\documentclass[11pt, twoside]{article}

\usepackage[english]{babel}

\usepackage{bbm}
\usepackage{amssymb}
\usepackage{amsfonts}
\usepackage{amsmath}
\usepackage{amsthm}
\usepackage{color}
\usepackage{mathrsfs}
\usepackage{txfonts}
\usepackage{bbm}
\usepackage{enumerate}
\usepackage{anysize}
\usepackage{indentfirst}

\usepackage{latexsym}

\usepackage[colorlinks=true,
  linkcolor=blue,
  citecolor=red,
  urlcolor=magenta,
  ]{hyperref}

\allowdisplaybreaks

\newtheorem{theorem}{Theorem}[section]
\newtheorem{lemma}[theorem]{Lemma}
\newtheorem{corollary}[theorem]{Corollary}
\newtheorem{proposition}[theorem]{Proposition}

\theoremstyle{definition}
\newtheorem{remark}[theorem]{Remark}
\newtheorem{definition}[theorem]{Definition}

\newcounter{assum}

\renewcommand{\appendix}{\par
\setcounter{section}{0}%
\setcounter{subsection}{0}%
\setcounter{subsubsection}{0}%
\gdef\thesection{\@Alph\c@section}%
\gdef\thesubsection{\@Alph\c@section.\@arabic\c@subsection}%
\gdef\theHsection{\@Alph\c@section.}%
\gdef\theHsubsection{\@Alph\c@section.\@arabic\c@subsection}%
\csname appendixmore\endcsname
}

\numberwithin{equation}{section}

\begin{document}

\arraycolsep=1pt

\title{\bf\Large Sobolev--Morrey Spaces and Divergence-Form Degenerate
Second-Order Elliptic Equations on Domains
with Higher Co-Dimensional Boundaries
\footnotetext{\hspace{-0.35cm} 2020
\emph{Mathematics Subject Classification}.
Primary 46E35; Secondary 35J25, 35J70, 28A75, 42B35, 42B37, 46B70.
\endgraf \emph{Key words and phrases}.
weighted Sobolev--Morrey space,
higher co-dimensional boundary,
trace theorem,
extension theorem,
degenerate elliptic equation.
\endgraf
This project is partially supported by the National Natural
Science Foundation of China (Grant Nos. 12431006 and 12371093), the Beijing
Natural Science Foundation (Grant No.1262011),
the Fundamental Research Funds for the Central Universities
(Grant No. 2233300008), and Longyuan Young Talents of Gansu Province.}}
\author{Weiyi Kong, Yoshihiro Sawano, Dachun Yang\footnote{Corresponding author,
E-mail: \texttt{dcyang@bnu.edu.cn}/{\color{red}\today}/Final Version.},
\ \ Sibei Yang and Wen Yuan}
\date{}
\maketitle

\vspace{-0.6cm}

\begin{center}
\begin{minipage}{13cm}
{\small {\bf Abstract}\quad In this article,
we study the weighted homogeneous Sobolev--Morrey
spaces on domains in $\mathbb{R}^n$ with higher
co-dimensional boundaries. Precisely, we systematically establish a real-variable
theory of these spaces, including completeness, embedding theorems, Riesz
potential characterizations, continuity, trace and extension theorems, and
complex interpolation. Applying the boundedness of the trace and the
extension operators, we obtain sharp weighted a priori estimates
for solutions to the Dirichlet problem of divergence-form
degenerate second-order elliptic equations on such domains in
weighted Lebesgue spaces. The absence of a boundary manifold
structure of these domains poses some essential difficulties,
which are overcome by  using some tools, such as
the intrinsic properties of distance weights and the geometric structure
of domains, different from those available in Lipschitz domains.
}
\end{minipage}
\end{center}

\vspace{0.2cm}

\tableofcontents

\vspace{0.2cm}

\section{Introduction}
The restriction (trace) and the extension of function spaces,
particularly the boundedness of these operators, play a
crucial role in the regularity theory of elliptic equations on domains. The
regularity theory of elliptic equations on domains, in turn, constitutes an
important topic in partial differential equations (for short, PDEs). In the classical framework, the
domain is usually assumed to have a Lipschitz boundary or, more generally, a
boundary of co-dimension one (see, for example, \cite{bw(am-2010), d(amj-1986),
DEK18, HaLi11, K1994, Shen05}). Recently, David, Feneuil, and Mayboroda \cite{DFM21}
introduced the domains with \emph{higher co-dimensional boundaries},
systematically studied the corresponding weighted Sobolev space
$\dot{W}^{1,2}$ and its trace space,
established the boundedness of the associated trace and extension operators,
and applied these real-variable tools to investigate the weighted
$\dot{W}^{1,2}$ regularity of solutions to the Dirichlet
problem of divergence-form degenerate second-order elliptic
equations on such domains.
We refer to \cite{bmp(2603.22067),dfm(amses-2019), dfm(a-2023), dm(imrn-2023), fmz(rmi-2021)}
for further studies on the restriction and the
extension of function spaces and their applications to elliptic equations
on such domains.

On the other hand, to obtain more refined or finer regularity of solutions, Morrey spaces,
originally introduced by Morrey \cite{m(tams-1938)}, serve as a natural
extension of classical Lebesgue spaces and are of fundamental importance to the
regularity theory of elliptic equations. In the study of PDEs, one typically
needs to control the derivatives of functions as well. This leads to the
development of Sobolev--Morrey spaces. Nowadays, Morrey and Sobolev--Morrey spaces
have become important tools and have widespread impact in harmonic analysis
(see, for example, \cite{Adams-Book,hlms(amses-2025),hms(jfaa-2020),h(aasfm-2015),h(jga-2023),
lx(jfa-2018), SDH20-1, s(emj-2012), s(emj-2013)})
and in PDEs (see, for example, \cite{ax(jlms-2025), ax(ma-2025),
bb(na-2020), bdl(jde-2016), bdl(na-2019), lx(jmpa-2020),
lx(am-2025), NNS16, sty(amses-2018)}).

Furthermore, when dealing with elliptic equations on domains with irregular
boundaries, it becomes necessary to study weighted Morrey and weighted
Sobolev--Morrey spaces. Generally, there exist two types of weighted Morrey
spaces: the Komori--Shirai type \cite{Komori} and the Samko type \cite{Samko}.
Motivated by the classical study of elliptic equations in weighted Morrey
spaces, we adopt the Samko-type formulation in this article [see
\eqref{20260604.2231}]. We refer to \cite{DuRo20-1, DuRo20-2,
DuRo21, h(pems-2021), n(mn-1994), n(sm-2006), n(sm-2008),
Nakamura16, NST18, Sawano-Book II} for more studies on weighted
Morrey spaces.

Let $n\ge2$. Throughout this article, we \emph{always} assume that
$\Gamma\subset\mathbb{R}^n$ is an Ahlfors--David-regular set with dimension
$d\in(0,n-1)$; that is, $\Gamma$ is closed and there exists a constant
$C_0\in[1,\infty)$ such that, for any $x\in\Gamma$ and $r\in(0,\infty)$,
\begin{align*}
C_0^{-1}r^d\leq\mathcal{H}^d(\Gamma(x,r))\leq C_0 r^d.
\end{align*}
Here, and thereafter, for any $s\in(0,n]$, $x\in\mathbb{R}^n$, and
$r\in(0,\infty)$, $\mathcal{H}^s$ denotes the $s$-dimensional Hausdorff
measure,
$$\Gamma(x,r):=\Gamma\cap B(x,r),$$
and $B(x,r):=\{y\in\mathbb{R}^n:|y-x|<r\}$ is the open \emph{ball} in
$\mathbb{R}^n$ centered at $x$ with radius $r$.
Such a set $\Gamma$ is also called a
\emph{$d$-set} (see, for example, \cite{jw84,t97}).
The function spaces and their applications on $d$-sets also attract a lot of
attention (see, for example, \cite{hp(mn-2008), hs(aasfm-2011),
s(psim-2023), t97, y(sm-2003), ysy(jfaa-2025)}). Let
$$\Omega:=\mathbb{R}^n\setminus\Gamma.$$
Since $d<n-1$, such a domain $\Omega$ is referred to as a \emph{domain with a
higher co-dimensional boundary}. For any $x\in\mathbb{R}^n$, we define the
\emph{distance function}
$$\delta(x):=\operatorname{dist}(x,\Gamma):=\inf_{y\in\Gamma}|x-y|$$
and the \emph{distance weight}
\begin{align}\label{dw}
w(x):=[\delta(x)]^{d+1-n}.
\end{align}
Obviously, if $x\in\Gamma$, then $\delta(x)=0$ and $w(x)=\infty$.
Throughout this article, we always use the \emph{symbol $\Omega$} to denote a
domain with a higher co-dimensional boundary; i.e., the $d$-set $\Gamma$.

There exist two typical examples of such $d$-sets $\Gamma$. The first one is
the \emph{flat lower-dimensional boundary} $\mathcal{F}:=\mathbb{R}^{n-2}\times
\{(0,0)\}$ with $n\in\mathbb{N}\cap[3,\infty)$, which is an $(n-2)$-set.
Another canonical example is the \emph{Cantor dust} in
$\mathbb{R}^n$ (see, for example, \cite{F2014}). Precisely, let
$\mathcal{A}_0:=[0,2]^n$ and $a\in(0,\frac{1}{2^\frac{n}{n-1}})$. For any
$j\in\mathbb{N}$, define $\mathcal{A}_j:=\bigcup_{k\in\{0,1\}^n}[a\mathcal{A}_{j-1}+2(1-a)k]$ and let
$\mathcal{A}:=\bigcap_{j\in\mathbb{Z}_+}\mathcal{A}_j$. This set $\mathcal{A}$
is called the \emph{Cantor dust} and is a $d$-set,
where $d\in(0,n-1)$ satisfies $2^na^d=1$.

In this article, motivated by the work of David et al. \cite{DFM21} on the
real-variable theory of the weighted Sobolev space $\dot{W}^{1,2}(\Omega,w)$,
we study the weighted Sobolev--Morrey space
$\dot{W}^1\mathcal{M}^p_q(\Omega,w)$,
where $w$ is as in \eqref{dw}, $1<q\le p<\infty$,
and $\mathcal{M}^p_q(\Omega,w)$ denotes the Morrey space
in \eqref{20260604.2231}. More precisely, we systematically establish
a real-variable theory for these spaces, including completeness,
embedding theorems, Riesz potential characterizations, continuity, trace and
extension theorems, and complex interpolation. Note that
$\dot{W}^{1,2}(\Omega,w)=\dot{W}^1\mathcal{M}^2_2(\Omega,w)$.
Thus, these spaces are new when $1<q\le p<\infty$ and $p\neq 2\neq q$.
Even in the special case $p\neq2$ of $\dot{W}^{1,p}(\Omega,w)$
(which is precisely $\dot{W}^1\mathcal{M}^p_p(\Omega,w)$),
our results are also new.
Furthermore, applying the real-variable theory of
$\dot{W}^1\mathcal{M}^p_q(\Omega,w)$,
we obtain sharp weighted a priori estimates for solutions to the Dirichlet
problem of divergence-form degenerate second-order elliptic equations on
$\Omega$ in weighted Lebesgue spaces.

It is worth mentioning that a systematic study of degenerate elliptic
equations can be traced back to Fabes et al. \cite{FBS82}, where the analysis was
developed on a class of well-behaved domains. David et al. \cite{DFM21}
first considered divergence-form degenerate elliptic equations on $\Omega$.
Although many phenomena proved similar to
those in the classical case of bounded $C^\infty$ domains
as treated in \cite{HaLi11}, David et al. \cite{DFM21}
employed many new tools.

Indeed, compared with the classical case of domains with Lipschitz
boundaries, the present setting poses substantial analytical challenges.
The main difficulty is that the boundary $\Gamma$ has no manifold structure
and hence cannot be locally flattened or parameterized by boundary charts.
Consequently, boundary estimates cannot be reduced to the standard
half-space model, and the usual arguments based on normal directions,
surface coordinates, or classical boundary traces are no longer available.
To overcome these difficulties, we exploit the intrinsic geometry of the
$d$-set $\Gamma$, the quantitative properties of the distance
weight $w$, and a Whitney decomposition adapted to $\Omega$.
These tools allow us to establish Poincar\'e-type inequalities,
trace and extension theorems, and ultimately sharp weighted a
priori estimates in weighted Lebesgue spaces.

The organization of the remainder of this article is as follows.

In Section \ref{section2}, we prepare the necessary geometric and analytical
foundations regarding the distance weight $w$ and boundary traces.
Specifically, in Subsection \ref{subsection2.1}, we
examine the quantitative behavior of ball averages of $w$ with
the sharp range of $p$ (see Lemma
\ref{lem:250104-11}). Furthermore, we characterize the weight classes which $w$
belongs to (see Remark \ref{rmk:20260508.2141} and Proposition
\ref{prop:260117-1}). In Subsection \ref{subsection2.2}, we present various
Poincar\'e inequalities for functions in Sobolev spaces (see Lemma
\ref{DFM21} and Propositions \ref{prop:Poincare} and \ref{prop:Poincare2}) and
derive the existence and the differentiation properties of the boundary trace for
these functions (see Proposition \ref{prop:260126-1}).

In Section \ref{section3}, we introduce and investigate the weighted Morrey
spaces $\mathcal{M}^p_q(\Omega,w)$ adapted to $\Gamma$. Specifically, in
Subsection \ref{subsection3.1} we introduce these spaces and
present their fundamental structural properties (see Propositions
\ref{lem:250919-2} and \ref{prop:250926-1}
and Theorem \ref{prop:250104-11}). We mention that Proposition
\ref{prop:250926-1} and Theorem \ref{prop:250104-11} are sharp.
In Subsection \ref{subsection3.2}, we state the boundedness results of classical
operators on $\mathcal{M}^p_q(\Omega,w)$ (see Propositions
\ref{thm:260126-1}--\ref{prop:20260420.1836}). In Subsection
\ref{subsection3.3}, by showing that $\mathcal{M}^p_q(\Omega,w)$
can be embedded into the space $\mathcal{S}'(\mathbb{R}^n)$
of tempered distributions, we obtain its Littlewood--Paley
characterization (see Propositions \ref{lem:250919-31} and
\ref{prop:LP-Morrey}), which serves as a crucial tool for the Riesz potential
characterization in Section \ref{section4} below. Finally, the complex
interpolation for $\mathcal{M}^p_q(\Omega,w)$ is presented in
Subsection \ref{subsection3.4} (see Proposition \ref{thm:interp-morrey}).

In Section \ref{section4}, we systematically
develop a real-variable theory
of weighted Sobolev--Morrey spaces $\dot{W}^1\mathcal{M}^p_q(\Omega,w)$
adapted to $\Gamma$. Specifically, Subsection
\ref{subsection4.1} is devoted to showing their completeness (see Lemma
\ref{lem:250919-3}). In Subsection \ref{subsection4.2}, we establish the Riesz
potential characterization of $\dot{W}^1\mathcal{M}^p_q(\Omega,w)$, which
subsequently yields the corresponding Sobolev--Morrey embedding theorem
(see Theorem \ref{cor:260117-1} and Corollary \ref{cor:260605}). Subsection
\ref{subsection4.3} examines the convergence of integral averages at infinity
under the lower critical case (see Lemma \ref{lem:250916-1}), while Subsection
\ref{subsection4.4} addresses the pointwise continuity and quantitative
H\"older estimates under the upper critical case (see Lemmas
\ref{lem:260126-11} and \ref{lem:local-continuity} and Theorem \ref{thm:5.1}
with the range of $p$ being sharp).
In Subsections \ref{subsection4.5} and \ref{subsection4.6}, we turn our
attention to the boundary behavior on $\Gamma$, where we introduce the trace
space $Q^p_q(\Gamma)$ and establish the mapping properties of both the trace
operator $T$ and the extension operator $E$ (see Theorems
\ref{thm:local-trace} and \ref{thm:Eg-membership}). Finally, in Subsection
\ref{subsection4.7}, we establish the complex interpolation identities for
both $\dot{W}^1\mathcal{M}^p_q(\Omega,w)$ and $Q^p_q(\Gamma)$ (see Theorem
\ref{thm:interp-sobolev-morrey} and Corollary \ref{cor:interp-Qpq}).

In Section \ref{section5}, as an application of the function space theory
developed in the preceding sections, we establish the weighted a priori
estimates in a sharp range for solutions to the Dirichlet problem
of divergence-form degenerate second-order elliptic
equations on $\Omega$. Specifically, in Subsection \ref{subsection5.1},
we recall the definition of solutions and prove the reverse H\"older
inequalities for local solutions (see Lemma \ref{lem:7.1}). Combining this,
a real-variable lemma of Gehring type (see Lemma \ref{lem:Shen}), the
weighted $\dot{W}^{1,2}$ estimates established by David et al. \cite[Lemma
9.1]{DFM21}, and the real-variable tools obtained in Sections
\ref{section3} and \ref{section4} (especially the trace and the extension
theorems), we obtain the regularity properties of solutions to weighted Sobolev
and weighted Sobolev--Morrey settings in a sharp range in Subsection
\ref{subsection5.2} (see Theorem \ref{thm:weighted-dirichlet}, Corollaries
\ref{thm:weighted-dirichlet-cor} and \ref{cor:7.1},
and Remark \ref{rmk:20260521.1442}).

We end this introduction by making some notational conventions. Throughout
this article, let $\mathbb{N}:=\{1,\,2,\,\dots\}$ and
$\mathbb{Z}_+:=\mathbb{N}\cup\{0\}$. For any $s\in\mathbb{R}$, the \emph{symbol}
$\lceil s\rceil$ denotes the smallest integer not less than $s$. For any given
$p\in[1,\infty]$, we denote by $p'$ its \emph{conjugate exponent}; i.e.,
$\frac1p+\frac{1}{p'}=1$. We always denote by $C$ a \emph{positive constant}
which is independent of the main parameters involved, but it may vary from line
to line. The \emph{notation} $f\lesssim g$ means that $f\le Cg$.
If $f\lesssim g$ and $g\lesssim f$, then we write $f\sim g$.
If $f\le Cg$ and $g=h$ or $g\le h$, we then write $f\lesssim g=h$ or $f\lesssim
g\le h$. For any measurable set $A\subset\mathbb{R}^n$, we
denote by $|A|$ its Lebesgue measure. For any $a\in(0,\infty)$ and any ball
$B:=B(x,r)\subset\mathbb{R}^n$ with $x\in\mathbb{R}^n$ and $r\in(0,\infty)$,
let $aB:=B(x,ar)$ be the ball centered at $x$ with radius $ar$.
Assume that $\mathcal{X}\subset\mathbb{R}^n$ is a measurable.
For any given $p\in[1,\infty]$, let
$L_\mathrm{loc}^p(\mathcal{X})$ denote the set of all
$p$-locally integrable functions on $\mathcal{X}$. Similarly, for any given nonnegative locally integrable function $\omega$ on
$\mathbb{R}^n$, we denote by $L_\mathrm{loc}^p(\mathcal{X},
\omega)$ the set of all $p$-locally integrable functions on
$\mathcal{X}$ with respect to the measure $\omega(x)\,dx$.
In addition, the weighted \emph{Lebesgue space
$L^p(\mathcal{X,\omega}$)} is defined to be the
space of all measurable functions $f$ on $\mathcal{X}$ such that
\begin{align*}
\|f\|_{L^p(\mathcal{X,\omega})}:=
\begin{cases}
\displaystyle
\left[\int_\mathcal{X}|f(x)|^p\omega(x)\,dx\right]^\frac1p
&\mathrm{if}\ p\in[1,\infty),\\
\displaystyle\mathop\mathrm{ess\,sup}_{x\in\mathcal{X}}|f(x)|&\mathrm{if}\ p=\infty\\
\end{cases}
\end{align*}
is finite. If $\omega\equiv1$, then we simply write
$L^p(\mathcal{X},\omega)$ as $L^p(\mathcal{X})$.
For any given Borel measure $\mu$ on $\mathbb{R}^n$, any
$\mu$-measurable subset $A\subset\mathbb{R}^n$ with
$\mu(A)\in(0,\infty)$, and any $\mu$-locally integrable function
$f$ on $\mathbb{R}^n$, let
\begin{align*}
f_A:=\fint_Af(x)\,d\mu(x):=\frac{1}{\mu(A)}\int_Af(x)\,d\mu(x).
\end{align*}
For any given open subset $U\subset\mathbb{R}^n$,
denote by $C_\mathrm{c}^\infty(U)$ the \emph{set} of all
infinitely differentiable functions on $U$ with compact support.
For any given normed spaces $\mathcal X$ and $\mathcal Y$ with the
corresponding norms $\|\cdot\|_{\mathcal X}$ and
$\|\cdot\|_{\mathcal Y}$, the \emph{notation} ${\mathcal X}\hookrightarrow{\mathcal Y}$
means that, if $f\in \mathcal X$, then $f\in\mathcal Y$ and
$\|f\|_{\mathcal Y}\lesssim \|f\|_{\mathcal X}$ with the implicit positive constant
independent of $f$. Finally, in all proofs, we consistently retain the
notation introduced in the original theorem (or related statement).

\section{General Remarks}\label{section2}

This section consists of two subsections. In Subsection \ref{subsection2.1},
we study the analytical and the geometric properties of the distance weight $w$.
Specifically, we examine the quantitative behavior of ball averages of $w$.
Furthermore, we also prove that $w$ belongs to the Muckenhoupt class
$A_q(\mathbb{R}^n)$ for $q\in[1,\infty]$ and we provide a characterization for
$w$ belonging to the weight class $\mathcal{B}_{p,q}(\mathbb{R}^n)$ introduced
by Nakamura \cite[Definition 1.1]{Nakamura16}.
In Subsection \ref{subsection2.2}, we establish the Poincar\'e inequality for
functions in the Sobolev space and derive the existence and
differentiation properties of the boundary trace for these functions.

\subsection{Analytical and Geometric Properties of Distance Weight $w$}
\label{subsection2.1}

In this subsection, we discuss the analytical and the geometric properties of
$w$. We begin by presenting the geometric properties which $w$ satisfies,
generalizing the corresponding results in \cite[Lemma 2.3 and (2.13)]{DFM21}.
More precisely, (i) and (ii) of Lemma \ref{lem:250104-11} relax the
restriction $p=1$ in \cite[Lemma 2.3]{DFM21} to the sharp range
$p\in(-\infty,0)\cup(0,\frac{n-d}{n-d-1})$, while Lemma
\ref{lem:250104-11}(iii) extends the diagonal case $q_0=q_1=1$ in
\cite[(2.13)]{DFM21} to the off-diagonal case $1\leq q_0\leq q_1<\infty$.
Lemma \ref{lem:250104-11} is of independent interest.

\begin{lemma}\label{lem:250104-11}
Let $1\leq q_0\leq q_1<\infty$,
$p\in(-\infty,0)\cup(0,\frac{n-d}{n-d-1})$,
and $w$ be as in \eqref{dw}.
Then the following assertions hold.
\begin{enumerate}
\item[\rm(i)] For any $x\in\mathbb{R}^n$ and $r\in(0,\infty)$ satisfying
$\delta(x)\geq2r$,
\begin{align}\label{eq:general2.1}
\left\{\fint_{B(x,r)}[w(z)]^p\,dz\right\}^\frac1p\sim w(x),
\end{align}
where the positive equivalence constants are independent of $x$ and $r$.

\item[{\rm(ii)}] For any $x\in\mathbb{R}^n$ and $r\in(0,\infty)$ satisfying
$\delta(x)\leq2r$,
\begin{align}\label{eq:general2.2}
\left\{\fint_{B(x,r)}[w(z)]^p\,dz\right\}^\frac1p\sim r^{d+1-n},
\end{align}
where the positive equivalence constants are independent of $x$ and $r$.

\item[\rm(iii)] For any $x\in\mathbb{R}^n$,  $r\in(0,\infty)$, and any
measurable function $f$ on $B(x,r)$,
\begin{align}\label{20260508.2150}
\left[\fint_{B(x,r)}|f(y)|^{q_0}\,dy\right]^\frac{1}{q_0}
\lesssim\left[\frac{1}{w(B(x,r))}
\int_{B(x,r)}|f(y)|^{q_1}w(y)\,dy\right]^\frac{1}{q_1},
\end{align}
where the positive implicit constant is
independent of $x$, $r$, and $f$.
\end{enumerate}
\end{lemma}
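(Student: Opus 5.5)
For (i), the plan is to use the Lipschitz property of $\delta$. If $\delta(x)\ge 2r$ and $z\in B(x,r)$, then
$$\tfrac12\delta(x)\le\delta(x)-r\le\delta(z)\le\delta(x)+r\le\tfrac32\delta(x).$$
Hence $w(z)\sim w(x)$ uniformly on $B(x,r)$, with constants depending only on $n-d-1$. Averaging $[w(z)]^p$ over the ball and taking the $1/p$ power gives \eqref{eq:general2.1} for every real $p\neq0$. The sign of $p$ only swaps the direction of the inequalities before the root, so no upper bound on $p$ is needed here.

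For (ii), assume $\delta(x)\le 2r$ and set $\alpha:=p(d+1-n)$. The condition $p<\frac{n-d}{n-d-1}$ is equivalent to $\alpha>d-n$, and for $p<0$ we have $\alpha>0$.

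\emph{Upper bound.} Pick $x_0\in\Gamma$ with $|x-x_0|=\delta(x)\le2r$, so that $B(x,r)\subset B(x_0,3r)$. The key estimate is
$$\int_{B(x_0,R)}[\delta(z)]^{\alpha}\,dz\lesssim R^{n+\alpha}\qquad\text{for } x_0\in\Gamma,\ \alpha>d-n.$$
For $\alpha\ge0$ this is trivial since $\delta\le R$ on the ball. For $\alpha<0$, use a layer-cake decomposition of the level sets of $\delta$. For $k\ge0$, the set $\{z\in B(x_0,R):\delta(z)\le 2^{-k}R\}$ is covered by $\lesssim 2^{kd}$ balls of radius $2^{1-k}R$; this follows from the Ahlfors regularity and a standard maximal-disjoint-family argument on $\Gamma(x_0,2R)$. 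Its measure is therefore $\lesssim 2^{k(d-n)}R^n$. Summing $\sum_k (2^{-k}R)^{\alpha}2^{k(d-n)}R^n$ converges exactly when $\alpha>d-n$, which is where the sharp range enters. Since $|B(x,r)|\sim r^n$, this gives $\fint_{B(x,r)}w^p\lesssim r^{\alpha}$.

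\emph{Lower bound.} Find a ball $B(y,r/8)\subset B(x,r)$ on which $\delta\sim r$. This uses that $\Gamma$ is "thin": by the covering estimate above with $k$ large, $|\{z\in B(x,r):\delta(z)\le\epsilon r\}|\lesssim\epsilon^{n-d}r^n$, which is smaller than $|B(x,r)|/2$ for $\epsilon$ small. The remaining half of $B(x,r)$ has $\epsilon r\le\delta\le 3r$, so $w^p\sim r^{\alpha}$ there. This gives $\fint_{B(x,r)} w^p\gtrsim r^\alpha$. The same two estimates also handle $p<0$: then $\alpha>0$, the upper bound is trivial, and the lower bound uses the thinness argument. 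Taking $1/p$ powers gives \eqref{eq:general2.2}, with the direction of the inequalities flipping appropriately when $p<0$.

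For (iii), apply H\"older's inequality with exponent $q_1/q_0$:
$$\fint_B|f|^{q_0}=\frac{1}{|B|}\int_B|f|^{q_0}w^{q_0/q_1}w^{-q_0/q_1}\le\frac1{|B|}\Big(\int_B|f|^{q_1}w\Big)^{q_0/q_1}\Big(\int_B w^{-s}\Big)^{1-q_0/q_1},$$
where $s=\frac{q_0}{q_1-q_0}$; the case $q_0=q_1$ is handled directly via $\sup_B w^{-1}$. This reduces the claim to the $A_{\cdot}$-type estimate
$$\Big(\fint_B w\Big)\Big(\fint_B w^{-s}\Big)^{1/s}\lesssim1,$$
which is what remains after taking $q_0$-th roots and normalizing. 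This estimate follows from (i) and (ii), used with $p=1$ and $p=-s<0$, both of which are allowed. In each case the two averages are comparable to $w(x)$ and $w(x)^{-1}$, respectively to $r^{d+1-n}$ and $r^{n-d-1}$, so their product is bounded.

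The main obstacle is the covering and measure estimate for neighborhoods of $\Gamma$ in (ii), together with the lower bound. I will derive it carefully from the Ahlfors regularity of $\Gamma$.
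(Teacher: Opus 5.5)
Your proposal is correct and follows essentially the same route as the paper. In (i) you use the Lipschitz comparison of $\delta$. In (ii) you move the center to a nearest point of $\Gamma$, use a covering/layer-cake estimate on the level sets of $\delta$ for $p\in(0,\frac{n-d}{n-d-1})$, and use the thinness of $\Gamma$ (its small neighborhood fills at most half the ball) for $p<0$. In (iii) you apply H\"older's inequality to reduce to an $A_{q_1/q_0}$-type bound, obtained from (i)--(ii) with exponents $1$ and $-\frac{q_0}{q_1-q_0}$, together with the $A_1$ bound when $q_0=q_1$.

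The only small difference is that for $p>0$ your thinness argument for the lower bound is unnecessary. Since $\delta\le 3r$ on $B(x,r)$, you already have $[w(z)]^p\ge(3r)^{p(d+1-n)}$ pointwise, which is how the paper obtains that bound.
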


\begin{proof}
We first prove (i). Let $x\in\mathbb{R}^n$ and $r\in(0,\infty)$ be such that
$\delta(x)\geq2r$. Then, for any $y\in B(x,r)$, $\frac12\delta(x)\leq\delta(y)
\leq\frac32\delta(x)$, which further implies that $(\frac{2}{3})^{n-d-1}w(x)
\leq w(y)\leq 2^{n-d-1}w(x)$. This immediately yields \eqref{eq:general2.1}.

Next, we show (ii). We first establish the lower bound in \eqref{eq:general2.2}.
Observe that, for any $y\in B(x,r)$, $\delta(y)\leq3r$. Thus,
\begin{align*}
r^{d+1-n}\lesssim\left\{\fint_{B(x,r)}[\delta(z)]^{p(d+1-n)}\,dz
\right\}^\frac1p=\left\{\fint_{B(x,r)}[w(z)]^p\,dz\right\}^\frac1p.
\end{align*}
We then prove the upper bound in \eqref{eq:general2.2}. To do this,
we consider the following two cases for $p$.

\emph{Case (1)} $p\in(-\infty,0)$. In this case, for any $k\in\mathbb{Z}_+$, let
\begin{align*}
Z_k:=\left\{y\in B(x,r):2^{-k-1}r<\delta(y)\le 2^{-k}r\right\}
\mbox{\ \ and\ \ }E_k:=\left\{y\in B(x,r):\delta(y)\leq2^{-k}r\right\}.
\end{align*}
It was shown in \cite[p.\,11]{DFM21} that, for any $k\in\mathbb{Z}_+$,
$|Z_k|\lesssim2^{kd}(2^{-k}r)^n$, which further implies that,
for any $k\in\mathbb{Z}_+$,
\begin{align*}
|E_k|\lesssim\sum_{j=k}^\infty2^{jd}\left(2^{-j}r\right)^n
\lesssim2^{k(d-n)}r^n.
\end{align*}
Therefore, there exists $K\in\mathbb{N}$ such that
$|E_K|\lesssim2^{K(d-n)}r^n\leq\frac12|B(x,r)|$
and hence $|B(x,r)\setminus E_K|>\frac12|B(x,r)|$. Consequently,
\begin{align*}
\left\{\fint_{B(x,r)}[w(z)]^p\,dz\right\}^\frac1p\lesssim
\left\{\fint_{B(x,r)\setminus E_K}[\delta(z)]^{p(d+1-n)}\,dz
\right\}^\frac1p\lesssim r^{d+1-n}.
\end{align*}

\emph{Case (2)} $p\in(0,\frac{n-d}{n-d-1})$. In this case, if $x\in\Gamma$,
then, from the proven conclusion that $|Z_k|\lesssim2^{kd}(2^{-k}r)^n$
for any $k\in\mathbb{Z}_+$, we deduce that
\begin{align*}
\int_{B(x,r)}[w(z)]^p\,dz&=\sum_{k\in\mathbb{Z}_+}\int_{Z_k}
[\delta(z)]^{p(d+1-n)}\,dz\lesssim\sum_{k\in\mathbb{Z}_+}
2^{kd}\left(2^{-k}r\right)^{n+p(d+1-n)}\\
&=r^{n+p(d+1-n)}\sum_{k\in\mathbb{Z}_+}2^{k[(d-n)(1-p)-p]}\sim r^{n+p(d+1-n)}
\end{align*}
and hence
\begin{align}\label{20260508.1847}
\left[\fint_{B(x,r)}[w(z)]^p\,dz\right]^\frac1p\lesssim r^{d+1-n}.
\end{align}
Otherwise, $x\in\Omega$ and $\delta(x)\leq2r$. In this case, there exists
$\xi_x\in\Gamma$ such that $\delta(x)=|x-\xi_x|$ and hence $B(x,r)\subset
B(\xi_x,3r)$. Applying this and \eqref{20260508.1847} with $x$ therein
replaced by $\xi_x$, we find that
\begin{align*}
\left[\fint_{B(x,r)}[w(z)]^p\,dz\right]^\frac1p\lesssim
\left[\fint_{B(\xi_x,3r)}[w(z)]^p\,dz\right]^\frac1p\lesssim r^{d+1-n}.
\end{align*}
This finishes the proof of (ii).

Finally, we show (iii). Let $x\in\mathbb{R}^n$ and $r\in(0,\infty)$, and
let $f$ be a measurable function on $B(x,r)$,
We consider the following two cases for $q_0$ and $q_1$.

\emph{Case (1)} $q_0=q_1=:q$. In this case, from Cavalieri's principle and
the doubling condition for $w$ (see Remark \ref{rmk:20260508.2141}(ii) below),
it follows that
\begin{align*}
\fint_{B(x,r)}|f(y)|^q\,dy&=\frac{1}{|B(x,r)|}\int_{B(x,r)}qt^{q-1}
|\{y\in B:|f(y)|>t\}|\,dt\\
&\lesssim\frac{1}{w(B(x,r))}\int_{B(x,r)}qt^{q-1}w(\{y\in B:|f(y)|>t\})\,dt\\
&=\frac{1}{w(B(x,r))}\int_{B(x,r)}|f(y)|^{q_1}w(y)\,dy
\end{align*}
and hence \eqref{20260508.2150} holds.

\emph{Case (2)} $q_0<q_1$. In this case, define $p:=\frac{q_1}{q_1-q_0}$.
Then $\frac1p+\frac{q_0}{q_1}=1$. Using this, H\"older's inequality, (i),
and (ii), we conclude that
\begin{align*}
\left[\fint_{B(x,r)}|f(y)|^{q_0}\,dy\right]^\frac{1}{q_0}
&=\left\{\fint_{B(x,r)}|f(y)|^{q_0}[w(y)]
^\frac{q_0}{q_1}[w(y)]^{-\frac{q_0}{q_1}}\,dy\right\}^\frac{1}{q_0}\\
&\leq\left\{\fint_{B(x,r)}|f(y)|^{q_1}w(y)\,dy\right\}^\frac{1}{q_1}
\left\{\fint_{B(x,r)}[w(y)]^{-\frac{q_0p}{q_1}}\,dy\right\}^\frac{1}{pq_0}\\
&\sim\left\{\fint_{B(x,r)}|f(y)|^{q_1}w(y)\,dy\right\}^\frac{1}{q_1}
\left[\fint_{B(x,r)}w(y)\,dy\right]^{-\frac{1}{q_0}}\\
&=\left[\int_{B(x,r)}|f(y)|^{q_1}w(y)\,dy\right]^\frac{1}{q_1}
\end{align*}
and hence \eqref{20260508.2150} holds. This finishes the proof of (iii) and
Lemma \ref{lem:250104-11}.
\end{proof}

\begin{remark}\label{rmk:20260508.2141}
We use the same notation as in Lemma \ref{lem:250104-11}.
\begin{enumerate}
\item[\rm(i)] If $p:=1$, then (i)--(iii) of Lemma \ref{lem:250104-11}
reduce, respectively, to \cite[Lemma 2.3(i)]{DFM21},
\cite[Lemma 2.3(ii)]{DFM21}, and \cite[(2.13)]{DFM21}.
Moreover, the restriction $p<\frac{n-d}{n-d-1}$ here is sharp. To prove this,
let $x\in\Gamma$ and $r\in(0,\infty)$.
By Tonelli's theorem, we find that
\begin{align}\label{20260629.1039}
\int_{B(x,r)}[w(z)]^p\,dz&=\int_{B(x,r)}[\delta(z)]^{-p(n-d-1)}\,dz\notag\\
&=p(n-d-1)\int_{B(x,r)}\int_0^\infty t^{-p(n-d-1)-1}
\boldsymbol{1}_{\{t\in(0,\infty):t>\delta(z)\}}\,dt\,dz\notag\\
&=p(n-d-1)\int_0^\infty t^{-p(n-d-1)-1}
|\{z\in B(x,r):\delta(z)<t\}|\,dt.
\end{align}
We claim that, for any $t\in(0,2r)$,
\begin{align*}
r^dt^{n-d}\lesssim|\{z\in B(x,r):\delta(z)<t\}|,
\end{align*}
where the implicit positive constant is independent of $r$ and $t$.
Indeed, it is easy to show that there exist a maximal $N\in\mathbb{N}$
and points $\{y_j\}_{j\in\mathbb{N}\cap[1,N]}\subset\Gamma(x,\frac{r}{2})$
such that $|y_i-y_j|\geq t$ for any $i,j\in\mathbb{N}\cap[1,N]$ with $i\neq j$.
Since $N$ is maximal, it follows that
\begin{align*}
\Gamma\left(x,\frac{r}{2}\right)\subset\bigcup_{j=1}^N\Gamma\left(y_j,t\right).
\end{align*}
From this and the assumption that $\Gamma$ is a $d$-set, we deduce that
\begin{align*}
r^d\sim\mathcal{H}^d\left(\Gamma\left(x,\frac{r}{2}\right)\right)
\leq\sum_{j=1}^N\mathcal{H}^d\left(\Gamma\left(y_j,t\right)\right)
\sim Nt^d,
\end{align*}
which further implies that $(\frac{r}{t})^d\lesssim N$. Moreover, for any
$j\in\mathbb{N}\cap[1,N]$ and $z\in B(y_j,\frac{t}{4})$,
$|z-x|\leq|z-y_j|+|y_j-x|<\frac{t}{4}+\frac{r}{2}<r$ and
$\delta(z)\leq|z-y_j|<\frac{t}{4}<t$. Thus,
\begin{align*}
\bigcup_{j=1}^NB\left(y_j,\frac{t}{4}\right)
\subset\{z\in B(x,r):\delta(z)<t\}.
\end{align*}
Combining this, the proven conclusion that $(\frac{r}{t})^d\lesssim N$,
and the fact that $\{B(y_j,\frac{t}{4})\}_{j\in\mathbb{N}\cap[1,N]}$
are pairwise disjoint, we conclude that
\begin{align*}
|\{z\in B(x,r):\delta(z)<t\}|\geq
\left|\bigcup_{i=1}^NB\left(y_j,\frac{t}{4}\right)\right|\sim Nt^n
\gtrsim r^dt^{n-d},
\end{align*}
and hence the above claim holds. From the above claim and
\eqref{20260629.1039}, we infer that, if $p\geq\frac{n-d}{n-d-1}$, then
\begin{align*}
\int_{B(x,r)}[w(z)]^p\,dz\gtrsim r^d\int_0^{2r}t^{n-d-p(n-d-1)-1}\,dt=\infty,
\end{align*}
which contradicts Lemma \ref{lem:250104-11}(ii). Therefore, the
restriction $p<\frac{n-d}{n-d-1}$ is sharp.

\item[\rm(ii)] Let $q\in[1,\infty]$. Recall that the
\emph{Muckenhoupt class $A_q(\mathbb{R}^n)$} is defined to be the
set of all nonnegative locally integrable functions $\omega$ on $\mathbb{R}^n$
such that
\begin{align*}
[\omega]_{A_q(\mathbb{R}^n)}:=
\begin{cases}
\displaystyle
\sup_B\fint_B\omega(x)\,dx\left[\mathop\mathrm{ess\,inf}_{x\in B}
\omega(x)\right]^{-1}<\infty &\mathrm{if}\ q=1,\\
\displaystyle
\sup_B\fint_B\omega(x)\,dx\left\{
\fint_B[\omega(x)]^\frac{1}{1-q}\,dx\right\}^{q-1}<\infty
&\mathrm{if}\ q\in(1,\infty),\\
\displaystyle
\sup_B\fint_B\omega(x)\,dx\exp\left\{\fint_B\ln\frac{1}{\omega(x)}\,dx
\right\}<\infty &\mathrm{if}\ q=\infty,
\end{cases}
\end{align*}
where the suprema are taken over all balls $B\subset\mathbb{R}^n$.
As a direct consequence of (i) and (ii) of
Lemma \ref{lem:250104-11}, we find that $w$ belongs to the Muckenhoupt
class $A_q(\mathbb{R}^n)$, where $q\in[1,\infty]$.
This further implies that $w$ has the doubling property; i.e.,
there exists a positive constant $C$ such that, for any measurable set
$B\subset\mathbb{R}^n$ and any measurable subset $A\subset B$,
$\frac{|A|}{|B|}\leq C\frac{w(A)}{w(B)}$.
\end{enumerate}
\end{remark}

Let $1\leq q\leq p<\infty$. Recall that the weight class
$\mathcal{B}_{p,q}(\mathbb{R}^n)$, introduced by Nakamura
\cite[Definition 1.1]{Nakamura16}, is defined to be the set of all nonnegative
locally integrable functions $\omega$ on $\mathbb{R}^n$ such that there exists
a positive constant $C$ satisfying, for any $x\in\mathbb{R}^n$
and $0<r\leq R<\infty$,
\begin{align*}
|B(x,r)|^{\frac1p-\frac1q}[\omega(B(x,r))]^\frac1q
\leq C|B(x,R)|^{\frac1p-\frac1q}[\omega(B(x,R))]^\frac1q.
\end{align*}
Roughly speaking, the class $\mathcal{B}_{p,q}(\mathbb{R}^n)$ is introduced to
ensure that the weighted Morrey norm of the characteristic function
$\boldsymbol{1}_{B(x,r)}$ can be estimated efficiently for any
$x\in\mathbb{R}^n$ and $r\in(0,\infty)$ (see Proposition \ref{prop:250926-1}
below). This condition also plays an important role in showing the weighted
boundedness of classical operators on weighted Morrey spaces
(see Subsection \ref{subsection3.2}).
We characterize the condition for $w\in B_{p,q}(\mathbb{R}^n)$.

\begin{proposition}\label{prop:260117-1}
Let $1\leq q\leq p<\infty$ and $w$ be as in \eqref{dw}.
Then $w\in\mathcal{B}_{p,q}(\mathbb{R}^n)$ if and only if
\begin{align}\label{eq:cond-pq}
\frac{n}{p}-\frac{n-d-1}{q}\geq0.
\end{align}
\end{proposition}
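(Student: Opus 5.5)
We reduce the problem to an explicit formula for $w(B(x,r))$ and then compare powers of $r$. By Lemma \ref{lem:250104-11} with exponent $1$, for any $x\in\mathbb{R}^n$ and $r\in(0,\infty)$ we have
\begin{align*}
w(B(x,r))\sim
\begin{cases}
r^n[\delta(x)]^{d+1-n} & \text{if } r\le \delta(x)/2,\\
r^{d+1} & \text{if } r\ge \delta(x)/2.
\end{cases}
\end{align*}
Both cases are captured by $w(B(x,r))\sim r^n[\max\{r,\delta(x)\}]^{d+1-n}$. Hence the quantity appearing in the definition of $\mathcal{B}_{p,q}(\mathbb{R}^n)$ satisfies
\begin{align*}
\Phi_x(r):=|B(x,r)|^{\frac1p-\frac1q}[w(B(x,r))]^{\frac1q}\sim r^{\frac np}\,[\max\{r,\delta(x)\}]^{-\frac{n-d-1}{q}}.
\end{align*}
With this formula, $w\in\mathcal{B}_{p,q}(\mathbb{R}^n)$ becomes equivalent to $\Phi_x$ being almost increasing in $r$, uniformly in $x$.

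\emph{Sufficiency.} Assume \eqref{eq:cond-pq} and fix $0<r\le R$; we bound $\Phi_x(r)\lesssim\Phi_x(R)$ in three regimes.
\begin{enumerate}
\item[(a)] If $R\le\delta(x)$, then $\Phi_x(r)/\Phi_x(R)\sim (r/R)^{n/p}\le1$.
\item[(b)] If $r\ge\delta(x)$, then $\Phi_x(r)/\Phi_x(R)\sim (r/R)^{\frac np-\frac{n-d-1}{q}}\le1$, which is exactly where \eqref{eq:cond-pq} is used.
\item[(c)] If $r<\delta(x)<R$, insert the intermediate radius $\delta(x)$ and chain (a) and (b): $\Phi_x(r)\lesssim\Phi_x(\delta(x))\lesssim\Phi_x(R)$.
\end{enumerate}
When $\delta(x)=0$, that is $x\in\Gamma$, only regime (b) occurs. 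The implicit constants come only from Lemma \ref{lem:250104-11}, so they do not depend on $x$, $r$, or $R$.

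\emph{Necessity.} Suppose $\frac np-\frac{n-d-1}{q}<0$. Take $x\in\Gamma$, so $\delta(x)=0$ and $\Phi_x(r)\sim r^{\frac np-\frac{n-d-1}{q}}$ for every $r$. Fix $R=1$ and let $r\to0^+$. Then $\Phi_x(r)\to\infty$ while $\Phi_x(1)$ stays bounded, which contradicts the $\mathcal{B}_{p,q}(\mathbb{R}^n)$ inequality. Since $\Gamma\ne\emptyset$, this proves necessity.

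The only real work is the two-sided estimate for $w(B(x,r))$, including the boundary case $r\sim\delta(x)/2$ where the two regimes overlap. Lemma \ref{lem:250104-11}(i)--(ii) supplies both bounds up to constants, and the two expressions are comparable at $r=\delta(x)/2$, so the regimes glue with uniform constants. After that, the argument is just the exponent comparison above.
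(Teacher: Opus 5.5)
Your proof is correct and is essentially the paper's argument. For sufficiency, the paper splits into the three regimes $\delta(x)<2r$, $\delta(x)>2R$, and $2r\le\delta(x)\le 2R$ using Lemma~\ref{lem:250104-11}(i)--(ii); your unified formula $w(B(x,r))\sim r^n[\max\{r,\delta(x)\}]^{d+1-n}$ and your split at $\delta(x)$ instead of at $2r$, $2R$ only repackage those cases. Your necessity argument is the same as the paper's: take $x\in\Gamma$ and compare $r^{\frac np-\frac{n-d-1}{q}}$ with $R^{\frac np-\frac{n-d-1}{q}}$.
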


\begin{proof}
We first prove the sufficiency. Let $x\in\mathbb{R}^n$ and $0<r\leq R<\infty$.
We consider the following three cases for $x$, $r$, and $R$.

\emph{Case (1)} $\delta(x)<2r$. In this case, from Lemma
\ref{lem:250104-11}(ii) and \eqref{eq:cond-pq}, we infer that
\begin{align*}
|B(x,r)|^{\frac1p-\frac1q}[w(B(x,r))]^\frac1q
\sim r^{\frac{n}{p}-\frac{n-d-1}{q}}\leq R^{\frac{n}{p}-\frac{n-d-1}{q}}
\sim|B(x,R)|^{\frac1p-\frac1q}[w(B(x,R))]^\frac1q.
\end{align*}

\emph{Case (2)} $\delta(x)>2R$. In this case, by Lemma \ref{lem:250104-11}(i), we obtain
\begin{align*}
|B(x,r)|^{\frac1p-\frac1q}[w(B(x,r))]^\frac1q
&\sim|B(x,r)|^\frac1p[w(x)]^\frac1q\\
&\lesssim|B(x,R)|^\frac1p[w(x)]^\frac1q\sim
|B(x,R)|^{\frac1p-\frac1q}[w(B(x,R))]^\frac1q.
\end{align*}

\emph{Case (3)} $\delta(x)\geq2r$ and $\delta(x)\leq2R$. In this case, using (i) and (ii) of Lemma \ref{lem:250104-11} and
\eqref{eq:cond-pq}, we conclude that
\begin{align*}
|B(x,r)|^{\frac1p-\frac1q}[w(B(x,r))]^\frac1q
&\sim|B(x,r)|^\frac1p[w(x)]^\frac1q\lesssim r^{\frac{n}{p}-\frac{n-d-1}{q}}\\
&\leq R^{\frac{n}{p}-\frac{n-d-1}{q}}\sim
|B(x,R)|^{\frac1p-\frac1q}[w(B(x,R))]^\frac1q.
\end{align*}
This finishes the proof of the sufficiency.

Next, we show the necessity. Let $x\in\Gamma$. Then, for any $0<r\leq R<\infty$,
\begin{align*}
r^{\frac{n}{p}-\frac{n-d-1}{q}}\sim|B(x,r)|^{\frac1p-\frac1q}[w(B(x,r))]^\frac1q
\lesssim|B(x,R)|^{\frac1p-\frac1q}[w(B(x,R))]^\frac1q
\sim R^{\frac{n}{p}-\frac{n-d-1}{q}}.
\end{align*}
This further implies that \eqref{eq:cond-pq} and hence the necessity hold,
which completes the proof of Proposition \ref{prop:260117-1}.
\end{proof}

We also present a growth property of $w$ and further provide a necessary and
sufficient condition for $w$ to satisfy the integral condition introduced by
Nakamura \cite[Theorem 1.4]{Nakamura16}. This integral condition plays a key
role in studying the boundedness of Hardy--Littlewood maximal operators and
Riesz transforms on weighted Morrey spaces
(see Subsection \ref{subsection3.2}).

\begin{proposition}\label{prop:260117-2}
Let $w$ be as in \eqref{dw}. Then
there exists a positive constant $C$ such that, for any $x\in\mathbb{R}^n$,
$r\in(0,\infty)$, and $k\in\mathbb{Z}_+$,
$w(B(x,r))\leq C2^{-k(d+1)}w(B(x,2^kr))$. In particular, let
$1<q\leq p<\infty$. Then, for any ball $B\subset\mathbb{R}^n$,
\begin{align}\label{20260511.1639}
\int_1^\infty\frac{1}{|sB|^{\frac1p-\frac1q}[w(sB)]^\frac1q}\frac{ds}{s}
\lesssim\frac{1}{|B|^{\frac1p-\frac1q}[w(B)]^\frac1q}
\end{align}
holds with the implicit positive constant independent of $B$ if and only if
\begin{align}\label{20260623.2235}
\frac{n}{p}-\frac{n-d-1}{q}>0.
\end{align}
\end{proposition}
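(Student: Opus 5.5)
The plan is to derive everything from the two-sided ball asymptotics in Lemma \ref{lem:250104-11}(i) and (ii). For a ball $B(x,r)$ these give
\begin{align*}
w(B(x,r))\sim r^n\min\{w(x),r^{d+1-n}\}\sim r^n[\max\{\delta(x),r\}]^{d+1-n}.
\end{align*}
This holds because, when $\delta(x)\ge2r$, we have $w(x)=\delta(x)^{d+1-n}$, and, when $\delta(x)\le2r$, the average is $\sim r^{d+1-n}$. Write $\Phi(x,r):=r^n[\max\{\delta(x),r\}]^{d+1-n}$.

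\textbf{Growth estimate.} Set $R:=2^kr$ and compare $\Phi(x,r)$ with $\Phi(x,R)$. There are three cases.
\begin{enumerate}
\item[(a)] If $\delta(x)\le r$, the ratio is $(r/R)^{d+1}=2^{-k(d+1)}$.
\item[(b)] If $\delta(x)\ge R$, the ratio is $(r/R)^n\le2^{-k(d+1)}$, since $n\ge d+1$.
\item[(c)] If $r<\delta(x)<R$, the ratio is $(r/R)^n(\delta(x)/R)^{d+1-n}$. Because $d+1-n<0$ and $\delta(x)>r$, this is at most $(r/R)^{n}(r/R)^{d+1-n}=(r/R)^{d+1}$.
\end{enumerate}
All implicit constants come only from Lemma \ref{lem:250104-11}, so the first claim follows.

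\textbf{Sufficiency of \eqref{20260623.2235}.} Fix $B=B(x,r)$ and set $\alpha:=\frac1q-\frac1p\ge0$. First discretize the integral: by monotonicity of $|sB|$ and $w(sB)$ in $s$, together with the doubling property of $w$ from Remark \ref{rmk:20260508.2141}(ii), it is comparable to
\begin{align*}
\sum_{k\ge0}|2^kB|^{\alpha}[w(2^kB)]^{-1/q}.
\end{align*}
Next insert $\Phi$. The $k$-th term is comparable to $(2^kr)^{n\alpha}\Phi(x,2^kr)^{-1/q}$. When $2^kr\ge\delta(x)$ this equals $(2^kr)^{-(\frac np-\frac{n-d-1}{q})}$. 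When $2^kr<\delta(x)$ it equals $(2^kr)^{-n/p}\delta(x)^{(n-d-1)/q}$. Both are geometric sequences with negative exponent: the first because of \eqref{20260623.2235}, the second because $n/p>0$.

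Now split the sum at $k_0$, the first index with $2^{k_0}r\ge\delta(x)$. The part $k<k_0$ is dominated by its $k=0$ term. The part $k\ge k_0$ is dominated by its $k=k_0$ term, and that term is comparable to the $k=k_0-1$ term of the first geometric sum, hence is also dominated by the $k=0$ term. The $k=0$ term is $\sim|B|^{\alpha}[w(B)]^{-1/q}$, which proves \eqref{20260511.1639}.

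\textbf{Necessity.} Take $x\in\Gamma$. Then every term equals $(2^kr)^{-(\frac np-\frac{n-d-1}{q})}$. If $\frac np-\frac{n-d-1}{q}\le0$, the left-hand side of \eqref{20260511.1639} diverges, being at least $\int_1^\infty s^{-\gamma}\frac{ds}{s}$ with $\gamma\le0$, while the right-hand side is finite. This is a contradiction. I expect the only mild obstacle to be the bookkeeping in the mixed regime $r<\delta(x)<R$: checking that the two geometric pieces glue together at $k_0$ with uniform constants, which is exactly where the sign of $d+1-n$ and strict positivity are used.
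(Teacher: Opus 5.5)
Your proof is correct, but it is organized differently from the paper's.

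\textbf{Growth estimate.} The paper simply cites \cite[(2.8)]{DFM21}. You instead derive it in a self-contained way from Lemma \ref{lem:250104-11}(i)--(ii), via the closed form $w(B(x,r))\sim r^n[\max\{\delta(x),r\}]^{d+1-n}$ and a three-case comparison. This is valid; in case (c) the sign of $d+1-n$ does the work.

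\textbf{Sufficiency.} Here the paper uses exactly that growth estimate and nothing else. Set $\gamma:=\frac np-\frac{n-d-1}{q}$. On $[2^k,2^{k+1}]$ the integrand is at most $|2^{k+1}B|^{\frac1q-\frac1p}[w(2^kB)]^{-\frac1q}$. The growth estimate bounds this by a constant times $2^{-k\gamma}|B|^{\frac1q-\frac1p}[w(B)]^{-\frac1q}$. One geometric series then finishes the proof, with no case distinction.

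Your split at $k_0$ and the gluing of the two geometric pieces are correct, but redundant. Your own first part already gives the uniform bound $[w(2^kB)]^{-1/q}\lesssim 2^{-k(d+1)/q}[w(B)]^{-1/q}$, so the ``mild obstacle'' you anticipate disappears.

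What your explicit two-regime computation buys is insight. It shows that for $2^kr<\delta(x)$ the terms decay at the faster rate $2^{-kn/p}$. So the slower rate $2^{-k\gamma}$, and hence the condition $\gamma>0$, is dictated by balls centered near $\Gamma$. That is exactly why testing with $x\in\Gamma$ gives sharpness.

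\textbf{Necessity.} Your argument coincides with the paper's.
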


\begin{proof}
Let $x\in\mathbb{R}^n$, $r\in(0,\infty)$, and $k\in\mathbb{Z}_+$. From
\cite[(2.8)]{DFM21}, we infer that, for any $k\in\mathbb{Z}_+$,
\begin{align}\label{20260511.1650}
w(B(x,r))\lesssim2^{-k(d+1)}w\left(B\left(x,2^kr\right)\right).
\end{align}

Now, let $1<q\leq p<\infty$ satisfy \eqref{20260623.2235}. We show
that \eqref{20260511.1639} holds. Indeed, from \eqref{20260511.1650}
and \eqref{20260623.2235}, we deduce that, for any ball $B\subset\mathbb{R}^n$,
\begin{align*}
\int_1^\infty\frac{1}{|sB|^{\frac1p-\frac1q}[w(sB)]^\frac1q}\frac{ds}{s}
&=\sum_{k\in\mathbb{Z}_+}\int_{2^k}^{2^{k+1}}
\frac{1}{|sB|^{\frac1p-\frac1q}[w(sB)]^\frac1q}\frac{ds}{s}\\
&\leq\sum_{k\in\mathbb{Z}_+}\frac{1}
{|2^{k+1}B|^{\frac1p-\frac1q}[w(2^kB)]^\frac1q}\\
&=\frac{1}{|B|^{\frac1p-\frac1q}}\sum_{k\in\mathbb{Z}_+}
2^{(k+1)(\frac{n}{q}-\frac{n}{p})}\frac{1}{[w(2^kB)]^\frac1q}\\
&\lesssim\frac{1}{|B|^{\frac1p-\frac1q}[w(B)]^\frac1q}\sum_{k\in\mathbb{Z}_+}
2^{-k(\frac{n}{p}-\frac{n-d-1}{q})}
\sim\frac{1}{|B|^{\frac1p-\frac1q}[w(B)]^\frac1q}
\end{align*}
and hence \eqref{20260511.1639} holds.

Conversely, let $1<q\leq p<\infty$ satisfy $\frac{n}{p}-\frac{n-d-1}{q}\leq0$.
Fix $x\in\Gamma$ and $r\in(0,\infty)$.
Using Lemma \ref{lem:250104-11}(ii), we obtain
\begin{align*}
\int_1^\infty\frac{1}{|B(x,sr)|^{\frac1p-\frac1q}[w(B(x,sr))]
^\frac1q}\frac{ds}{s}\sim r^{-(\frac{n}{p}-\frac{n-d-1}{q})}\int_1^\infty
s^{-(\frac{n}{p}-\frac{n-d-1}{q})-1}\,ds=\infty.
\end{align*}
Therefore, if \eqref{20260511.1639} holds for any ball $B\subset\mathbb{R}^n$,
then \eqref{20260623.2235} holds.
This finishes the proof of Proposition \ref{prop:260117-2}.
\end{proof}

\subsection{Poincar\'e Inequalities and Trace Operators}
\label{subsection2.2}

In this subsection, we establish the Poincar\'e inequality for suitable
functions and derive the existence and differentiation properties of the
boundary trace for them. Before stating the Poincar\'e inequality, we make the
following observation, which indicates that a random line segment in
$\mathbb{R}^n$ intersects $\Gamma$ with probability zero. In what follows, for
any $x,y\in\mathbb{R}^n$, denote by $[x,y]$ the closed \emph{segment}
joining $x$ and $y$.

\begin{lemma}\label{lem:2.10}
Let $n\ge2$, $d\in(0,n-1)$, and $\Gamma\subset\mathbb{R}^n$ be a $d$-set. Let
\begin{align*}
G:=\left\{(x,y) \in {\mathbb R}^n \times {\mathbb R}^n: [x,y]
\cap \Gamma \ne \emptyset\right\}.
\end{align*}
Then ${\mathcal H}^{2n}(G)=0$.
\end{lemma}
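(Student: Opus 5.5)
Since $\mathcal H^{2n}$ on $\mathbb R^n\times\mathbb R^n=\mathbb R^{2n}$ coincides with Lebesgue measure up to a constant, it suffices to show that $G$ has $2n$-dimensional Lebesgue measure zero. We first note that $G$ is closed: $\Gamma$ is closed, and if $[x_k,y_k]\ni z_k\in\Gamma$ with $(x_k,y_k)\to(x,y)$, then a subsequence of $z_k$ converges to some $z\in[x,y]\cap\Gamma$. So $G$ is measurable. By countable subadditivity we may localize: it suffices to show that $G\cap(B(0,R)\times B(0,R))$ is null for each $R$. Any segment meeting $\Gamma$ inside $B(0,R)$ meets $\Gamma(0,R)$, which is a bounded $d$-set piece.

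The plan is to fix $x$ and apply Tonelli's theorem. For fixed $x\in\mathbb R^n\setminus\Gamma$, the slice $G_x:=\{y:[x,y]\cap\Gamma\neq\emptyset\}$ is contained in the cone
\begin{align*}
K_x:=\left\{x+t(\xi-x):\ \xi\in\Gamma,\ t\ge1\right\},
\end{align*}
that is, the union of rays from $x$ through points of $\Gamma$. Since $\Gamma$ is a $d$-set, $\mathcal H^d(\Gamma)=0$ is false in general, but $\mathcal H^{d+1}$-type bounds hold. Consider the radial projection $\pi_x(\xi):=\frac{\xi-x}{|\xi-x|}\in S^{n-1}$, which is locally Lipschitz on $\mathbb R^n\setminus\{x\}$. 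Cover $\Gamma\cap\{\xi:2^{-j-1}\le|\xi-x|\le 2^{-j}\}$ for $j\in\mathbb Z$ (only finitely many dyadic shells are relevant after restricting to $B(0,R)$, except near $x$, which we avoid since $x\notin\Gamma$ so $\delta(x)>0$). On each shell, $\pi_x$ is Lipschitz, so $\mathcal H^{d}(\pi_x(\Gamma\cap\text{shell}))<\infty$, and since $d<n-1$, $\mathcal H^{n-1}(\pi_x(\Gamma\cap B(0,R)))=0$. Then $K_x\cap B(0,R)$ is contained in the cone over a null subset of the sphere; by polar coordinates centered at $x$, $|K_x\cap B(0,R)|=0$. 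Since $\Gamma$ itself has Lebesgue measure zero ($d<n$), almost every $x$ lies outside $\Gamma$. Tonelli then gives
\begin{align*}
|G\cap(B(0,R)\times B(0,R))|=\int_{B(0,R)}|G_x\cap B(0,R)|\,dx=0,
\end{align*}
and letting $R\to\infty$ along integers finishes the proof.

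The only step requiring care is the claim $\mathcal H^{n-1}(\pi_x(\Gamma\cap B(0,R)))=0$. My approach is to use the upper Ahlfors bound: cover $\Gamma$ intersected with a shell at distance $\sim\rho\ge\delta(x)$ from $x$ by $\lesssim(\rho/\varepsilon)^d$ balls of radius $\varepsilon$, as in the covering argument of Remark \ref{rmk:20260508.2141}(i). These map to spherical caps of radius $\lesssim\varepsilon/\rho$. The total $(n-1)$-content is then $\lesssim(\rho/\varepsilon)^d(\varepsilon/\rho)^{n-1}\to0$ as $\varepsilon\to0$, because $d<n-1$. Summing over the finitely many shells between radii $\delta(x)$ and $2R+|x|$ gives the claim. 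This uses only the upper $d$-set bound and $d<n-1$, which is exactly where the co-dimension hypothesis enters.
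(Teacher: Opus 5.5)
Your argument is correct, but it follows a different route from the paper's.

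\textbf{The two routes.} The paper parametrizes the bad pairs with $x,y\notin\Gamma$, $x\neq y$, by the locally Lipschitz surjection $\sigma(z,t,s,\rho):=(z-t\rho,z+s\rho)$ defined on $\Gamma\times(0,\infty)\times(0,\infty)\times\mathbb{S}^{n-1}$. It then uses two facts: locally Lipschitz maps do not increase Hausdorff dimension, and a product formula for Hausdorff dimension. Together these give $\dim_{\mathcal H}\le d+n+1<2n$.

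You instead slice with Tonelli. For each fixed $x\notin\Gamma$, the section $G_x\cap B(\mathbf{0},R)$ lies in the cone from $x$ over $\pi_x(\Gamma\cap B(\mathbf{0},R))$. This projection is $\mathcal H^{n-1}$-null for three reasons:
\begin{itemize}
\item $\pi_x$ is Lipschitz on $\{\xi:|\xi-x|\ge\delta(x)\}$, with constant $\lesssim 1/\delta(x)$;
\item $\mathcal H^d(\Gamma\cap B(\mathbf{0},R))<\infty$;
\item $d<n-1$.
\end{itemize}
Polar coordinates centered at $x$, together with $|\Gamma|=0$, then finish the proof. Closedness of $G$ handles measurability cleanly.

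\textbf{What each approach buys.} The codimension hypothesis enters at the same point in both proofs, namely $d+1<n$. The paper's route gives the stronger structural statement that the exceptional set has Hausdorff dimension at most $d+n+1$. The cost is reliance on the product-dimension inequality and an exhaustion by compact sets. Your route is more elementary: it needs only Lipschitz invariance of finiteness of $\mathcal H^d$, polar coordinates, and Fubini. It also shows that \emph{every} section $G_x$ with $x\notin\Gamma$ is Lebesgue-null, not just almost every one.

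\textbf{Two small corrections.}
\begin{itemize}
\item The sentence claiming that $\mathcal H^d(\Gamma)=0$ ``is false in general, but $\mathcal H^{d+1}$-type bounds hold'' is garbled and should simply be deleted.
\item Your closing claim that the covering version ``uses only the upper $d$-set bound'' is inaccurate for that version. Bounding the number of $\varepsilon$-balls needed to cover $\Gamma\cap B(x,2\rho)$ by $(\rho/\varepsilon)^d$ requires the lower Ahlfors bound, via a packing argument; the upper bound alone does not control covering numbers. The Lipschitz-image argument is the one that needs only the upper bound, and it already suffices. Equivalently, take near-optimal $\eta$-covers $\{E_i\}$ from the definition of $\mathcal H^d$ and note that $\sum_i(\operatorname{diam}\pi_x(E_i))^{n-1}\lesssim \eta^{n-1-d}\sum_i(\operatorname{diam}E_i)^d\to0$. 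Neither argument needs the dyadic shells.
\end{itemize}
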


\begin{proof}
Since $\mathcal{H}^n(\Gamma)=0$, it suffices to show that
$\mathcal{H}^{2n}(F)=0$, where
\begin{align*}
F:=\left\{(x,y)\in\mathbb{R}^n\times\mathbb{R}^n:x,y\in\mathbb{R}^n
\setminus\Gamma,\ x \neq y,\mbox{\ and\ }[x,y]\cap\Gamma\neq\emptyset\right\}.
\end{align*}
To do this, let us construct a surjection $\sigma$ as follows:
\begin{align*}
\sigma:\left\{\begin{array}{rll}
D(\sigma)&\longrightarrow F,\\
(z,t,s,\rho)&\longmapsto(z-t\rho,z+s\rho),
\end{array}\right.
\end{align*}
where $D(\sigma):=\Gamma\times(0,\infty)\times(0,\infty)\times\mathbb{S}^{n-1}$
and $\mathbb{S}^{n-1}$ denotes the unit sphere in $\mathbb{R}^n$.
Then $\sigma$ is locally Lipschitz. In addition, there exists an increasing
sequence of compact subsets $\{D_k\}_{k\in\mathbb{N}}$ in $F$ such that
$D(\sigma)=\bigcup_{k\in\mathbb{N}}D_k$. Combining these, \cite[Proposition
2.3]{F2014} with $\alpha:=1$ therein, the countable stability of the Hausdorff
dimension $\dim_{\mathcal{H}}$, and \cite[Corollary 7.4]{F2014}, we conclude that
\begin{align*}
\dim_{\mathcal{H}}(F)&=\dim_{\mathcal{H}}\left(\sigma
\left(\bigcup_{k\in\mathbb{N}}D_k\right)\right)
=\dim_{\mathcal{H}}\left(\bigcup_{k\in\mathbb{N}}\sigma(D_k)\right)\\
&=\sup_{k\in\mathbb{N}}\dim_{\mathcal{H}}(\sigma(D_k))
\leq\sup_{k\in\mathbb{N}}\dim_{\mathcal{H}}(D_k)\\
&=\dim_{\mathcal{H}}(D(\sigma))
=d+n+1<2n.
\end{align*}
Thus, $\mathcal{H}^{2n}(F)=0$. This finishes the proof of Lemma \ref{lem:2.10}.
\end{proof}

Now, we present the Poincar\'e inequality, which is exactly
\cite[(3.7) and (3.8)]{DFM21}. Applying Lemma \ref{lem:2.10}, we provide
an alternative proof and extend its validity from the weighted Sobolev space
$\dot{W}^{1,2}(\Omega,w)$ in Definition \ref{def5.1} below to the \emph{local
Sobolev space $$W^{1,1}_{\mathrm{loc}}(\Omega):=
\left\{f\in L^1_{\mathrm{loc}}(\Omega):\nabla f\in
L^1_{\mathrm{loc}}(\Omega)\right\}.$$}
In what follows, $r\to0^+$ means that $r\in(0,\infty)$ and $r\to0$.

\begin{lemma}\label{DFM21}
Let $B:=B(x,r)\subset\mathbb{R}^n$ be a ball and $f\in
W^{1,1}_{\mathrm{loc}}(\Omega)$.
\begin{enumerate}
\item[{\rm(i)}] It holds that
\begin{equation}\label{eq:250906-12}
\fint_B\fint_B |f(y)-f(z)|\,dydz\lesssim r\fint_B |\nabla f(\xi)|\,d\xi
\end{equation}
with the implicit positive constant independent of $f$ and $B$.
\item[{\rm(ii)}]For almost every $z\in B$,
\begin{equation}\label{eq:250906-11}
\fint_B |f(y)-f(z)|\,dy\lesssim\int_B \frac{|\nabla f(\xi)|}{|z-\xi|^{n-1}}\,d\xi
\end{equation}
with the implicit positive constant independent of $f$, $B$, and $z$.
\end{enumerate}
\end{lemma}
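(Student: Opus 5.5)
The plan is to use the classical segment argument: for $f\in C^1$ one writes $f(y)-f(z)$ as an integral of $\nabla f$ along $[z,y]$. The only new point is that $f$ is merely $W^{1,1}_{\mathrm{loc}}(\Omega)$, so $\nabla f$ may be uncontrolled near $\Gamma$. Lemma \ref{lem:2.10} is what makes this harmless: almost every pair $(y,z)$ has $[y,z]\subset\Omega$.

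Concretely, first I will show that for $\mathcal{H}^{2n}$-almost every $(y,z)\in B\times B$ we have
\begin{align*}
|f(y)-f(z)|\le\int_0^1|\nabla f(z+t(y-z))||y-z|\,dt.
\end{align*}
Take mollifications $f_\varepsilon:=f*\varphi_\varepsilon$. These are smooth on $\Omega_\varepsilon:=\{\delta>\varepsilon\}$ and satisfy $f_\varepsilon\to f$ a.e. and $\nabla f_\varepsilon\to\nabla f$ in $L^1_{\mathrm{loc}}(\Omega)$. For a pair with $[y,z]\cap\Gamma=\emptyset$, the segment is compact and lies in $\Omega$, so it sits inside $\Omega_{2\varepsilon_0}$ for some $\varepsilon_0>0$. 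The fundamental theorem of calculus then holds for $f_\varepsilon$ with $\varepsilon<\varepsilon_0$.

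To pass to the limit without working pair by pair, I integrate. Fix a compact $K\subset\Omega$ and consider the set $G_K$ of pairs whose segment lies in $K$. On $G_K$, the quantity
\begin{align*}
\iint_{G_K}\int_0^1|\nabla f_\varepsilon-\nabla f|(z+t(y-z))|y-z|\,dt\,dy\,dz
\end{align*}
tends to zero. This follows from the change of variables $\xi=z+t(y-z)$, which gives the standard kernel bound $\int\!\!\int\cdots\lesssim r^{n+1}\int_{K}|\nabla f_\varepsilon-\nabla f|$. Hence the inequality holds a.e. on $G_K$. Exhausting $\Omega$ by countably many compacts $K$ and invoking Lemma \ref{lem:2.10} gives it a.e. on $B\times B$.

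For (ii), I fix $z$ and average the displayed inequality in $y\in B$, using polar coordinates centered at $z$. The standard computation,
\begin{align*}
\int_B\int_0^1|\nabla f(z+t(y-z))||y-z|\,dt\,dy\lesssim r^n\int_B\frac{|\nabla f(\xi)|}{|z-\xi|^{n-1}}\,d\xi,
\end{align*}
follows from substituting $\xi=z+t(y-z)$. Here $dy=t^{-n}d\xi$, $|y-z|=|\xi-z|/t$, and the constraint $|\xi-z|<2rt$ restricts $t$, so integrating $t^{-n-1}$ over $t>|\xi-z|/(2r)$ gives $\lesssim r^n|\xi-z|^{-n}$. Multiplying by $|\xi-z|$ and dividing by $|B|$ yields \eqref{eq:250906-11}. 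Fubini shows this holds for a.e. $z$. Convexity of $B$ guarantees $\xi\in B$.

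For (i), I average (ii) over $z\in B$ and use
\begin{align*}
\fint_B|z-\xi|^{1-n}\,dz\lesssim r^{1-n}\cdot r^{-n}\cdot r^{n}\cdot r=r\cdot r^{-n+1-1}\cdots,
\end{align*}
that is, $\fint_B|z-\xi|^{1-n}dz\lesssim r^{1-n}$. This gives $r^{1-n}\int_B|\nabla f|\sim r\fint_B|\nabla f|$.

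The main obstacle is the approximation step, namely justifying the line integral a.e. when $\nabla f$ is only locally integrable in $\Omega$. The plan is to combine the restriction to $G_K$ with Lemma \ref{lem:2.10}; note also that if $\int_B|\nabla f|=\infty$ the inequalities are trivial.
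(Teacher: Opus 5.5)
Your argument is correct, but it runs in the opposite order from the paper's.

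\textbf{The paper's route.} It proves (i) first. It applies the mollified fundamental theorem of calculus on the set $E_\epsilon$ of pairs in $B(x,(1-\epsilon)r)$ whose segment stays at distance greater than $\epsilon$ from $\Gamma$, and passes to the limit in the mollification. The same substitution $\xi=z+t(y-z)$ then bounds everything by $r^{n+1}\int_B|\nabla f|$, and $\epsilon\to0$ is handled via Lemma \ref{lem:2.10}. Afterwards it derives (ii) from (i) by chaining: it telescopes $|f_{B_j}-f_{B_{j+1}}|$ along balls $B_j\subset B$ shrinking to a Lebesgue point $z$, and uses $\sum_j(2^{-j}r)^{1-n}\mathbf{1}_{B_j}(\xi)\lesssim|z-\xi|^{1-n}$.

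\textbf{Your route.} You first establish the pointwise segment inequality a.e.\ on $B\times B$. Your exhaustion by the closed sets $G_K$ is a clean and rigorous way to do the limit passage, since the error $\int_0^1|\nabla f_\varepsilon-\nabla f|(z+t(y-z))|y-z|\,dt$ is controlled in $L^1$ by the same kernel bound. You then get (ii) directly from the Gilbarg--Trudinger potential computation, and (i) by averaging (ii) in $z$.

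\textbf{What each buys.} Your route extracts both estimates from a single computation and needs neither Lebesgue points nor the nested-ball geometry. The paper's route uses the segment structure only once, in (i). It also sets up the telescoping mechanism \emph{Poincar\'e on balls implies a pointwise Riesz-potential bound}, which is reused later in Proposition \ref{prop:260126-1} and Lemmas \ref{lem:260126-11} and \ref{lem:local-continuity}.

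\textbf{One cosmetic fix.} Your displayed estimate for $\fint_B|z-\xi|^{1-n}\,dz$ is garbled. For $\xi\in B$ it should simply read $\fint_B|z-\xi|^{1-n}\,dz\le|B|^{-1}\int_{B(\xi,2r)}|z-\xi|^{1-n}\,dz\sim r^{-n}\cdot r=r^{1-n}$.
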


\begin{proof}
We first prove (i). Fix $\epsilon\in(0,1)$ and define
\begin{align*}
E_\epsilon:=\{(y,z)\in B(x,(1-\epsilon)r)\times B(x,(1-\epsilon)r):
\operatorname{dist}([y,z],\Gamma)>\epsilon\}\subset\mathbb{R}^{2n}.
\end{align*}
From Lemma \ref{lem:2.10}, we infer that
\begin{align}\label{20260414.1427}
\mathcal{H}^{2n}\left(B(x,r)\times B(x,r)\setminus
\bigcup_{\epsilon\in(0,1)}E_\epsilon\right)
=\mathcal{H}^{2n}(\{(y,z)\in B(x,r)\times B(x,r):
[y,z]\cap\Gamma\neq\emptyset\})=0.
\end{align}
Let $\eta\in C^\infty_{\mathrm{c}}(B(\bf{0},\epsilon))$ satisfy
$0\leq\eta\leq1$ and $\|\eta\|_{L^1(\mathbb{R}^n)}=1$.
For any given $\delta\in(0,\epsilon)$, define
$\eta_\delta(\cdot):=\delta^{-n}\eta(\frac{\cdot}{\delta})$ and
$f_\delta:=f*\eta_\delta$. Note that, for any $y\in F_\delta:=\{x\in\Omega:
\operatorname{dist}(x,\Gamma)>\delta\}$ and $z\in B(\mathbf{0},\delta\epsilon)$,
$\operatorname{dist}(y-z,\Gamma)\geq\operatorname{dist}(y,\Gamma)-|z|>\delta(1-\epsilon)>0$. Hence
$f_\delta$ is well defined on $F_\delta$. Moreover, $f_\delta\in
C^\infty(F_\delta)$.

For any $(y,z)\in E_\epsilon$, the line segment $[y,z]\subset F_\varepsilon$.
Therefore, $f_\delta$ satisfies the fundamental theorem of calculus:
\begin{align*}
f_\delta(y)-f_\delta(z)=\int_0^1(y-z)\cdot\nabla f_\delta(z+t(y-z))\,dt.
\end{align*}
This, together with the Cauchy--Schwarz inequality, further implies that
\begin{align}\label{20260413.2250}
\iint_{E_\epsilon}|f_\delta(y)-f_\delta(z)|\,dy\,dz
\leq\iint_{E_\epsilon}\int_0^1|y-z|\,|\nabla f_\delta(z+t(y-z))|\,dt\,dy\,dz.
\end{align}

For any $z\in\mathbb{R}^n$, let
\begin{align*}
G(z):=\begin{cases}
\nabla f(z) &\mathrm{if}\ z\in\Omega,\\
0 &\mathrm{if}\ z\in\Gamma
\end{cases}
\mbox{\ \ and\ \ }G_\delta:=G*\eta_\delta.
\end{align*}
Then, for any $(y,z)\in E_\epsilon$, $\nabla f_\delta=\nabla
f*\eta_\delta=G_\delta$ on $[y,z]$. By this and \eqref{20260413.2250}, we obtain
\begin{align*}
\iint_{E_\epsilon}|f_\delta(y)-f_\delta(z)|\,dy\,dz
\leq\iint_{E_\epsilon}\int_0^1|y-z|\,|G_\delta(z+t(y-z))|\,dt\,dy\,dz.
\end{align*}
Letting $\delta\to0^+$ and using Fatou's lemma and the Lebesgue
dominated convergence theorem, we conclude that
\begin{align}\label{eq:250906-12x}
\iint_{E_\epsilon}|f(y)-f(z)|\,dy\,dz
\leq\iint_{B(x,(1-\epsilon)r)\times B(x,(1-\epsilon)r)}
\int_0^1 |y-z|\,|G(z+t(y-z))|\,dt\,dy\,dz.
\end{align}

Now, we estimate the right-hand side of \eqref{eq:250906-12x}. For any
$t\in[0,1]$, $y\in\mathbb{R}^n$, and $z\in B(x,(1-\epsilon)r)$, let
$\xi:=z+t(y-z)$. Then $y\in B(x,(1-\epsilon)r)$ if and only if
$\xi\in B(z+t(x-z),t(1-\epsilon)r)$. From this change of variable and Tonelli's
theorem, we deduce that
\begin{align*}
&\iint_{B(x,(1-\epsilon)r)\times B(x,(1-\epsilon)r)}
\int_0^1 |y-z|\,|G(z+t(y-z))|\,dt\,dy\,dz\\
&\quad=\int_{B(x,(1-\epsilon)r)}\int_0^1
\int_{B(z+t(x-z),t(1-\epsilon)r)}\frac{|\xi-z|}{t^{n+1}}|G(\xi)|\,d\xi\,dt\,dz\\
&\quad\leq\int_{B(x,(1-\epsilon)r)}\int_{B(x,(1-\epsilon)r)}
\int_{\frac{|\xi-z|}{2(1-\epsilon)r}}^1
\frac{|\xi-z|}{t^{n+1}}|G(\xi)|\,dt\,d\xi\,dz\\
&\quad\lesssim(1-\epsilon)^nr^n\int_{B(x,(1-\epsilon)r)}\int_{B(x,(1-\epsilon)r)}
\frac{|G(\xi)|}{|z-\xi|^{n-1}}\,dz\,d\xi\\
&\quad\leq(1-\epsilon)^nr^n\int_{B(x,(1-\epsilon)r)}|G(\xi)|
\int_{B(\xi,2(1-\epsilon)r)}\frac{1}{|z-\xi|^{n-1}}\,dz\,d\xi\\
&\quad\lesssim(1-\epsilon)^{n+1}r^{n+1}
\int_{B(x,(1-\epsilon)r)}|\nabla f(\xi)|\,d\xi,
\end{align*}
which further implies that
\begin{align*}
&\fint_{B(x,(1-\epsilon)r)}\fint_{B(x,(1-\epsilon)r)}
\int_0^1 |y-z|\,|G(z+t(y-z))|\,dt\,dy\,dz\\
&\quad\lesssim(1-\epsilon)r\fint_{B(x,(1-\epsilon)r)}|\nabla f(\xi)|\,d\xi.
\end{align*}
Combining this, \eqref{eq:250906-12x}, and \eqref{20260414.1427}, and letting
$\epsilon\to0^+$, we find that \eqref{eq:250906-12} holds,
thereby completing the proof of (i).

Next, we show (ii). Let $z\in B$ be a Lebesgue point of $f$.
For any $j\in\mathbb{Z}_+$, define
\begin{align*}
B_j:=B\left(\left(1-2^{-j}\right)z+2^{-j}x,2^{-j}r\right).
\end{align*}
Then $B_{j+1}\subset B_j$ with equivalent measure and
$\bigcap_{j=0}^\infty B_j=\{z\}$. Using \eqref{eq:250906-12} on $B_j$,
we conclude that
\begin{align}\label{eq:250913-1}
\left|f_{B_j}-f_{B_{j+1}}\right|&\leq
\fint_{B_{j+1}}\fint_{B_j}|f(y)-f(v)|\,dy\,dv\notag\\
&\sim\fint_{B_j}\fint_{B_j}|f(y)-f(v)|\,dy\,dv
\lesssim2^{-j}r\fint_{B_j}|\nabla f(\xi)|\,d\xi.
\end{align}
We claim that
\begin{align}\label{eq:250913-2}
\sum_{j\in\mathbb{Z}_+}\left(2^{-j}r\right)^{1-n}\mathbf{1}_{B_j}(\xi)
\lesssim\frac{\boldsymbol{1}_B(\xi)}{|z-\xi|^{n-1}}.
\end{align}
Indeed, for any $\xi\in B$, let $J:=\max\{j\in\mathbb{N}:\xi\in B_j\}$. Then
$|z-\xi|\sim2^{-J}r$ and hence
\begin{align*}
\sum_{j\in\mathbb{Z}_+}\left(2^{-j}r\right)^{1-n}\mathbf{1}_{B_j}(\xi)
=\sum_{j=0}^J\left(2^{-j}r\right)^{1-n}\lesssim2^{(n-1)J}r^{1-n}\sim
2^{-j}r\fint_{B_j}|\nabla f(\xi)|\,d\xi.
\end{align*}
Thus, the above claim holds. In addition, since $z$ is a Lebesgue point of $f$,
it follows that $f(z)=\lim_{j\to\infty}f_{B_j}$. Combining this,
\eqref{eq:250906-12} on $B$, \eqref{eq:250913-1}, and \eqref{eq:250913-2},
we find that
\begin{align*}
\fint_B|f(y)-f(z)|\,dy&\leq\fint_{B_0}|f(y)-f_{B_1}|\,dy+|f_{B_1}-f(z)|\\
&\lesssim\fint_B\fint_B|f(y)-f(v)|\,dy\,dv
+\sum_{j\in\mathbb{N}}\left|f_{B_j}-f_{B_{j+1}}\right|\\
&\lesssim r^{1-n}\int_B|\nabla f(\xi)|\,d\xi
+\int_B\sum_{j\in\mathbb{Z}_+}\left(2^{-j}r\right)^{1-n}\mathbf{1}_{B_j}(\xi)
|\nabla f(\xi)|\,d\xi\\
&\lesssim\int_B \frac{|\nabla f(\xi)|}{|z-\xi|^{n-1}}\,d\xi,
\end{align*}
which further implies that \eqref{eq:250906-11} holds. This finishes the proof
of (ii) and hence Lemma \ref{DFM21}.
\end{proof}

Next, using Lemma \ref{DFM21}, we establish the existence and differentiation
properties of boundary trace for functions in $W^{1,1}_\mathrm{loc}(\Omega)$.

\begin{proposition}\label{prop:260126-1}
Let $f\in W^{1,1}_\mathrm{loc}(\Omega)$.
\begin{enumerate}
\item[\rm(i)] If $x\in\Gamma$ satisfies
\begin{align}\label{eq:250928-1}
\int_{B(x,r)}\frac{|\nabla f(\xi)|}{|x-\xi|^{n-1}}\,d\xi<\infty
\end{align}
for some $r\in(0,\infty)$, then the limit
\begin{align*}
Tf(x):=\lim_{r\to0^+}\fint_{B(x,r)}f(y)\,dy
\end{align*}
exists and is finite.

\item[\rm(ii)] If $\nabla f\in L^1_\mathrm{loc}(\Omega,w)$, then
$Tf(x)$ exists for $\mathcal{H}^d$-almost every $x\in\Gamma$.

\item[\rm(iii)] If $x\in\Gamma$ satisfies \eqref{eq:250928-1},
then the Lebesgue differentiation property
\begin{align*}
\lim_{r\to0^+}\fint_{B(x,r)}|Tf(x)-f(y)|\,dy=0
\end{align*}
holds.
\end{enumerate}
\end{proposition}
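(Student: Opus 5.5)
The plan is a dyadic Cauchy argument for the ball averages, driven by Lemma \ref{DFM21}. For (i) and (iii), fix $x\in\Gamma$ and $r_0$ such that \eqref{eq:250928-1} holds with $r=r_0$. Set $B_j:=B(x,2^{-j}r_0)$ for $j\in\mathbb{Z}_+$. Exactly as in \eqref{eq:250913-1}, Lemma \ref{DFM21}(i) gives
\begin{align*}
\left|f_{B_j}-f_{B_{j+1}}\right|\lesssim 2^{-j}r_0\fint_{B_j}|\nabla f(\xi)|\,d\xi
\sim\int_{B_j}\frac{|\nabla f(\xi)|}{(2^{-j}r_0)^{n-1}}\,d\xi .
\end{align*}
Summing over $j\ge J$ and using the bound $\sum_{j}(2^{-j}r_0)^{1-n}\mathbf 1_{B_j}(\xi)\lesssim|x-\xi|^{1-n}$ from \eqref{eq:250913-2} (now with center $x$), we get
\begin{align*}
\sum_{j\ge J}\left|f_{B_j}-f_{B_{j+1}}\right|\lesssim\int_{B_J}\frac{|\nabla f(\xi)|}{|x-\xi|^{n-1}}\,d\xi .
\end{align*}
The right-hand side tends to $0$ as $J\to\infty$ by dominated convergence, because the integrand is integrable on $B(x,r_0)$ by \eqref{eq:250928-1}. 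Hence $\{f_{B_j}\}_j$ is Cauchy, and its limit $L$ is finite.

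To pass from the dyadic radii to all $r\to0^+$ and to obtain (iii) at the same time, take $2^{-j-1}r_0\le r<2^{-j}r_0$. Then
\begin{align*}
\fint_{B(x,r)}|f-L|\lesssim\fint_{B_j}|f-f_{B_j}|+|f_{B_j}-L|.
\end{align*}
By Lemma \ref{DFM21}(i), the first term is bounded by $2^{-j}r_0\fint_{B_j}|\nabla f|\lesssim\int_{B_j}|\nabla f(\xi)||x-\xi|^{1-n}\,d\xi\to0$. The second term is at most $\sum_{k\ge j}|f_{B_k}-f_{B_{k+1}}|\to0$ by the estimate above. So $\fint_{B(x,r)}|f-L|\to0$. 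This gives (iii), and it also gives (i) with $Tf(x)=L$, since $|f_{B(x,r)}-L|\le\fint_{B(x,r)}|f-L|$. One point needs care: $f$ is only defined on $\Omega$, but $|\Gamma|=0$, and Lemma \ref{DFM21} is stated for arbitrary balls, including balls centered on $\Gamma$. So this causes no problem.

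For (ii), it suffices by (i) to show that \eqref{eq:250928-1} holds for $\mathcal H^d$-a.e.\ $x\in\Gamma$ with, say, $r=1$. Localize: fix $R>0$ and $x_0\in\Gamma$, and integrate over $\Gamma(x_0,R)$ using Tonelli:
\begin{align*}
\int_{\Gamma(x_0,R)}\int_{B(x,1)}\frac{|\nabla f(\xi)|}{|x-\xi|^{n-1}}\,d\xi\,d\mathcal H^d(x)
\le\int_{B(x_0,R+1)}|\nabla f(\xi)|\int_{\Gamma(\xi,1)}\frac{d\mathcal H^d(x)}{|x-\xi|^{n-1}}\,d\xi .
\end{align*}
The main step is the kernel estimate $\int_{\Gamma(\xi,1)}|x-\xi|^{1-n}\,d\mathcal H^d(x)\lesssim\delta(\xi)^{d+1-n}=w(\xi)$ for $\delta(\xi)\le 1$; when $\delta(\xi)>1$ the inner integral vanishes. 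To prove it, note that $|x-\xi|\ge\delta(\xi)$ for every $x\in\Gamma$. Decompose $\Gamma(\xi,1)$ into the dyadic shells $\{x\in\Gamma: 2^k\delta(\xi)\le|x-\xi|<2^{k+1}\delta(\xi)\}$. Each shell lies in $\Gamma(\xi_\xi,2^{k+2}\delta(\xi))$ for a nearest point $\xi_\xi\in\Gamma$, so Ahlfors regularity gives it $\mathcal H^d$-measure $\lesssim(2^k\delta(\xi))^d$. Summing $(2^k\delta)^{d}(2^k\delta)^{1-n}$ over $k$ gives a geometric series, convergent because $d+1-n<0$, with sum $\lesssim\delta(\xi)^{d+1-n}$. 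The double integral is therefore $\lesssim\int_{B(x_0,R+1)}|\nabla f|\,w<\infty$ by the hypothesis $\nabla f\in L^1_{\mathrm{loc}}(\Omega,w)$. Hence the inner integral is finite for $\mathcal H^d$-a.e.\ $x\in\Gamma(x_0,R)$. Letting $R\to\infty$ along a sequence and applying (i) proves (ii).

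The kernel estimate is the only genuinely new ingredient and the step I expect to need the most care. The rest repeats the telescoping argument of Lemma \ref{DFM21}(ii).
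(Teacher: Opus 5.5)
Your proposal is correct and follows essentially the same route as the paper: for (i) and (iii), the dyadic telescoping of ball averages via Lemma \ref{DFM21}(i) and the kernel bound \eqref{eq:250913-2}; for (ii), Tonelli's theorem together with the Ahlfors-regularity estimate \eqref{20260511.1829} with $a=1$. You also make explicit two points the paper leaves implicit: the passage from dyadic radii to all $r\to0^+$, and the use of a nearest point $\xi_\xi\in\Gamma$ to justify the upper Ahlfors bound on the dyadic shells.
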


We point out that, under the same notation as in Proposition
\ref{prop:260126-1}, the mapping $f\longmapsto Tf$ is called the
\emph{trace operator}.

\begin{proof}[Proof of Proposition \ref{prop:260126-1}]
We first prove (i). Let $x\in\Gamma$ satisfy \eqref{eq:250928-1}
for some $r\in(0,\infty)$. Using Lemma \ref{DFM21}(i), \eqref{eq:250913-2}
with $B_j$ therein replaced by $B(x,2^{1-j}r)$, and \eqref{eq:250928-1},
we find that
\begin{align*}
\sum_{j\in\mathbb{N}}\left|f_{B(x,2^{-j}r)}-f_{B(x,2^{1-j}r)}\right|
&\lesssim\sum_{j\in\mathbb{N}}\fint_{B(x,2^{1-j}r)}\fint_{B(x,2^{1-j}r)}
|f(y)-f(z)|\,dy\,dz\\
&\lesssim\int_{B(x,r)}\sum_{j\in\mathbb{N}}\left(2^{1-j}r\right)^{1-n}
\boldsymbol{1}_{B(x,2^{1-j}r)}(\xi)|\nabla f(\xi)|\,d\xi\\
&\lesssim\int_{B(x,r)}\frac{|\nabla f(\xi)|}{|x-\xi|^{n-1}}\,d\xi<\infty,
\end{align*}
which further implies that the sequence $\{f_{B(x,2^{-j}r)}\}_{j\in\mathbb{N}}$
is a Cauchy sequence. This guarantees the existence $Tf(x)$ and finishes the
proof of (i).

Now, we show (ii). We claim that, for any given $a\in(0,n-d)$ and for any
$x\in\Gamma$, $r\in(0,\infty)$, and $y\in B(x,2r)$,
\begin{align}\label{20260511.1829}
\int_{\Gamma(x,r)}\frac{d\mathcal{H}^d(z)}{|z-y|^{n-a}}
\lesssim[\delta(y)]^{d+a-n},
\end{align}
where the implicit positive constant is independent of $x$, $y$, and $r$.
Indeed, let $a\in(0,n-d)$, $x\in\Gamma$, $r\in(0,\infty)$, and $y\in B(x,2r)$.
Since $\delta(y)\leq|y-x|<2r$, it follows that there exists
$\xi_y\in\Gamma(x,4r)$ satisfying $\delta(y) = |y-\xi_y|$.
In addition, by the fact that $\Gamma$ is a $d$-set, we find that,
for any $k\in\mathbb{N}$, $\mathcal{H}^d(\Gamma(y,2^k\delta(y)))\sim
[2^k\delta(y)]^d$. This, together with the observation that
$\Gamma(x,r)\subset\Gamma(\xi_y,5r)$, further implies that
\begin{align*}
\int_{\Gamma(x,r)}\frac{d\mathcal{H}^d(z)}{|z-y|^{n-a}}
&\leq\int_{\Gamma(\xi_y,5r)}\frac{d\mathcal{H}^d(z)}{|z-y|^{n-a}}
\leq\sum_{k\in\mathbb{N}}\int_{\Gamma(y,2^k\delta(y)\setminus\Gamma(y,2^{k-1}
\delta(y))}\frac{d\mathcal{H}^d(z)}{|z-y|^{n-a}}\\
&\lesssim\sum_{k\in\mathbb{N}}
\frac{\mathcal{H}^d(\Gamma(y,2^k\delta(y)))}{[2^k\delta(y)]^{n-a}}
\lesssim\sum_{k\in\mathbb{N}}\left[2^k\delta(y)\right]^{d+a-n}
\sim[\delta(y)]^{d-n+a},
\end{align*}
and hence \eqref{20260511.1829}, i.e. the above claim, holds.

From Tonelli's theorem and the above claim, we infer that,
for any $x\in\Gamma$ and $r\in(0,\infty)$,
\begin{align*}
\int_{\Gamma(x,r)}\left[\int_{B(x,r)}\frac{|\nabla f(\xi)|}{|z-\xi|^{n-1}}
\,d\xi\right]\,d\mathcal{H}^{d}(z)
&=\int_{B(x,r)}\left[\int_{\Gamma(x,r)}
\frac{|\nabla f(\xi)|}{|z-\xi|^{n-1}}\,d\mathcal{H}^{d}(z)\right]d\xi\\
&\lesssim\int_{B(x,r)}|\nabla f(\xi)|w(\xi)\,d\xi<\infty.
\end{align*}
Consequently, for $\mathcal{H}^d$-almost every $x\in\Gamma$
and any $r\in(0,\infty)$,
\begin{align*}
\int_{B(x,r)}\frac{|\nabla f(\xi)|}{|x-\xi|^{n-1}}\,d\xi<\infty,
\end{align*}
which, together with (i), implies that (ii) holds.

Finally, we prove (iii). Let $x\in\Gamma$ satisfy \eqref{eq:250928-1}.
Applying Lemma \ref{DFM21}(i), we find that, for any $r\in(0,\infty)$,
\begin{align*}
\fint_{B(x,r)}|Tf(x)-f(y)|\,dy&\leq\left|Tf(x)-f_{B(x,r)}\right|+
\fint_{B(x,r)}\left|f_{B(x,r)}-f(y)\right|\,dy\\
&\leq\left|Tf(x)-f_{B(x,r)}\right|+
\fint_{B(x,r)}\fint_{B(x,r)}|f(y)-f(z)|\,dy\,dz\\
&\lesssim\left|Tf(x)-f_{B(x,r)}\right|+r\fint_{B(x,r)}|\nabla f(y)|\,dy\\
&\lesssim\left|Tf(x)-f_{B(x,r)}\right|+
\int_{B(x,r)}\frac{|\nabla f(y)|}{|x-y|^{n-1}}\,dy.
\end{align*}
Letting $r\to0^+$ and using \eqref{eq:250928-1}, (i), and the absolute
continuity of the integral, we conclude that (iii) holds. This finishes the
proof of Proposition \ref{prop:260126-1}.
\end{proof}

Let $p\in[1,\infty)$ and $w$ be as in \eqref{dw}.
The \emph{local weighted Sobolev space
$\dot{W}^{1,p}_\mathrm{loc}(\Omega,w)$} is defined by setting
\begin{align*}
\dot{W}^{1,p}_\mathrm{loc}(\Omega,w):=
\left\{f\in L^1_{\mathrm{loc}}(\Omega):\nabla f
\in L^p_\mathrm{loc}(\Omega,w)\right\}.
\end{align*}
By Proposition \ref{prop:260126-1} and an argument similar to that
used in the proof of \cite[Lemma 4.1]{DFM21}, we obtain
the following Poincar\'e inequality adapted to the boundary for functions in
$\dot{W}^{1,1}_\mathrm{loc}(\Omega,w)$; we omit the details.

\begin{proposition}\label{prop:Poincare}
Let $w$ be as in \eqref{dw} and
$f\in\dot{W}^{1,1}_\mathrm{loc}(\Omega,w)$. Then, for any $x\in\Gamma$ and
$r\in(0,\infty)$ such that $Tf=0$ on $\Gamma(x,r)$,
\begin{align*}
\fint_{B(x,r)}|f(y)|\,dy\lesssim\frac{1}{r^d}
\int_{B(x,r)}|\nabla f(y)|w(y)\,dy,
\end{align*}
where the implicit positive constant is independent of $f$, $x$, and $r$.
\end{proposition}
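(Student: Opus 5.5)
The strategy is to average the pointwise bound coming from the trace over the boundary piece $\Gamma(x,r)$. This lets the Riesz-type kernel $|z-\xi|^{1-n}$ be integrated against $\mathcal{H}^d$, which produces the weight $w$ through \eqref{20260511.1829}. This mirrors the argument of \cite[Lemma 4.1]{DFM21}.

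Fix $x\in\Gamma$ and $r\in(0,\infty)$ with $Tf=0$ on $\Gamma(x,r)$, and set $B:=B(x,r)$. We may assume $\int_{B(x,2r)}|\nabla f|w<\infty$; otherwise there is nothing to prove. By the computation in the proof of Proposition \ref{prop:260126-1}(ii), $\mathcal{H}^d$-almost every $z\in\Gamma(x,r)$ satisfies \eqref{eq:250928-1}. For such $z$, Proposition \ref{prop:260126-1}(i) and (iii) give that $Tf(z)=0$ is the limit of the averages of $f$ over $B(z,\rho)$ as $\rho\to0^+$.

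Next I would rerun the telescoping argument of Lemma \ref{DFM21}(ii), now with the point $z\in\Gamma$ in place of a Lebesgue point. Take the chain of balls $B_j$ shrinking from $B_0:=B(z,2r)\supset B$ to $z$, for instance $B_j:=B(z,2^{1-j}r)$. Lemma \ref{DFM21}(i) controls each step $|f_{B_j}-f_{B_{j+1}}|$, and \eqref{eq:250913-2} sums the steps. The limit of $f_{B_j}$ is $Tf(z)=0$. This yields
\begin{align*}
\fint_{B}|f(y)|\,dy\lesssim\fint_{B(z,2r)}|f(y)-f_{B(z,2r)}|\,dy+\left|f_{B(z,2r)}\right|
\lesssim\int_{B(x,3r)}\frac{|\nabla f(\xi)|}{|z-\xi|^{n-1}}\,d\xi .
\end{align*}

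Then I average this inequality over $z\in\Gamma(x,r)$ with respect to $\mathcal{H}^d$, using $\mathcal{H}^d(\Gamma(x,r))\sim r^d$. After Tonelli, I apply \eqref{20260511.1829} with $a=1$, which is admissible since $1<n-d$. This bounds $\int_{\Gamma(x,r)}|z-\xi|^{1-n}\,d\mathcal{H}^d(z)$ by $[\delta(\xi)]^{d+1-n}=w(\xi)$, and gives
\begin{align*}
\fint_B|f|\lesssim\frac{1}{r^d}\int_{B(x,3r)}|\nabla f(\xi)|w(\xi)\,d\xi .
\end{align*}

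The main obstacle is that the right-hand side lives on $B(x,3r)$ rather than $B(x,r)$. The enlargement is forced because the balls centred at points $z\in\Gamma(x,r)$ leave $B$, and \eqref{20260511.1829} needs $\xi$ within $2r$ of the centre. To remove it I would restrict $z$ to $\Gamma(x,r/4)$, whose $\mathcal{H}^d$-measure is still $\sim r^d$. I would also build the telescoping chain inside $B$, namely the balls $B((1-2^{-j})z+2^{-j}x,2^{-j}r)$ used in Lemma \ref{DFM21}(ii), so that they stay in $B$ and shrink to $z$. With this chain, \eqref{eq:250913-2} gives the kernel bound $\mathbf 1_B(\xi)|z-\xi|^{1-n}$.

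The remaining point is the limit of this chain. These balls are comparable to, but not centred at, $z$: each is contained in $B(z,2^{1-j}r)$ and has comparable measure. Hence Proposition \ref{prop:260126-1}(iii) still forces their averages to converge to $Tf(z)=0$. With this chain the estimate holds directly on $B(x,r)$, which is the statement.
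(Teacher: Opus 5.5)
Your proposal is correct and follows essentially the route the paper indicates, namely Proposition \ref{prop:260126-1} combined with the boundary-averaging argument of \cite[Lemma 4.1]{DFM21}: you bound $\fint_B|f|$ by the Riesz-type integral at $\mathcal{H}^d$-a.e.\ $z\in\Gamma(x,r/4)$ where $Tf(z)=0$, average over $z$, and apply \eqref{20260511.1829} with $a=1$, while your chain of non-centred balls inside $B$ (whose averages still tend to $Tf(z)$ by Proposition \ref{prop:260126-1}(iii)) correctly keeps the right-hand side on $B(x,r)$. The only slip is the preliminary reduction, which should read $\int_{B(x,r)}|\nabla f|w<\infty$ rather than $B(x,2r)$; that is all your final argument uses.
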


Finally, as a consequence of Proposition \ref{prop:Poincare} and an argument
similar to that used in the proof of \cite[Lemma 4.2]{DFM21}, we conclude the
following weighted Poincar\'e inequality; we omit the details.

\begin{proposition}\label{prop:Poincare2}
Let $p\in(1,n)$, $p^*:=\frac{np}{n-p}$, $q\in[1,p^*]$, and
$w$ be as in \eqref{dw}. Then, for any
$f\in\dot{W}^{1,\max\{p,\,q\}}_\mathrm{loc}(\Omega,w)$, $x\in\mathbb{R}^n$, and
$r\in(0,\infty)$,
\begin{align*}
&\left[\frac{1}{w(B(x,r))}\int_{B(x,r)}\left|f(y)-
f_{B(x,r)}\right|^qw(y)\,dy\right]^\frac1q\\
&\quad\sim\left[\frac{1}{w(B(x,r))}\int_{B(x,r)}\left|f(y)-\frac{1}{w(B(x,r))}
\int_{B(x,r)}f(z)w(z)\,dz\right|^qw(y)\,dy\right]^\frac1q\\
&\quad\lesssim r\left[\frac{1}{w(B(x,r))}\int_{B(x,r)}
|\nabla f(y)|^pw(y)\,dy\right]^\frac1p,
\end{align*}
where the implicit positive constants are independent of $f$, $x$, and $r$.
\end{proposition}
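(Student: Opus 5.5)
The plan has three steps: the equivalence of the two centerings, an unweighted gradient–oscillation bound on $B:=B(x,r)$ via a fractional integral, and a passage to weighted norms using a Sobolev inequality for the $A_1$-type weight $w$.

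\emph{Step 1: the equivalence.} It is the standard fact that the weighted oscillation about any constant $c$ dominates, up to a factor $2$, the weighted oscillation about the optimal weighted mean. Set $c_w:=\frac{1}{w(B)}\int_B fw$. By Minkowski,
\[
\Bigl[\frac1{w(B)}\int_B|f-c_w|^qw\Bigr]^{1/q}\le 2\Bigl[\frac1{w(B)}\int_B|f-c|^qw\Bigr]^{1/q}
\]
for every constant $c$. For the reverse direction it suffices to bound $|f_B-c_w|$ by the weighted oscillation about $c_w$:
\[
|f_B-c_w|\le \fint_B|f-c_w|\,dy\lesssim \Bigl[\frac1{w(B)}\int_B|f-c_w|^qw\Bigr]^{1/q},
\]
where the second inequality is Lemma \ref{lem:250104-11}(iii) with $q_0=1$ and $q_1=q$.

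\emph{Step 2: pointwise bound.} Integrating Lemma \ref{DFM21}(ii) in $z$ gives, for a.e.\ $z\in B$,
\[
|f(z)-f_B|\le\fint_B|f(y)-f(z)|\,dy\lesssim I_1(|\nabla f|\mathbf 1_B)(z).
\]
So it remains to prove the weighted fractional integral inequality
\[
\Bigl[\frac1{w(B)}\int_B [I_1(g\mathbf 1_B)]^qw\Bigr]^{1/q}\lesssim r\Bigl[\frac1{w(B)}\int_B g^pw\Bigr]^{1/p},
\]
with $g=|\nabla f|$ and $q\le p^*$.

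\emph{Step 3: the weighted fractional integral, which I expect to be the main obstacle.} The weight $w$ is not a power weight, and the exponent gain must be exactly $p^*$ relative to Lebesgue measure, not relative to the $d$-type dimension of $w$. I would not take the route through Proposition \ref{prop:Poincare}: that is the boundary Poincar\'e inequality and needs $Tf=0$, which is not assumed here.

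\textbf{Hedberg / truncation.} First try Hedberg's truncation with the weighted maximal function $M_w$ and the doubling of $w$ (Remark \ref{rmk:20260508.2141}). This yields the $L^q(w)\to L^q(w)$ bound with gain $r$ whenever $q\le p$: bound $I_1(g\mathbf 1_B)$ on each dyadic annulus by $2^{-k}r\fint g$ and use $M$-boundedness on $L^p(w)$, since $w\in A_p$. The real difficulty is the off-diagonal range $p<q\le p^*$.

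\textbf{Two-weight $I_1$ theorem.} For that range, invoke the Sawyer / P\'erez two-weight condition for $I_1$ from $L^p(w)$ to $L^q(w)$ localized to $B$. Its bump condition reads
\[
\ell(Q)\,|Q|^{-1}\,w(Q)^{1/q}\Bigl[\fint_Q w^{-p'/p}\Bigr]^{1/p'}|Q|^{1/p'}\lesssim r\,w(B)^{1/q-1/p}
\]
for cubes $Q\subset B$. Using $w\in A_1$ (so the bracket is $\lesssim \inf_Q w^{-1/p}$), this reduces to
\[
\frac{\ell(Q)}{r}\Bigl(\frac{w(Q)}{w(B)}\Bigr)^{1/q-1/p}\lesssim1.
\]
Lemma \ref{lem:250104-11}(i)--(ii) gives $w(Q)/w(B)\gtrsim(\ell(Q)/r)^{n}$, because $w(Q)\ge|Q|\inf_Q w$ and $\inf_Q w\gtrsim \inf_B w\gtrsim w(B)/|B|$ in the relevant cases. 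Hence the left side is at most
\[
(\ell(Q)/r)^{1-n(1/p-1/q)},
\]
which is bounded precisely when $q\le p^*$. Making this comparison $\inf_Q w\gtrsim w(B)/|B|$ uniform over all relative positions of $Q$, $B$, and $\Gamma$ is the delicate case analysis. I would split into $\delta$ large versus small, as in the proof of Proposition \ref{prop:260117-1}.

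If needed, the endpoint $q=p^*$ would instead be handled by a truncation / Maz'ya argument from the weak-type estimate. The hypothesis $f\in\dot W^{1,\max\{p,q\}}_{\rm loc}$ guarantees finiteness so that these manipulations are legitimate.
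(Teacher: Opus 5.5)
Your proposal is correct in substance, but it takes a different route from the paper. The paper omits the proof and says only that the inequality follows from the boundary Poincar\'e inequality (Proposition \ref{prop:Poincare}) together with the argument of \cite[Lemma 4.2]{DFM21}; that route is tied to the $d$-set geometry of $\Gamma$ and to traces. You use just two inputs: the pointwise potential bound of Lemma \ref{DFM21}(ii), which encodes that $\Gamma$ does not obstruct $W^{1,1}$ oscillation estimates, and the fact that $w\in A_1(\mathbb{R}^n)$. Your argument therefore works for any $A_1$ weight, at the cost of importing two-weight fractional-integral theory. Step 1, Step 2, and the case $q\le p$ of Step 3 are fine as written.

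In the range $p<q\le p^*$, three points should be tightened.

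\emph{The bump.} The condition you display is the plain Muckenhoupt--Wheeden condition. That condition is not by itself a hypothesis of the Sawyer--Wheeden or P\'erez theorems; they need a bump, e.g.\ averages of $u^s$ and $\sigma^s$ for some $s>1$, where $u:=w\mathbf{1}_B$ and $\sigma:=w^{1-p'}\mathbf{1}_B$. Here the bump costs nothing. Lemma \ref{lem:250104-11}(i)--(ii), applied with exponents $s\in(1,\frac{n-d}{n-d-1})$ and $(1-p')s<0$, gives exactly the reverse H\"older inequalities for $w$ and $w^{1-p'}$. Truncating by $\mathbf{1}_B$ only decreases the bumped averages.

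\emph{Cubes not inside $B$.} You must also check cubes that meet $B$ without lying in it. Those with $\ell(Q)\lesssim r$ lie in a fixed dilate of $B$ and are handled by doubling. For large cubes, the power of $|Q|$ in the truncated condition equals $\frac1n-1<0$ at $s=1$, so it stays negative for $s$ close to $1$.

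\emph{The mass comparison.} The bound $w(Q)/w(B)\gtrsim(\ell(Q)/r)^n$ needs no case analysis. For $Q\subset B$,
\[
w(Q)\ge|Q|\mathop{\mathrm{ess\,inf}}_{B}w\ge[w]_{A_1(\mathbb{R}^n)}^{-1}\frac{|Q|}{|B|}\,w(B).
\]

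With these fixes, the endpoint $q=p^*$ is covered by the same theorem, so no truncation argument is needed.

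A lighter alternative to the two-weight machinery is the Haj\l asz--Koskela (or Franchi--P\'erez--Wheeden) self-improvement. The $(1,p)$ weighted Poincar\'e inequality you already have from the case $q\le p$, together with exactly this lower mass bound, upgrades to the $(p^*,p)$ inequality. Alternatively, once $f$ is known to be a Sobolev function on $B$ across $\Gamma$, you can quote the $A_1$ Poincar\'e--Sobolev inequality of P\'erez and Rela \cite{pr(tams-2019)}, which the paper itself invokes in Section \ref{section5}.
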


\section{Weighted Morrey Spaces $\mathcal{M}^p_q(\Omega,w)$}
\label{section3}

In this section, we study weighted Morrey spaces
$\mathcal{M}^p_q(\Omega,w)$ adapted to $\Gamma$ by four subsections. Subsection
\ref{subsection3.1} is devoted to their definitions, together with
their basic properties. Subsection \ref{subsection3.2} handles with the
boundedness properties of relevant operators on $\mathcal{M}^p_q(\Omega,w)$.
In Subsection \ref{subsection3.3}, we establish the Littlewood--Paley
characterization of $\mathcal{M}^p_q(\Omega,w)$, which is used to provide
the potential characterization of the weighted Sobolev--Morrey spaces adapted
to $\Gamma$ in Section \ref{section4}. Finally, in Subsection
\ref{subsection3.4}, we present the complex interpolation formulae for these
Morrey spaces.

\subsection{Weighted Morrey Spaces Adapted to $\Gamma$}
\label{subsection3.1}

In this subsection, we introduce the concept and some basic properties of
weighted Morrey spaces adapted to $\Gamma$. Let $1\leq q\leq p<\infty$
and $w$ be as in \eqref{dw}. The
\emph{weighted Morrey space $\mathcal{M}^p_q(\Omega,w)$} adapted to $\Gamma$ is
defined to be the set of all measurable functions
$f$ on $\mathbb{R}^n$ such that
\begin{align}\label{20260604.2231}
\|f\|_{\mathcal{M}^p_q(\Omega,w)}:= \sup_{x\in\mathbb{R}^n,\,r\in(0,\infty)}
|B(x,r)|^{\frac1p-\frac1q}\left[\int_{\Omega(x,r)}|f(y)|^qw(y)\,dy\right]
^\frac{1}{q}<\infty,
\end{align}
where, for any $x\in\mathbb{R}^n$ and $r\in(0,\infty)$,
$\Omega(x,r):=\Omega\cap B(x,r)$. This space can be seen
as a natural extension of the weighted Lebesgue space $L^p(\Omega,w)$.
In particular, for any $p\in[1,\infty)$,
\begin{align*}
\mathcal{M}^p_p(\Omega,w)=L^p(\Omega,w)=L^p(\mathbb{R}^n,w).
\end{align*}

By the definition of $\|\cdot\|_{\mathcal{M}^p_q(\Omega,w)}$ and Lemma
\ref{lem:250104-11}(iii), we find that the following continuous embedding
holds; we omit the details.

\begin{proposition}\label{lem:250919-2}
Let $1\leq q\leq p<\infty$ and $w$ be as in \eqref{dw}. Then
\begin{align*}
\mathcal{M}^p_q(\Omega,w)\hookrightarrow L^1_{\mathrm{loc}}(\Omega,w)
\hookrightarrow L^1_{\mathrm{loc}}(\Omega).
\end{align*}
\end{proposition}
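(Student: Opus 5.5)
The plan is to prove the two embeddings separately, each time working on an arbitrary ball $B:=B(x,r)$. This is enough, because every compact subset of $\mathbb{R}^n$ is covered by finitely many such balls. One point needs care at the start. Since $\Gamma$ has Lebesgue measure zero (it is a $d$-set with $d<n$), integrals over $\Omega(x,r)$ and over $B(x,r)$ coincide. So $L^1_{\mathrm{loc}}(\Omega,w)$ should be read here as local integrability on balls of $\mathbb{R}^n$ with respect to $w(y)\,dy$, and likewise for $L^1_{\mathrm{loc}}(\Omega)$. I will prove the quantitative form: for every ball $B$, $\int_B|f|w\lesssim_B \|f\|_{\mathcal{M}^p_q(\Omega,w)}$, and $\int_B|f|\lesssim_B\int_B|f|w$.

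\emph{First embedding.} Let $f\in\mathcal{M}^p_q(\Omega,w)$ and $B=B(x,r)$. H\"older's inequality with respect to the measure $w(y)\,dy$ gives
\begin{align*}
\int_{B}|f(y)|w(y)\,dy\le [w(B)]^{1-\frac1q}\left[\int_{\Omega(x,r)}|f(y)|^qw(y)\,dy\right]^{\frac1q}
\le [w(B)]^{1-\frac1q}|B|^{\frac1q-\frac1p}\|f\|_{\mathcal{M}^p_q(\Omega,w)}.
\end{align*}
By Lemma \ref{lem:250104-11}(i) and (ii) with exponent $1$, $w(B)<\infty$ for every ball: it is comparable to $r^n w(x)$ or to $r^{d+1}$. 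So the prefactor is a finite constant depending only on $B$, and the first embedding holds. The only input beyond H\"older is that $w$ is locally integrable, and this is exactly Lemma \ref{lem:250104-11}(ii) with $p=1<\frac{n-d}{n-d-1}$.

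\emph{Second embedding.} I will apply Lemma \ref{lem:250104-11}(iii) with $q_0=q_1=1$, which gives
\begin{align*}
\fint_B|f(y)|\,dy\lesssim \frac{1}{w(B)}\int_B|f(y)|w(y)\,dy.
\end{align*}
Hence $\int_B|f|\lesssim \frac{|B|}{w(B)}\int_B|f|w$, where $w(B)>0$ because $w>0$ almost everywhere. An alternative is to argue directly: $w(y)=[\delta(y)]^{d+1-n}$ and $\delta(y)\le \delta(x)+r$ on $B$, so $w\gtrsim(\delta(x)+r)^{d+1-n}$ there, and $|f|\le (\delta(x)+r)^{n-d-1}|f|w$ pointwise on $B$.

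Neither step is a real obstacle; the only point needing care is the convention about what $L^1_{\mathrm{loc}}(\Omega,\cdot)$ means near $\Gamma$. I would state explicitly that the embeddings are meant on balls of $\mathbb{R}^n$, which is legitimate because $|\Gamma|=0$. The resulting estimate is stronger than local integrability on compact subsets of $\Omega$ alone.
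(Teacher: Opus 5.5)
Your proof is correct and follows essentially the paper's (omitted) argument. The first embedding comes from the definition of the Morrey norm, H\"older's inequality in $w(y)\,dy$, and the finiteness of $w(B)$ from Lemma \ref{lem:250104-11}(i)--(ii); the second comes from Lemma \ref{lem:250104-11}(iii) with $q_0=q_1=1$, and your pointwise bound $w\ge(\delta(x)+r)^{d+1-n}$ on $B(x,r)$ is an even more elementary substitute for that step. Your reading of $L^1_{\mathrm{loc}}$ as integrability on all balls of $\mathbb{R}^n$, justified by $|\Gamma|=0$, is also the form the paper relies on later, e.g.\ in Lemma \ref{lem:250919-3} and Theorem \ref{cor:260117-1}.
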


Next, we discuss conditions under which the \emph{space
$L^\infty_{\mathrm{c}}(\mathbb{R}^n)$} of bounded functions with compact
support is embedded into $\mathcal{M}^p_q(\Omega,w)$,
where a new restriction appears.
\begin{proposition}\label{prop:250926-1}
Let $1\leq q\leq p<\infty$ and $w$ be as in \eqref{dw}.
Then the following assertions hold.
\begin{enumerate}
\item[\rm(i)] For any $x\in\mathbb{R}^n$ and $r\in(0,\infty)$,
$\boldsymbol{1}_{B(x,r)}\in\mathcal{M}^p_q(\Omega,w)$ if and only if
\eqref{eq:cond-pq} holds. In this case,
$\|\boldsymbol{1}_{B(x,r)}\|_{\mathcal{M}^p_q(\Omega,w)}\sim
r^{\frac{n}{p}-\frac{n-d-1}{q}}$, where the positive equivalence constants are
independent of both $x$ and $r$.

\item[\rm(ii)] The constant function $1\in\mathcal{M}^p_q(\Omega,w)$ if and
only if $\frac{n}{p}=\frac{n-d-1}{q}$.
\end{enumerate}
\end{proposition}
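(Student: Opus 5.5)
Fix $x_0\in\mathbb{R}^n$ and $r_0\in(0,\infty)$, write $B_0:=B(x_0,r_0)$, and set $\alpha:=\frac{n}{p}-\frac{n-d-1}{q}$. The plan is to reduce everything to the quantity $\Phi(B):=|B|^{\frac1p-\frac1q}[w(B)]^{\frac1q}$, where $B=B(x,r)$. Since $\Gamma$ is Lebesgue-null, $w(\Omega\cap B)=w(B)$, and hence
\[
\|\boldsymbol{1}_{B_0}\|_{\mathcal{M}^p_q(\Omega,w)}=\sup_{B}|B|^{\frac1p-\frac1q}[w(B\cap B_0)]^{\frac1q}.
\]
By Lemma \ref{lem:250104-11}(i) and (ii), $\Phi(B)\sim r^{\frac np}[w(x)]^{\frac1q}$ when $\delta(x)\ge 2r$, and $\Phi(B)\sim r^{\alpha}$ when $\delta(x)\le 2r$.

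\emph{Necessity in (i).} Suppose $\alpha<0$. Choose $\xi\in\Gamma$ near $B_0$, for example $\xi\in\Gamma$ with $|\xi-x_0|=\delta(x_0)$. If $\xi\in\overline{B_0}$, pick a point $\xi'\in B_0$ close to $\xi$ and small balls $B(\xi',s)\subset B_0$ with $\delta(\xi')\le 2s$; then Lemma \ref{lem:250104-11}(ii) gives $\Phi(B(\xi',s))\sim s^{\alpha}\to\infty$ as $s\to0^+$. If instead $\Gamma\cap\overline{B_0}=\emptyset$, the blow-up cannot happen, since $w$ is then bounded on $B_0$. So the statement ``for any $x,r$'' has to be read appropriately. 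I would take the necessity to mean that membership for all balls (in particular for balls centred on $\Gamma$) forces $\alpha\ge0$. For $x\in\Gamma$, the balls $B(x,s)$ with $s<r_0$ give norm at least $\Phi(B(x,s))\sim s^{\alpha}$, which is unbounded as $s\to0^+$ when $\alpha<0$.

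\emph{Sufficiency and the two-sided estimate.} Assume $\alpha\ge0$, so $w\in\mathcal{B}_{p,q}(\mathbb{R}^n)$ by Proposition \ref{prop:260117-1}. The lower bound is $\|\boldsymbol{1}_{B_0}\|\ge\Phi(B_0)$. For the upper bound, take any ball $B=B(x,r)$ meeting $B_0$.
\begin{enumerate}
\item[\rm(a)] If $r\le r_0$, then $B\subset B(x_0,2r_0)$, and $B\subset B(x,2r_0)$ as well. Hence $|B|^{\frac1p-\frac1q}[w(B\cap B_0)]^{\frac1q}\le\Phi(B)\lesssim\Phi(B(x,2r_0))$ by the $\mathcal{B}_{p,q}$ property. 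Then $\Phi(B(x,2r_0))\lesssim\Phi(B(x_0,3r_0))\lesssim\Phi(B_0)$ by the doubling of $w$ (Remark \ref{rmk:20260508.2141}(ii)).
\item[\rm(b)] If $r>r_0$, then, since $\frac1p-\frac1q\le0$, we get $|B|^{\frac1p-\frac1q}[w(B_0)]^{\frac1q}\le|B_0|^{\frac1p-\frac1q}[w(B_0)]^{\frac1q}=\Phi(B_0)$.
\end{enumerate}
Thus $\|\boldsymbol{1}_{B_0}\|\sim\Phi(B_0)$.

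The claimed equivalence $\Phi(B_0)\sim r_0^{\alpha}$ holds when $\delta(x_0)\le 2r_0$, by Lemma \ref{lem:250104-11}(ii). When $\delta(x_0)>2r_0$, Lemma \ref{lem:250104-11}(i) gives $\Phi(B_0)\sim r_0^{\frac np}\delta(x_0)^{-\frac{n-d-1}{q}}\lesssim r_0^{\alpha}$, so only $\lesssim$ survives in general. I would therefore prove the equivalence for balls with $\delta(x_0)\lesssim r_0$ (in particular for balls centred on $\Gamma$), and the general upper bound $\lesssim r_0^{\alpha}$ otherwise. I expect this discrepancy with the literal statement to be the main subtlety to handle carefully.

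\emph{Part (ii).} If $\alpha=0$, then $\Phi(B)\lesssim 1$ in both cases of the dichotomy above, since $r^{\frac np}\delta(x)^{-\frac{n-d-1}{q}}\le r^{\alpha}=1$ when $\delta(x)\ge2r$. Hence $\|1\|=\sup_B\Phi(B)<\infty$. Conversely, for $x\in\Gamma$, $\Phi(B(x,r))\sim r^{\alpha}$, and this is bounded for all $r\in(0,\infty)$ only if $\alpha=0$: small $r$ rules out $\alpha<0$ and large $r$ rules out $\alpha>0$.
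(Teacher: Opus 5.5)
Your proof is correct, and you read the statement correctly. The paper's necessity argument also places the center in $\Gamma$. Its sufficiency argument proves only the upper bound $\|\boldsymbol{1}_{B(x,r)}\|_{\mathcal{M}^p_q(\Omega,w)}\lesssim r^{\frac np-\frac{n-d-1}{q}}$; the matching lower bound comes out of the necessity computation, and only for centers on $\Gamma$. Your computation $|B_0|^{\frac1p-\frac1q}[w(B_0)]^{\frac1q}\sim r_0^{\frac np}[\delta(x_0)]^{-\frac{n-d-1}{q}}$ for $\delta(x_0)>2r_0$ shows that this lower bound genuinely fails uniformly in $x$. It also shows that membership is automatic when $\overline{B_0}\cap\Gamma=\emptyset$, so proving the equivalence for $\delta(x_0)\lesssim r_0$ and only the upper bound otherwise is the right thing to do.

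The real difference from the paper is in the sufficiency. The paper fixes a test ball $B(y,s)$ and runs a four-case analysis: whether $B(y,2s)$ meets $\Gamma$, whether $s\ge r$, and whether $\delta(x)\le 2r$. In each case it applies Lemma \ref{lem:250104-11}(i)--(ii) directly. Its Case (4) in fact tacitly uses $x\in\Gamma$ when it derives $r\ge s$.

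You instead use Proposition \ref{prop:260117-1}, which says the condition is exactly $w\in\mathcal{B}_{p,q}(\mathbb{R}^n)$, together with doubling. This reduces everything to two cases and gives a sharper, center-independent identity,
$\|\boldsymbol{1}_{B_0}\|_{\mathcal{M}^p_q(\Omega,w)}\sim|B_0|^{\frac1p-\frac1q}[w(B_0)]^{\frac1q}$. The comparison with $r_0^{\frac np-\frac{n-d-1}{q}}$ then follows from Lemma \ref{lem:250104-11}. The paper's route needs nothing beyond Lemma \ref{lem:250104-11}. Yours is shorter, makes exactly the use of $\mathcal{B}_{p,q}(\mathbb{R}^n)$ that the paper advertises but never carries out, and shows precisely when $r^{\frac np-\frac{n-d-1}{q}}$ is the true order. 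Your proof of (ii), which the paper omits as ``similar'', is also fine.

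One harmless slip: a ball $B(x,r)$ with $r\le r_0$ that meets $B_0$ lies in $B(x_0,3r_0)$, not $B(x_0,2r_0)$. You never use that inclusion. The bound $\Phi(B(x,2r_0))\lesssim\Phi(B_0)$ follows from $B(x,2r_0)\subset B(x_0,4r_0)$ and doubling.
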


\begin{proof}
We only give the proof of (i) because the proof of (ii) is similar.
We first prove the necessity. Let $x\in\Gamma$ and $r\in(0,\infty)$.
Using Lemma \ref{lem:250104-11}(ii), we find that, for any $s\in(0,r]$,
\begin{align*}
s^{\frac{n}{p}-\frac{n-d-1}{q}}\sim
|B(x,s)|^{\frac{1}{p}-\frac{1}{q}}[w(B(x,s))]^{\frac{1}{q}}
\leq\|\boldsymbol{1}_{B(x,r)}\|_{\mathcal{M}^p_q(\Omega,w)}<\infty,
\end{align*}
which further implies that \eqref{eq:cond-pq} holds.

Now, we show the sufficiency. Let $x,y\in\mathbb{R}^n$ and
$r,s\in(0,\infty)$ such that $B(x,r)\cap B(y,s)\neq\emptyset$.
We prove $\boldsymbol{1}_{B(x,r)}\in\mathcal{M}^p_q(\Omega,w)$ by considering
the following four cases for $B(y,s)$.

\emph{Case (1)} $\delta(x)\leq2r$, $B(y,2s)\cap\Gamma\neq\emptyset$,
and $r\leq s$. In this case, by Lemma \ref{lem:250104-11}(ii), we conclude that
\begin{align*}
|B(y,s)|^{\frac{1}{p}-\frac{1}{q}}[w(B(x,r)\cap B(y,s))]^\frac{1}{q}
&\leq|B(y,s)|^{\frac1p-\frac1q}[w(B(x,r))]^{\frac1q}\\
&\leq|B(y,r)|^{\frac1p-\frac1q}[w(B(x,r))]^{\frac1q}\\
&\sim r^{\frac np-\frac{n-d-1}q}.
\end{align*}

\emph{Case (2)} $\delta(x)>2r$, $B(y,2s)\cap\Gamma\neq\emptyset$,
and $r\leq s$. In this case, from Lemma \ref{lem:250104-11}(i), it follows that
\begin{align*}
|B(y,s)|^{\frac{1}{p}-\frac{1}{q}}[w(B(x,r)\cap B(y,s))]^\frac{1}{q}
&\leq|B(y,s)|^{\frac1p-\frac1q}[w(B(x,r))]^{\frac1q}\\
&\leq|B(y,r)|^{\frac1p-\frac1q}[w(B(x,r))]^{\frac1q}\\
&\sim r^\frac{n}{p}[w(x)]^\frac1q\lesssim r^{\frac np-\frac{n-d-1}q}.
\end{align*}

\emph{Case (3)} $B(y,2s)\cap\Gamma\neq\emptyset$ and $r\geq s$. In this case,
using Lemma \ref{lem:250104-11}(ii) and \eqref{eq:cond-pq}, we find that
\begin{align*}
|B(y,s)|^{\frac1p-\frac1q}[w(B(x,r)\cap B(y,s))]^{\frac1q}
&\le|B(y,s)|^{\frac1p-\frac1q}[w(B(y,s))]^{\frac1q}\\
&\sim s^{\frac np-\frac{n-d-1}q}\le r^{\frac np-\frac{n-d-1}q}.
\end{align*}

\emph{Case (4)} $B(y,2s)\cap\Gamma=\emptyset$. In this case, from the
assumptions that $B(y,2s)\cap\Gamma=\emptyset$ and $B(x,r)\cap B(y,s)\neq\emptyset$,
we deduce that $r\geq s$. Indeed, if $r<s$, then $|x-y|\leq|x-z|+|z-y|<r+s<2s$,
where $z\in B(x,r)\cap B(y,s)$. This further implies that $x\in
B(y,2s)\cap\Gamma$, in contradiction to $B(y,2s)\cap\Gamma=\emptyset$.
Thus, $r\geq s$. Moreover, both the assumption that $\delta(y)\geq 2s$
and Lemma \ref{lem:250104-11}(i) yield
\begin{align*}
w(B(y,s))\sim s^nw(y)\lesssim s^{d+1}\sim w(B(x,s)),
\end{align*}
which, together with Lemma \ref{lem:250104-11}(ii), \eqref{eq:cond-pq}, and
$r\geq s$, further implies that
\begin{align*}
|B(y,s)|^{\frac1p-\frac1q}[w\left(B(x,r)\cap B(y,s)\right)]^{\frac1q}&
\lesssim|B(y,s)|^{\frac1p-\frac1q}[w\left(B(x,s)\right)]^{\frac1q}\\
&\lesssim s^{\frac np-\frac{n-d-1}q}\lesssim
r^{\frac np-\frac{n-d-1}q}.
\end{align*}

In conclusion, we obtain
\begin{align*}
\|\boldsymbol{1}_{B(x,r)}\|_{\mathcal{M}^p_q(\Omega,w)}
\lesssim r^{\frac np-\frac{n-d-1}q}.
\end{align*}
This finishes the proof of the sufficiency and hence
Proposition \ref{prop:250926-1}.
\end{proof}

Let $\omega$ be a weight on $\mathbb{R}^n$; i.e., $\omega(x)\in(0,\infty)$ for
almost every $x\in\mathbb{R}^n$. Recall that the \emph{weighted
Hardy--Littlewood maximal operator $M_\omega$} is defined by setting,
for any $f\in L^1_\mathrm{loc}(\mathbb{R}^n,\omega)$ and $x\in\mathbb{R}^n$,
\begin{equation*}
M_\omega(f)(x):=\sup_{r\in(0,\infty)}\frac{1}{\omega(B(x,r))}
\int_{B(x,r)}|f(y)|\omega(y)\,dy.
\end{equation*}
In particular, when $\omega\equiv1$, $M_\omega$ reduces to the classical
Hardy--Littlewood maximal operator (see, for example, \cite{Stein93}) and,
in this case, we simply denote it by $M$. Applying Lemma
\ref{lem:250104-11}, we obtain the following equivalent norm of Morrey spaces
${\mathcal M}^p_q(\Omega,w)$, which is of independent interest.
In contrast to the classical case, the higher
co-dimension assumption $d\in(0,n-1)$ leads to the additional restriction
\eqref{eq:260121-1}.

\begin{theorem}\label{prop:250104-11}
Let $1\leq q\leq p<\infty$, $\theta\in(0,1)$, and $w$ be as in \eqref{dw}.
Then, for any $f\in\mathcal{M}^p_q(\Omega,w)$,
\begin{align}\label{20260629.2159}
\|f\|_{\mathcal{M}^p_q(\Omega,w)}\sim
\|f\|_{\widetilde{\mathcal{M}}^p_q(\Omega,w)}:=
\sup_{\genfrac{}{}{0pt}{}{x\in\Omega}{r\in(0,\infty)}}
r^{\frac{n}{p}-\frac{n}{q}}\left\{\int_\Omega|f(y)|^q
\left[M_w(\boldsymbol{1}_{\Omega(x,r)})(y)\right]
^\theta w(y)\,dy\right\}^\frac1q
\end{align}
holds with the positive equivalence constants independent of $f$ if and only if
\begin{align}\label{eq:260121-1}
\frac{n}{p}-\frac{n}{q}+\frac{(d+1)\theta}{q}>0.
\end{align}
\end{theorem}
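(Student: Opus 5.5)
The plan is to compare the maximal function $M_w(\boldsymbol{1}_{\Omega(x,r)})$ with explicit powers of $2^{-k}$ on dyadic annuli around $B:=B(x,r)$. The sufficiency of \eqref{eq:260121-1} then reduces to summing a geometric series. For necessity, I will test the equivalence \eqref{20260629.2159} on a homogeneous function centered at a point of $\Gamma$.

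\emph{Lower bound $\|f\|_{\mathcal{M}^p_q(\Omega,w)}\lesssim\|f\|_{\widetilde{\mathcal{M}}^p_q(\Omega,w)}$; this holds without \eqref{eq:260121-1}.}
\begin{enumerate}
\item[(a)] Fix $x\in\Omega$ and $r\in(0,\infty)$. For almost every $y\in\Omega(x,r)$, the ball $B(y,2r)$ contains $B(x,r)$, and $\mathcal{H}^n(\Gamma)=0$. Together with the doubling property of $w$ in Remark \ref{rmk:20260508.2141}(ii), this gives $M_w(\boldsymbol{1}_{\Omega(x,r)})(y)\gtrsim1$.
\item[(b)] Restricting the integral in $\|f\|_{\widetilde{\mathcal{M}}^p_q(\Omega,w)}$ to $\Omega(x,r)$ therefore yields $r^{\frac np-\frac nq}\big[\int_{\Omega(x,r)}|f|^qw\big]^{1/q}\lesssim\|f\|_{\widetilde{\mathcal{M}}^p_q(\Omega,w)}$.
\item[(c)] Balls centered at $x\in\Gamma$ are handled by enlarging: $B(x,r)\subset B(x',2r)$ for some $x'\in\Omega$ close to $x$, since $\Gamma$ has empty interior.
\end{enumerate}

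\emph{Upper bound, assuming \eqref{eq:260121-1}.}
\begin{enumerate}
\item[(a)] Split $\Omega$ into $2B$ and the annuli $R_k:=2^{k+1}B\setminus2^kB$ for $k\in\mathbb{N}$.
\item[(b)] For $y\in R_k$, every ball centered at $y$ that meets $B$ has radius $\gtrsim2^kr$ and contains a ball comparable to $2^kB$. Hence $M_w(\boldsymbol{1}_{\Omega(x,r)})(y)\lesssim w(B)/w(2^kB)$.
\item[(c)] Proposition \ref{prop:260117-2} gives $w(B)/w(2^kB)\lesssim2^{-k(d+1)}$.
\item[(d)] Since $M_w(\cdot)\le1$ on $2B$, bounding each $\int_{2^{k+1}B}|f|^qw$ by the Morrey norm gives
\begin{align*}
r^{q(\frac np-\frac nq)}\int_\Omega|f|^q\left[M_w(\boldsymbol{1}_{\Omega(x,r)})\right]^\theta w
\lesssim\|f\|_{\mathcal{M}^p_q(\Omega,w)}^q\sum_{k\in\mathbb{Z}_+}
2^{-k[\frac{nq}{p}-n+(d+1)\theta]}.
\end{align*}
\item[(e)] The series converges exactly when \eqref{eq:260121-1} holds, which proves the upper bound.
\end{enumerate}

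\emph{Necessity.} Suppose \eqref{eq:260121-1} fails. Since $\theta>0$, this forces $q<p$.
\begin{enumerate}
\item[(a)] Fix $\xi\in\Gamma$ and set $f(y):=|y-\xi|^{\alpha}$ with $\alpha:=\frac{n-d-1}{q}-\frac np$.
\item[(b)] Decomposing into dyadic shells around $\xi$ and using Lemma \ref{lem:250104-11}(ii) gives $\int_{B(\xi,s)}|f|^qw\sim s^{q\alpha+d+1}=s^{n-\frac{nq}p}$. The exponent is positive because $q<p$, so the integral converges.
\item[(c)] I then check $f\in\mathcal{M}^p_q(\Omega,w)$ for arbitrary balls $B(y,s)$. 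If $|y-\xi|\lesssim s$, I compare with $B(\xi,Cs)$. If $|y-\xi|\gg s$, then $f\sim|y-\xi|^\alpha$ on the ball, and I use Lemma \ref{lem:250104-11} together with the argument of Proposition \ref{prop:250926-1}, namely the scaling $w(B(y,s))\lesssim s^{d+1}\big(|y-\xi|/s\big)^{\dots}$. I expect this verification to be the most tedious step.
\item[(d)] Take $x\in\Omega$ with $\delta(x)\le r$ near $\xi$. For $y$ in a fixed positive-measure portion of $R_k$, the matching lower bound $M_w(\boldsymbol{1}_{\Omega(x,r)})(y)\gtrsim w(B)/w(2^{k+2}B)\sim2^{-k(d+1)}$ holds by Lemma \ref{lem:250104-11}(ii).
\item[(e)] Together with $\int_{R_k}|f|^qw\sim(2^kr)^{n-nq/p}$, each term of the series in the upper-bound step (d) is bounded below by a constant. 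So $\|f\|_{\widetilde{\mathcal{M}}^p_q(\Omega,w)}=\infty$ while $\|f\|_{\mathcal{M}^p_q(\Omega,w)}<\infty$, and the equivalence fails.
\end{enumerate}

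The main obstacle is the pointwise two-sided estimate of $M_w(\boldsymbol{1}_{\Omega(x,r)})$ on the annuli. This is what makes the exponent $(d+1)\theta$ appear sharply, and the lower bound used in necessity must come from Lemma \ref{lem:250104-11}(ii).
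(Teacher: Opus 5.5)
Your sufficiency argument matches the paper's. You use $M_w(\boldsymbol{1}_{\Omega(x,r)})\gtrsim 1$ on $\Omega(x,r)$ for the easy inequality. You then use the annular bound $M_w(\boldsymbol{1}_{\Omega(x,r)})\lesssim w(B)/w(2^kB)\lesssim 2^{-k(d+1)}$ from Proposition \ref{prop:260117-2}, followed by a geometric series.

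The gap is in the necessity. Your test function $f(y)=|y-\xi|^{\alpha}$, with $\alpha=\frac{n-d-1}{q}-\frac np$, does \emph{not} belong to $\mathcal{M}^p_q(\Omega,w)$ when $\alpha>0$, i.e.\ when $\frac np<\frac{n-d-1}{q}$. This subcase lies inside the failure regime of \eqref{eq:260121-1}, because
\[
\frac np-\frac nq+\frac{(d+1)\theta}{q}=-\alpha-\frac{(d+1)(1-\theta)}{q}
\]
and $\theta<1$. To see that $f\notin\mathcal{M}^p_q(\Omega,w)$, pick $\eta\in\Gamma$ with $\eta\neq\xi$. For $s<|\eta-\xi|/2$ you have $f\ge(|\eta-\xi|/2)^{\alpha}$ on $B(\eta,s)$, so Lemma \ref{lem:250104-11}(ii) gives
\[
|B(\eta,s)|^{\frac1p-\frac1q}\Big[\int_{B(\eta,s)}|f|^qw\Big]^{\frac1q}\gtrsim|\eta-\xi|^{\alpha}s^{\frac np-\frac{n-d-1}q}=|\eta-\xi|^{\alpha}s^{-\alpha}\to\infty
\]
as $s\to0^+$. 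This is the same obstruction as in Proposition \ref{prop:250926-1}. In your step (c) it is exactly the subcase $|y-\xi|\gg s$, $\delta(y)\le2s$. There $w(B(y,s))\sim s^{d+1}$ and the Morrey quantity is $(s/|y-\xi|)^{-\alpha}$, so no scaling of $w(B(y,s))$ rescues it. For $\alpha\le0$ your argument does go through. That includes the lower bound $M_w(\boldsymbol{1}_{\Omega(x,r)})\gtrsim2^{-k(d+1)}$, which holds on all of $R_k$ once $\delta(x)\le r$.

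What is missing is a test function with the same homogeneity that puts no mass near $\Gamma$. The paper gets it from the porosity of $\Gamma$, a consequence of $d<n-1$. It picks $z_j$ with $cR^j\le\delta(z_j)\le|z_j-\xi|<R^j$ and sets
\[
f_N:=\sum_{j=1}^NR^{j\alpha}\boldsymbol{1}_{B(z_j,\frac c2R^j)}.
\]
Since $\delta\sim R^j$ on each ball, the bound $\int_{B(x,r)}|f_N|^qw\lesssim r^{n(1-\frac qp)}$ does not depend on the sign of $\alpha$, while $\|f_N\|_{\widetilde{\mathcal{M}}^p_q(\Omega,w)}\to\infty$.

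Alternatively, in the subcase $\alpha>0$ you could take $f:=\delta^{\alpha}$. Then $|f|^qw=\delta^{-nq/p}=w^{t}$ with $t=\frac{nq}{p(n-d-1)}<1$. Lemma \ref{lem:250104-11}(i)--(ii) then gives both $\|f\|_{\mathcal{M}^p_q(\Omega,w)}\lesssim1$ and, via the doubling property of $w^t$, the annular lower bounds $\int_{R_k}|f|^qw\gtrsim(2^kr)^{n-\frac{nq}{p}}$. Combining this with your $|y-\xi|^{\alpha}$ for $\alpha\le0$ would close the gap.
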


\begin{proof}
We first prove the sufficiency. Assume that \eqref{eq:260121-1} holds and let
$f\in\mathcal{M}^p_q(\Omega,w)$. Since, for any $x\in\mathbb{R}^n$ and
$r\in(0,\infty)$, $M_w(\boldsymbol{1}_{\Omega(x,r)})\geq
\boldsymbol{1}_{\Omega(x,r)}$, it follows that
$\|f\|_{\mathcal{M}^p_q(\Omega,w)}\lesssim
\|f\|_{\widetilde{\mathcal{M}}^p_q(\Omega,w)}$.

To prove the reverse inequality of \eqref{20260629.2159}, we let
$x\in\Omega$ and $r\in(0,\infty)$. Note that
$M_w(\boldsymbol{1}_{\Omega(x,r)})\leq1$. Thus,
\begin{align*}
&r^{\frac{n}{p}-\frac{n}{q}}\left\{\int_{\Omega(x,2r)}|f(y)|^q
\left[M_w(\boldsymbol{1}_{\Omega(x,r)})(y)\right]^\theta
w(y)\,dy\right\}^\frac1q \\
&\quad\leq r^{\frac{n}{p}-\frac{n}{q}}\left[\int_{\Omega(x,2r)}
|f(y)|^q w(y)\,dy\right]^\frac1q\lesssim\|f\|_{\mathcal{M}^p_q(\Omega,w)}.
\end{align*}
It remains to show that
\begin{align}\label{eq:260104-1}
r^{\frac{n}{p}-\frac{n}{q}}\left\{\int_{\Omega\setminus\Omega(x,2r)}|f(y)|^q
\left[M_w(\boldsymbol{1}_{\Omega(x,r)})(y)\right]^\theta
w(y)\,dy\right\}^\frac1q\lesssim\|f\|_{{\mathcal M}^p_q(\Omega,w)}.
\end{align}
We claim that, for any $k\in\mathbb{N}$ and
$y\in\Omega(x,2^{k+1}r)\setminus\Omega(x,2^kr)$,
$M_w\left(\boldsymbol{1}_{\Omega(x,r)}\right)(y)\lesssim2^{-k(d+1)}$,
where the implicit positive constant is independent of $x$, $y$, and $r$.
Indeed, observe that, for any $s\in(0,\infty)$,
\begin{align*}
\begin{cases}
B(y,s)\cap B(x,r)=\emptyset &\mbox{if}\ s\in(0,|x-y|-r],\\
B(y,s)\subset B(x,2^{k+3}r) &\mbox{if}\ s\in(|x-y|-r,|x-y|+r),\\
B(x,r)\subset B(y,s) &\mbox{if}\ s\in[|x-y|+r,\infty).
\end{cases}
\end{align*}
From this, the doubling condition of $w$, and Proposition \ref{prop:260117-2},
we infer that
\begin{align*}
M_w\left(\boldsymbol{1}_{\Omega(x,r)}\right)(y)&=\sup_{s\in(0,\infty)}
\frac{w(B(y,s)\cap B(x,r)\cap\Omega)}{w(B(y,s))}\\
&=\sup_{s\in(|x-y|-r,|x-y|+r)}
\frac{w(B(y,s)\cap B(x,r)\cap\Omega)}{w(B(y,s))}\\
&\leq\sup_{s\in(|x-y|-r,|x-y|+r)}\frac{w(B(x,r))}{w(B(x,2^kr))}
\frac{w(B(x,2^{k+3}r))}{w(B(y,s))}\\
&\lesssim\frac{w(B(x,r))}{w(B(x,2^kr))}\sup_{s\in(|x-y|-r,|x-y|+r)}
\frac{|B(x,2^{k+3}r)|}{|B(y,s)|}
\sim\frac{w(B(x,r))}{w(B(x,2^kr))}\lesssim2^{-k(d+1)}.
\end{align*}
This, together with \eqref{eq:260121-1}, further implies that
\begin{align*}
&r^{\frac{n}{p}-\frac{n}{q}}\left\{\int_{\Omega\setminus\Omega(x,2r)}|f(y)|^q
\left[M_w(\boldsymbol{1}_{\Omega(x,r)})(y)\right]^\theta
w(y)\,dy\right\}^\frac1q\\
&\quad\leq\sum_{k\in\mathbb{N}}r^{\frac{n}{p}-\frac{n}{q}}
\left\{\int_{\Omega(x,2^{k+1}r)\setminus\Omega(x,2^kr)}|f(y)|^q
\left[M_w(\boldsymbol{1}_{\Omega(x,r)})(y)\right]^\theta
w(y)\,dy\right\}^\frac1q\\
&\quad\lesssim\sum_{k\in\mathbb{N}}r^{\frac{n}{p}-\frac{n}{q}}
2^{-\frac{k(d+1)\theta}{q}}\left[\int_{\Omega(x,2^{k+1}r)}|f(y)|^q
w(y)\,dy\right]^\frac1q\\
&\quad\lesssim\sum_{k\in\mathbb{N}}r^{\frac{n}{p}-\frac{n}{q}}
2^{-\frac{k(d+1)\theta}{q}}(2^{k+1}r)^{\frac{n}{q}-\frac{n}{p}}
\|f\|_{\mathcal{M}^p_q(\Omega,w)}\\
&\quad=\sum_{k\in\mathbb{N}}
2^{k[\frac{n}{q}-\frac{n}{p}-\frac{(d+1)\theta}{q}]}
\|f\|_{\mathcal{M}^p_q(\Omega,w)}\lesssim\|f\|_{\mathcal{M}^p_q(\Omega,w)}.
\end{align*}
Therefore, \eqref{eq:260104-1} and hence \eqref{20260629.2159} hold.
This finishes the proof of the sufficiency.

We then show the necessity by contradiction. Assume that $\frac{n}{p}-
\frac{n}{q}+\frac{(d+1)\theta}{q}\leq0$. Since $\Gamma$ is a $d$-set with
$d<n-1$, it follows that $\Gamma$ is \emph{porous}; i.e., there exists a
positive constant $c\in(0,1)$ such that, for any $\xi\in\Gamma$ and
$r\in(0,\infty)$, there exists $z\in B(\xi,r)$ satisfying $B(z,cr)\subset
B(\xi,r)\setminus\Gamma\subset\Omega$ (see, for instance, \cite[Proposition
1.3]{t(mn-2023)}). Fix $\xi\in\Gamma$ and $R\in(\frac{c+2}{c},\infty)$.
It is easy to prove that, for any $j\in\mathbb{N}$, there exists
$z_j\in B(\xi,R^j)\cap\Omega$ such that
\begin{align}\label{20260629.1801}
cR^j\leq\delta\left(z_j\right)\leq\left|z_j-\xi\right|<R^j.
\end{align}
For any $N\in\mathbb{N}$, define
\begin{align*}
f_N:=\sum_{j=1}^NR^{-j(\frac{n}{p}-\frac{n-d-1}{q})}
\boldsymbol{1}_{B(z_j,\frac{c}{2}R^j)}.
\end{align*}
By \eqref{20260629.1801} and the assumption that $R>\frac{c+2}{c}$, we find
that, for any $j,k\in\mathbb{N}$ with $j<k$,
\begin{align*}
\left|z_j-z_k\right|\geq|z_k-\xi|-\left|\xi-z_j\right|\geq cR^k-R^j>
\frac{c}{2}R^k+\frac{c}{2}R^j,
\end{align*}
and hence $B(z_j,\frac{c}{2}R^j)\cap B(z_k,\frac{c}{2}R^k)=\emptyset$.
Moreover, from \eqref{20260629.1801}, we infer that, for any $j\in\mathbb{N}$
and $z\in B(z_j,\frac{c}{2}R^j)$, $\frac{c}{2}R^j<\delta(z_j)-|z-z_j|
\leq\delta(z)\leq|z-z_j|+\delta(z_j)<(\frac{c}{2}+1)R^j$.
These further implies that, for any $x\in\mathbb{R}^n$, $r\in(0,\infty)$,
and $N\in\mathbb{N}$,
\begin{align*}
\int_{B(x,r)}|f_N(z)|^qw(z)\,dz&\leq\left(\sum_{\genfrac{}{}{0pt}{}
{j\in\mathbb{N}}{R^j\leq2r}}+\sum_{\genfrac{}{}{0pt}{}
{j\in\mathbb{N}}{R^j>2r}}\right)R^{-jq(\frac{n}{p}-\frac{n-d-1}{q})}
w\left(B(x,r)\cap B\left(z_j,\frac{c}{2}R^j\right)\right)\\
&\lesssim\sum_{\genfrac{}{}{0pt}{}{j\in\mathbb{N}}{R^j\leq2r}}
R^{-jq(\frac{n}{p}-\frac{n-d-1}{q})}R^{j(d+1-n)}
\left|B\left(z_j,\frac{c}{2}R^j\right)\right|\\
&\quad+\sum_{\genfrac{}{}{0pt}{}{j\in\mathbb{N}}{R^j>2r}}
R^{-jq(\frac{n}{p}-\frac{n-d-1}{q})}R^{j(d+1-n)}|B(x,r)|\\
&\sim\sum_{\genfrac{}{}{0pt}{}{j\in\mathbb{N}}{R^j\leq2r}}R^{jn(1-\frac{q}{p})}
+\sum_{\genfrac{}{}{0pt}{}{j\in\mathbb{N}}{R^j>2r}}R^{-j\frac{nq}{p}}r^n
\lesssim r^{n(1-\frac{q}{p})}.
\end{align*}
Thus, for any $N\in\mathbb{N}$,
\begin{align*}
\|f_N\|_{\mathcal{M}_q^p(\Omega,w)}=
\sup_{x\in\mathbb{R}^n,\,r\in(0,\infty)}
|B(x,r)|^{\frac1p-\frac1q}\left[\int_{B(x,r)}|f_N(z)|^qw(z)\,dz\right]^\frac1q
\lesssim1,
\end{align*}
where the implicit positive constant is independent of $N$.

On the other hand, we show that
$\|f_N\|_{\widetilde{\mathcal{M}}^p_q(\Omega,w)}\to\infty$
as $N\to\infty$, which contradicts \eqref{20260629.2159}. Indeed,
by \eqref{20260629.1801}, we find that, for any $j\in\mathbb{N}$ and $z\in
B(z_j,\frac{c}{2}R^j)$,
\begin{align*}
\frac{c}{2}R^j<\left|z_j-\xi\right|-|\xi-z|\leq
|z-\xi|\leq\left|z-z_j\right|+\left|z_j-\xi\right|
<\left(\frac{c}{2}+1\right)R^j,
\end{align*}
which further implies that $\{z\}\cup B(\xi,1)\subset
B(\xi,(\frac{c}{2}+1)R^j)$. Combining this and Lemma \ref{lem:250104-11}(ii),
we obtain
\begin{align*}
R^{-j(d+1)}\lesssim\frac{w(\Omega(\xi,1))}{w(B(\xi,(\frac{c}{2}+1)R^j))}\leq
M_w\left(\boldsymbol{1}_{\Omega(\xi,1)}\right)(z).
\end{align*}
This, together with the proven conclusions that
$\{B(z_j,\frac{c}{2}R^j)\}_{j\in\mathbb{N}}$ are pairwise disjoint and
$\delta(z)\sim R^j$ for any $j\in\mathbb{N}$ and $z\in B(z_j,\frac{c}{2}R^j)$,
and with the assumption that $\frac{n}{p}-\frac{n}{q}+\frac{(d+1)\theta}{q}\leq0$,
further implies that
\begin{align*}
\|f_N\|_{\widetilde{\mathcal{M}}^p_q(\Omega,w)}&\geq
\left\{\int_\Omega|f_N(z)|^q\left[M_w(\boldsymbol{1}
_{\Omega(\xi,1)})(z)\right]^\theta w(z)\,dz\right\}^\frac1q\\
&\gtrsim\left[\sum_{j=1}^N\int_{B(z_j,\frac{c}{2}R^j)}
R^{-jq(\frac{n}{p}-\frac{n-d-1}{q})}R^{-j\theta(d+1)}w(z)\,dz\right]^\frac1q\\
&\sim\left\{\sum_{j=1}^NR^{-jq[\frac{n}{p}-
\frac{n}{q}+\frac{(d+1)\theta}{q}]}\right\}^\frac1q\\
&\sim\begin{cases}
R^{-N[\frac{n}{p}-\frac{n}{q}+\frac{(d+1)\theta}{q}]}\to\infty &\mbox{if\ }
\frac{n}{p}-\frac{n}{q}+\frac{(d+1)\theta}{q}<0,\\
N\to\infty &\mbox{if\ }\frac{n}{p}-\frac{n}{q}+\frac{(d+1)\theta}{q}=0
\end{cases}
\end{align*}
as $N\to\infty$. This contradicts \eqref{20260629.2159}. Therefore,
\eqref{eq:260121-1} holds, which completes the proof of the necessity and
hence Theorem \ref{prop:250104-11}.
\end{proof}

\subsection{Boundedness of Some Classical Operators}
\label{subsection3.2}

In this subsection, we state the boundedness results of some classical
operators on $\mathcal{M}^p_q(\Omega,w)$. First, as a direct application of
Propositions \ref{prop:260117-1} and \ref{prop:260117-2} and \cite[Theorem
1.4]{Nakamura16}, we obtain the following Fefferman--Stein vector-valued
inequality on weighted Morrey spaces ${\mathcal M}^p_q(\Omega,w)$.
We omit the details here.

\begin{proposition}\label{thm:260126-1}
Let $1<q\leq p<\infty$ satisfy \eqref{20260623.2235},
$r\in(1,\infty)$, and $w$ be as in \eqref{dw}.
Then there exists a positive constant $C$ such that,
for any sequence $\{f_k\}_{k\in\mathbb{N}}$ in $\mathcal{M}^p_q(\Omega,w)$,
\begin{align*}
\left\|\left(\sum_{k\in\mathbb{N}}|Mf_k|^r\right)
^{\frac1r}\right\|_{\mathcal{M}^p_q(\Omega,w)}
\leq C\left\|\left(\sum_{k\in\mathbb{N}}|f_k|^r\right)
^{\frac1r}\right\|_{\mathcal{M}^p_q(\Omega,w)}.
\end{align*}
\end{proposition}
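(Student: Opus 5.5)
The plan is to obtain the inequality as a special case of Nakamura's Fefferman--Stein theorem for Samko-type weighted Morrey spaces \cite[Theorem 1.4]{Nakamura16}. That result asserts the vector-valued boundedness of $M$ on $\mathcal{M}^p_q(\mathbb{R}^n,\omega)$, with $r\in(1,\infty)$ and $1<q\le p<\infty$, provided three hypotheses hold:
\begin{enumerate}
\item[(a)] $\omega\in A_q(\mathbb{R}^n)$;
\item[(b)] $\omega\in\mathcal{B}_{p,q}(\mathbb{R}^n)$;
\item[(c)] the integral condition \eqref{20260511.1639} holds uniformly over all balls.
\end{enumerate}
So the argument has two parts: identify $\mathcal{M}^p_q(\Omega,w)$ with the weighted Morrey space on $\mathbb{R}^n$, and check (a)--(c) for $w$.

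For the identification, note that $\Gamma$ is a $d$-set with $d<n$, so $\mathcal{H}^n(\Gamma)=0$ and hence $|\Gamma|=0$. Consequently $\int_{\Omega(x,r)}|f|^qw=\int_{B(x,r)}|f|^qw$ for every ball. Therefore $\|f\|_{\mathcal{M}^p_q(\Omega,w)}$ coincides with the Samko-type norm on $\mathbb{R}^n$ with weight $w$, and the vector-valued quantities on both sides are unaffected by a null set.

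For the hypotheses, each follows from an earlier result.
\begin{enumerate}
\item[(a)] $w\in A_q(\mathbb{R}^n)$ for all $q\in[1,\infty]$ by Remark \ref{rmk:20260508.2141}(ii), so in particular for the given $q>1$.
\item[(b)] Assumption \eqref{20260623.2235} implies \eqref{eq:cond-pq}, so $w\in\mathcal{B}_{p,q}(\mathbb{R}^n)$ by Proposition \ref{prop:260117-1}.
\item[(c)] Proposition \ref{prop:260117-2} shows that \eqref{20260623.2235} is exactly equivalent to \eqref{20260511.1639} for every ball $B$.
\end{enumerate}
Applying \cite[Theorem 1.4]{Nakamura16} then gives the stated inequality. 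The constant $C$ depends only on $n,d,C_0,p,q,r$, since all the constants above depend only on these quantities.

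The main obstacle is matching our hypotheses to the precise form of Nakamura's assumptions. If his theorem is stated with the doubling of $\omega$ or with a $\mathcal{B}_{p,q}$-type growth condition in place of the explicit integral condition, I would verify that version directly. For the doubling this follows from $A_q$. For a growth condition I would use the bound $w(B(x,r))\lesssim 2^{-k(d+1)}w(B(x,2^kr))$ from Proposition \ref{prop:260117-2}, which makes $|2^kB|^{\frac1p-\frac1q}[w(2^kB)]^{\frac1q}$ grow geometrically in $k$ under \eqref{20260623.2235}; this is the computation already carried out in that proof. If instead the theorem requires the weaker $\mathcal{B}_{p,q}$ condition only, our hypotheses are stronger and nothing further is needed.
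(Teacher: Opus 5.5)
Your proposal is correct and follows the same route as the paper, which also derives this directly from \cite[Theorem 1.4]{Nakamura16}, with the $\mathcal{B}_{p,q}$ condition supplied by Proposition \ref{prop:260117-1} and the integral condition \eqref{20260511.1639} supplied by Proposition \ref{prop:260117-2}. Your additional checks ($w\in A_q(\mathbb{R}^n)$ via Remark \ref{rmk:20260508.2141}(ii), and $|\Gamma|=0$ so that $\mathcal{M}^p_q(\Omega,w)$ coincides with the weighted Morrey space on $\mathbb{R}^n$) are exactly the details the paper omits.
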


For any given $j\in\mathbb{N}\cap[1,n]$, the \emph{$j$-th Riesz
transform} $R_j$ is defined by setting, for any suitable function $f$ on
$\mathbb{R}^n$ and for any $x\in\mathbb{R}^n$,
\begin{align*}
R_jf(x):=\mathrm{p.v.}\int_{\mathbb{R}^n}
\frac{x_j-y_j}{|x-y|^{n+1}}f(y)\,dy.
\end{align*}
Second, from Propositions \ref{prop:260117-1} and
\ref{prop:260117-2} and \cite[Theorem 156]{Sawano-Book II}, we deduce that the
following vector-valued inequalities of Riesz transforms on
$\mathcal{M}^p_q(\Omega,w)$ hold; we omit the details here.

\begin{proposition}\label{prop:260117-3}
Let $1<q\leq p<\infty$ satisfy \eqref{20260623.2235},
$r\in(1,\infty)$, and $w$ be as in \eqref{dw}.
Then there exists a positive constant $C$ such that, for
any $j\in\mathbb{N}\cap[1,n]$ and any sequence
$\{f_k\}_{k\in\mathbb{N}}$ in $\mathcal{M}^p_q(\Omega,w)$,
\begin{align*}
\left\|\left(\sum_{k\in\mathbb{N}}\left|R_jf_k\right|^r\right)
^{\frac1r}\right\|_{\mathcal{M}^p_q(\Omega,w)}
\leq C\left\|\left(\sum_{k\in\mathbb{N}}|f_k|^r\right)
^{\frac1r}\right\|_{\mathcal{M}^p_q(\Omega,w)}.
\end{align*}
\end{proposition}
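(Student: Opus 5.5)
The plan is to reduce Proposition \ref{prop:260117-3} to the abstract vector-valued Calder\'on--Zygmund theory on Samko-type weighted Morrey spaces in \cite[Theorem 156]{Sawano-Book II}. That result applies to a weight $\omega$ and exponents $1<q\le p<\infty$ under three hypotheses: $\omega\in A_q(\mathbb{R}^n)$; $\omega\in\mathcal{B}_{p,q}(\mathbb{R}^n)$ (a monotonicity of the normalizing factor $|B|^{\frac1p-\frac1q}[\omega(B)]^{\frac1q}$); and the integral condition \eqref{20260511.1639}. The first step is therefore to observe that $\mathcal{M}^p_q(\Omega,w)$ coincides, with equal norm, with the Samko-type weighted Morrey space $\mathcal{M}^p_q(\mathbb{R}^n,w)$. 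This holds because $|\Gamma|=0$ (Ahlfors regularity with $d<n$ gives $\mathcal{H}^n(\Gamma)=0$), so integrating over $\Omega(x,r)$ or over $B(x,r)$ makes no difference.

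Next I verify the hypotheses. By Remark \ref{rmk:20260508.2141}(ii), $w\in A_q(\mathbb{R}^n)$ for every $q\in[1,\infty]$, in particular for the given $q\in(1,\infty)$. Assumption \eqref{20260623.2235} implies \eqref{eq:cond-pq}, so Proposition \ref{prop:260117-1} gives $w\in\mathcal{B}_{p,q}(\mathbb{R}^n)$. Proposition \ref{prop:260117-2} gives \eqref{20260511.1639} for all balls, again using \eqref{20260623.2235}. The Riesz transform $R_j$ is a standard Calder\'on--Zygmund operator; its kernel $\frac{x_j-y_j}{|x-y|^{n+1}}$ satisfies the size and smoothness estimates, and it is bounded on $L^2$. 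Hence the vector-valued extension $\{f_k\}\mapsto\{R_jf_k\}$ is bounded on $L^q(\ell^r,\omega)$ for every $\omega\in A_q$ and $r\in(1,\infty)$, by weighted vector-valued Calder\'on--Zygmund theory.

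With all hypotheses checked, the conclusion follows from \cite[Theorem 156]{Sawano-Book II}. For a self-contained argument, the standard route is this. Fix a ball $B=B(x,r)$ and split $f_k=f_k\boldsymbol{1}_{2B}+f_k\boldsymbol{1}_{(2B)^\complement}$. For the local part, the weighted $L^q(\ell^r,w)$ bound controls the contribution by $\|(\sum_k|f_k|^r)^{1/r}\|_{L^q(2B,w)}$. For the global part, the kernel decay gives the pointwise bound
\[
\Bigl(\sum_k|R_j(f_k\boldsymbol{1}_{(2B)^\complement})(y)|^r\Bigr)^{1/r}\lesssim\int_{2r}^\infty\fint_{B(x,s)}\Bigl(\sum_k|f_k|^r\Bigr)^{1/r}\frac{ds}{s},\quad y\in B,
\]
obtained via Minkowski's inequality. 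Then Lemma \ref{lem:250104-11}(iii) with $q_0=1$ and $q_1=q$ bounds each average by $[w(B(x,s))]^{-1/q}$ times the local weighted $L^q$ norm, hence by $|B(x,s)|^{\frac1q-\frac1p}[w(B(x,s))]^{-1/q}$ times the Morrey norm. Multiplying by $|B|^{\frac1p-\frac1q}[w(B)]^{1/q}$ and applying \eqref{20260511.1639} closes the estimate.

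The main obstacle is the tail term: the $s$-integral must converge with the right normalization. This is exactly where \eqref{20260623.2235} is needed, through Proposition \ref{prop:260117-2}. Everything else is routine bookkeeping.
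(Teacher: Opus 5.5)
Your proposal is correct and follows the same route as the paper. The paper likewise verifies $w\in A_q(\mathbb{R}^n)$, $w\in\mathcal{B}_{p,q}(\mathbb{R}^n)$ (Proposition \ref{prop:260117-1}) and the integral condition \eqref{20260511.1639} (Proposition \ref{prop:260117-2}), then invokes \cite[Theorem 156]{Sawano-Book II} and omits the details; your local/tail sketch is the standard argument behind that theorem, and it is sound, with the tail term closed exactly by \eqref{20260511.1639}.
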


The boundedness of Riesz transforms on $\mathcal{M}^p_q(\Omega,w)$,
established in Proposition~\ref{prop:260117-3},
allows us to identify the homogeneous weighted Sobolev--Morrey norm
with a norm defined via the fractional Laplacian (see Theorem
\ref{cor:260117-1} below).

Let $\alpha\in(0,n)$. The \emph{fractional integral operator
$I_\alpha$} is defined by setting, for any suitable function $f$ on
$\mathbb{R}^n$ and for any $x\in\mathbb{R}^n$,
\begin{align*}
I_\alpha f(x):=\int_{\mathbb{R}^n}\frac{f(y)}{|x-y|^{n-\alpha}}\,dy.
\end{align*}
Finally, we establish the boundedness of fractional integral operators
on $\mathcal{M}^p_q(\Omega,w)$.

\begin{proposition}\label{prop:20260420.1836}
Let $1<u\leq s<\infty$, $1<q\leq p<\infty$ with $\frac{s}{u}=\frac{p}{q}$
and $u<q$, $\alpha\in(0,n)$ satisfy $\alpha=(n-d-1)(\frac1u-\frac1q)
-n(\frac1s-\frac1p)$, and $w$ be as in \eqref{dw}.
Then $I_\alpha$ is bounded from
$\mathcal{M}_q^p(\Omega,w)$ to $\mathcal{M}_u^s(\Omega,w)$.
\end{proposition}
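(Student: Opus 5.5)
The plan is an Adams--Hedberg type argument: first a pointwise estimate of $I_\alpha f$ by a power of a maximal function times a power of $\|f\|_{\mathcal{M}^p_q(\Omega,w)}$, then the norm identity $\|g^{q/u}\|_{\mathcal{M}^s_u(\Omega,w)}=\|g\|_{\mathcal{M}^p_q(\Omega,w)}^{q/u}$, which is exactly where $\frac su=\frac pq$ is used.

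\textbf{Bookkeeping of exponents.} Write
\begin{align*}
\phi_{p,q}(x,\rho):=|B(x,\rho)|^{\frac1p-\frac1q}[w(B(x,\rho))]^{\frac1q},
\qquad
\beta:=\frac np-\frac{n-d-1}{q}.
\end{align*}
Since $\frac1s=\frac{q}{pu}$, the hypothesis on $\alpha$ reads $\alpha=-(\frac qu-1)\beta$. Because $u<q$ and $\alpha>0$, this forces $\beta<0$, i.e. the regime opposite to \eqref{20260623.2235}. So Proposition~\ref{thm:260126-1} is not available, and the proof must work directly with the balls.

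\textbf{Pointwise estimate (Hedberg).} For $x\in\Omega$ and $R>0$ I split $I_\alpha f(x)$ into $|x-y|<R$ and $|x-y|\ge R$. In each dyadic annulus at scale $\rho$, Lemma~\ref{lem:250104-11}(iii) with $q_0=1$, $q_1=q$ gives
\begin{align*}
\rho^{\alpha-n}\int_{B(x,\rho)}|f|\lesssim \frac{\rho^{\alpha}}{\phi_{p,q}(x,\rho)}\|f\|_{\mathcal{M}^p_q(\Omega,w)}.
\end{align*}
By Lemma~\ref{lem:250104-11}(ii), $\phi_{p,q}(x,\rho)\sim\rho^{\beta}$ when $\rho\gtrsim\delta(x)$. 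Since $\alpha-\beta>0$, the small scales are summable and give $R^{\alpha-\beta}\|f\|$, at least when $R\gtrsim\delta(x)$. For $\rho<\delta(x)/2$ one uses Lemma~\ref{lem:250104-11}(i) instead; the weight is essentially constant there, and those scales contribute a classical term controlled by $\rho^\alpha Mf(x)$.

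The large scales must be balanced against a maximal-type quantity. I would try to bound them by $R^{\alpha}\,\mathcal{M}f(x)$, where $\mathcal{M}$ is a suitable (possibly fractional, or $w$-weighted as in $M_w$) maximal operator. Optimizing in $R$ then yields
\begin{align*}
|I_\alpha f(x)|\lesssim \|f\|_{\mathcal{M}^p_q(\Omega,w)}^{1-\frac qu}[\mathcal{M}f(x)]^{\frac qu},
\end{align*}
since $1-\frac\alpha\beta=\frac qu$.

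\textbf{Norm estimate.} If $\mathcal{M}$ is bounded on $\mathcal{M}^p_q(\Omega,w)$, the norm identity above finishes the proof. If not, I would instead localize: fix a ball $B(z,r)$ and split $f=f\boldsymbol{1}_{B(z,2r)}+f\boldsymbol{1}_{B(z,2r)^\complement}$.
\begin{enumerate}
\item[(a)] For the far piece, on $B(z,r)$ one has the pointwise bound $\sum_{\rho\ge r}\rho^\alpha/\phi_{p,q}(z,\rho)$. One then checks, case by case via Lemma~\ref{lem:250104-11}(i),(ii) according to whether $\delta(z)\le 2r$ or not, that $r^\alpha\phi_{s,u}(z,r)/\phi_{p,q}(z,r)\lesssim1$; the relation defining $\alpha$ makes the powers of $r$ cancel near $\Gamma$.
\item[(b)] For the near piece, I would use a local weighted $L^q(w)\to L^u(w)$ bound on $B(z,2r)$. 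This can be obtained from the Hedberg estimate restricted to scales $\le 4r$ together with the weighted maximal inequality for $w\in A_q(\mathbb{R}^n)$ (Remark~\ref{rmk:20260508.2141}(ii)), combined with H\"older's inequality since $u<q$.
\end{enumerate}

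\textbf{Main obstacle.} I expect the main obstacle to be the treatment of the large scales $\rho\gg R$, equivalently the far piece (a). With $\beta<0$, the Morrey-scale bound $\rho^{\alpha}/\phi_{p,q}$ grows in $\rho$ near $\Gamma$, so the tail is not summable by the usual geometric-decay argument. Its control must come from the $\rho^{\alpha-n}$ decay of the kernel together with the true mass distribution of $w$ at large scales: the growth estimate $w(B(x,r))\lesssim 2^{-k(d+1)}w(B(x,2^kr))$ of Proposition~\ref{prop:260117-2}, and the fact that far from $\Gamma$ the weight is small, by Lemma~\ref{lem:250104-11}(i). I would first try to redo the dyadic summation with the exact $x$-dependent form of $\phi_{p,q}(x,\rho)$ rather than its near-boundary approximation, and check whether the resulting series converges for the given exponents.
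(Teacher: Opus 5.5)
\textbf{Verdict: the proposal has a genuine gap, and it cannot be closed, because the proposition is false under its own hypotheses.} Your exponent bookkeeping is right, and it is what decides this. Since $\alpha=-(\frac qu-1)\beta$ with $\beta:=\frac np-\frac{n-d-1}{q}$, the hypotheses force $\beta<0$. So the ``main obstacle'' you flag (the large scales in the Hedberg step, equivalently the far piece (a)) is not a technical difficulty.

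Here is a counterexample. Note first that $\beta<0$ forces $q<p$, since $p=q$ gives $\beta=\frac{d+1}{q}>0$. Fix $x_0\in\Gamma$ and let $f:=w^{-1/q}|\cdot-x_0|^{-n/p}$ on $\Omega$.
\begin{enumerate}
\item[(1)] Since $|f|^qw=|\cdot-x_0|^{-nq/p}$, every ball satisfies $\int_{B(x,r)}|f|^qw\,dy\le\int_{B(x_0,r)}|y-x_0|^{-nq/p}\,dy\sim r^{n-\frac{nq}{p}}$. Hence $\|f\|_{\mathcal{M}^p_q(\Omega,w)}\lesssim1$.
\item[(2)] The bound $|E_K|\lesssim2^{K(d-n)}r^n$ from the proof of Lemma \ref{lem:250104-11}(ii) shows the following. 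For a fixed large $K$, a fixed proportion of each annulus $A_j:=\{2^j\le|y-x_0|<2^{j+1}\}$ satisfies $\delta(y)>2^{j+1-K}$, and on that part $f(y)\gtrsim2^{-j\beta}$.
\item[(3)] Hence, for every $x\in\mathbb{R}^n$,
\begin{align*}
I_\alpha f(x)\gtrsim\sum_{j:\,2^j\ge2|x-x_0|}2^{j(\alpha-n)}\,2^{jn}\,2^{-j\beta}
=\sum_{j:\,2^j\ge2|x-x_0|}2^{j(\alpha-\beta)}=\infty,
\end{align*}
because $\alpha-\beta=-\frac qu\beta>0$.
\item[(4)] No density argument helps. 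Since $0\le f\le|\cdot-x_0|^{-\beta}$ is bounded on bounded sets, the truncations $f\boldsymbol{1}_{B(x_0,2^N)}\in L^\infty_{\mathrm c}(\mathbb{R}^n)$ are uniformly bounded in $\mathcal{M}^p_q(\Omega,w)$. Testing on $B(x_0,1)$, however, gives $\|I_\alpha(f\boldsymbol{1}_{B(x_0,2^N)})\|_{\mathcal{M}^s_u(\Omega,w)}\gtrsim2^{N(\alpha-\beta)}$.
\item[(5)] The hypotheses are consistent, e.g.\ $n=3$, $d=1$, $q=2$, $p=8$, $u=\frac32$, $s=6$, $\alpha=\frac1{24}$.
\end{enumerate}

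The example also shows where your sketch breaks. First, the large-scale part $\int_{|x-y|\ge R}|f(y)||x-y|^{\alpha-n}\,dy$ is nonincreasing in $R$. It therefore cannot be $\lesssim R^\alpha\mathcal{M}f(x)$, with $\mathcal{M}f(x)<\infty$, for all small $R$ unless $f=0$. Moreover, both of your terms increase in $R$, so there is nothing to balance. Second, in (a) the series $\sum_{\rho\ge r}\rho^\alpha/\phi_{p,q}(z,\rho)$ behaves like $\sum_{\rho\ge r}\rho^{\alpha-\beta}$ near $\Gamma$ and diverges. The example shows this size is actually attained, so no finer use of Proposition \ref{prop:260117-2} or Lemma \ref{lem:250104-11}(i) can rescue it.

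Your check $r^\alpha\phi_{s,u}(z,r)\lesssim\phi_{p,q}(z,r)$ is correct. It only controls the first term of that series, and it is what makes the local piece (b) work.

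That single-ball inequality, $|B|^{\frac1s-\frac1p+\frac\alpha n}\lesssim[\frac{w(B)}{|B|}]^{\frac1q-\frac1u}$, is the whole of the paper's proof. The paper verifies it via Lemma \ref{lem:250104-11}(i),(ii), exactly as in your (a), and delegates the rest to Corollary 1.9 of Nakamura's paper. The example shows this reduction cannot be valid here. Any extra hypothesis that controls the tail, of the type $w\in\mathcal{B}_{p,q}(\mathbb{R}^n)$ or \eqref{20260511.1639}, holds only when $\beta\ge0$, resp.\ $\beta>0$, by Propositions \ref{prop:260117-1} and \ref{prop:260117-2}.

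So neither your plan nor the paper's argument can establish the statement as written; the hypotheses must be changed. For instance, with $u>q$ the same formula gives $\alpha=(1-\frac qu)\beta$, which is positive exactly when $\beta>0$.
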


\begin{proof}
By \cite[Corollary 1.9]{Nakamura16}, it suffices to prove that there exists
a positive constant $C$ such that, for any ball $B\subset\mathbb{R}^n$,
\begin{align}\label{20260420.1843}
|B|^{\frac1s-\frac1p+\frac\alpha n}\leq
C\left[\frac{w(B)}{|B|}\right]^{\frac1q-\frac1u}.
\end{align}
Indeed, let $x\in\mathbb{R}^n$ and $r\in(0,\infty)$. If $\delta(x)\leq2r$, then,
from the definition of $\alpha$ and Lemma \ref{lem:250104-11}(ii), we infer that
\begin{align*}
|B(x,r)|^{\frac1s-\frac1p+\frac\alpha n}&\sim
r^{n(\frac1s-\frac1p+\frac\alpha n)}
=r^{(n-d-1)(\frac1u-\frac1q)}\sim
\left[\frac{w(B(x,r))}{|B(x,r)|}\right]^{\frac1q-\frac1u}.
\end{align*}
If $\delta(x)>2r$, then, using the definition of $\alpha$, $u<q$, and Lemma
\ref{lem:250104-11}(i), we find that
\begin{align*}
|B(x,r)|^{\frac1s-\frac1p+\frac\alpha n}&\sim
r^{n(\frac1s-\frac1p+\frac\alpha n)}=r^{(n-d-1)(\frac1u-\frac1q)}
\lesssim[\delta(x)]^{(n-d-1)(\frac1u-\frac1q)}
\sim\left[\frac{w(B(x,r))}{|B(x,r)|}\right]^{\frac1q-\frac1u}.
\end{align*}
In conclusion, \eqref{20260420.1843} holds. This finishes the proof of
Proposition \ref{prop:20260420.1836}.
\end{proof}

\subsection{Littlewood--Paley Characterizations}
\label{subsection3.3}

Denote by $\mathcal{S}(\mathbb{R}^n)$ the \emph{space of all Schwartz
functions} equipped with the well-known topology determined by a countable
family of norms and by $\mathcal{S}'(\mathbb{R}^n)$ its dual space, the
\emph{space of all tempered distributions} equipped with the weak-$\ast$
topology. In this subsection, we present the Littlewood--Paley
characterization of $\mathcal{M}^p_q(\Omega,w)$. To begin with, we point out
that $\mathcal{M}^p_q(\Omega,w)$ can be embedded into
$\mathcal{S}'(\mathbb{R}^n)$.

\begin{proposition}\label{lem:250919-31}
Let $1\leq q\leq p<\infty$ and $w$ be as in \eqref{dw}. Then
$\mathcal{M}^p_q(\Omega,w)\subset\mathcal{S}'(\mathbb{R}^n)$.
\end{proposition}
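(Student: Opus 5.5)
The plan is to show that every $f\in\mathcal{M}^p_q(\Omega,w)$ is locally integrable on $\mathbb{R}^n$ with polynomial growth in the averaged sense:
\begin{align*}
\int_{B(\mathbf{0},R)}|f(y)|\,dy\lesssim R^{N}\|f\|_{\mathcal{M}^p_q(\Omega,w)}
\quad\text{for all } R\ge1,
\end{align*}
for some fixed $N$. Once this is established, the pairing
\begin{align*}
\langle f,\varphi\rangle:=\int_{\mathbb{R}^n}f(y)\varphi(y)\,dy,
\qquad \varphi\in\mathcal{S}(\mathbb{R}^n),
\end{align*}
converges absolutely and defines a continuous linear functional on $\mathcal{S}(\mathbb{R}^n)$. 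Since $|\Gamma|=0$ (as $\mathcal{H}^n(\Gamma)=0$), values of $f$ on $\Gamma$ are irrelevant, so $f$ is determined by its restriction to $\Omega$.

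\textbf{Key step: the local estimate.} Fix a ball $B=B(x,r)$. By Lemma \ref{lem:250104-11}(iii) with $q_0=1$ and $q_1=q$,
\begin{align*}
\fint_B|f|\,dy
\lesssim\left[\frac{1}{w(B)}\int_{\Omega\cap B}|f|^qw\,dy\right]^{1/q}
\le [w(B)]^{-1/q}|B|^{\frac1q-\frac1p}\|f\|_{\mathcal{M}^p_q(\Omega,w)}.
\end{align*}
For $B=B(\mathbf{0},R)$ with $R\ge1$, we need a lower bound on $w(B)$ of the form $w(B)\gtrsim R^{-K}$. Lemma \ref{lem:250104-11} provides this:
\begin{enumerate}
\item[$\bullet$] if $\delta(\mathbf{0})\le2R$, then $w(B)\sim R^{d+1}\gtrsim1$ by part (ii);
\item[$\bullet$] if $\delta(\mathbf{0})>2R$, then $w(B)\sim R^n[\delta(\mathbf{0})]^{d+1-n}$ by part (i). Here $\delta(\mathbf{0})$ is a fixed finite number, but it cannot be bounded in terms of $R$ alone.
\end{enumerate}
To avoid the second case altogether, recenter at a fixed point $\xi_0\in\Gamma$ and use the balls $B(\xi_0,R)$, which exhaust $\mathbb{R}^n$. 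Then part (ii) always applies, and we obtain
\begin{align*}
\int_{B(\xi_0,R)}|f|\,dy\lesssim R^{\,n-\frac{d+1}{q}+\frac nq-\frac np}\|f\|_{\mathcal{M}^p_q(\Omega,w)}.
\end{align*}
This is a polynomial bound, with an exponent that is harmless because it is fixed.

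\textbf{Conclusion.} Decompose $\mathbb{R}^n$ into the dyadic annuli $B(\xi_0,2^{k+1})\setminus B(\xi_0,2^k)$ together with the unit ball. For $\varphi\in\mathcal{S}(\mathbb{R}^n)$, bound $|\varphi(y)|\le(1+|y-\xi_0|)^{-M}\sup_z(1+|z-\xi_0|)^M|\varphi(z)|$ and take $M$ larger than the exponent above. Summing over the annuli gives
\begin{align*}
|\langle f,\varphi\rangle|\lesssim\|f\|_{\mathcal{M}^p_q(\Omega,w)}\,p_M(\varphi),
\end{align*}
where $p_M$ is a Schwartz seminorm; the translation to $\xi_0$ only changes constants. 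This proves continuity, and moreover shows that the embedding is continuous.

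I expect no step to be a real obstacle. The only point requiring care is the choice of center: centering at a point of $\Gamma$ ensures that part (ii) of Lemma \ref{lem:250104-11} controls $w(B)$ from below uniformly in $R$.
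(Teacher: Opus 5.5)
Your proposal is correct and follows essentially the same route as the paper, which also places a point of $\Gamma$ at the origin, combines Lemma \ref{lem:250104-11}(iii) (with $q_0=1$, $q_1=q$), the Morrey norm, and Lemma \ref{lem:250104-11}(ii) to get $\int_{B(\mathbf{0},2^j)}|f|\,dx\lesssim 2^{j(n-\frac np+\frac{n-d-1}{q})}\|f\|_{\mathcal{M}^p_q(\Omega,w)}$ (the same exponent as yours), and then sums over dyadic annuli against the decay $|\varphi(x)|\lesssim(1+|x|)^{-N}$. Your observation that centering on $\Gamma$ makes part (ii) apply uniformly in $R$ is exactly the reason for the paper's normalization $\mathbf{0}\in\Gamma$.
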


\begin{proof}
Let $f\in\mathcal{M}^p_q(\Omega,w)$. After a translation of the coordinate
system, we may assume without loss of generality that $\mathbf{0}\in\Gamma$.
Here, and thereafter, ${\bf0}$ denotes the \emph{origin} of $\mathbb{R}^n$.
Combining (iii) and (ii) of Lemma \ref{lem:250104-11} and H\"older's
inequality, we obtain, for any $j\in\mathbb{Z}$,
\begin{align}\label{eq:20260209}
\int_{B(\mathbf{0},2^j)}|f(x)|\,dx
&=|B(\mathbf{0},2^j)|\fint_{B(\mathbf{0},2^j)}|f(x)|\,dx\notag\\
&\lesssim\frac{|B(\mathbf{0},2^j)|}{w(B(\mathbf{0},2^j))}
\int_{B(\mathbf{0},2^j)}|f(x)|w(x)\,dx\notag\\
&\leq\frac{|B(\mathbf{0},2^j)|^{1+\frac1q-\frac1p}}
{[w(B(\mathbf{0},2^j))]^\frac1q}\|f\|_{\mathcal{M}^p_q(\Omega,w)}
\sim2^{j(n-\frac{n}{p}+\frac{n-d-1}{q})}
\|f\|_{\mathcal{M}^p_q(\Omega,w)}.
\end{align}
Choose $N\in\mathbb{N}$ such that $n-\frac{n}{p}+\frac{n-d-1}{q}-N<0$.
Let $\varphi\in\mathcal{S}(\mathbb{R}^n)$. Then, for any $x\in\mathbb{R}^n$,
$|\varphi(x)|\lesssim\frac{1}{(1+|x|)^N}$, where the implicit positive constant
depends only on $N$ and the Schwartz norm of $\varphi$. This, together with
\eqref{eq:20260209}, further implies that
\begin{align*}
\|f\varphi\|_{L^1(\mathbb{R}^n)}&=\int_{B(\mathbf{0},1)}|f(x)\varphi(x)|\,dx
+\sum_{j\in\mathbb{Z}_+}\int_{B(\mathbf{0},2^{j+1})\setminus B(\mathbf{0},2^j)}
|f(x)\varphi(x)|\,dx\\
&\lesssim\|f\|_{\mathcal{M}^p_q(\Omega,w)}+\sum_{j\in\mathbb{Z}_+}
\int_{B(\mathbf{0},2^{j+1})}\frac{|f(x)|}{2^{jN}}\,dx\\
&\lesssim\left[1+\sum_{j\in\mathbb{Z}_+}
2^{j(n-\frac{n}{p}+\frac{n-d-1}{q}-N)}\right]\|f\|_{\mathcal{M}^p_q(\Omega,w)}
\lesssim\|f\|_{\mathcal{M}^p_q(\Omega,w)}.
\end{align*}
Therefore, $f$ determines a continuous linear functional on
$\mathcal{S}(\mathbb{R}^n)$ and hence $f\in\mathcal{S}'(\mathbb{R}^n)$.
This finishes the proof of Proposition \ref{lem:250919-31}.
\end{proof}

Let $\varphi\in\mathcal{S}(\mathbb{R}^n)$. In what follows, for any
$j\in\mathbb{Z}$, define $\varphi_j(\cdot):=\varphi(2^{-j}\cdot)$.
In addition, the \emph{Fourier transform} $\mathcal{F}\varphi$ and the
\emph{inverse Fourier transform} $\mathcal{F}^{-1}\varphi$ of $\varphi$ are
defined, respectively, by setting, for any $\xi\in\mathbb{R}^n$,
\begin{align*}
\mathcal{F}\varphi(\xi):=(2\pi)^{-\frac{n}{2}}\int_{\mathbb{R}^n}
\varphi(x)e^{-ix\cdot\xi}\,dx\mbox{\ \ and\ \ }
\mathcal{F}^{-1}\varphi(\xi):=\mathcal{F}\varphi(-\xi).
\end{align*}
Let $f\in\mathcal{S}'(\mathbb{R}^n)$. The \emph{Fourier transform}
$\mathcal{F}f$ and the \emph{inverse Fourier transform} $\mathcal{F}^{-1}f$ of
$f$ are defined, respectively, by setting, for any
$\varphi\in\mathcal{S}(\mathbb{R}^n)$, $\langle\mathcal{F}f,\varphi\rangle:=
\langle f,\mathcal{F}\varphi\rangle$ and $\langle\mathcal{F}^{-1}f,\varphi\rangle
:=\langle f,\mathcal{F}^{-1}\varphi\rangle$. It is well known that, for any
$g\in\mathcal{S}'(\mathbb{R}^n)$ and $h\in\mathcal{S}(\mathbb{R}^n)$,
$gh\in\mathcal{S}'(\mathbb{R}^n)$. For any given
$\psi\in\mathcal{S}(\mathbb{R}^n)$, define
$\psi(D)f:=\mathcal{F}^{-1}(\psi\mathcal{F}f).$
The operator $\psi(D)$ is called the \emph{Fourier multiplier}
associated with the symbol $\psi$.

Let $1<q\leq p<\infty$. Recall that the Morrey space
$\mathcal{M}^p_q(\mathbb{R}^n)$ can be realized as the dual
space of a Banach space, i.e., the block space
(see, for instance, \cite[Theorem 347]{SDH20-1}). In addition, the mapping
\begin{align*}
f\in\mathcal{M}^p_q(\mathbb{R}^n)\longmapsto w^{-\frac1q}f
\in\mathcal{M}^p_q(\Omega,w)
\end{align*}
is an isomorphism. Thus, $\mathcal{M}^p_q(\Omega,w)$ can also be regarded as
the dual space of a Banach space. As a consequence, we obtain the following
Littlewood--Paley characterization of $\mathcal{M}^p_q(\Omega,w)$, whose proof
follows closely that of \cite[Theorem 1.1]{IST15}; we omit the details.

\begin{proposition}\label{prop:LP-Morrey}
Let $1<q\leq p<\infty$ satisfy \eqref{20260623.2235},
$w$ be as in \eqref{dw},
and $\varphi\in C^\infty_{\mathrm{c}}(\mathbb{R}^n)$ be a nonnegative
function such that $\sum_{j\in\mathbb{Z}}\varphi_j=
\boldsymbol{1}_{\mathbb{R}^n\setminus\{{\mathbf{0}}\}}$.
Then the following statements hold.
\begin{enumerate}
\item[\rm(i)] If $f\in\mathcal{M}^p_q(\Omega,w)$, then
$f=\sum_{j\in\mathbb{Z}}\varphi_j(D)f$ in the weak-$\ast$ topology
of $\mathcal{M}^p_q(\Omega,w)$ and
\begin{align*}
\|f\|_{\mathcal{M}^p_q(\Omega,w)}\sim\left\|\left[\sum_{j\in\mathbb{Z}}
\left|\varphi_j(D)f\right|^2\right]^\frac12\right\|_{\mathcal{M}^p_q(\Omega,w)},
\end{align*}
where the positive equivalence constants are independent of $f$.

\item[\rm(ii)] If $f\in\mathcal{S}'(\mathbb{R}^n)$ satisfies
\begin{align*}
\left\|\left[\sum_{j\in\mathbb{Z}}\left|\varphi_j(D)f\right|^2\right]^\frac12
\right\|_{\mathcal{M}^p_q(\Omega,w)}<\infty,
\end{align*}
then there exists a decomposition $f=F+P$ in $\mathcal{S}'(\mathbb{R}^n)$,
where $F:=\sum_{j\in\mathbb{Z}}\varphi_j(D)f$ pointwise and $P$ is a
polynomial on $\mathbb{R}^n$. Moreover,
\begin{align*}
\|F\|_{\mathcal{M}^p_q(\Omega,w)}\sim\left\|\left[\sum_{j\in\mathbb{Z}}
\left|\varphi_j(D)f\right|^2\right]^\frac12\right\|_{\mathcal{M}^p_q(\Omega,w)},
\end{align*}
where the positive equivalence constants are independent of $f$.
\end{enumerate}
\end{proposition}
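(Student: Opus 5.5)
The plan is to transfer the classical Littlewood--Paley characterization of Morrey spaces via the predual structure, following \cite[Theorem 1.1]{IST15}. The main tool is the vector-valued boundedness of Calder\'on--Zygmund-type operators on $\mathcal{M}^p_q(\Omega,w)$. Under \eqref{20260623.2235}, Propositions \ref{prop:260117-1} and \ref{prop:260117-2} place $w$ in Nakamura's framework. In that framework the Fefferman--Stein inequality (Proposition \ref{thm:260126-1}) holds, and so does the $\ell^2$-valued boundedness of singular integrals (\cite[Theorem 156]{Sawano-Book II}, as in Proposition \ref{prop:260117-3}).

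\textbf{Step 1 (upper bound in (i)).} I will regard $f\mapsto\{\varphi_j(D)f\}_{j\in\mathbb{Z}}$ as a vector-valued Calder\'on--Zygmund operator with $\ell^2$-valued kernel $\{2^{jn}(\mathcal{F}^{-1}\varphi)(2^j\cdot)\}_j$. This kernel satisfies the standard size and smoothness bounds. The weighted Morrey boundedness theorem then gives
\[
\Bigl\|\Bigl(\sum_j|\varphi_j(D)f|^2\Bigr)^{1/2}\Bigr\|_{\mathcal{M}^p_q(\Omega,w)}\lesssim\|f\|_{\mathcal{M}^p_q(\Omega,w)}.
\]
For a more hands-on route, I would instead use a pointwise bound $|\varphi_j(D)f|\lesssim M f$ for each piece and combine it with the usual square-function argument. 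That argument runs through the weighted $L^q(w)$ theory (recall $w\in A_q$ by Remark \ref{rmk:20260508.2141}(ii)), localized via the decay in Proposition \ref{prop:260117-2}.

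\textbf{Step 2 (convergence and lower bound in (i)).} Proposition \ref{lem:250919-31} puts $f$ in $\mathcal{S}'$, so each $\varphi_j(D)f$ is meaningful. Choose $\psi\in C_{\mathrm{c}}^\infty$ equal to $1$ on the support of $\varphi$, so that $\varphi_j(D)=\psi_j(D)\varphi_j(D)$. To get weak-$\ast$ convergence, I pair with the predual, which is the block space transported by the isomorphism $f\mapsto w^{-1/q}f$. On that predual, the adjoint partial sums converge in norm; this follows from the same vector-valued estimates applied to the predual, using that $\mathcal{M}^p_q(\Omega,w)$ is the dual of a Banach space. For the reverse inequality I argue by duality. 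For $g$ in the predual,
\[
|\langle f,g\rangle|=\Bigl|\sum_j\langle\varphi_j(D)f,\psi_j(D)g\rangle\Bigr|\le\Bigl\|\Bigl(\sum_j|\varphi_j(D)f|^2\Bigr)^{1/2}\Bigr\|_{\mathcal{M}^p_q}\Bigl\|\Bigl(\sum_j|\psi_j(D)g|^2\Bigr)^{1/2}\Bigr\|_{\mathrm{predual}}.
\]
The last factor is $\lesssim\|g\|$ by boundedness of the vector-valued operator on the block space, which is dual to Step 1 applied with $\ell^2$-valued targets.

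\textbf{Step 3 (part (ii)).} Set $F:=\sum_j\varphi_j(D)f$. Pointwise convergence follows because $|\varphi_j(D)f|\le(\sum_k|\varphi_k(D)f|^2)^{1/2}$, which is locally integrable by Proposition \ref{lem:250919-2}. Low frequencies, $j\to-\infty$, are controlled using $\psi_j(D)$ and the estimate \eqref{eq:20260209}, with growth rate $2^{j(n-\frac np+\frac{n-d-1}{q})}$ times a kernel decay of $2^{jn}$. The same duality as in Step 2 gives $\|F\|_{\mathcal{M}^p_q}$ comparable to the square function. Then $f-F$ has Fourier transform supported at the origin, so $f-F$ is a polynomial $P$.

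\textbf{Main obstacle.} The hardest part is the low-frequency convergence in (ii), together with the weak-$\ast$ convergence. This is where \eqref{20260623.2235} is genuinely needed. I expect to handle it by splitting $\sum_{j<0}$, bounding $\|\varphi_j(D)f\|_{L^\infty(B)}$ through \eqref{eq:20260209}, and summing geometrically. If the series fails to converge, I would subtract Taylor polynomials, which is precisely what produces the polynomial $P$.
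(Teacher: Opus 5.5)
Your proposal takes the same route as the paper, whose entire argument is that $\mathcal{M}^p_q(\Omega,w)$ is a dual space, since the isomorphism $f\mapsto w^{-1/q}f$ from $\mathcal{M}^p_q(\mathbb{R}^n)$ transports the block-space predual, and that the proof of \cite[Theorem 1.1]{IST15} then carries over using the vector-valued estimates available under \eqref{20260623.2235}. The one slip is in Step 3: dominating each $|\varphi_j(D)f|$ by the square function does not make $\sum_j\varphi_j(D)f$ converge. Instead, the part with $j\ge0$ converges in $\mathcal{S}'(\mathbb{R}^n)$ because $f\in\mathcal{S}'(\mathbb{R}^n)$ and $\sum_{0\le j\le J}\varphi_j(D)\phi$ converges in $\mathcal{S}(\mathbb{R}^n)$ for each $\phi\in\mathcal{S}(\mathbb{R}^n)$, your geometric bound handles $j<0$, and a.e.\ convergence of the partial sums follows afterwards from $F\in L^1_{\mathrm{loc}}(\Omega)$ (Proposition \ref{lem:250919-2}) by the usual approximate-identity argument.
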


\subsection{Complex Interpolation}
\label{subsection3.4}

In this subsection, we list an interpolation formula of
$\mathcal{M}^p_q(\Omega,w)$. Suppose that $X_0$ and $X_1$ are two complex
Banach spaces. The couple $(X_0, X_1)$ is said to be \emph{compatible} if
$X_0$ and $X_1$ are continuously embedded into a common Hausdorff topological
vector space $X$. In this case, we can naturally define two Banach spaces
$X_0+X_1$ and $X_0 \cap X_1$. More precisely, let
\begin{align*}
X_0+X_1:=\left\{x\in X:x=x_0+x_1,\ x_0\in X_0,\mbox{\ and\ }x_1\in X_1\right\}
\end{align*}
equipped with the norm
\begin{align*}
\|x\|_{X_0+X_1}:=\inf\{\|x_0\|_{X_0}+\|x_1\|_{X_1}:
x_0\in X_0,\ x_1\in X_1,\mbox{\ and\ }x=x_0+x_1\}.
\end{align*}
For any $x\in X_0\cap X_1$, we define the norm $\|x\|_{X_0\cap X_1}:=
\max\{\|x\|_{X_0},\,\|x\|_{X_1}\}$. For a comprehensive introduction of these
interpolation couples, we refer to \cite[Section 2.3]{BeLo76}.

Next, we recall the concept of Calder\'on's first
complex interpolation spaces (see \cite[p.\,114, 3]{Calderon64}
or \cite[p.\,88]{BeLo76}). To this end, let
$S:= \{ z\in\mathbb{C}:0<\Re(z)<1\}$ and $\overline{S}$ be its closure in
$\mathbb{C}$, where $\Re(z)$ denotes the \emph{real part} of $z$.

\begin{definition}
Let $(X_0, X_1)$ be a compatible couple of complex Banach spaces.
\begin{enumerate}
\item[\rm(i)] The \emph{space $\mathcal{F}(X_0, X_1)$} is defined to be the
set of all functions $F:\overline{S}\to X_0+X_1$ such that
\begin{enumerate}
\item[{\rm(a)}] $F$ is bounded and continuous on $\overline{S}$,
\item[{\rm(b)}] $F$ is analytic in $S$,
\item[{\rm(c)}] for any $j\in\{0,\,1\}$, the function $t\in\mathbb{R}
\longmapsto F(j+it)\in X_j$ is bounded and continuous.
\end{enumerate}
Moreover, the space $\mathcal{F}(X_0,X_1)$ is equipped with the norm,
for any $F\in\mathcal{F}(X_0,X_1)$,
\begin{align*}
\|F\|_{\mathcal{F}(X_0,X_1)}:=\max\left\{\sup_{z\in
i\mathbb{R}}\|F(z)\|_{X_0},\,\sup_{z\in 1+i\mathbb{R}}\|F(z)\|_{X_1}\right\}.
\end{align*}
\item[\rm(ii)] Let $\theta\in(0,1)$. The \emph{first complex interpolation
space $[X_0,X_1]_\theta$} with respect to $(X_0, X_1)$ is defined to be the
set of all functions $f\in X_0+X_1$ such that $f=F(\theta)$ for some
$F\in\mathcal{F}(X_0,X_1)$, equipped with the norm
\begin{align*}
\|f\|_{[X_0,X_1]_\theta}:=\inf\left\{\|F\|_{\mathcal{F}(X_0,X_1)}:
f=F(\theta)\mbox{\ for\ some\ }F\in\mathcal{F}(X_0,X_1)\right\}.
\end{align*}
\end{enumerate}
\end{definition}

We then recall the concept of Calder\'on's second complex interpolation spaces
(see \cite[p.\,115, 5]{Calderon64} or \cite[p.\,89]{BeLo76}).

\begin{definition}
Let $(X_0, X_1)$ be a compatible couple of complex Banach spaces.
\begin{enumerate}
\item[\rm(i)] The \emph{space $\mathcal{G}(X_0, X_1)$} is defined to be the
set of all functions $G:\overline{S}\to X_0+X_1$ such that
\begin{enumerate}
\item[{\rm (a)}] $G$ is continuous on $\overline{S}$ and
$\sup_{z\in\overline{S}}\|\frac{G(z)}{1+|z|}\|_{X_0+X_1}<\infty$,
\item[{\rm (b)}] $G$ is analytic in $S$,
\item[{\rm (c)}] for any $j\in\{0,\,1\}$, the function $t\in\mathbb{R}
\longmapsto G(j+it)-G(j)\in X_j$ is Lipschitz continuous; i.e., there exists
a positive constant $C$ such that, for any $t,s\in\mathbb{R}$,
\begin{align*}
\left|\left[G(j+it)-G(j)\right]-\left[G(j+is)-G(j)\right]\right|\leq C|t-s|.
\end{align*}
\end{enumerate}
In addition, the space $\mathcal{G}(X_0,X_1)$ is equipped with the norm
\begin{align*}
\|G\|_{\mathcal{G}(X_0,X_1)}:=\max\left\{
\sup_{\genfrac{}{}{0pt}{}{t,s\in\mathbb{R}}{t\neq s}}
\left\|\frac{G(it)-G(is)}{t-s}\right\|_{X_0},\,
\sup_{\genfrac{}{}{0pt}{}{t,s\in\mathbb{R}}{t\neq s}}
\left\|\frac{G(1+it)-G(1+is)}{t-s}\right\|_{X_1}\right\}.
\end{align*}
\item[\rm(ii)] Let $\theta\in(0,1)$. The \emph{second complex interpolation
space $[X_0, X_1]^\theta$} with respect to $(X_0, X_1)$ is defined to be the
set of all functions $f\in X_0+X_1$ such that $f=G'(\theta)$ for some
$G\in\mathcal{G}(X_0,X_1)$, equipped with the norm
\begin{align*}
\|f\|_{[X_0,X_1]^\theta}:=\inf\left\{\|G\|_{\mathcal{G}(X_0,X_1)}:
f=G'(\theta)\mbox{\ for\ some\ }G\in\mathcal{G}(X_0,X_1) \right\}.
\end{align*}
\end{enumerate}
\end{definition}

As a direct corollary of \cite[Theorem 2.3]{HakimNakamuraSawano17}, we state
the following complex interpolation formula for $\mathcal{M}^p_q(\Omega,w)$.
The details are omitted.

\begin{proposition}\label{thm:interp-morrey}
Let $1\leq q_0\leq p_0<\infty$, $1\leq q_1\leq p_1<\infty$ with
\begin{align}\label{eq:ratio-condition}
\frac{q_0}{p_0}=\frac{q_1}{p_1},
\end{align}
$\theta\in(0,1)$, $1\leq q\leq p<\infty$ satisfy
\begin{align}\label{eq:interp-indices}
\frac{1}{p}=\frac{1-\theta}{p_0}+\frac{\theta}{p_1}
\mbox{\ \ and\ \ }
\frac{1}{q}=\frac{1-\theta}{q_0}+\frac{\theta}{q_1},
\end{align}
and $w$ be as in \eqref{dw}.
Then
\begin{align}\label{eq:interp-morrey}
\left[\mathcal{M}^{p_0}_{q_0}(\Omega,w),
\mathcal{M}^{p_1}_{q_1}(\Omega,w)\right]^\theta=\mathcal{M}^p_q(\Omega,w).
\end{align}
\end{proposition}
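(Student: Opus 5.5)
The plan is to deduce \eqref{eq:interp-morrey} from \cite[Theorem 2.3]{HakimNakamuraSawano17}, which gives the second complex interpolation of weighted Morrey spaces of Samko type, $\mathcal{M}^p_q(\mathbb{R}^n,\omega)$. The first step is to identify $\mathcal{M}^p_q(\Omega,w)$ with such a space. Since $\mathcal{H}^n(\Gamma)=0$, integrating over $\Omega(x,r)$ is the same as integrating over $B(x,r)$. Hence $\mathcal{M}^p_q(\Omega,w)$ coincides, with equal norms, with the weighted Morrey space on $\mathbb{R}^n$ with weight $w$ and normalization $|B|^{\frac1p-\frac1q}$. The second step is to check the structural hypotheses of that theorem on the weight. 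By Remark \ref{rmk:20260508.2141}(ii), $w\in A_1(\mathbb{R}^n)$, so $w$ is locally integrable, positive almost everywhere, and doubling. In particular, $w(y)\,dy$ and $dy$ are mutually absolutely continuous. Thus, for every admissible index pair, the spaces $\mathcal{M}^{p_j}_{q_j}(\Omega,w)$ embed continuously into $L^0(\mathbb{R}^n)$ by Proposition \ref{lem:250919-2}, and $(\mathcal{M}^{p_0}_{q_0}(\Omega,w),\mathcal{M}^{p_1}_{q_1}(\Omega,w))$ is a compatible couple.

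The core of the argument is the lattice-type mechanism behind \cite[Theorem 2.3]{HakimNakamuraSawano17}, which I would rederive if the weighted statement there does not literally cover our normalization. For the inclusion ``$\supset$'', take $f\in\mathcal{M}^p_q(\Omega,w)$ with norm $1$, and write $f=\operatorname{sgn}(f)|f|$. Set
\[
G(z):=\int_\theta^z \operatorname{sgn}(f)\,|f|^{p(\frac{1-\zeta}{p_0}+\frac{\zeta}{p_1})}\,d\zeta .
\]
The ratio condition \eqref{eq:ratio-condition}, together with \eqref{eq:interp-indices}, gives $\frac{p}{p_j}=\frac{q}{q_j}$. On the boundary lines this yields
\[
\bigl\||f|^{p/p_j}\bigr\|_{\mathcal{M}^{p_j}_{q_j}(\Omega,w)}=\|f\|_{\mathcal{M}^p_q(\Omega,w)}^{p/p_j},
\]
because for each ball $\int_{B}|f|^{q}w$ is raised to exactly the right power. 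This gives the Lipschitz bounds required by the definition of $\mathcal{G}$, and $G'(\theta)=f$.

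For the inclusion ``$\subset$'', take $G\in\mathcal{G}$ with $f=G'(\theta)$. Fix a ball $B$ and restrict to the weighted Lebesgue spaces $L^{q_j}(B,w)$. The classical Stein--Weiss/Calder\'on result gives $[L^{q_0}(B,w),L^{q_1}(B,w)]^\theta=L^q(B,w)$ isometrically. Combining this with the three-lines argument on the difference quotients $\frac{G(z+ih)-G(z)}{ih}$, one obtains
\[
\|f\|_{L^q(B,w)}\le\|G\|_{\mathcal{G}}\,|B|^{(1-\theta)(\frac1{q_0}-\frac1{p_0})+\theta(\frac1{q_1}-\frac1{p_1})}=\|G\|_{\mathcal{G}}\,|B|^{\frac1q-\frac1p}.
\]
Taking the supremum over $B$ gives the bound. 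Here no Muckenhoupt condition beyond local integrability of $w$ is really needed. That is consistent with the statement imposing no restriction like \eqref{eq:cond-pq}.

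The main obstacle I expect is in the ``$\supset$'' direction when the index pairs differ. There, $G(z)$ must take values in $X_0+X_1$ and satisfy the growth bound $\sup_z\|G(z)/(1+|z|)\|_{X_0+X_1}<\infty$. The powers $|f|^{p(\frac{1-\zeta}{p_0}+\frac{\zeta}{p_1})}$ are not individually controlled in either endpoint space for interior $\zeta$. I would handle this as in \cite{HakimNakamuraSawano17}: split $|f|$ at level $1$. On $\{|f|\le1\}$ the exponent can be lowered toward the larger of $p/p_j$, and on $\{|f|>1\}$ raised, so that each piece lies in the appropriate endpoint space. This gives membership in the sum space, with a bound uniform along vertical lines up to the linear factor $1+|z|$.
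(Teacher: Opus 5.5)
Your proposal is correct and follows the paper's route. Since $\Gamma$ is Lebesgue-null, $\mathcal{M}^p_q(\Omega,w)$ is exactly the Samko-type weighted Morrey space on $\mathbb{R}^n$ with weight $w$, and the paper simply invokes Theorem 2.3 of Hakim--Nakamura--Sawano; the lattice mechanism you sketch (using $\frac{p}{p_j}=\frac{q}{q_j}$ and ball-wise restriction to $L^{q_j}(B,w)$) is what underlies that theorem.

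One small correction to the splitting step: on $\{|f|\le1\}$ you should dominate $|f|^{p(\frac{1-\Re\zeta}{p_0}+\frac{\Re\zeta}{p_1})}$ by $|f|^{\min_j p/p_j}$, and on $\{|f|>1\}$ by $|f|^{\max_j p/p_j}$ (not ``lowered toward the larger''). This directly gives $\|G(z)\|_{X_0+X_1}\lesssim 1+|z|$.
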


\begin{remark}\label{rem:diag-Lp}
We use the same notation as in Proposition \ref{thm:interp-morrey}.
If $p_0=q_0$ and $p_1=q_1$, then, combining \cite[Theorem 4.3.1]{BeLo76}, the
reflexivity of $\mathcal M^{p_i}_{q_i}(\Omega,w)=L^{p_i}(\Omega,w)$ with
$i\in\{0,1\}$, and \eqref{eq:interp-morrey}, we recover the standard weighted
$L^p$ interpolation formula
\begin{align*}
\left[L^{p_0}(\Omega,w),L^{p_1}(\Omega,w) \right]_\theta=
\left[L^{p_0}(\Omega,w),L^{p_1}(\Omega,w)\right]^\theta=L^p(\Omega,w).
\end{align*}
\end{remark}

\section{Weighted Sobolev--Morrey Spaces $\dot{W}^1\mathcal{M}^p_q(\Omega,w)$}
\label{section4}

In this section, we extend the weighted Sobolev space theory developed in
\cite{DFM21} to the framework of weighted Sobolev--Morrey spaces by seven
subsections. In Subsection \ref{subsection4.1}, we introduce the
weighted Sobolev--Morrey spaces $\dot{W}^1\mathcal{M}^p_q(\Omega,w)$ adapted
to $\Gamma$ and establish their completeness. Subsection \ref{subsection4.2}
is devoted to presenting a Riesz potential characterization of these spaces,
which subsequently leads to the corresponding Sobolev--Morrey embedding
theorem. The focus then shifts to the qualitative properties of functions
within these spaces: Subsection \ref{subsection4.3} examines the behavior of
ball averages as the radius tends to infinity, while Subsection
\ref{subsection4.4} investigates their continuity properties. In Subsections
\ref{subsection4.5} and \ref{subsection4.6}, we turn our attention to boundary
behavior, defining the trace space $Q^p_q(\Gamma)$ and proving the boundedness
of the trace operator $T$ and the extension operator $E$. We also show that
$E$ serves as a right inverse of $T$. Finally, in Subsection
\ref{subsection4.7}, combining the trace and the extension theorems, we
establish the complex interpolation identities for both
$\dot{W}^1 \mathcal{M}^p_q(\Omega, w)$ and $Q^p_q(\Gamma)$.

\subsection{Weighted Sobolev--Morrey Spaces Adapted to $\Gamma$}
\label{subsection4.1}

In this subsection, we introduce the weighted Sobolev--Morrey space
$\dot{W}^1\mathcal{M}^p_q(\Omega,w)$ adapted to $\Gamma$ and we prove
the completeness of these spaces.

\begin{definition}\label{def5.1}
Let $1\leq q\leq p<\infty$ and $w$ be as in \eqref{dw}.
The \emph{weighted Sobolev--Morrey space
$\dot{W}^1\mathcal{M}^p_q(\Omega,w)$} adapted to $\Gamma$
is defined to be the set of all $f\in L^1_\mathrm{loc}(\Omega)$, modulo
constant functions, such that $\nabla f \in \mathcal{M}^p_q(\Omega,w)$. This
space is equipped with the norm $\|f\|_{\dot{W}^1\mathcal{M}^p_q(\Omega,w)}:=
\|\nabla f\|_{\mathcal{M}^p_q(\Omega,w)}$.
\end{definition}

In particular, for any given $p\in[1,\infty)$, we simply denote
$\dot{W}^1\mathcal{M}^p_p(\Omega,w)$ by $\dot{W}^{1,p}(\Omega,w)$,
the \emph{weighted Sobolev space} adapted to $\Gamma$.
As an extension of \cite[Lemma 5.1]{DFM21}, we show the completeness of
$\dot{W}^1\mathcal{M}^p_q(\Omega,w)$.

\begin{lemma}\label{lem:250919-3}
Let $1\leq q\leq p<\infty$ and $w$ be as in \eqref{dw}.
Then $\dot{W}^1\mathcal{M}^p_q(\Omega,w)$ is complete.
\end{lemma}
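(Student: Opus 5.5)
The plan is to follow the standard argument for homogeneous Sobolev spaces: take a Cauchy sequence, identify the limit of the gradients in the Morrey space, and then recover a potential for that limit by fixing constants through ball averages.

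\emph{Step 1: the gradients converge.} Let $\{f_k\}_{k\in\mathbb{N}}$ be a Cauchy sequence in $\dot{W}^1\mathcal{M}^p_q(\Omega,w)$. Then $\{\nabla f_k\}_{k\in\mathbb{N}}$ is Cauchy in $\mathcal{M}^p_q(\Omega,w)$. This space is complete by the usual Fatou-type argument for weighted Morrey norms, and also via its isomorphism with $\mathcal{M}^p_q(\mathbb{R}^n)$ through multiplication by $w^{-1/q}$. Hence there exists $G\in\mathcal{M}^p_q(\Omega,w)$ with $\nabla f_k\to G$ in $\mathcal{M}^p_q(\Omega,w)$. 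By Proposition \ref{lem:250919-2}, this convergence also holds in $L^1_{\mathrm{loc}}(\Omega)$, and indeed in $L^1(B)$ for every ball $B$ by Lemma \ref{lem:250104-11}(iii) with $q_0=1$ and $q_1=q$.

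\emph{Step 2: normalize and get local convergence of the functions.} Since elements are defined modulo constants, replace $f_k$ by $f_k-(f_k)_{B(\mathbf{0},1)}$. Fix $R\ge1$ and set $B_R:=B(\mathbf{0},R)$. Apply the Poincar\'e inequality of Lemma \ref{DFM21}(i), valid on all of $\mathbb{R}^n$ because $\Gamma$ is null and segments avoid $\Gamma$ almost surely, to the difference $f_k-f_j\in W^{1,1}_{\mathrm{loc}}(\Omega)$. This gives
\begin{align*}
\fint_{B_R}\left|f_k-f_j-(f_k-f_j)_{B_R}\right|\lesssim R\fint_{B_R}|\nabla f_k-\nabla f_j|.
\end{align*}
Since $B_1\subset B_R$, we also have $|(f_k-f_j)_{B_R}-(f_k-f_j)_{B_1}|\lesssim R^{n}\fint_{B_R}|f_k-f_j-(f_k-f_j)_{B_R}|$, and $(f_k-f_j)_{B_1}=0$ by the normalization. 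Combining these with Step 1 shows that $\{f_k\}$ is Cauchy in $L^1(B_R)$ for every $R$. A diagonal argument then produces $f\in L^1_{\mathrm{loc}}(\mathbb{R}^n)\subset L^1_{\mathrm{loc}}(\Omega)$ with $f_k\to f$ in $L^1_{\mathrm{loc}}$.

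\emph{Step 3: identify the gradient and conclude.} For $\phi\in C^\infty_{\mathrm c}(\Omega)$ we have $\int f_k\,\partial_i\phi=-\int\partial_if_k\,\phi$. Passing to the limit using the $L^1_{\mathrm{loc}}$ convergences gives $\nabla f=G$ in the distributional sense on $\Omega$. Hence $f\in\dot{W}^1\mathcal{M}^p_q(\Omega,w)$ and
\begin{align*}
\|f_k-f\|_{\dot{W}^1\mathcal{M}^p_q(\Omega,w)}=\|\nabla f_k-G\|_{\mathcal{M}^p_q(\Omega,w)}\to0.
\end{align*}

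The main obstacle is Step 2 near $\Gamma$: balls meeting $\Gamma$ are not contained in $\Omega$, so the classical Poincar\'e inequality is unavailable there. This is why Lemma \ref{DFM21}(i), proved using Lemma \ref{lem:2.10}, is used on arbitrary balls. Together with the embedding into $L^1(B)$ from Lemma \ref{lem:250104-11}(iii), it closes the argument.
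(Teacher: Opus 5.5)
Your proof is correct and takes essentially the same route as the paper. You normalize by the average over $B(\mathbf{0},1)$, apply Lemma \ref{DFM21}(i) on balls that may meet $\Gamma$, use Lemma \ref{lem:250104-11}(iii) to pass from the Morrey norm to $L^1$ control on balls, and identify the limit gradient. The only difference is cosmetic: you invoke completeness of $\mathcal{M}^p_q(\Omega,w)$ to get the limit $G$ of the gradients directly, whereas the paper extracts an a.e.\ convergent subsequence and applies the Fatou property of the Morrey norm, which is the same mechanism behind that completeness.
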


\begin{proof}
Without loss of generality, we may assume that $\mathbf{0}\in\Gamma$. Let
$\{u_j\}_{j\in\mathbb{N}}$ be a Cauchy sequence in
$\dot{W}^1\mathcal{M}^p_q(\Omega,w)$. From Tonelli's
theorem, Lemma \ref{DFM21}(i), and (iii) and (ii) of Lemma \ref{lem:250104-11},
we deduce that, for any given $R\in[1,\infty)$ and for any $j,k\in\mathbb{N}$,
\begin{align}\label{20260426.1837}
&\int_{B(\mathbf{0},R)}\left|u_j(x)-\left(u_j\right)_{B(\mathbf{0},1)}-u_k(x)
+(u_k)_{B(\mathbf{0},1)}\right|\,dx\notag\\
&\quad\lesssim R^n\fint_{B(\mathbf{0},R)}\fint_{B(\mathbf{0},1)}
\left|u_j(x)-u_j(y)-u_k(x)+u_k(y)\right|\,dy\,dx\notag\\
&\quad\lesssim R^{2n}\fint_{B(\mathbf{0},R)}\fint_{B(\mathbf{0},R)}
\left|u_j(x)-u_k(x)-u_j(y)+u_k(y)\right|\,dx\,dy\notag\\
&\quad\lesssim R^{2n+1}\fint_{B(\mathbf{0},R)}
\left|\nabla u_j(x)-\nabla u_k(x)\right|\,dx
\lesssim\frac{R^{2n+1}}{w(B(\mathbf{0},R))}\int_{B(\mathbf{0},R)}
\left|\nabla u_j(x)-\nabla u_k(x)\right|w(x)\,dx\notag\\
&\quad\lesssim\frac{R^{2n+1}|B(\mathbf{0},R)|^{\frac1q-\frac1p}}
{[w(B(\mathbf{0},R))]^\frac1q}\left\|\nabla u_j-\nabla u_k\right\|
_{\mathcal{M}^p_q(\Omega,w)}\notag\\
&\quad\sim R^{2n+1-\frac{n}{p}+\frac{n-d-1}{q}}
\left\|\nabla u_j-\nabla u_k\right\|_{\mathcal{M}^p_q(\Omega,w)}.
\end{align}
Therefore, $\{u_j-(u_j)_{B(\mathbf{0},1)}\}_{j\in\mathbb{N}}$ is a Cauchy
sequence in $L^1_\mathrm{loc}(\mathbb{R}^n)$ and hence converges to a function
$u\in L^1_\mathrm{loc}(\mathbb{R}^n)$.

Moreover, using Proposition \ref{lem:250919-2}, we find that, for any
$k\in\mathbb{N}\cap[1,n]$, the sequence $\{\partial_k u_j\}_{j\in\mathbb{N}}$
is a Cauchy sequence in $L^1_\mathrm{loc}(\mathbb{R}^n)$ and hence converges to
a function $u^k\in L^1_\mathrm{loc}(\mathbb{R}^n)$. Then it is easy to prove
that, for any $k\in\mathbb{N}\cap[1,n]$, $u^k=\partial_k u$ in
$\mathcal{S}'(\mathbb{R}^n)$. This, together with Riesz's lemma and a standard
Cantor's diagonal argument, further implies that we can extract a subsequence
$\{u_{j_k}\}_{k\in\mathbb{N}}$ of $\{u_j\}_{j\in\mathbb{N}}$ such that $\nabla
u_{j_k}\to\nabla u$ almost everywhere as $k\to\infty$. From this and Fatou's
property of $\|\cdot\|_{\mathcal {M}^p_q(\Omega,w)}$, it follows that
\begin{align*}
\left\|u-u_j\right\|_{\dot{W}^1\mathcal{M}^p_q(\Omega,w)}=
\left\|\nabla u-\nabla u_j\right\|_{\mathcal{M}^p_q(\Omega,w)}
\leq\liminf_{k\to\infty}\left\|\nabla u_{j_k}-\nabla u_j\right\|
_{\mathcal{M}^p_q(\Omega,w)}\to0
\end{align*}
as $j\to\infty$. Thus, $u_j\to u$ in $\dot{W}^1\mathcal{M}^p_q(\Omega,w)$ as
$j\to\infty$. This finishes the proof of Lemma \ref{lem:250919-3}.
\end{proof}

\begin{remark}
We use the same notation as in Lemma \ref{lem:250919-3}.
The proof of Lemma \ref{lem:250919-3} suggests that, when considering
functions $u$ in $\dot{W}^1\mathcal{M}^p_q(\Omega,w)$,
it is more natural to work with $u-u_{B(\mathbf{0},1)}$ rather than $u$ itself.
\end{remark}

\subsection{Riesz Potential Characterizations and Sobolev--Morrey Embeddings}
\label{subsection4.2}

In this subsection, we characterize $\dot{W}^1\mathcal{M}^p_q(\Omega,w)$ in
terms of Riesz potentials and derive the corresponding Sobolev--Morrey
embedding theorem. We first observe that, if $\frac{n}{p}-\frac{n-d-1}{q}>0$,
then any element $u\in\dot{W}^1\mathcal{M}^p_q(\Omega,w)$ can be
realized as a tempered distribution in $\mathcal{S}'(\mathbb{R}^n)$ via the
mapping $u\mapsto u-u_{B(\mathbf{0},1)}$.

\begin{lemma}\label{lem:250919-31b}
Let $1\leq q\leq p<\infty$ satisfy \eqref{20260623.2235},
$w$ be as in \eqref{dw},
and $u\in\dot{W}^1\mathcal{M}^p_q(\Omega,w)$.
Then $u-u_{B(\mathbf{0},1)}\in\mathcal{S}'(\mathbb{R}^n)$.
\end{lemma}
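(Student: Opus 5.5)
We may assume, after translating coordinates, that $\mathbf{0}\in\Gamma$, and write $v:=u-u_{B(\mathbf{0},1)}$. The aim is a polynomial growth bound for $\int_{B(\mathbf{0},R)}|v(x)|\,dx$ as $R\to\infty$; from it, $v$ defines a continuous linear functional on $\mathcal{S}(\mathbb{R}^n)$ exactly as in the proof of Proposition \ref{lem:250919-31}. Note first that $v\in L^1_{\mathrm{loc}}(\mathbb{R}^n)$. Indeed, Proposition \ref{lem:250919-2} gives $\nabla u\in L^1_{\mathrm{loc}}$, and Lemma \ref{DFM21}(i) holds on every ball, including balls meeting $\Gamma$. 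Hence $u$ is locally integrable across $\Gamma$ (which is Lebesgue null).

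For the growth bound, fix $R\ge1$. We repeat the chain of inequalities in \eqref{20260426.1837} with $u_k$ replaced by $0$. First,
$$\int_{B(\mathbf{0},R)}|v(x)|\,dx\le\int_{B(\mathbf{0},R)}\fint_{B(\mathbf{0},1)}|u(x)-u(y)|\,dy\,dx\lesssim R^{2n}\fint_{B(\mathbf{0},R)}\fint_{B(\mathbf{0},R)}|u(x)-u(y)|\,dy\,dx.$$
Next, Lemma \ref{DFM21}(i) bounds this by $R^{2n+1}\fint_{B(\mathbf{0},R)}|\nabla u|$. Lemma \ref{lem:250104-11}(iii) with $q_0=1$ and $q_1=q$ then gives the bound $R^{2n+1}[w(B(\mathbf{0},R))^{-1}\int_{B(\mathbf{0},R)}|\nabla u|^qw]^{1/q}$. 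Finally, the Morrey norm together with Lemma \ref{lem:250104-11}(ii) (since $\delta(\mathbf{0})=0$, we have $w(B(\mathbf{0},R))\sim R^{d+1}$) yields
$$\int_{B(\mathbf{0},R)}|v|\lesssim R^{2n+1-\frac np+\frac{n-d-1}{q}}\|u\|_{\dot W^1\mathcal M^p_q(\Omega,w)}.$$
The bound for $R<1$ is only needed with $R=1$, and this case is covered as well.

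To conclude, choose $N\in\mathbb{N}$ exceeding $2n+1-\frac np+\frac{n-d-1}{q}$. Split $\mathbb{R}^n$ into $B(\mathbf{0},1)$ and the dyadic annuli $B(\mathbf{0},2^{j+1})\setminus B(\mathbf{0},2^j)$, and use $|\varphi(x)|\lesssim(1+|x|)^{-N}$. Summing the geometric series gives $\|v\varphi\|_{L^1}\lesssim\|u\|_{\dot W^1\mathcal M^p_q(\Omega,w)}$, with the constant depending only on a Schwartz seminorm of $\varphi$. This gives continuity on $\mathcal{S}$, so $v\in\mathcal{S}'(\mathbb{R}^n)$.

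The only delicate point is whether the hypothesis \eqref{20260623.2235} is needed here at all. The argument above uses only polynomial growth, so it appears to work for any exponents. The hypothesis presumably matters later, for identifying the distributional gradient and for the Riesz potential characterization. I therefore expect no real obstacle beyond justifying Lemma \ref{DFM21}(i) on balls centered on $\Gamma$, which that lemma already allows.
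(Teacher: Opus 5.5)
Your proof is correct and follows essentially the paper's route. Both combine the Poincar\'e bound of Lemma \ref{DFM21}(i) with Lemma \ref{lem:250104-11}(iii),(ii) and the Morrey norm on balls centered at a point of $\Gamma$ to get polynomial growth of $\int_{B(\mathbf{0},R)}|u-u_{B(\mathbf{0},1)}|$, and then run the dyadic-annulus argument of Proposition \ref{lem:250919-31}. The only difference is that you compare $B(\mathbf{0},R)$ with $B(\mathbf{0},1)$ in one step, as in \eqref{20260426.1837}, whereas the paper telescopes over the balls $B(\mathbf{0},2^j)$; your version costs an extra power of $R$, which is harmless since only polynomial growth is needed.

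You are also right that \eqref{20260623.2235} plays no role in this lemma; the paper's proof does not use it either.
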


\begin{proof}
Without loss of generality, we may assume that $\mathbf{0}\in\Gamma$ and
$u_{B(\mathbf{0},1)}=0$. By Lemma \ref{DFM21}(i), Proposition
\ref{lem:250919-2}, Lemma \ref{lem:250104-11}(iii), H\"older's inequality, and
Lemma \ref{lem:250104-11}(ii), we conclude that, for any $j\in\mathbb{Z}_+$,
\begin{align}\label{20260422.2213}
&\fint_{B(\mathbf{0},2^{j+1})}|u(x)|\,dx-
\fint_{B(\mathbf{0},2^j)} |u(x)|\,dx\notag\\
&\quad\lesssim\fint_{B(\mathbf{0},2^{j+1})}
\fint_{B(\mathbf{0},2^{j+1})}|u(x)-u(y)|\,dx\,dy
\lesssim 2^j\fint_{B(\mathbf{0},2^{j+1})}|\nabla u(x)|\,dx\notag\\
&\quad\lesssim\frac{2^j}{w(B(\mathbf{0},2^{j+1}))}
\int_{B(\mathbf{0},2^{j+1})}|\nabla u(x)|w(x)\,dx\notag\\
&\quad\lesssim2^j\left[\frac{1}{w(B(\mathbf{0},2^{j+1}))}
\int_{B(\mathbf{0},2^{j+1})}|\nabla u(x)|^qw(x)\,dx\right]^\frac1q\notag\\
&\quad\sim2^{j(1-\frac{n}{p}+\frac{n-d-1}{q})}
\left|B(\mathbf{0},2^{j+1})\right|
^{\frac1p-\frac1q}\left[\int_{B(\mathbf{0},2^{j+1})}
|\nabla u(x)|^q w(x)\,dx\right]^\frac1q\notag\\
&\quad\leq2^{j(n-\frac{n}{p}+\frac{n-d-1}{q})}
\|u\|_{\dot{W}^1\mathcal{M}^p_q(\Omega,w)}.
\end{align}
Moreover, since $u_{B(\mathbf{0},1)}=0$, it follows from an argument
similar to the one used in \eqref{20260422.2213} that
\begin{align*}
\fint_{B(\mathbf{0},1)}|u(x)|\,dx&=\fint_{B(\mathbf{0},1)}
\left|u(x)-u_{B(\mathbf{0},1)}\right|\,dx\\
&\leq\fint_{B(\mathbf{0},1)}\fint_{B(\mathbf{0},1)}|u(x)-u(y)|\,dx\,dy
\lesssim\|u\|_{\dot{W}^1\mathcal{M}^p_q(\Omega,w)}.
\end{align*}
Combining this and \eqref{20260422.2213}, we obtain, for any $j\in\mathbb{N}$,
\begin{align*}
\int_{B(\mathbf{0},2^j)}|u(x)|\,dx\lesssim
2^{j(n-\frac{n}{p}+\frac{n-d-1}{q})}
\|u\|_{\dot{W}^1\mathcal{M}^p_q(\Omega,w)},
\end{align*}
which, together with an argument used in the proof of Proposition
\ref{lem:250919-31}, further implies that, for any
$\varphi\in\mathcal{S}(\mathbb{R}^n)$,
\begin{align*}
\|u\varphi\|_{L^1(\mathbb{R}^n)}\lesssim
\|u\|_{\dot{W}^1\mathcal{M}^p_q(\Omega,w)},
\end{align*}
where the implicit positive constant depends only on the Schwartz norm of
$\varphi$. Therefore, $u-u_{B(\mathbf{0},1)}$ determines a continuous linear
functional on $\mathcal{S}(\mathbb{R}^n)$ and hence
$u-u_{B(\mathbf{0},1)}\in\mathcal{S}'(\mathbb{R}^n)$.
This finishes the proof of Lemma \ref{lem:250919-31b}.
\end{proof}

Let $\alpha\in\mathbb{R}$ and $\varphi\in
C^\infty_{\mathrm{c}}(\mathbb{R}^n)$ satisfy
$\sum_{j\in\mathbb{Z}}\varphi_j=\boldsymbol{1}_{\mathbb{R}^n\setminus
\{{\mathbf{0}}\}}$. The \emph{Riesz potential $(-\Delta)^\alpha$} is
defined by setting, for any suitable $f\in\mathcal{S}'(\mathbb{R}^n)$,
\begin{align*}
(-\Delta)^\alpha f:=\mathcal{F}^{-1}\left(|\cdot|^{2\alpha}
\mathcal{F}f\right)=\lim_{J\to\infty}\sum_{j=-J}^J\mathcal{F}^{-1}
\left(|\cdot|^{2\alpha}\varphi_j\mathcal{F}f\right).
\end{align*}
The main result of this subsection is the following theorem, which gives
a characterization of $\dot{W}^1\mathcal{M}^p_q(\Omega,w)$
in terms of the Riesz potential $(-\Delta)^\frac12$.

\begin{theorem}\label{cor:260117-1}
Let $1<q\leq p<\infty$ satisfy \eqref{20260623.2235},
$w$ be as in \eqref{dw}, and
$\varphi\in C^\infty_{\mathrm{c}}(\mathbb{R}^n)$ be a nonnegative
function such that $\sum_{j\in\mathbb{Z}}\varphi_j=
\boldsymbol{1}_{\mathbb{R}^n\setminus\{{\mathbf{0}}\}}$.
Then the following assertions hold.
\begin{enumerate}
\item[{\rm(i)}] If $f\in\dot{W}^1\mathcal{M}^p_q(\Omega,w)$, then
$(-\Delta)^\frac12f\in\mathcal{M}^p_q(\Omega,w)$ and
$\|f\|_{\dot{W}^1\mathcal{M}^p_q(\Omega,w)}\sim
\|(-\Delta)^\frac12f\|_{\mathcal{M}^p_q(\Omega,w)}$,
where the positive equivalence constants are independent of $f$.

\item[{\rm(ii)}] Conversely, if $f\in\mathcal{S}'(\mathbb{R}^n)$ satisfies
$(-\Delta)^\frac12f\in\mathcal{M}^p_q(\Omega,w)$,
then there exist $F\in\dot{W}^1\mathcal{M}^p_q(\Omega,w)$ and a polynomial
$P$ on $\mathbb{R}^n$ such that $f=F+P$ in $\mathcal{S}'(\mathbb{R}^n)$.
Moreover, $\|F\|_{\dot{W}^1\mathcal{M}^p_q(\Omega,w)}\sim
\|(-\Delta)^\frac12f\|_{\mathcal{M}^p_q(\Omega,w)}$,
where the positive equivalence constants are independent of $f$.
\end{enumerate}
In particular, $(-\Delta)^\frac12$ is an isomorphism
from $\dot{W}^1\mathcal{M}^p_q(\Omega,w)$ to $\mathcal{M}^p_q(\Omega,w)$.
\end{theorem}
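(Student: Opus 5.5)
The plan is to run everything through the Riesz transforms, using the identities $\partial_k=-R_k(-\Delta)^{\frac12}$ and $(-\Delta)^{\frac12}=-\sum_{k=1}^nR_k\partial_k$. These hold on the Fourier side, since $R_k$ has symbol $-i\xi_k/|\xi|$ up to a normalizing constant. We will apply them frequency by frequency, with Proposition \ref{prop:260117-3} (Riesz transforms on $\mathcal{M}^p_q(\Omega,w)$) and Proposition \ref{prop:LP-Morrey} (Littlewood--Paley) controlling the convergence.

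For (i), fix $f\in\dot{W}^1\mathcal{M}^p_q(\Omega,w)$, normalized so that $f_{B(\mathbf{0},1)}=0$. Lemma \ref{lem:250919-31b} then gives $f\in\mathcal{S}'(\mathbb{R}^n)$. Since $\partial_kf\in\mathcal{M}^p_q(\Omega,w)$, for each $j$ we have
\[
\varphi_j(D)(-\Delta)^{\frac12}f=-\sum_{k=1}^n R_k\,\varphi_j(D)\partial_kf,
\]
because each piece is a smooth, frequency-localized Fourier multiplier away from the origin. Applying the vector-valued form of Proposition \ref{prop:260117-3} with $r=2$ to the sequence $\{\varphi_j(D)\partial_kf\}_{j}$ and then Proposition \ref{prop:LP-Morrey}(i), we get
\[
\Bigl\|\Bigl[\sum_j|\varphi_j(D)(-\Delta)^{\frac12}f|^2\Bigr]^{\frac12}\Bigr\|_{\mathcal{M}^p_q(\Omega,w)}\lesssim\|\nabla f\|_{\mathcal{M}^p_q(\Omega,w)}.
\]
By Proposition \ref{prop:LP-Morrey}(ii), $(-\Delta)^{\frac12}f$, defined as $\sum_j\varphi_j(D)(-\Delta)^{\frac12}f$, lies in $\mathcal{M}^p_q(\Omega,w)$ with the same bound. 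The $j$-sum converges in the weak-$\ast$ sense and kills polynomials, which matches the definition of $(-\Delta)^{\frac12}$. The reverse inequality follows in the same way from $\varphi_j(D)\partial_kf=-R_k\varphi_j(D)(-\Delta)^{\frac12}f$ together with Proposition \ref{prop:LP-Morrey}(i) applied to $\partial_kf$, which is legitimate because $\partial_kf\in\mathcal{M}^p_q(\Omega,w)$.

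For (ii), let $g:=(-\Delta)^{\frac12}f\in\mathcal{M}^p_q(\Omega,w)$. Set $G_k:=-R_kg$, which lies in $\mathcal{M}^p_q(\Omega,w)$ by Proposition \ref{prop:260117-3}. We will show that $\partial_kf-G_k$ is a distribution whose Fourier transform is supported at the origin. Away from the origin, the Fourier transforms of $\partial_kf$ and $G_k$ agree, since $\varphi_j(D)\partial_kf=-R_k\varphi_j(D)g$ for every $j$. Hence $\partial_kf=G_k+P_k$ with $P_k$ a polynomial. Next, apply Proposition \ref{prop:LP-Morrey}(ii) to $f$ in the form $f=\sum_j\varphi_j(D)f+P$. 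The plan is to show that $F:=\sum_j\varphi_j(D)f$ converges modulo constants in $L^1_{\mathrm{loc}}$ and satisfies $\nabla F=\sum_j\varphi_j(D)\nabla f=(G_k)_k$. Proposition \ref{prop:LP-Morrey}(i) applied to $G_k$ identifies $G_k=\sum_j\varphi_j(D)G_k$, and this removes any polynomial from $\nabla F$. We then conclude $\|F\|_{\dot{W}^1\mathcal{M}^p_q(\Omega,w)}=\|G\|_{\mathcal{M}^p_q(\Omega,w)}\lesssim\|g\|_{\mathcal{M}^p_q(\Omega,w)}$, and the reverse inequality follows from (i). Combining (i) and (ii) gives the isomorphism statement, with injectivity coming from the fact that $f$ is taken modulo constants.

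The main obstacle is the low-frequency convergence of $\sum_j\varphi_j(D)f$ in (ii). The pieces $\varphi_j(D)f$ for $j\to-\infty$ are $\varphi_j(D)(-\Delta)^{-\frac12}g$, which need not converge, only their gradients do. My first attempt is to use instead $F:=\sum_j[\varphi_j(D)f-\varphi_j(D)f(\mathbf{0})]$. The gradient bound $\|\nabla\varphi_j(D)f\|_{L^\infty(B(\mathbf{0},R))}\lesssim 2^{j}\,\|\text{local average of }g\|$ controls these differences. Combined with the decay $|\varphi_j(D)g(x)|\lesssim 2^{j(\frac np-\frac{n-d-1}q)}\|g\|$ for $j<0$, which follows from the computation \eqref{eq:20260209} and the condition \eqref{20260623.2235}, this yields a summable geometric series, so $F$ is well defined modulo constants in $L^1_{\mathrm{loc}}$.
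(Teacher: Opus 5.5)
Your part (i) is the paper's argument up to packaging. The paper also starts from $\mathcal{F}^{-1}(|\cdot|\varphi_j\mathcal{F}f)=\sum_kR_k(\varphi_j(D)\partial_kf)$. It then uses the weak-$\ast$ continuity of $R_k$ and Proposition \ref{prop:LP-Morrey}(i) for $\partial_kf$ to identify $(-\Delta)^{\frac12}f=\sum_kR_k\partial_kf$. Both inequalities then follow from the scalar case of Proposition \ref{prop:260117-3} and the identity $R_k((-\Delta)^{\frac12}f)=-\partial_kf$. You should close your version the same way. Proposition \ref{prop:LP-Morrey}(ii) needs a tempered distribution as input, and $(-\Delta)^{\frac12}f$ is not yet known to be one. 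By contrast, $h:=\sum_kR_k\partial_kf\in\mathcal{M}^p_q(\Omega,w)$ is one, and Proposition \ref{prop:LP-Morrey}(i) applied to $h$ gives convergence of the partial sums.

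In (ii) you take a genuinely different route from the paper. The paper never sums $\sum_j\varphi_j(D)f$. Instead it:
\begin{enumerate}
\item[(a)] applies Proposition \ref{prop:LP-Morrey}(ii) to $\partial_kf$, whose square function is controlled by that of $(-\Delta)^{\frac12}f$, to get $\partial_kf=F_k+P_k$;
\item[(b)] uses $\partial_{k_1}F_{k_2}=\partial_{k_2}F_{k_1}$ to write $P_k=\partial_kP$ for a single polynomial $P$, and sets $F:=f-P$;
\item[(c)] shows that this distribution with $L^1_{\mathrm{loc}}$ gradient is an $L^1_{\mathrm{loc}}$ function, by mollification and Lemma \ref{DFM21}(i).
\end{enumerate}
So the low-frequency obstacle you identify never arises there, and \eqref{20260623.2235} enters only through the Morrey-space tools.

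Your renormalized, homogeneous-Besov-type realization also works, and it makes the role of \eqref{20260623.2235} explicit. Write $\nabla\varphi_j(D)f=\widetilde\psi_j(D)g$ with $\widetilde\psi(\xi):=i\xi\varphi(\xi)/|\xi|$. The kernel bound together with \eqref{eq:20260209} gives $\|\nabla\varphi_j(D)f\|_{L^\infty(B(\mathbf{0},R))}\lesssim2^{j(\frac np-\frac{n-d-1}q)}\|g\|_{\mathcal{M}^p_q(\Omega,w)}$ once $2^{-j}\ge R$. There is no extra factor $2^j$; summability comes from \eqref{20260623.2235} alone. Your construction also builds $F$ from $g$ alone, so it yields the surjectivity half of the final isomorphism claim without presupposing a preimage $f$.

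Three repairs are needed:
\begin{enumerate}
\item[(1)] Drop the step ``apply Proposition \ref{prop:LP-Morrey}(ii) to $f$''; its hypothesis fails for $f$.
\item[(2)] Subtract the constants $\varphi_j(D)f(\mathbf{0})$ only for $j<0$. For $j\ge0$ they need not be summable. The high-frequency part needs no renormalization: $\sum_{j\ge0}\varphi_j(D)f$ converges in $L^1_{\mathrm{loc}}$, because $|\varphi_j(D)f|\lesssim2^{-j}Mg$ and $M$ is bounded on $\mathcal{M}^p_q(\Omega,w)$ by Proposition \ref{thm:260126-1}.
\item[(3)] Note explicitly that $f=F+P$ follows from $\nabla(f-F)=(P_k)_k$ in the sense of distributions.
\end{enumerate}
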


\begin{proof} We first prove (i). Let $f\in\dot{W}^1\mathcal{M}^p_q(\Omega,w)$.
We claim that $(-\Delta)^\frac12f$ is well defined and belongs to
$\mathcal{M}^p_q(\Omega,w)$. Indeed, by Lemma \ref{lem:250919-31b}, we find
that $f\in\mathcal{S}'(\mathbb{R}^n)$. In addition, note that, for any
$\xi\in\mathbb{R}^n\setminus\{\mathbf{0}\}$,
\begin{align*}
|\xi|=\sum_{k=1}^n\frac{-i\xi_k}{|\xi|}(i\xi_k).
\end{align*}
Thus, for any $j\in\mathbb{Z}$,
\begin{align}\label{20260423.1438}
\mathcal{F}^{-1}\left(|\cdot|\varphi_j\mathcal{F}f\right)=
\sum_{k=1}^n\mathcal{F}^{-1}\left(\frac{-i\xi_k}{|\xi|}
\varphi_j\mathcal{F}f\right)=\sum_{k=1}^nR_k\left(\varphi_j(D)
(\partial_k f)\right).
\end{align}
From Proposition \ref{prop:260117-3}, we deduce that, for any
$k\in\mathbb{N}\cap[1,n]$, the Riesz transform $R_k$ is continuous on
$\mathcal{M}^p_q(\Omega,w)$ with respect to the weak-$\ast$ topology.
This, together with \eqref{20260423.1438} and Proposition
\ref{prop:LP-Morrey}(i), further implies that
\begin{align}\label{20260423.1503}
(-\Delta)^\frac12f&=\lim_{J\to\infty}\sum_{j=-J}^J
\sum_{k=1}^nR_k\left(\varphi_j(D)(\partial_k f)\right)=
\lim_{J\to\infty}\sum_{k=1}^n\sum_{j=-J}^J
R_k\left(\varphi_j(D)(\partial_k f)\right)\notag\\
&=\sum_{k=1}^nR_k\left(\lim_{J\to\infty}\sum_{j=-J}^J
\varphi_j(D)(\partial_k f)\right)=\sum_{k=1}^nR_k(\partial_k f)
\end{align}
in the weak-$\ast$ topology of $\mathcal{M}^p_q(\Omega,w)$. This finishes
the proof the above claim.

From this claim, it follows that, for any $k\in\mathbb{N}\cap[1,n]$ and any
$h\in\mathcal{S}(\mathbb{R}^n)$,
\begin{align*}
\langle-\partial_kf,h\rangle=\langle f,\partial_kh\rangle=
\left\langle f,\mathcal{F}^{-1}(i\xi_k\mathcal{F}h)\right\rangle=
\left\langle f,-(-\Delta)^\frac12R_kh\right\rangle=
\left\langle R_k\left((-\Delta)^\frac12f\right),h\right\rangle
\end{align*}
and hence $R_k((-\Delta)^\frac12f)=-\partial_kf$
in $\mathcal{S}'(\mathbb{R}^n)$. Combining this, \eqref{20260423.1503},
and Proposition \ref{prop:260117-3}, we conclude that
\begin{align*}
\|\nabla f\|_{\mathcal{M}^p_q(\Omega,w)}\lesssim
\left\|(-\Delta)^\frac12 f\right\|_{\mathcal{M}^p_q(\Omega,w)}
\lesssim\|\nabla f\|_{\mathcal{M}^p_q(\Omega,w)}
\end{align*}
and hence complete the proof of (i).

Next, we show (ii). Assume that $f\in\mathcal{S}'(\mathbb{R}^n)$ satisfies
$(-\Delta)^\frac12f\in\mathcal{M}^p_q(\Omega,w)$.
Let $k\in\mathbb{N}\cap[1,n]$. From the fact that
$R_k((-\Delta)^\frac12f)=-\partial_kf$ and Propositions \ref{prop:260117-3}
and \ref{prop:LP-Morrey}(i), we infer that
\begin{align*}
\left\|\left[\sum_{j\in\mathbb{Z}}\left|\varphi_j(D)(\partial_k f)
\right|^2\right]^\frac12\right\|_{\mathcal{M}^p_q(\Omega,w)}&=
\left\|\left[\sum_{j\in\mathbb{Z}}\left|\varphi_j(D)
\left(R_k\left((-\Delta)^\frac12f\right)\right)
\right|^2\right]^\frac12\right\|_{\mathcal{M}^p_q(\Omega,w)}\\
&=\left\|\left[\sum_{j\in\mathbb{Z}}\left|R_k\left(\varphi_j(D)
\left((-\Delta)^\frac12f\right)\right)
\right|^2\right]^\frac12\right\|_{\mathcal{M}^p_q(\Omega,w)}\\
&\lesssim\left\|\left[\sum_{j\in\mathbb{Z}}\left|\varphi_j(D)
\left((-\Delta)^\frac12f\right)
\right|^2\right]^\frac12\right\|_{\mathcal{M}^p_q(\Omega,w)}\\
&\sim\left\|(-\Delta)^\frac12f\right\|_{\mathcal{M}^p_q(\Omega,w)}<\infty.
\end{align*}
This, together with Proposition \ref{prop:LP-Morrey}(ii), further implies
that, for any $k\in\mathbb{N}\cap[1,n]$, there exists a decomposition
$\partial_k f=F_k+P_k$ in $\mathcal{S}'(\mathbb{R}^n)$, where
$F_k:=\sum_{j\in\mathbb{Z}}\varphi_j(D)
(\partial_k f)\in\mathcal{M}^p_q(\Omega,w)$ and
$P_k$ is a polynomial on $\mathbb{R}^n$.

Note that, for any $k_1,k_2\in\mathbb{N}\cap[1,n]$,
$\partial_{k_1}F_{k_2}=\partial_{k_2}F_{k_1}$. Therefore, for any
$k_1,k_2\in\mathbb{N}\cap[1,n]$, $\partial_{k_1}P_{k_2}=\partial_{k_2}P_{k_1}$.
This further indicates that there exists a polynomial $P$ on $\mathbb{R}^n$
such that $\partial_k P=P_k$ for any $k\in\mathbb{N}\cap[1,n]$.
Define $F:=f-P$. Then $F\in\mathcal{S}'(\mathbb{R}^n)$ and, for any
$k\in\mathbb{N}\cap[1,n]$, $\partial_kF=F_k\in\mathcal{M}^p_q(\Omega,w)
\hookrightarrow L_\mathrm{loc}^1(\mathbb{R}^n)$.
Let $\rho\in C_\mathrm{c}^\infty(\mathbb{R}^n)$ be a radial decreasing
nonnegative function with $\operatorname{supp}(\rho)\subset B(\mathbf{0},1)$
and $\|\rho\|_{L^1(\mathbb{R}^n)}=1$. For any $\varepsilon\in(0,\infty),$
let $\rho_\varepsilon(\cdot):=\frac{1}{\varepsilon^n}
\rho(\frac{\cdot}{\varepsilon})$.
By an argument similar to that used in \eqref{20260426.1837}, we conclude
that, for any given $R\in[1,\infty)$,
\begin{align*}
&\fint_{B(\mathbf{0},R)}\left|\rho_{2^{-j}}*F-
\left(\rho_{2^{-j}}*F\right)_{B(\mathbf{0},1)}-\rho_{2^{-k}}*F+
\left(\rho_{2^{-k}}*F\right)_{B(\mathbf{0},1)}\right|\,dx\\
&\quad\lesssim R^{n+1}\fint_{B(\mathbf{0},R)}\left|\rho_{2^{-j}}*\nabla F-
\rho_{2^{-k}}*\nabla F\right|\,dx\to0
\end{align*}
as $j,k\to\infty$, and hence the sequence
$\{\rho_{2^{-j}}*F-(\rho_{2^{-j}}*F)_{B(\mathbf{0},1)}\}_{j\in\mathbb{Z}}$
is a Cauchy sequence in $L^1_{\mathrm{loc}}({\mathbb R}^n)$.
Denote its limit by $h\in L^1_\mathrm{loc}(\mathbb{R}^n)$.
Then, for any $k\in\mathbb{N}\cap[1,n]$,
\begin{align*}
\partial_kh=\lim_{j\to\infty}\rho_{2^{-j}}*\partial_kF
=\lim_{j\to\infty}\rho_{2^{-j}}*F_k=F_k
\end{align*}
in $\mathcal{S}'(\mathbb{R}^n)$. Thus, $h$ and $F$ differ by a constant and
hence $F\in\dot{W}^1\mathcal{M}^p_q(\Omega,w)$. This, combined with
(i), further implies that
\begin{align*}
\|F\|_{\dot{W}^1\mathcal{M}^p_q(\Omega,w)}
\sim\left\|(-\Delta)^\frac12F\right\|_{\mathcal{M}^p_q(\Omega,w)}
=\left\|(-\Delta)^\frac12f\right\|_{\mathcal{M}^p_q(\Omega,w)},
\end{align*}
which completes the proof of Theorem \ref{cor:260117-1}.
\end{proof}

As a direct consequence of Theorem \ref{cor:260117-1} and Proposition
\ref{prop:20260420.1836}, we obtain the following
Sobolev--Morrey embedding theorem; we omit the details here.

\begin{corollary}\label{cor:260605}
Let $1<u\leq s<\infty$, $1<q\leq p<\infty$ with $\frac{s}{u}=\frac{p}{q}$,
$u<q$, and $(n-d-1)(\frac1u-\frac1q)-n(\frac1s-\frac1p)=1$. Then
$\dot{W}^1\mathcal{M}^p_q(\Omega,w)\hookrightarrow\mathcal{M}^u_s(\Omega,w)$.
\end{corollary}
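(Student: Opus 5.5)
The plan is to factor the embedding through the Riesz potential characterization, writing $f$ as a first-order fractional integral of its gradient, and then to apply Proposition \ref{prop:20260420.1836} with $\alpha:=1$. I read the target space as $\mathcal{M}^s_u(\Omega,w)$, that is, Morrey index $s$ and integrability index $u$. This is the reading consistent with $1<u\le s$ and with the conclusion of Proposition \ref{prop:20260420.1836}; I would state this explicitly in the proof.

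\emph{Step 1 (representation).} Let $f\in\dot{W}^1\mathcal{M}^p_q(\Omega,w)$. By Theorem \ref{cor:260117-1}(i), $g:=(-\Delta)^{\frac12}f\in\mathcal{M}^p_q(\Omega,w)$ and $\|g\|_{\mathcal{M}^p_q(\Omega,w)}\sim\|f\|_{\dot{W}^1\mathcal{M}^p_q(\Omega,w)}$. Since $(-\Delta)^{-\frac12}$ is the convolution with $c_n|\cdot|^{1-n}$, one expects $f=c_nI_1g$ modulo constants (or polynomials). I would justify this through the Littlewood--Paley decomposition in Proposition \ref{prop:LP-Morrey}(i), applied piece by piece to $\varphi_j(D)g$, and then pass to the limit using the weak-$\ast$ convergence.

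\emph{Step 2 (boundedness).} Proposition \ref{prop:20260420.1836} with $\alpha=1$ gives $\|I_1g\|_{\mathcal{M}^s_u(\Omega,w)}\lesssim\|g\|_{\mathcal{M}^p_q(\Omega,w)}\sim\|f\|_{\dot{W}^1\mathcal{M}^p_q(\Omega,w)}$. This is exactly the claimed embedding.

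\emph{Main obstacle (index compatibility).} I expect the hardest part to be the compatibility of the indices with the hypothesis \eqref{20260623.2235} needed in Theorem \ref{cor:260117-1}. Set $\lambda:=q/u>1$. Then $s=pu/q$ gives $n/s=\lambda n/p$, and also $(n-d-1)/u=\lambda(n-d-1)/q$. The balance condition then becomes
\[
(\lambda-1)\left[\frac{n-d-1}{q}-\frac np\right]=1 .
\]
This forces $\frac np<\frac{n-d-1}{q}$, so \eqref{20260623.2235} \emph{fails}. Consequently Theorem \ref{cor:260117-1} and the Littlewood--Paley machinery cannot be quoted, and Step 1 must be replaced.

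\emph{Fallback route.} I would bypass Fourier analysis with the local pointwise estimate of Lemma \ref{DFM21}(ii). For every ball $B$ and almost every $z\in B$, it gives $|f(z)-f_B|\lesssim I_1(|\nabla f|\boldsymbol{1}_B)(z)$, where the averaged form is upgraded to a pointwise one at Lebesgue points. I would then apply the proof of Proposition \ref{prop:20260420.1836}, that is, the Nakamura condition \eqref{20260420.1843} with $\alpha=1$, to the localized function $|\nabla f|\boldsymbol{1}_B$. This yields
\[
|B'|^{\frac1s-\frac1u}\|f-f_B\|_{L^u(B'\cap\Omega,w)}\lesssim\|\nabla f\|_{\mathcal{M}^p_q(\Omega,w)}
\]
for $B'\subset B$, uniformly in $B$. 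One must check that \eqref{20260420.1843} needs only $u<q$ and the exponent identity, not \eqref{20260623.2235}; the computation in that proof indeed uses only Lemma \ref{lem:250104-11}(i)--(ii).

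\emph{Normalization modulo constants.} Finally, I would fix the constant by letting $B\uparrow\mathbb{R}^n$. Since the relevant exponent is negative, the averages $f_{B(\mathbf{0},R)}$ should converge, in the spirit of the lower-critical analysis announced for Subsection \ref{subsection4.3}. This would produce a representative of $f$ lying in $\mathcal{M}^s_u(\Omega,w)$ with the required bound. Establishing this convergence of averages is where I expect the real work to be.
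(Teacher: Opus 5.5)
\textbf{There is a genuine gap: the fallback route fails, and in fact the statement cannot be proved in this generality.}

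Your Steps 1--2 are exactly the argument the paper has in mind: it calls the corollary a direct consequence of Theorem \ref{cor:260117-1} and Proposition \ref{prop:20260420.1836}. Your index computation correctly shows that this argument cannot be run. With $\lambda=q/u>1$ the hypotheses give $(\lambda-1)\bigl[\frac{n-d-1}{q}-\frac np\bigr]=1$, so \eqref{20260623.2235} fails. The fallback does not repair this, and the obstruction is structural.

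The consequence you have not drawn is that $\frac ns-\frac{n-d-1}{u}=\lambda\bigl(\frac np-\frac{n-d-1}{q}\bigr)<0$. Hence, by Lemma \ref{lem:250104-11}(ii), if $h\equiv\kappa\neq0$ on some $B(\xi,r_1)$ with $\xi\in\Gamma$, then $|B(\xi,r)|^{\frac1s-\frac1u}[\int_{B(\xi,r)}|h|^uw]^{\frac1u}\sim|\kappa|\,r^{\frac ns-\frac{n-d-1}{u}}\to\infty$ as $r\to0^+$. So elements of $\mathcal{M}^s_u(\Omega,w)$ (the target, read as you read it) must vanish at $\Gamma$ in the averaged sense, and no nonzero constant can be absorbed.

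This breaks your localized inequality. Let $f\geq0$ be a nonzero smooth bump supported in $B(x_0,r_0)$ with $B(x_0,2r_0)\subset\Omega$. Its gradient lies in $\mathcal{M}^p_q(\Omega,w)$, since $w\le r_0^{d+1-n}$ on $B(x_0,r_0)$. Let $B$ be a ball containing $B(x_0,r_0)$ and a point $\xi\in\Gamma$. Then $f-f_B\equiv-f_B\neq0$ on $B'=B(\xi,r)$ for small $r$, so $|B'|^{\frac1s-\frac1u}\|f-f_B\|_{L^u(B'\cap\Omega,w)}\to\infty$ while $\|\nabla f\|_{\mathcal{M}^p_q(\Omega,w)}<\infty$.

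The same test shows that Proposition \ref{prop:20260420.1836} cannot be invoked either. Its hypotheses force the same sign condition, and $g:=\boldsymbol{1}_{B(x_0,r_0)}\in\mathcal{M}^p_q(\Omega,w)$ by the same computation. But $I_\alpha g$ is bounded below near $\Gamma$, so $I_\alpha g\notin\mathcal{M}^s_u(\Omega,w)$. Your proposed check, that \eqref{20260420.1843} needs only $u<q$ and the exponent identity, therefore succeeds but proves nothing: that condition alone is not sufficient. Consistently, $w\notin\mathcal{B}_{p,q}(\mathbb{R}^n)$ here by Proposition \ref{prop:260117-1}.

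The normalization step also fails, and no convergence argument can fix it. In the proof of Lemma \ref{lem:250916-1} the increments are bounded by $2^{j(1-\frac np+\frac{n-d-1}{q})}\|f\|_{\dot{W}^1\mathcal{M}^p_q(\Omega,w)}$, whose exponent exceeds $1$ here. That lemma needs $\frac np-\frac{n-d-1}{q}>1$, the opposite regime. You are instead in the upper-critical regime of Lemma \ref{lem:260126-11}, where $Tf$ exists at every point of $\Gamma$. By the previous paragraph, a representative $f-c$ can lie in $\mathcal{M}^s_u(\Omega,w)$ only if $Tf\equiv c$ on $\Gamma$, whatever happens at infinity.

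That need not hold. The hypotheses force $n-d-1>q'$, since $u>1$ means $\lambda<q$. They are nonetheless satisfiable, for example by $n=4$, $d=\frac12$, $q=2$, $p=32$, $u=\frac{18}{17}$, $s=\frac{288}{17}$. Let $\Gamma$ be the Cantor dust with $a=2^{-8}$, so $d=\frac12$. Let $f(x):=\chi(|x|)$, where $\chi$ is smooth, $\chi=1$ on $[0,\frac14]$ and $\chi=0$ on $[\frac12,\infty)$.
\begin{itemize}
\item The gradient $\nabla f$ is bounded and supported in the compact annulus $\{\frac14\le|x|\le\frac12\}$, which does not meet $\Gamma$. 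Hence $f\in\dot{W}^1\mathcal{M}^p_q(\Omega,w)$.
\item On the other hand, $Tf=1$ on $\Gamma\cap[0,2a]^4$ and $Tf=0$ on the rest of $\Gamma$.
\end{itemize}
So no $f-c$ lies in $\mathcal{M}^s_u(\Omega,w)$.

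The missing step therefore cannot be supplied: the statement fails in this generality. A correct version would need an extra hypothesis forcing $Tf$ to be constant on $\Gamma$, with $f-Tf$ as the representative, and a proof that avoids Proposition \ref{prop:20260420.1836}.
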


\subsection{Convergence of Integral Averages at Infinity in Lower Critical Case}
\label{subsection4.3}

Functions in $\dot{W}^1\mathcal{M}^p_q(\Omega,w)$ are defined only up to
additive constants, and their pointwise values at infinity are not a priori
meaningful. Nevertheless, when the gradient satisfies suitable Morrey-type
decay, one can recover a canonical representative by considering averages
over expanding balls. In this subsection, we prove that, under a natural
balance condition between the dimension of $\Gamma$ and the integrability
exponents, these averages converge as the radius tends to infinity.

\begin{lemma}\label{lem:250916-1}
Let $1\leq q\leq p<\infty$ satisfy
\begin{align}\label{eq:250915-1}
\frac{n}{p}-\frac{n-d-1}{q}>1
\end{align}
and $w$ be as in \eqref{dw}.
Fix $x_0\in\Gamma$. Then, for any $u\in\dot{W}^1\mathcal{M}^p_q(\Omega,w)$,
\begin{align*}
u^0(x_0):=\lim_{j\to\infty} \fint_{B(x_0,2^j)}u(z)\,dz
\end{align*}
exists and is finite.
\end{lemma}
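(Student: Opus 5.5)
The plan is to show that the sequence $\{u_{B(x_0,2^j)}\}_{j\in\mathbb{N}}$ is Cauchy by proving that its consecutive differences are summable. The estimate will be the one already carried out in \eqref{20260422.2213}, now centered at $x_0\in\Gamma$ instead of $\mathbf{0}$. Since $u\in L^1_{\mathrm{loc}}(\Omega)$ with $\nabla u\in\mathcal{M}^p_q(\Omega,w)\hookrightarrow L^1_{\mathrm{loc}}(\Omega)$ (Proposition \ref{lem:250919-2}), we have $u\in W^{1,1}_{\mathrm{loc}}(\Omega)$. Lemma \ref{DFM21}(i) therefore applies on every ball $B(x_0,2^{j+1})$, and since $|\Gamma|=0$ all averages below are well defined.

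For $j\in\mathbb{Z}_+$, we first bound
\begin{align*}
\left|u_{B(x_0,2^j)}-u_{B(x_0,2^{j+1})}\right|
\le\fint_{B(x_0,2^j)}\fint_{B(x_0,2^{j+1})}|u(y)-u(z)|\,dz\,dy
\lesssim\fint_{B(x_0,2^{j+1})}\fint_{B(x_0,2^{j+1})}|u(y)-u(z)|\,dz\,dy,
\end{align*}
where the last step uses $|B(x_0,2^{j+1})|=2^n|B(x_0,2^j)|$. Lemma \ref{DFM21}(i) then bounds this by $2^{j}\fint_{B(x_0,2^{j+1})}|\nabla u|$. Next we apply Lemma \ref{lem:250104-11}(iii) with $q_0=1$ and $q_1=q$, which gives
\begin{align*}
\fint_{B(x_0,2^{j+1})}|\nabla u|\lesssim\left[\frac{1}{w(B(x_0,2^{j+1}))}\int_{\Omega(x_0,2^{j+1})}|\nabla u|^qw\right]^{\frac1q}.
\end{align*}
Since $x_0\in\Gamma$, Lemma \ref{lem:250104-11}(ii) yields $w(B(x_0,r))\sim r^{d+1}$. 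Inserting the Morrey norm then gives
\begin{align*}
\left|u_{B(x_0,2^j)}-u_{B(x_0,2^{j+1})}\right|
\lesssim 2^{j}\,2^{-\frac{j(d+1)}{q}}\,2^{j(\frac nq-\frac np)}\|\nabla u\|_{\mathcal{M}^p_q(\Omega,w)}
=2^{-j[\frac np-\frac{n-d-1}{q}-1]}\|u\|_{\dot{W}^1\mathcal{M}^p_q(\Omega,w)}.
\end{align*}
This step is exactly where the normalization of the weighted Morrey norm and the factor $r$ from the Poincar\'e inequality must combine to the exponent $1-\frac np+\frac{n-d-1}{q}$. I expect it to be the only real point of the proof; everything else is a routine telescoping argument.

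By \eqref{eq:250915-1} the exponent is negative, so the series $\sum_j|u_{B(x_0,2^j)}-u_{B(x_0,2^{j+1})}|$ converges geometrically. Hence $\{u_{B(x_0,2^j)}\}_j$ is Cauchy in $\mathbb{C}$ and $u^0(x_0)$ exists and is finite. The argument also gives the quantitative bound $|u^0(x_0)-u_{B(x_0,1)}|\lesssim\|u\|_{\dot{W}^1\mathcal{M}^p_q(\Omega,w)}$.

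Finally, we note that $u$ is defined only modulo constants, so the limit is defined modulo the same constant, consistent with Definition \ref{def5.1}. Alternatively, the statement can be read for a fixed representative, in which case this issue does not arise.
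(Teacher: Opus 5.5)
Your proposal is correct and is essentially the paper's proof. The paper also telescopes over the balls $B(x_0,2^j)$, applies Lemma \ref{DFM21}(i), and then uses Lemma \ref{lem:250104-11}(iii) and (ii), with $w(B(x_0,r))\sim r^{d+1}$, to obtain the summable factor $2^{j(1-\frac np+\frac{n-d-1}{q})}$. One small correction: the averages of $u$ on balls meeting $\Gamma$ are finite because of Lemma \ref{DFM21}(i) itself (a finite double integral forces $u\in L^1(B)$), not merely because $|\Gamma|=0$.
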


\begin{proof}
Let $u\in\dot{W}^1\mathcal{M}^p_q(\Omega,w)$. For any
$j\in\mathbb{N}$, define $B_j:=B(x_0,2^j)$. By an argument similar to
that used in \eqref{20260422.2213}, we obtain, for any $j\in\mathbb{N}$,
\begin{align*}
\left|u_{B_j}-u_{B_{j+1}}\right|\lesssim
\fint_{B_{j+1}}\fint_{B_{j+1}}|u(y)-u(z)|\,dy\,dz
\lesssim2^{j(1-\frac{n}{p}+\frac{n-d-1}{q})}
\|u\|_{\dot{W}^1\mathcal{M}^p_q(\Omega,w)},
\end{align*}
which, combined with \eqref{eq:250915-1}, further implies that
$\{u_{B_j}\}_{j\in\mathbb{N}}$ is a Cauchy sequence and hence the limit
$u^0(x_0)$ exists and is finite.
This finishes the proof of Lemma \ref{lem:250916-1}.
\end{proof}

\begin{remark}
We use the same notation as in Lemma \ref{lem:250916-1}. It is worth
noting that Lemma \ref{lem:250916-1} is a generalization of
\cite[Lemma 5.7]{DFM21}. Indeed, under the choice $p=q=2$, our condition
\eqref{eq:250915-1} is consistent with their condition $d>1$ and our
result reduces to theirs. Moreover, according to the proof of
Lemma \ref{lem:250916-1}, we find that $u^0(x_0)$ does not depend
on the specific choice of $x_0\in\Gamma$.
\end{remark}

As was done in \cite[Lemma 5.5]{DFM21}, we give the following definition
motivated by Lemma \ref{lem:250916-1}.
\begin{definition}
Let $1\leq q\leq p<\infty$ satisfy \eqref{eq:250915-1} and
$w$ be as in \eqref{dw}. Define
\begin{align*}
\dot{W}^1\mathcal{M}^p_q(\Omega,w)_0:=\left\{u\in\dot{W}^1
\mathcal{M}^p_q(\Omega,w): u^0\equiv0\mbox{\ on\ }\Gamma\right\}.
\end{align*}
\end{definition}

\subsection{Continuity of Functions in Upper Critical Case}
\label{subsection4.4}

Functions in $\dot{W}^1\mathcal{M}^p_q(\Omega,w)$ are controlled only
through their gradients, and neither continuity nor pointwise behavior is
automatic. In particular, even local regularity inside $\Omega$ must be
recovered from integral information on $\nabla f$.
In this subsection, we first show that, in contrast to the lower critical
condition \eqref{eq:250915-1}, functions in
$\dot{W}^1\mathcal{M}^p_q(\Omega, w)$ possess pointwise traces on $\Gamma$
that satisfy a specific H\"older continuity condition when the condition is
reversed as in \eqref{eq:250915-1b}. We then show that, under a stronger
condition $p\in(n,\infty)$, these functions are continuous away from $\Gamma$
and we further provides a quantitative pointwise estimate that bridges interior
values and boundary traces.

\begin{lemma}\label{lem:260126-11}
Let $1\leq q\leq p<\infty$ satisfy
\begin{align}\label{eq:250915-1b}
\frac{n}{p}-\frac{n-d-1}{q}<1
\end{align}
and $w$ be as in \eqref{dw}.
Then, for any $u\in\dot{W}^1\mathcal{M}^p_q(\Omega,w)$, $x\in\Gamma$,
and $r\in(0,\infty)$,
\begin{align}\label{eq:260125-101}
\int_{B(x,r)}\frac{|\nabla u(\xi)|}{|x-\xi|^{n-1}}\,d\xi
\lesssim r^{1-\frac{n}{p}+\frac{n-d-1}{q}}
\|u\|_{\dot{W}^1\mathcal{M}^p_q(\Omega,w)},
\end{align}
where the implicit positive constant is independent of $u$, $x$, and $r$.
In particular, for any $u\in\dot{W}^1\mathcal{M}^p_q(\Omega,w)$ and
$x\in\Gamma$, $Tu(x)$ exists and, for any $x,y\in\Gamma$,
\begin{align*}
|Tu(x)-Tu(y)|\lesssim|x-y|^{1-\frac{n}{p}+\frac{n-d-1}{q}}
\|u\|_{\dot{W}^1\mathcal{M}^p_q(\Omega,w)},
\end{align*}
where the implicit positive constant is independent of $u$, $x$, and $y$.
\end{lemma}

\begin{proof}
Let $u\in\dot{W}^1\mathcal{M}^p_q(\Omega,w)$. From \eqref{eq:260125-101} and
an argument similar to that used in \eqref{20260422.2213}, it follows
that, for any $x\in\Gamma$, and $r\in(0,\infty)$,
\begin{align*}
\int_{B(x,r)}\frac{|\nabla u(\xi)|}{|x-\xi|^{n-1}}\,d\xi
&=\sum_{j\in\mathbb{N}}\int_{B(x,2^{-j+1}r)\setminus B(x,2^{-j}r)}
\frac{|\nabla u(\xi)|}{|x-\xi|^{n-1}}\,d\xi\\
&\lesssim\sum_{j\in\mathbb{N}}2^{-j}r
\fint_{B(x,2^{-j+1}r)}|\nabla u(\xi)|\,d\xi\\
&\lesssim\sum_{j\in\mathbb{N}}\left(2^{-j}r\right)
^{1-\frac{n}{p}+\frac{n-d-1}{q}}\|u\|_{\dot{W}^1\mathcal{M}^p_q(\Omega,w)}
\lesssim r^{1-\frac{n}{p}+\frac{n-d-1}{q}}
\|u\|_{\dot{W}^1\mathcal{M}^p_q(\Omega,w)}.
\end{align*}
Thus, \eqref{eq:260125-101} holds. Combining \eqref{eq:260125-101} and
Proposition \ref{prop:260126-1}(i), we find that, for any $x\in\Gamma$,
$Tu(x)$ exists. Moreover, using Lemma \ref{DFM21}(i), we find that,
for any $x\in\Gamma$ and $r\in(0,\infty)$,
\begin{align}\label{20260427.2156}
\left|Tu(x)-u_{B(x,r)}\right|&\leq\sum_{j\in\mathbb{N}}
\left|u_{B(x,2^{-j}r)}-u_{B(x,2^{1-j}r)}\right|\notag\\
&\lesssim\sum_{j\in\mathbb{N}}\fint_{B(x,2^{1-j}r)}\fint_{B(x,2^{1-j}r)}
|u(y)-u(z)|\,dy\,dz\notag\\
&\lesssim\sum_{j\in\mathbb{N}}2^{-j}r\fint_{B(x,2^{1-j}r)}
|\nabla u(\xi)|\,d\xi\notag\\
&\sim\int_{B(x,r)}|\nabla u(\xi)|\left[\sum_{j\in\mathbb{N}}
\left(2^{-j}r\right)^{1-n}\boldsymbol{1}_{B(x,2^{1-j}r)}(\xi)\right]\,d\xi.
\end{align}
To proceed, note that, for any given $\xi\in\mathbb{R}^n$,
$\boldsymbol{1}_{B(x,2^{1-j}r)}(\xi)=1$ if and only if
$j<\log_2\frac{2r}{|\xi-x|}$. Define
$J_\xi:=\lceil\log_2\frac{2r}{|\xi-x|}\rceil-1$. Then
\begin{align*}
\sum_{j\in\mathbb{N}}\left(2^{-j}r\right)^{1-n}
\boldsymbol{1}_{B(x,2^{1-j}r)}(\xi)&=r^{1-n}\sum_{j=1}^{J_\xi}2^{(n-1)j}
\sim r^{1-n}2^{(n-1)J_\xi}\\
&\sim r^{1-n}\left(\frac{r}{|\xi-x|}\right)^{n-1}
=\frac{1}{|\xi-x|^{n-1}}.
\end{align*}
From this, \eqref{20260427.2156}, and \eqref{eq:260125-101},
we deduce that, for any $x\in\Gamma$ and $r\in(0,\infty)$,
\begin{align}\label{20250427.2213}
\left|Tu(x)-u_{B(x,r)}\right|
\lesssim\int_{B(x,r)}\frac{|\nabla u(\xi)|}{|\xi-x|^{n-1}}\,d\xi
\lesssim r^{1-\frac{n}{p}+\frac{n-d-1}{q}}
\|u\|_{\dot{W}^1\mathcal{M}^p_q(\Omega,w)}.
\end{align}

Let $x,y\in\Gamma$ be such that $x\neq y$ and define $R:=|x-y|$.
Observe that $B(y,R)\subset B(x,2R)$. By this, Lemma \ref{DFM21}(i),
and \eqref{eq:260125-101}, we obtain
\begin{align*}
\left|u_{B(x,R)}-u_{B(y,R)}\right|&\lesssim\fint_{B(x,2R)}\fint_{B(x,2R)}
|u(z)-u(\xi)|\,dz\,d\xi\\
&\lesssim R\fint_{B(x,2R)}|\nabla u(\xi)|\,d\xi
\sim R^{1-n}\int_{B(x,2R)}|\nabla u(\xi)|\,d\xi\\
&\lesssim\int_{B(x,2R)}\frac{|\nabla u(\xi)|}{|\xi-x|^{n-1}}\,d\xi
\lesssim R^{1-\frac{n}{p}+\frac{n-d-1}{q}}
\|u\|_{\dot{W}^1\mathcal{M}^p_q(\Omega,w)}.
\end{align*}
Combining this, Proposition \ref{prop:260126-1}(iii), and
\eqref{20250427.2213}, we conclude that
\begin{align*}
|Tu(x)-Tu(y)|&\leq\left|Tu(x)-u_{B(x,R)}\right|+
\left|u_{B(x,R)}-u_{B(y,R)}\right|+\left|Tu(y)-u_{B(y,R)}\right|\\
&\lesssim R^{1-\frac{n}{p}+\frac{n-d-1}{q}}
\|u\|_{\dot{W}^1\mathcal{M}^p_q(\Omega,w)}
=|x-y|^{1-\frac{n}{p}+\frac{n-d-1}{q}}
\|u\|_{\dot{W}^1\mathcal{M}^p_q(\Omega,w)}.
\end{align*}
This finishes the proof of Lemma \ref{lem:260126-11}.
\end{proof}

Under the stronger condition $p\in(n,\infty)$, functions in
$\dot{W}^1\mathcal{M}^p_q(\Omega,w)$ are continuous far from $\Gamma$.

\begin{lemma}\label{lem:local-continuity}
Let $1\leq q\leq p<\infty$ satisfy $p\in(n,\infty)$ and
$w$ be as in \eqref{dw}. Then, for any
$u\in\dot{W}^1\mathcal{M}^p_q(\Omega,w)$ and any Lebesgue points
$x,y\in\Omega$ such that $B(x,4|x-y|)\subset\Omega$,
\begin{align*}
|u(x)-u(y)|\lesssim|x-y|^{1-\frac{n}{p}}[w(x)]^{-\frac1q}
\|u\|_{\dot{W}^1\mathcal{M}^p_q(\Omega,w)},
\end{align*}
where the implicit positive constant is independent of $u$, $x$, and $y$.
\end{lemma}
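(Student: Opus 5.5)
Set $R:=|x-y|$ and $B:=B(x,2R)$. Since $B(x,4R)\subset\Omega$, we have $\delta(x)\ge4R$, so the ball $B$ stays well away from $\Gamma$. By the argument in the proof of Lemma \ref{lem:250104-11}(i), $\delta(z)\sim\delta(x)$ for every $z\in B$, and hence $w(z)\sim w(x)$ on $B$. This comparability is the only place where the geometry of $\Gamma$ enters. After it, the estimate reduces to the classical Morrey inequality on a ball sitting inside $\Omega$, with the weight frozen at the value $w(x)$.

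The plan is to apply Lemma \ref{DFM21}(ii) twice, once at each of the Lebesgue points $x$ and $y$, on the ball $B$; both points lie in $B$. This gives
\begin{align*}
|u(x)-u(y)|\le\fint_B|u(z)-u(x)|\,dz+\fint_B|u(z)-u(y)|\,dz
\lesssim\sum_{v\in\{x,y\}}\int_B\frac{|\nabla u(\xi)|}{|v-\xi|^{n-1}}\,d\xi.
\end{align*}
To be precise, Lemma \ref{DFM21}(ii) is stated for almost every $z$. Its proof, however, only uses that $z$ is a Lebesgue point, so it applies to $x$ and $y$. Each of the two integrals is then bounded with H\"older's inequality with exponents $p$ and $p'$. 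Because $p>n$, the kernel satisfies $|v-\cdot|^{-(n-1)}\in L^{p'}(B)$, and
\begin{align*}
\left\|\,|v-\cdot|^{1-n}\right\|_{L^{p'}(B)}\lesssim R^{1-\frac np}.
\end{align*}
This is exactly where the hypothesis $p\in(n,\infty)$ is used.

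The remaining task is to control $\|\nabla u\|_{L^p(B)}$ by the Morrey norm. The point is that $\nabla u\in\mathcal M^p_q(\Omega,w)$ does not give $L^p$ integrability directly when $q<p$. For this reason I would not use $p'$ in the H\"older step. Instead, I would apply Lemma \ref{DFM21}(ii) on dyadic annuli around $v$ (equivalently, decompose the kernel integral) and, on each ball $B(v,2^{-k}R)$, use H\"older with exponent $q$ together with $w\sim w(x)$:
\begin{align*}
\fint_{B(v,2^{-k}R)}|\nabla u|\lesssim\left[w(x)\right]^{-\frac1q}(2^{-k}R)^{-\frac nq}\left[\int_{B(v,2^{-k}R)}|\nabla u|^qw\right]^{\frac1q}
\lesssim[w(x)]^{-\frac1q}(2^{-k}R)^{-\frac np}\|u\|_{\dot W^1\mathcal M^p_q(\Omega,w)}.
\end{align*}
Multiplying by $2^{-k}R$ and summing over $k\in\mathbb Z_+$ gives a geometric series $\sum_k(2^{-k}R)^{1-\frac np}\sim R^{1-\frac np}$, which converges precisely because $p>n$. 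This yields the claimed bound.

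I expect the main obstacle to be exactly this mismatch: the Morrey norm controls only $L^q$ averages weighted by $w$, so the argument has to proceed through averages at all scales (a telescoping chain of balls shrinking to $v$, as in the proof of Lemma \ref{lem:260126-11}) rather than through a single H\"older inequality. Beyond that, the work is bookkeeping: checking that every ball in the chain stays inside $B(x,4R)$, so that $w\sim w(x)$ holds on each of them.
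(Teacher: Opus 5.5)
Your proposal is correct and is essentially the paper's argument. The paper telescopes over the dyadic balls $B(x,2^{-j}R)$ and $B(y,2^{-j}R)$ using Lemma \ref{DFM21}(i), bounds each average of $|\nabla u|$ via H\"older to $L^q$, the Morrey norm and $w(B(x,r))\sim r^nw(x)$, and sums the geometric series using $p>n$; your dyadic splitting of the kernel in Lemma \ref{DFM21}(ii) reproduces exactly this, and the only cosmetic difference is that you join $x$ and $y$ through the common ball $B(x,2R)$ instead of estimating the middle term $|u_{B(x,R)}-u_{B(y,R)}|$.

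One small correction to your bookkeeping: the two-sided comparability $w\sim w(x)$ is only guaranteed on $B(x,2R)\subset B(x,\delta(x)/2)$, not on all of $B(x,4R)$. Your H\"older step only needs the lower bound $w\gtrsim w(x)$, and this does hold on $B(x,4R)$ because there $\delta(z)\le\delta(x)+4R\le2\delta(x)$. Alternatively, intersect the balls around $y$ with $B(x,2R)$, where the kernel integral lives.
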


\begin{proof}
Let $u\in\dot{W}^1\mathcal{M}^p_q(\Omega,w)$ and $x,y\in\Omega$ be Lebesgue
points of $u$ such that $x\neq y$ and $B(x,4|x-y|)\subset\Omega$.
Let $R:=|x-y|$. From Lemma \ref{DFM21}(i), (iii) and (i) of Lemma
\ref{lem:250104-11}, H\"older's inequality, and $p\in(n,\infty)$, we infer that,
\begin{align}\label{20260428.1415}
\left|u(x)-u_{B(x,R)}\right|&\leq\sum_{j\in\mathbb{N}}
\left|u_{B(x,2^{-j}R)}-u_{B(x,2^{1-j}R)}\right|\notag\\
&\lesssim\sum_{j\in\mathbb{N}}\fint_{B(x,2^{1-j}R)}\fint_{B(x,2^{1-j}R)}
|u(y)-u(z)|\,dy\,dz\notag\\
&\lesssim\sum_{j\in\mathbb{N}}2^{-j}R\fint_{B(x,2^{1-j}R)}
|\nabla u(z)|\,dz\notag\\
&\lesssim\sum_{j\in\mathbb{N}}\frac{2^{-j}R}{w(B(x,2^{1-j}R))}
\int_{B(x,2^{1-j}R)}|\nabla u(z)|w(z)\,dz\notag\\
&\lesssim\sum_{j\in\mathbb{N}}2^{-j}R\left[\frac{1}{w(B(x,2^{1-j}R))}
\int_{B(x,2^{1-j}R)}|\nabla u(z)|^qw(z)\,dz\right]^\frac1q\notag\\
&\leq\sum_{j\in\mathbb{N}}\frac{2^{-j}R|B(x,2^{1-j}R)|^{\frac1q-\frac1p}}
{[w(B(x,2^{1-j}R))]^\frac1q}
\|u\|_{\dot{W}^1\mathcal{M}^p_q(\Omega,w)}\notag\\
&\sim\sum_{j\in\mathbb{N}}\frac{(2^{-j}R)^{1+\frac{n}{q}-\frac{n}{p}}}
{(2^{-j}R)^\frac{n}{q}}[w(x)]^{-\frac1q}
\|u\|_{\dot{W}^1\mathcal{M}^p_q(\Omega,w)}\notag\\
&=\sum_{j\in\mathbb{N}}\left(2^{-j}R\right)^{1-\frac{n}{p}}[w(x)]^{-\frac1q}
\|u\|_{\dot{W}^1\mathcal{M}^p_q(\Omega,w)}\notag\\
&\sim |x-y|^{1-\frac{n}{p}}[w(x)]^{-\frac1q}
\|u\|_{\dot{W}^1\mathcal{M}^p_q(\Omega,w)}.
\end{align}
In addition, since $y\in B(x,2R)$ and $\delta(x)\geq4R$, it follows that
$\delta(y)\leq\delta(x)+|x-y|\leq\frac32\delta(x)$ and hence
$[w(y)]^{-\frac1q}\lesssim[w(x)]^{-\frac1q}$. This, together with the
observations that $B(y,2R)\subset B(x,4R)\subset\Omega$ and $B(y,R)\subset
B(x,2R)\subset B(x,4R)\subset\Omega$ and an argument similar to that used
in \eqref{20260428.1415}, further implies that
\begin{align*}
\left|u(y)-u_{B(y,R)}\right|&\leq\sum_{j\in\mathbb{N}}
\left|u_{B(y,2^{-j}R)}-u_{B(y,2^{1-j}R)}\right|\\
&\lesssim |x-y|^{1-\frac{n}{p}}[w(y)]^{-\frac1q}
\|u\|_{\dot{W}^1\mathcal{M}^p_q(\Omega,w)}
\lesssim |x-y|^{1-\frac{n}{p}}[w(x)]^{-\frac1q}
\|u\|_{\dot{W}^1\mathcal{M}^p_q(\Omega,w)}
\end{align*}
and
\begin{align*}
\left|u_{B(x,R)}-u_{B(y,R)}\right|\lesssim
\fint_{B(x,2R)}\fint_{B(x,2R)}|u(z)-u(\xi)|\,dz\,d\xi
\lesssim |x-y|^{1-\frac{n}{p}}[w(x)]^{-\frac1q}
\|u\|_{\dot{W}^1\mathcal{M}^p_q(\Omega,w)}.
\end{align*}
Combining these and \eqref{20260428.1415}, we conclude that
\begin{align*}
|u(x)-u(y)|&\leq\left|u(x)-u_{B(x,R)}\right|+\left|u_{B(x,R)}
-u_{B(y,R)}\right|+\left|u_{B(y,R)}-u(y)\right|\notag\\
&\leq\sum_{j\in\mathbb{N}}
\left|u_{B(x,2^{-j}R)}-u_{B(x,2^{1-j}R)}\right|+
\left|u_{B(x,R)}-u_{B(y,R)}\right|\\
&\quad+\sum_{j\in\mathbb{N}}
\left|u_{B(y,2^{-j}R)}-u_{B(y,2^{1-j}R)}\right|\\
&\lesssim|x-y|^{1-\frac{n}{p}}[w(x)]^{-\frac1q}
\|u\|_{\dot{W}^1\mathcal{M}^p_q(\Omega,w)}.
\end{align*}

This finishes the proof of Lemma \ref{lem:local-continuity}.
\end{proof}

As a consequence of Lemmas \ref{lem:260126-11} and \ref{lem:local-continuity},
we establish a quantitative pointwise estimate between interior values
and boundary traces of functions in $\dot{W}^1\mathcal{M}^p_q(\Omega,w)$
under the sharp assumption $p\in(n,\infty)$.

\begin{theorem}\label{thm:5.1}
Let $1\leq q\leq p<\infty$ satisfy $p\in(n,\infty)$, and let
$w$ be as in \eqref{dw}. Then, for any
$u\in\dot{W}^1\mathcal{M}^p_q(\Omega,w)$, any Lebesgue point $x\in\Omega$,
and any $y\in\Gamma$,
\begin{align}\label{20260630.1334}
|u(x)-Tu(y)|\lesssim|x-y|^{1-\frac{n}{p}+\frac{n-d-1}{q}}
\|u\|_{\dot{W}^1\mathcal{M}^p_q(\Omega,w)}+
|x-y|^{1-\frac{n}{p}}[w(x)]^{-\frac{1}{q}}
\|u\|_{\dot{W}^1\mathcal{M}^p_q(\Omega,w)},
\end{align}
where the implicit positive constant is independent of $u$, $x$, and $y$.
\end{theorem}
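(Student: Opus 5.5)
Set $R:=|x-y|$ and $\alpha:=1-\frac np+\frac{n-d-1}{q}$. Note that $p>n$ and $q\le p$ give $\frac np-\frac{n-d-1}{q}<1$, since $\frac np<1$. Hence \eqref{eq:250915-1b} holds, and Lemma \ref{lem:260126-11} applies: $Tu(y)$ exists and, by \eqref{20250427.2213}, $|Tu(y)-u_{B(y,s)}|\lesssim s^{\alpha}\|u\|_{\dot{W}^1\mathcal{M}^p_q(\Omega,w)}$ for every $s\in(0,\infty)$. Since $y\in\Gamma$ we have $\delta(x)\le R$. The plan is to split
\begin{align*}
|u(x)-Tu(y)|\le|u(x)-u_{B(x,\delta(x)/8)}|+|u_{B(x,\delta(x)/8)}-u_{B(x,2R)}|+|u_{B(x,2R)}-u_{B(y,R)}|+|u_{B(y,R)}-Tu(y)|,
\end{align*}
and to estimate each term separately. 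The last term is $\lesssim R^\alpha\|u\|_{\dot{W}^1\mathcal{M}^p_q(\Omega,w)}$ by \eqref{20250427.2213}.

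\textbf{Interior term.} For the first term, I would repeat the telescoping chain \eqref{20260428.1415} from the proof of Lemma \ref{lem:local-continuity}, starting from the ball $B(x,\rho)$ with $\rho:=\delta(x)/8$. On every ball $B(x,2^{-j}\rho)$ we have $\delta(x)\ge 2\cdot 2^{-j}\rho$, so Lemma \ref{lem:250104-11}(i) gives $w(B(x,2^{-j}\rho))\sim(2^{-j}\rho)^nw(x)$. The chain then yields $|u(x)-u_{B(x,\rho)}|\lesssim\rho^{1-\frac np}[w(x)]^{-\frac1q}\|u\|_{\dot{W}^1\mathcal{M}^p_q(\Omega,w)}$. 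Since $p>n$, $\rho\le R$, and $w$ enters only through $[w(x)]^{-1/q}$, this is $\lesssim R^{1-\frac np}[w(x)]^{-\frac1q}\|u\|_{\dot{W}^1\mathcal{M}^p_q(\Omega,w)}$, which is the second term in \eqref{20260630.1334}.

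\textbf{Middle terms.} For the second term, telescope over the dyadic balls $B(x,2^k\rho)$ from radius $\rho$ up to radius comparable to $2R$; there are about $\log_2(R/\delta(x))$ steps. Each step is controlled by Lemma \ref{DFM21}(i), then Lemma \ref{lem:250104-11}(iii) with $q_0=1$, $q_1=q$, then the Morrey norm, giving the bound $s\,|B(x,s)|^{\frac1q-\frac1p}[w(B(x,s))]^{-\frac1q}\|u\|_{\dot{W}^1\mathcal{M}^p_q(\Omega,w)}$ at radius $s$. Once $s\ge\delta(x)/2$, Lemma \ref{lem:250104-11}(ii) gives $w(B(x,s))\sim s^{d+1}$, so the summand is $s^\alpha$. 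The smallest few radii, where $s<\delta(x)/2$, are handled by Lemma \ref{lem:250104-11}(i) instead; there $s\sim\delta(x)$, and the summand is $\sim s^{1-\frac np}[w(x)]^{-\frac1q}$, which is again controlled by the second term in \eqref{20260630.1334}. Since $\alpha>0$, the geometric sum of the $s^\alpha$ terms is dominated by its top term, so it is $\lesssim R^\alpha\|u\|_{\dot{W}^1\mathcal{M}^p_q(\Omega,w)}$. The third term is treated as in the proof of Lemma \ref{lem:260126-11}: because $B(y,R)\subset B(x,2R)$ with comparable measure, it is $\lesssim R\fint_{B(x,2R)}|\nabla u|$, and the same Morrey and Lemma \ref{lem:250104-11}(ii) argument bounds this by $R^\alpha\|u\|_{\dot{W}^1\mathcal{M}^p_q(\Omega,w)}$.

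\textbf{Main obstacle.} The step requiring most care is the chain from radius $\delta(x)/8$ up to $2R$, which crosses the transition between the interior regime of Lemma \ref{lem:250104-11}(i) and the boundary regime of Lemma \ref{lem:250104-11}(ii). The issue is to ensure that each dyadic term is bounded by one of the two right-hand terms in \eqref{20260630.1334}. Two facts make this work: $\alpha>0$ makes the boundary-regime sum geometric, and at the crossover radius $s\sim\delta(x)$ both expressions agree, since $s^\alpha\sim s^{1-\frac np}[w(x)]^{-\frac1q}$ when $s\sim\delta(x)$. Finally, $p>n$ is used to bound $\rho^{1-\frac np}\le R^{1-\frac np}$ in the interior estimate.
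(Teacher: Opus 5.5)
Your argument is correct, but it is organized differently from the paper's. The paper passes through a nearest boundary point $\xi_x\in\Gamma$ with $|x-\xi_x|=\delta(x)$ and splits $|u(x)-Tu(y)|$ into four pieces:
\begin{itemize}
\item $|u(x)-u_{B(x,\delta(x)/4)}|$, by the interior chain as in \eqref{20260428.1415};
\item $|u_{B(x,\delta(x)/4)}-u_{B(\xi_x,\delta(x)/2)}|$, by a single Poincar\'e step on $B(\xi_x,\frac{5\delta(x)}{4})$;
\item $|u_{B(\xi_x,\delta(x)/2)}-Tu(\xi_x)|$, by \eqref{20250427.2213};
\item $|Tu(\xi_x)-Tu(y)|$, by the H\"older continuity of the trace in Lemma \ref{lem:260126-11} together with $|\xi_x-y|\le 2|x-y|$.
\end{itemize}

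You instead keep the center at $x$ all the way up to scale $2R$, where $R=|x-y|$. Your dyadic chain from $\delta(x)/8$ to $2R$ does the long-range transport explicitly, crossing from the regime of Lemma \ref{lem:250104-11}(i) into that of Lemma \ref{lem:250104-11}(ii). You then need only the one-point estimate \eqref{20250427.2213} at $y$, not the two-point H\"older bound for $Tu$, and no nearest-point construction.

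The paper's route is shorter because it reuses Lemma \ref{lem:260126-11}, whose proof is essentially the comparison you carry out by hand. Yours is more self-contained and shows directly why $\alpha=1-\frac np+\frac{n-d-1}{q}>0$ is exactly what makes the multiscale sum collapse to its top term.

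As a side remark, your crossover computation also shows that
$R^{1-\frac np}[w(x)]^{-\frac1q}=R^{1-\frac np}[\delta(x)]^{\frac{n-d-1}{q}}\le R^{\alpha}$, since $\delta(x)\le R$. So the second term on the right of \eqref{20260630.1334} is in fact dominated by the first.
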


\begin{proof}
Let $u\in\dot{W}^1\mathcal{M}^p_q(\Omega,w)$, $x\in\Omega$, and
$y\in\Gamma$. Then there exists $\xi_x\in\Gamma$ such that $\delta(x)=|x-\xi_x|$.
By $p\in(n,\infty)$ and an argument similar to that used in
\eqref{20260428.1415}, we obtain
\begin{align}\label{20260428.1825}
\left|u(x)-u_{B(x,\frac{\delta(x)}{4})}\right|\lesssim[\delta(x)]
^{1-\frac{n}{p}}[w(x)]^{-\frac1q}\|u\|_{\dot{W}^1\mathcal{M}^p_q(\Omega,w)}
\leq|x-y|^{1-\frac{n}{p}}[w(x)]^{-\frac1q}
\|u\|_{\dot{W}^1\mathcal{M}^p_q(\Omega,w)}.
\end{align}
In addition, from $p\in(n,\infty)$, the observation that
$B(x,\frac{\delta(x)}{4})\subset B(\xi_x,\frac{5\delta(x)}{4})$, and an
argument similar to that used in \eqref{20260422.2213}, we deduce that
\begin{align}\label{20260428.1826}
\left|u_{B(x,\frac{\delta(x)}{4})}-u_{B(\xi_x,\frac{\delta(x)}{2})}\right|
&\lesssim\fint_{B(\xi_x,\frac{5\delta(x)}{4})}
\fint_{B(\xi_x,\frac{5\delta(x)}{4})}|u(z)-u(\eta)|\,dz\,d\eta\notag\\
&\lesssim[\delta(x)]^{1-\frac{n}{p}+\frac{n-d-1}{q}}
\|u\|_{\dot{W}^1\mathcal{M}^p_q(\Omega, w)}
\lesssim|x-y|^{1-\frac{n}{p}+\frac{n-d-1}{q}}
\|u\|_{\dot{W}^1\mathcal{M}^p_q(\Omega, w)}.
\end{align}
Moreover, note that $p\in(n,\infty)$ implies that \eqref{eq:250915-1b} holds. This,
together with \eqref{20250427.2213}, further implies that
\begin{align}\label{20260428.1827}
\left|u_{B(\xi_x,\frac{\delta(x)}{2})}-Tu(\xi_x)\right|\lesssim
[\delta(x)]^{1-\frac{n}{p}+\frac{n-d-1}{q}}
\|u\|_{\dot{W}^1\mathcal{M}^p_q(\Omega, w)}
\lesssim|x-y|^{1-\frac{n}{p}+\frac{n-d-1}{q}}
\|u\|_{\dot{W}^1\mathcal{M}^p_q(\Omega, w)}.
\end{align}
Furthermore, using \eqref{eq:250915-1b}, the fact that
$|\xi_x-y|\leq|\xi_x-x|+|x-y|\leq2|x-y|$, and Lemma \ref{lem:260126-11},
we obtain
\begin{align*}
|Tu(\xi_x)-Tu(y)|\lesssim|\xi_x-y|^{1-\frac{n}{p}+\frac{n-d-1}{q}}
\|u\|_{\dot{W}^1\mathcal{M}^p_q(\Omega, w)}
\lesssim|x-y|^{1-\frac{n}{p}+\frac{n-d-1}{q}}
\|u\|_{\dot{W}^1\mathcal{M}^p_q(\Omega, w)}.
\end{align*}
Finally, combining this, \eqref{20260428.1825}, \eqref{20260428.1826},
and \eqref{20260428.1827}, we conclude that
\begin{align*}
|u(x)-Tu(y)|&\leq\left|u(x)-u_{B(x,\frac{\delta(x)}{4})}\right|+
\left|u_{B(x,\frac{\delta(x)}{4})}-u_{B(\xi_x,\frac{\delta(x)}{2})}\right|\\
&\quad+\left|u_{B(\xi_x,\frac{\delta(x)}{2})}-Tu(\xi_x)\right|+
|Tu(\xi_x)-Tu(y)|\\
&\lesssim|x-y|^{1-\frac{n}{p}+\frac{n-d-1}{q}}
\|u\|_{\dot{W}^1\mathcal{M}^p_q(\Omega,w)}+
|x-y|^{1-\frac{n}{p}}[w(x)]^{-\frac{1}{q}}
\|u\|_{\dot{W}^1\mathcal{M}^p_q(\Omega,w)}.
\end{align*}
This finishes the proof of Theorem \ref{thm:5.1}.
\end{proof}

\begin{remark}
We use the same notation as in Theorem \ref{thm:5.1}. We point out that
the assumption $p>n$ in Theorem \ref{thm:5.1} is sharp. Indeed, assume that
$1\leq q\leq p\leq n$. Fix $x_0\in\Omega$ and $r_0\in(0,\infty)$ such that
$B(x_0,4r_0)\subset\Omega$. Let $\eta\in C_\mathrm{c}^\infty
(\mathbb{R}^n)$ satisfy
\begin{align*}
\begin{cases}
\eta(x)=1 &\mbox{if\ }x\in B(x_0,r_0),\\
\eta(x)\in[0,1] &\mbox{if\ }x\in B(x_0,2r_0)\setminus B(x_0,r_0),\\
\eta(x)=0 &\mbox{if\ }x\in\mathbb{R}^n\setminus B(x_0,2r_0).
\end{cases}
\end{align*}
Since $B(x_0,4r_0)\subset\Omega$, it follows that, for any $z\in B(x_0,2r_0)$,
$w(z)\sim w(x_0)$. To construct a function
$u\in\dot{W}^1\mathcal{M}^p_q(\Omega,w)$ for which \eqref{20260630.1334}
fails when $p\le n$, we consider the following two cases for $p$.

\emph{Case (1)} $p<n$. In this case, fix $a\in(1,\frac{n}{p})$ and, for
any $x\in\mathbb{R}^n$, define $u(x):=\frac{\eta(x)}{|x-x_0|^{a-1}}$.
By a simple calculation, we find that $u\in L_\mathrm{loc}^1(\mathbb{R}^n)$
and, for almost every $x\in\mathbb{R}^n$,
\begin{align*}
\nabla u(x)=\frac{\nabla\eta(x)}{|x-x_0|^{a-1}}
+(1-a)\frac{\eta(x)(x-x_0)}{|x-x_0|^{a+1}}.
\end{align*}
Therefore, for almost every $x\in\mathbb{R}^n$,
\begin{align*}
|\nabla u(x)|\lesssim\boldsymbol{1}_{B(x_0,2r_0)\setminus B(x_0,r_0)}(x)+
\frac{\boldsymbol{1}_{B(x_0,2r_0)}(x)}{|x-x_0|^a}.
\end{align*}
This, together with the assumption that $a\in(1,\frac{n}{p})$,
further implies $\|\nabla u\|_{\mathcal{M}^p_q(\Omega,w)}<\infty$; i.e.,
$u\in\dot{W}^1\mathcal{M}^p_q(\Omega,w)$.

\emph{Case (2)} $p=n$. In this case, for any $x\in\mathbb{R}^n$, define
$u(x):=\eta(x)\ln\ln\frac{e^er_0}{|x-x_0|}$. From a simple calculation, we
deduce that $u\in L_\mathrm{loc}^1(\mathbb{R}^n)$ and, for almost every
$x\in\mathbb{R}^n$,
\begin{align*}
\nabla u(x)=\nabla\eta(x)\ln\ln\frac{e^er_0}{|x-x_0|}-
\frac{\eta(x)(x-x_0)}{|x-x_0|^2\ln(\frac{e^er_0}{|x-x_0})}.
\end{align*}
Thus, for almost every $x\in\mathbb{R}^n$,
\begin{align*}
|\nabla u(x)|\lesssim\boldsymbol{1}_{B(x_0,2r_0)\setminus B(x_0,r_0)}(x)+
\frac{\boldsymbol{1}_{B(x_0,2r_0)}(x)}{|x-x_0|\ln(\frac{e^er_0}{|x-x_0})},
\end{align*}
which further implies that $u\in\dot{W}^1\mathcal{M}^p_q(\Omega,w)$.

Next, we verify that the above constructed $u$ does not
satisfy \eqref{20260630.1334}. To this end, fix $\sigma\in\mathbb{S}^{n-1}$
and $y\in\Gamma$. For any $j\in\mathbb{N}$, let
$x_j:=x_0+\frac{r_0}{2j}\sigma$. Then, in both cases, the points
$\{x_j\}_{j\in\mathbb{N}}$ are Lebesgue points of $u$ in $\Omega$ and
$Tu(y)=0$. Note that $u(x_j)\to\infty$ as $j\to\infty$.
However, for any $j\in\mathbb{N}$,
\begin{align*}
&\left|x_j-y\right|^{1-\frac{n}{p}+\frac{n-d-1}{q}}
\|u\|_{\dot{W}^1\mathcal{M}^p_q(\Omega,w)}+
\left|x_j-y\right|^{1-\frac{n}{p}}\left[w\left(x_j\right)\right]^{-\frac{1}{q}}
\|u\|_{\dot{W}^1\mathcal{M}^p_q(\Omega,w)}\\
&\sim|x_0-y|^{1-\frac{n}{p}+\frac{n-d-1}{q}}
\|u\|_{\dot{W}^1\mathcal{M}^p_q(\Omega,w)}+
|x_0-y|^{1-\frac{n}{p}}[w(x_0)]^{-\frac{1}{q}}
\|u\|_{\dot{W}^1\mathcal{M}^p_q(\Omega,w)}
\end{align*}
is uniformly bounded, which means that \eqref{20260630.1334} fails for the
above constructed $u$. This indicates that the condition $p>n$
in Theorem \ref{thm:5.1} is sharp.
\end{remark}

\subsection{Trace Operators}
\label{subsection4.5}

In this subsection, we characterize the trace space of
$\dot{W}^1\mathcal{M}^p_q(\Omega,w)$, denoted by $Q_q^p(\Gamma)$, and establish
the bounded mapping property of the trace operator $T$ from
$\dot{W}^1\mathcal{M}^p_q(\Omega,w)$ into this newly defined space. We begin by
ensuring the existence of traces for functions in
$\dot{W}^1\mathcal{M}^p_q(\Omega,w)$, which follows directly from Proposition
\ref{prop:260126-1}(ii) and Proposition \ref{lem:250919-2};
we omit the details here.

\begin{lemma}\label{lem:5.1}
Let $1\leq q\leq p<\infty$ and $w$ be as in \eqref{dw}. Then, for any
$u\in\dot{W}^1\mathcal{M}^p_q(\Omega,w)$, $Tu(x)$ exists for
$\mathcal{H}^d$-almost every $x\in\Gamma$.
\end{lemma}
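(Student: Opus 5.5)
The plan is to reduce the statement to Proposition \ref{prop:260126-1}(ii). That result gives existence of $Tu(x)$ for $\mathcal{H}^d$-almost every $x\in\Gamma$ as soon as $u\in W^{1,1}_{\mathrm{loc}}(\Omega)$ and $\nabla u\in L^1_{\mathrm{loc}}(\Omega,w)$. So the only task is to check these two memberships for an arbitrary $u\in\dot{W}^1\mathcal{M}^p_q(\Omega,w)$.

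Fix such a $u$. By Definition \ref{def5.1}, $u\in L^1_{\mathrm{loc}}(\Omega)$ and $\nabla u\in\mathcal{M}^p_q(\Omega,w)$. Proposition \ref{lem:250919-2} then gives
$$\nabla u\in L^1_{\mathrm{loc}}(\Omega,w)\subset L^1_{\mathrm{loc}}(\Omega).$$
Hence $u\in W^{1,1}_{\mathrm{loc}}(\Omega)$ and $\nabla u\in L^1_{\mathrm{loc}}(\Omega,w)$.

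For the second embedding I would record the quantitative form on any ball $B(x,r)$ with $x\in\Gamma$. By H\"older's inequality with respect to $w(y)\,dy$, then the Morrey norm and Lemma \ref{lem:250104-11}(ii),
$$\int_{B(x,r)}|\nabla u|w\,dy\le [w(B(x,r))]^{1-\frac1q}\left[\int_{\Omega(x,r)}|\nabla u|^q w\,dy\right]^{\frac1q}\lesssim r^{(d+1)(1-\frac1q)}\,r^{\frac nq-\frac np}\,\|u\|_{\dot{W}^1\mathcal{M}^p_q(\Omega,w)}<\infty.$$
Here $\Gamma$ is $w$-null, so integrating over $B(x,r)$ or $\Omega(x,r)$ is the same. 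Every compact set is covered by such balls, so this gives the required local integrability against $w$, not merely on compact subsets of $\Omega$.

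This last point is the only subtle step. The proof of Proposition \ref{prop:260126-1}(ii) uses $\int_{B(x,r)}|\nabla f|w<\infty$ for balls centred on $\Gamma$, so $L^1_{\mathrm{loc}}$ must be understood with respect to compact subsets of $\mathbb{R}^n$. The estimate above supplies exactly this. Applying Proposition \ref{prop:260126-1}(ii) then gives, for $\mathcal{H}^d$-almost every $x\in\Gamma$ and every $r$, the finiteness $\int_{B(x,r)}\frac{|\nabla u(\xi)|}{|x-\xi|^{n-1}}\,d\xi<\infty$. Proposition \ref{prop:260126-1}(i) then shows that the limit defining $Tu(x)$ exists and is finite.

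Finally, $u$ is only defined modulo constants, so I would note that $Tu$ is well defined in that sense: adding a constant $c$ to $u$ adds $c$ to every ball average and hence to $Tu$.
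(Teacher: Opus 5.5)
Your proposal is correct and follows essentially the same route as the paper: the paper also derives the lemma directly from Proposition \ref{prop:260126-1}(ii) combined with the embedding $\mathcal{M}^p_q(\Omega,w)\hookrightarrow L^1_{\mathrm{loc}}(\Omega,w)$ of Proposition \ref{lem:250919-2}, and omits the details. Your H\"older estimate on balls centred on $\Gamma$ supplies those omitted details. It also rightly points out that local integrability is needed up to $\Gamma$ (i.e., on compact subsets of $\mathbb{R}^n$), which holds because the Morrey norm is a supremum over all balls in $\mathbb{R}^n$.
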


To precisely describe the regularity of these boundary
traces, we introduce the trace space $Q_q^p(\Gamma)$, inspired by the framework
of fractional Morrey-type spaces developed in \cite{Xiao08}.

\begin{definition}
Let $1\leq q\leq p<\infty$. The space $Q^p_q(\Gamma)$ is defined
to be the set of all $\mathcal{H}^d$-measurable functions $f$ on
$\Gamma$ such that
\begin{align*}
\|f\|_{Q^p_q(\Gamma)}:=\sup_{v\in\Gamma,R\in(0,\infty)}
|B(v,R)|^{\frac1p-\frac1q}\left[\iint_{\Gamma(v,R)\times\Gamma(v,R)}
\frac{|f(x)-f(y)|^q}{|x-y|^{q+d-1}}\,d\mathcal{H}^d(x)\,d\mathcal{H}^d(y)
\right]^\frac1q<\infty.
\end{align*}
\end{definition}

The following theorem is the main result of this subsection. By generalizing
the trace estimates in \cite[Theorem 3.4]{DFM21} from the weighted Sobolev
space $\dot{W}^{1,2}(\Omega,w)$ to the weighted Sobolev--Morrey
space $\dot{W}^1\mathcal{M}^p_q(\Omega,w)$, we demonstrate
that the trace operator $T$ is indeed a bounded linear
mapping from $\dot{W}^1\mathcal{M}^p_q(\Omega,w)$ to $Q^p_q(\Gamma)$.

\begin{theorem}\label{thm:local-trace}
Let $1<q\leq p<\infty$ and $w$ be as in \eqref{dw}.
Then the trace operator $T$ can be extended to a linear bounded mapping
$T:\dot{W}^1\mathcal{M}^p_q(\Omega,w)\longrightarrow Q^p_q(\Gamma)$.
\end{theorem}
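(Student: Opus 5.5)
The plan is to localize the argument of \cite[Theorem 3.4]{DFM21} to a single ball $B(v,R)$ with $v\in\Gamma$. We then sum the resulting pieces against the Morrey normalization $|B(v,R)|^{\frac1p-\frac1q}$. By Lemma \ref{lem:5.1}, $Tu(x)$ exists for $\mathcal{H}^d$-almost every $x\in\Gamma$, so it suffices to prove, for any $u\in\dot{W}^1\mathcal{M}^p_q(\Omega,w)$, $v\in\Gamma$, and $R\in(0,\infty)$, the local estimate
\begin{align*}
\iint_{\Gamma(v,R)\times\Gamma(v,R)}\frac{|Tu(x)-Tu(y)|^q}{|x-y|^{q+d-1}}\,d\mathcal{H}^d(x)\,d\mathcal{H}^d(y)
\lesssim\int_{B(v,CR)}|\nabla u(\xi)|^qw(\xi)\,d\xi
\end{align*}
with $C$ an absolute constant. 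Multiplying by $|B(v,R)|^{\frac{q}{p}-1}\sim|B(v,CR)|^{\frac qp-1}$ then gives $\|Tu\|_{Q^p_q(\Gamma)}\lesssim\|\nabla u\|_{\mathcal{M}^p_q(\Omega,w)}$. Linearity is clear, and $Tu$ is independent of the additive constant.

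To prove the local estimate, fix Lebesgue-type points $x,y\in\Gamma(v,R)$ (as in Proposition \ref{prop:260126-1}(iii)) and set $r:=|x-y|\le2R$. Following the proof of Lemma \ref{lem:260126-11}, write
\begin{align*}
|Tu(x)-Tu(y)|\le|Tu(x)-u_{B(x,r)}|+|u_{B(x,r)}-u_{B(y,r)}|+|u_{B(y,r)}-Tu(y)|.
\end{align*}
The chain estimate \eqref{20260427.2156} then gives
\begin{align*}
|Tu(x)-Tu(y)|\lesssim\sum_{j\ge0}2^{-j}r\fint_{B(x,2^{2-j}r)}|\nabla u|+(\text{same with }y).
\end{align*}
Denote by $a_j(x,r):=\fint_{B(x,2^{-j}r)}|\nabla u|$ the ball averages. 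By symmetry we treat only the $x$-term.

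Next, choose a small $\epsilon\in(0,1)$ and apply H\"older's inequality in $j$ with weights $2^{-j\epsilon}$:
\begin{align*}
\Big(\sum_j 2^{-j}ra_j\Big)^q\lesssim\sum_j2^{-jq(1-\epsilon)}r^qa_j^q.
\end{align*}
By Lemma \ref{lem:250104-11}(iii) with $q_0=1$, $q_1=q$, and (ii) (since $x\in\Gamma$), we have
\begin{align*}
a_j^q\lesssim(2^{-j}r)^{-(d+1)}\int_{B(x,2^{-j}r)}|\nabla u|^qw.
\end{align*}
We insert this bound and integrate in $y\in\Gamma(x,2R)$. The kernel $|x-y|^{-(q+d-1)}$ with $r=|x-y|$ is handled by splitting into dyadic shells $|x-y|\sim2^{-k}R$, each of $\mathcal{H}^d$-measure $\sim(2^{-k}R)^d$. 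The $y$-integral then produces
\begin{align*}
\sum_{k,j}(2^{-k}R)^{d-q-d+1+q}2^{-jq(1-\epsilon)}(2^{-j-k}R)^{-(d+1)}\int_{B(x,2^{-j-k}R)}|\nabla u|^qw.
\end{align*}
Setting $\rho=2^{-j-k}R$, this is $\lesssim\sum_{\rho}\rho^{-d}\int_{B(x,\rho)}|\nabla u|^qw$, where the sum over $j$ at fixed $\rho$ converges because of $2^{-j(q(1-\epsilon)-1)}$. This requires $q>1$, which is exactly where the hypothesis $1<q$ enters.

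Finally, we integrate in $x\in\Gamma(v,R)$ and use Tonelli's theorem:
\begin{align*}
\int_{\Gamma(v,R)}\sum_{\rho\le2R}\rho^{-d}\boldsymbol 1_{|x-\xi|<\rho}\,d\mathcal{H}^d(x)\lesssim\#\{\rho\text{ dyadic}:\delta(\xi)\le\rho\le2R\}.
\end{align*}
This count is logarithmic and is the main obstacle: it gives a $\log(R/\delta(\xi))$ loss. To remove it, I would keep the $\epsilon$-gain from the $k$-sum rather than from the $j$-sum. Concretely, I would use H\"older's inequality with a factor $(\rho/r)^{\epsilon}$ so that the summand carries $(\rho/|x-y|)^{\epsilon'}$, making the series geometric from $\rho\sim\delta(\xi)$ upward. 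Equivalently, one can estimate $\int_{\Gamma}\rho^{-d}\boldsymbol 1_{B(\xi,\rho)}\,d\mathcal{H}^d\lesssim(\delta(\xi)/\rho)^{0}$ only for $\rho\sim\delta(\xi)$, after first replacing $a_j$ for $2^{-j}r<\delta(\xi)$ by a Whitney-ball average, as in \cite{DFM21}. This bookkeeping is where I would spend the most care. Once it is done, the $\xi$-integral is bounded by $\int_{B(v,4R)}|\nabla u|^qw$, and the local estimate follows.
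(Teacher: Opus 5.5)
Your reduction to a local estimate on $\Gamma(v,R)$, the pointwise chain at points where \eqref{eq:250928-1} holds, and the Tonelli bookkeeping are sound. Your computation also correctly locates the weak point. What you actually prove is
\begin{align*}
\iint_{\Gamma(v,R)\times\Gamma(v,R)}\frac{|Tu(x)-Tu(y)|^q}{|x-y|^{q+d-1}}\,d\mathcal{H}^d(x)\,d\mathcal{H}^d(y)
\lesssim\int_{B(v,CR)}|\nabla u(\xi)|^qw(\xi)\log\frac{2R}{\delta(\xi)}\,d\xi,
\end{align*}
and the remedy you sketch does not remove the logarithm, so the proof does not close.

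The loss is created, irreversibly, when you apply Lemma \ref{lem:250104-11}(iii) on every ball $B(x,\rho)$ with $x\in\Gamma$. After that step and the $y$-integration, the $\xi$-kernel you obtain is $w(\xi)\sum_{\rho>|x-\xi|}\rho^{-d}\sim w(\xi)|x-\xi|^{-d}$. The kernel the argument can afford is $|x-\xi|^{1-n}$, which integrates over $\Gamma$ to $\lesssim w(\xi)$ by \eqref{20260511.1829}. The two differ by the factor $(|x-\xi|/\delta(\xi))^{n-d-1}$. By the lower Ahlfors bound, $\int_{\Gamma(v,R)}|x-\xi|^{-d}\,d\mathcal{H}^d(x)$ really is of order $\log(R/\delta(\xi))$ when $\delta(\xi)\ll R$ and $\xi$ lies near $\Gamma(v,R/2)$.

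Your proposed fixes do not touch this. A factor $(\rho/|x-y|)^{\epsilon'}$ is a ratio of two scales and never sees $\delta(\xi)$. You already have it: it is exactly what makes the $j$-sum converge at fixed $\rho$. After it is summed out, every dyadic $\rho\in(\delta(\xi),2R)$ still contributes $\sim w(\xi)$. The Whitney replacement for $2^{-j}r<\delta(\xi)$ treats scales that contribute nothing, since $\xi\notin B(x,\rho)$ when $x\in\Gamma$ and $\rho\le\delta(\xi)$. Nor can the Morrey normalization absorb the logarithm: for $p=q$ the required bound is exactly $\int|\nabla u|^qw$.

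The missing idea is a H\"older split whose gain depends on the position of $\xi$, not on a ratio of scales. The paper works at a fixed averaging scale $r$ and lets $r\to0^+$ by Fatou's lemma. It controls differences of averages by the Riesz potential of Lemma \ref{DFM21}(ii) and splits $|z-\xi|^{1-n}=|z-\xi|^{-(n-1-aq)/q}\,|z-\xi|^{-(n-1+aq')/q'}$ with $0<a<\min\{\frac1{q'},\frac{n-d-1}{q}\}$. After integrating out the boundary variable, the scale $2^kr$ contributes $(2^kr)^{-aq}[\delta(\xi)]^{d+1+aq-n}$ when $\delta(\xi)\lesssim2^kr$ and nothing otherwise. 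This is a geometric series summing to $\lesssim w(\xi)$, and it is where both $q>1$ and $d<n-1$ enter.

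Your route can be repaired in the same spirit. In place of Lemma \ref{lem:250104-11}(iii), apply H\"older on $B(x,\rho)$ against $w(\xi)[\delta(\xi)/\rho]^{\gamma}$ with $0<\gamma<n-d-1$. Since $\delta\le\rho$ on $B(x,\rho)$, this still gives $(\fint_{B(x,\rho)}|\nabla u|)^q\lesssim\rho^{-(d+1)}\int_{B(x,\rho)}|\nabla u(\xi)|^qw(\xi)[\delta(\xi)/\rho]^\gamma\,d\xi$. Now scale $\rho$ contributes only $w(\xi)[\delta(\xi)/\rho]^\gamma$, so the sum over $\rho>\delta(\xi)$ is $\lesssim w(\xi)$.

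Alternatively, keep the chain in the Riesz form $\int_{B(x,2|x-y|)}|\nabla u(\xi)|\,|x-\xi|^{1-n}\,d\xi$ plus the analogous term at $y$. Apply the $a$-split there with any $0<a<\frac1{q'}$, and integrate in $y$ (the kernel $|x-y|^{-d-aq}$ over $|x-y|\gtrsim|x-\xi|$ gives $|x-\xi|^{-aq}$). This yields $\int|\nabla u(\xi)|^q|x-\xi|^{1-n}\,d\xi$, and the bound $\int_{\Gamma(v,R)}|x-\xi|^{1-n}\,d\mathcal{H}^d(x)\lesssim w(\xi)$ from \eqref{20260511.1829} finishes the estimate.
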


\begin{proof}
Let $u\in\dot{W}^1\mathcal{M}^p_q(\Omega,w)$.
By Lemma \ref{lem:5.1}, we find that $Tu(x)$ exists for
$\mathcal{H}^d$-almost every $x\in\Gamma$. Thus, to prove Theorem
\ref{thm:local-trace}, it suffices to show that, for any given $v\in\Gamma$ and
$R\in(0,\infty)$,
\begin{align}\label{eq:local-trace}
|B(v,R)|^{\frac1p-\frac1q}\left[\iint_{\Gamma(v,R)\times\Gamma(v,R)}
\frac{|Tu(x)-Tu(y)|^q}{|x-y|^{q+d-1}}\,d\mathcal{H}^d(x)\,d\mathcal{H}^d(y)
\right]^\frac1q\lesssim\|u\|_{\dot{W}^1\mathcal{M}^p_q(\Omega,w)},
\end{align}
where the implicit positive constant is independent of $u$, $v$, and $R$.
Furthermore, to prove \eqref{eq:local-trace}, it remains to show that, for any
given $r\in(0,\infty)$,
\begin{align}\label{20260501.1900}
\int_{\Gamma(v,R)}\int_{\Gamma(v,R)\setminus B(y,r)}
\frac{|u_{B(x,r)}-u_{B(y,r)}|^q}{|x-y|^{q+d-1}}
\,d\mathcal{H}^d(x)\,d\mathcal{H}^d(y)\lesssim
\int_{B(v,11R)}|\nabla u(\xi)|^qw(\xi)\,d\xi
\end{align}
with the implicit positive constant independent of $r$. Indeed, if
\eqref{20260501.1900} holds, then, from Proposition \ref{prop:260126-1}(i),
Fatou's lemma, and \eqref{20260501.1900}, we infer that
\begin{align*}
&|B(v,R)|^{\frac1p-\frac1q}\left[\iint_{\Gamma(v,R)\times\Gamma(v,R)}
\frac{|Tu(x)-Tu(y)|^q}{|x-y|^{q+d-1}}\,d\mathcal{H}^d(x)\,d\mathcal{H}^d(y)
\right]^\frac1q\\
&\quad=|B(v,R)|^{\frac1p-\frac1q}\left[\int_{\Gamma(v,R)}\int_{\Gamma(v,R)}
\lim_{r\to0^+}\frac{|u_{B(x,r)}-u_{B(y,r)}|^q}{|x-y|^{q+d-1}}
\boldsymbol{1}_{\Gamma(v,R)\setminus B(y,r)}(x)
\,d\mathcal{H}^d(x)\,d\mathcal{H}^d(y)\right]^\frac1q\\
&\quad\leq\lim_{r\to0^+}|B(v,R)|^{\frac1p-\frac1q}
\left[\int_{\Gamma(v,R)}\int_{\Gamma(v,R)\setminus B(y,r)}
\frac{|u_{B(x,r)}-u_{B(y,r)}|^q}{|x-y|^{q+d-1}}
\,d\mathcal{H}^d(x)\,d\mathcal{H}^d(y)\right]^\frac1q\\
&\quad\lesssim|B(v,11R)|^{\frac1p-\frac1q}
\left[\int_{B(v,11R)}|\nabla u(\xi)|^qw(\xi)\,d\xi\right]^\frac1q
\leq\|u\|_{\dot{W}^1\mathcal{M}^p_q(\Omega,w)},
\end{align*}
and hence \eqref{eq:local-trace} holds.

Now, we prove \eqref{20260501.1900}. Fix $r\in(0,\infty)$ and
$a\in(0,\min\{\frac{1}{q'},\,\frac{n-d-1}{q}\})$. Let
$k\in\mathbb{Z}_+$ and $x,y\in\Gamma(v,R)$ be such that
$2^kr\leq|x-y|<2^{k+1}r$. Note that, for any $z\in B(x,r)$ and $\xi\in B(y,2^kr)$,
\begin{align*}
|\xi-z|\leq|\xi-y|+|y-x|+|x-z|<2^kr+2^{k+1}r+r\leq2^{k+2}r.
\end{align*}
Therefore, for any $z\in B(x,r)$, $B(y,2^kr)\subset B(z,2^{k+2}r)$.
Using this and Lemma \ref{DFM21}(ii), we obtain, for any $z\in B(x,r)$,
\begin{align*}
\left|u_{B(x,r)}-u_{B(y,2^kr)}\right|&\leq\fint_{B(x,r)}\fint_{B(y,2^kr)}
|u(z)-u(\xi)|\,d\xi\,dz\\
&\lesssim\fint_{B(x,r)}\fint_{B(z,2^{k+2}r)}|u(z)-u(\xi)|\,d\xi\,dz\\
&\lesssim\left(2^kr\right)^n\fint_{B(x,r)}\fint_{B(z,2^{k+2}r)}
\frac{|\nabla u(\xi)|}{|\xi-z|^{n-1}}\,d\xi\,dz.
\end{align*}
This, combined with H\"older's inequality and the assumption that
$a<\frac{1}{q'}$, further implies that
\begin{align}\label{20260501.2223}
\left|u_{B(x,r)}-u_{B(y,2^kr)}\right|^q&\lesssim\left(2^kr\right)^{nq}
\fint_{B(x,r)}\fint_{B(z,2^{k+2}r)}
\frac{|\nabla u(\xi)|^q}{|\xi-z|^{n-1-aq}}\,d\xi\,dz\notag\\
&\quad\times\left[\fint_{B(x,r)}\fint_{B(z,2^{k+2}r)}
\frac{1}{|\xi-z|^{n-1+aq'}}\,d\xi\,dz\right]^{q-1}\notag\\
&\sim\left(2^kr\right)^{n+q-1-aq}\fint_{B(x,r)}\fint_{B(z,2^{k+2}r)}
\frac{|\nabla u(\xi)|^q}{|\xi-z|^{n-1-aq}}\,d\xi\,dz.
\end{align}
By an argument similar to that used in \eqref{20260501.2223}, we find that
\begin{align*}
\left|u_{B(y,r)}-u_{B(y,2^kr)}\right|^q\lesssim
\left(2^kr\right)^{n+q-1-aq}\fint_{B(y,r)}\fint_{B(z,2^{k+2}r)}
\frac{|\nabla u(\xi)|^q}{|\xi-z|^{n-1-aq}}\,d\xi\,dz.
\end{align*}
Thus, for any $k\in\mathbb{Z}_+$ and $x,y\in\Gamma(v,R)$ satisfying
$2^kr\leq|x-y|<2^{k+1}r$,
\begin{align}\label{20260501.2234}
\left|u_{B(x,r)}-u_{B(y,r)}\right|^q&\lesssim
\left|u_{B(x,r)}-u_{B(y,2^kr)}\right|^q+
\left|u_{B(y,2^kr)}-u_{B(y,r)}\right|^q\notag\\
&\lesssim\left(2^kr\right)^{n+q-1-aq}\fint_{B(x,r)}\fint_{B(z,2^{k+2}r)}
\frac{|\nabla u(\xi)|^q}{|\xi-z|^{n-1-aq}}\,d\xi\,dz\notag\\
&\quad+\left(2^kr\right)^{n+q-1-aq}\fint_{B(y,r)}\fint_{B(z,2^{k+2}r)}
\frac{|\nabla u(\xi)|^q}{|\xi-z|^{n-1-aq}}\,d\xi\,dz.
\end{align}
In addition, since $\Gamma$ is a $d$-set, it follows that, for any
$y\in\Gamma(v,R)$ and $k\in\mathbb{Z}_+$,
\begin{align*}
\mathcal{H}^d\left(\Gamma(v,R)\cap B\left(y,2^{k+1}r\right)
\setminus B\left(y,2^kr\right)\right)\leq\mathcal{H}^d\left(\Gamma
\cap B\left(y,2^{k+1}r\right)\right)\sim\left(2^kr\right)^d.
\end{align*}
This, together with \eqref{20260501.2234},
further implies that, for any $k\in\mathbb{Z}_+$,
\begin{align}\label{20260502.2013}
&\iint_{\genfrac{}{}{0pt}{}{\Gamma(v,R)\times\Gamma(v,R)}
{2^kr\leq|x-y|<2^{k+1}r}}
\frac{|u_{B(x,r)}-u_{B(y,r)}|^q}{|x-y|^{q+d-1}}
\,d\mathcal{H}^d(x)\,d\mathcal{H}^d(y)\notag\\
&\quad\lesssim\left(2^kr\right)^{-d-aq}r^{-n}
\iint_{\genfrac{}{}{0pt}{}{\Gamma(v,R)\times\Gamma(v,R)}
{2^kr\leq|x-y|<2^{k+1}r}}
\int_{B(x,r)}\int_{B(z,2^{k+2}r)}\notag\\
&\quad\quad\times\frac{|\nabla u(\xi)|^q}{|\xi-z|^{n-1-aq}}\,d\xi\,dz
\,d\mathcal{H}^d(x)\,d\mathcal{H}^d(y)\notag\\
&\quad\sim\left(2^kr\right)^{-aq}r^{-n}
\int_{\Gamma(v,R)}\int_{B(x,r)}\int_{B(z,2^{k+2}r)}
\frac{|\nabla u(\xi)|^q}{|\xi-z|^{n-1-aq}}\,d\xi\,dz\,d\mathcal{H}^d(x).
\end{align}

To proceed, observe that, for any $x,y\in\Gamma(v,R)$, it holds that $|x-y|<2R$.
Therefore, for any $k\in\mathbb{Z}_+$ satisfying
$\{(x,y)\in\Gamma(v,R)\times\Gamma(v,R):2^kr\leq|x-y|<2^{k+1}r\}\neq\emptyset$,
it holds that $2^kr<2R$ and hence $k<\log_2\frac{2R}{r}$. Define
$K:=\lceil\log_2\frac{2R}{r}\rceil-1$. Then, for any $x\in\Gamma(v,R)$, $z\in
B(x,r)$, $k\in\mathbb{Z}_+\cap[0,K]$, and $\xi\in B(z,2^{k+2}r)$, it holds that
\begin{align*}
|\xi-v|\leq|\xi-z|+|z-x|+|x-v|<2^{k+2}r+r+R\leq11R
\end{align*}
and hence $\xi\in B(v,11R)$ and $z\in B(x,r)\cap B(\xi,2^{k+2}r)$. Combining
this, \eqref{20260502.2013}, and Tonelli's theorem, we obtain, for any
$k\in\mathbb{Z}_+\cap[0,K]$,
\begin{align}\label{20260502.2029}
&\iint_{\genfrac{}{}{0pt}{}{\Gamma(v,R)\times\Gamma(v,R)}
{2^kr\leq|x-y|<2^{k+1}r}}
\frac{|u_{B(x,r)}-u_{B(y,r)}|^q}{|x-y|^{q+d-1}}
\,d\mathcal{H}^d(x)\,d\mathcal{H}^d(y)\notag\\
&\quad\lesssim\left(2^kr\right)^{-aq}r^{-n}\int_{B(v,11R)}
|\nabla u(\xi)|^q\int_{\Gamma(v,R)}\int_{B(x,r)\cap B(\xi,2^{k+2}r)}
\frac{1}{|z-\xi|^{n-1-aq}}\,dz\,d\mathcal{H}^d(x)\,d\xi\notag\\
&\quad=\int_{B(v,11R)}|\nabla u(\xi)|^qh_k(\xi)\,d\xi,
\end{align}
where, for any $\xi\in B(v,11R)$,
\begin{align*}
h_k(\xi):=\left(2^kr\right)^{-aq}r^{-n}
\int_{\Gamma(v,R)}\int_{B(x,r)\cap B(\xi,2^{k+2}r)}
\frac{1}{|z-\xi|^{n-1-aq}}\,dz\,d\mathcal{H}^d(x).
\end{align*}
For any $k\in\mathbb{Z}_+\cap[0,K]$ and $\xi\in B(v,11R)$, we estimate
$h_k(\xi)$ by considering the following two cases for $x$.

\emph{Case (1)} $x\in\Gamma(v,R)\setminus B(\xi,2r)$. In this case, for any
$\xi\in B(v,11R)$ and $z\in B(x,r)\cap B(\xi,2^{k+2}r)$,
$|z-\xi|\geq|x-\xi|-|x-z|\geq\frac12|x-\xi|$ and
$|x-\xi|\leq|x-z|+|z-\xi|<r+2^{k+2}r\leq5\cdot2^kr$. This, together with the
assumption that $a<\frac{n-d-1}{q}$, further implies that, for any
$\xi\in\Gamma(v,11R)$,
\begin{align}\label{20260502.2112}
h_k^0(\xi)&:=\left(2^kr\right)^{-aq}r^{-n}\int_{\Gamma(v,R)\setminus B(\xi,2r)}
\int_{B(x,r)\cap B(\xi,2^{k+2}r)}\frac{1}{|z-\xi|^{n-1-aq}}
\,dz\,d\mathcal{H}^d(x)\notag\\
&\lesssim\left(2^kr\right)^{-aq}r^{-n}
\int_{\Gamma(v,R)\setminus B(\xi,2r)}\int_{B(x,r)\cap B(\xi,2^{k+2}r)}
\frac{1}{|x-\xi|^{n-1-aq}}\,dz\,d\mathcal{H}^d(x)\notag\\
&\lesssim\left(2^kr\right)^{-aq}\int_{\Gamma(v,R)\cap B(\xi,5\cdot2^kr)}
\frac{1}{|x-\xi|^{n-1-aq}}\,d\mathcal{H}^d(x).
\end{align}
If $\delta(\xi)\geq5\cdot2^kr$, then $\Gamma(v,R)\cap
B(\xi,5\cdot2^kr)=\emptyset$ and hence $h_k^0(\xi)=0$. Thus, we may assume that
$\delta(\xi)<5\cdot2^kr$. For any $m\in\mathbb{N}$, define
$A_m:=\{x\in\Gamma(v,R):2^m\delta(\xi)\leq|x-\xi|<2^{m+1}\delta(\xi)\}$
and let $A_0:=\{x\in\Gamma(v,R):\delta(\xi)\leq|x-\xi|<2\delta(\xi)\}$.
Since $\Gamma$ is a $d$-set, it follows that, for any $m\in\mathbb{Z}_+$,
\begin{align*}
\mathcal{H}^d(A_m)\leq\mathcal{H}^d\left(\Gamma\cap
B\left(\xi,2^{m+1}\delta(\xi)\right)\right)\sim\left[2^m\delta(\xi)\right]^d.
\end{align*}
Applying this, \eqref{20260502.2112}, and the assumptions that
$\delta(\xi)<5\cdot2^kr$ and $a<\frac{n-d-1}{q}$, we find that
\begin{align}\label{20260502.2118}
h_k^0(\xi)&\lesssim\left(2^kr\right)^{-aq}\sum_{m\in\mathbb{Z}_+}\int_{A_m}
\frac{1}{|x-\xi|^{n-1-aq}}\,d\mathcal{H}^d(x)\notag\\
&\lesssim\left(2^kr\right)^{-aq}\sum_{m\in\mathbb{Z}_+}
\left[2^m\delta(\xi)\right]^{d+1+aq-n}\lesssim
\left(2^kr\right)^{-aq}[\delta(\xi)]^{d+1+aq-n}.
\end{align}

\emph{Case (2)} $x\in\Gamma(v,R)\cap B(\xi,2r)$. In this case, for any $x\in
B(v,11R)$ and $z\in B(x,r)$, $|z-\xi|\leq|z-x|+|x-\xi|<3r$ and hence
$z\in B(\xi,3r)$. In addition, note that, if $\delta(\xi)\geq2r$, then
$\Gamma\cap B(\xi,2r)=\emptyset$. Thus, we may assume that $\delta(\xi)<2r$.
From $\Gamma$ is a $d$-set, we infer that
\begin{align*}
\mathcal{H}^d(\Gamma(v,R)\cap B(\xi,2r))
\leq\mathcal{H}^d(\Gamma\cap B(\xi,2r))\sim r^d.
\end{align*}
This, together with \eqref{20260502.2112}, the proven conclusion that
$z\in B(\xi,3r)$ for any $z\in B(x,r)$ and the assumptions that
$\delta(\xi)<2r$ and $a<\frac{n-d-1}{q}$, further implies that, for any
$\xi\in B(v,11R)$,
\begin{align*}
h_k^1(\xi)&:=\left(2^kr\right)^{-aq}r^{-n}\int_{\Gamma(v,R)\cap B(\xi,2r)}
\int_{B(x,r)\cap B(\xi,2^{k+2}r)}\frac{1}{|z-\xi|^{n-1-aq}}
\,dz\,d\mathcal{H}^d(x)\\
&\lesssim\left(2^kr\right)^{-aq}r^{d-n}\int_{B(\xi,3r)}
\frac{1}{|z-\xi|^{n-1-aq}}\,dz\sim\left(2^kr\right)^{-aq}r^{d+1+aq-n}
\lesssim2^{-kaq}w(\xi).
\end{align*}
Finally, combining this, \eqref{20260502.2029}, \eqref{20260502.2118},
the assumption that $\delta(\xi)<5\cdot2^kr$ in Case (1), we conclude that
\begin{align*}
&\int_{\Gamma(v,R)}\int_{\Gamma(v,R)\setminus B(y,r)}
\frac{|u_{B(x,r)}-u_{B(y,r)}|^q}{|x-y|^{q+d-1}}
\,d\mathcal{H}^d(x)\,d\mathcal{H}^d(y)\\
&\quad=\sum_{k\in\mathbb{Z}_+\cap[0,K]}
\iint_{\genfrac{}{}{0pt}{}{\Gamma(v,R)\times\Gamma(v,R)}
{2^kr\leq|x-y|<2^{k+1}r}}
\frac{|u_{B(x,r)}-u_{B(y,r)}|^q}{|x-y|^{q+d-1}}
\,d\mathcal{H}^d(x)\,d\mathcal{H}^d(y)\\
&\quad\lesssim\int_{B(v,11R)}|\nabla u(\xi)|^q
\sum_{k\in\mathbb{Z}_+\cap[0,K]}h_k(\xi)\,d\xi\\
&\quad=\int_{B(v,11R)}|\nabla u(\xi)|^q
\sum_{k\in\mathbb{Z}_+\cap[0,K]}\left[h_k^0(\xi)+h_k^1(\xi)\right]\,d\xi\\
&\quad\lesssim\int_{B(v,11R)}|\nabla u(\xi)|^q
\left\{\sum_{k\in\mathbb{Z}_+\cap[0,K]}\left(2^kr\right)^{-aq}
[\delta(\xi)]^{d+1+aq-n}\boldsymbol{1}_{\{\xi\in\mathbb{R}^n:
\delta(\xi)<5\cdot2^kr\}}\right.\\
&\quad\quad\left.+\sum_{k\in\mathbb{Z}_+\cap[0,K]}2^{-kaq}w(\xi)\right\}\,d\xi\\
&\quad\lesssim\int_{B(v,11R)}|\nabla u(\xi)|^qw(\xi)\,d\xi
\end{align*}
and hence \eqref{20260501.1900} holds.
This finishes the proof Theorem \ref{thm:local-trace}.
\end{proof}

\begin{remark}
When $p=q=2$, Theorem \ref{thm:local-trace} in this case coincides with
\cite[Theorem 3.4]{DFM21}. Moreover, even in the special case of $p=q\neq2$,
the conclusion of Theorem \ref{thm:local-trace} is new.
\end{remark}

\subsection{Extension Operators}
\label{subsection4.6}

In this subsection, we show the boundedness of the extension operator
$E$ from the trace space $Q^p_q(\Gamma)$ to
$\dot{W}^1\mathcal{M}^p_q(\Omega,w)$. Furthermore, we prove that $E$ is the
right inverse of the trace operator $T$. To introduce the concept of $E$, we
first recall the Whitney decomposition of open sets and the associated
partition of unity constructed in \cite[Theorem 7.5.2 and Lemma
7.5.6]{Grafakos24}. In what follows, for any cube $Q\subset\mathbb{R}^n$ and
$a\in(1,\infty)$, denote by $c_Q$ the center of $Q$, $l(Q)$ its edge length,
and $aQ$ the cube concentric with $Q$ having edge length $al(Q)$. Recall that a
\emph{dyadic cube} $Q_{j,k}$ in $\mathbb{R}^n$ is defined as a set of the form
$Q_{j,k}=[k_12^{-j},(k_1+1)2^{-j})\times\cdots\times [k_n2^{-j},(k_n+1)2^{-j})$,
where $j\in\mathbb{Z}$ and $k:=(k_1,\dots,k_n)\in\mathbb{Z}^n$. Moreover,
for any $j\in\mathbb{Z}$, let $\mathcal{D}_j:=\{Q_{j,k}:k\in\mathbb{Z}^n\}$
be the set of all dyadic cubes with edge length $2^{-j}$ and let
$\mathcal{D}:=\bigcup_{j\in\mathbb{Z}}\mathcal{D}_j$ be the set of all
dyadic cubes in $\mathbb{R}^n$.

\begin{lemma}\label{lem:20260502.2249}
Let $G\subset\mathbb{R}^n$ be a nonempty proper open subset. Then there exist a
countable family $\mathcal{W}:=\{Q\}_{Q\in\mathcal{W}}$ of disjoint dyadic
cubes and a sequence $\{\varphi_Q\}_{Q\in\mathcal{W}}$ of functions
in $C_\mathrm{c}^\infty(\mathbb{R}^n)$ such that the following statements hold.
\begin{enumerate}
\item[\rm(i)] $\bigcup_{Q\in\mathcal{W}}Q=G$.

\item[\rm(ii)] For any $Q\in\mathcal{W}$, $\sqrt{n}l(Q)\leq
\operatorname{dist}(\overline{Q},G^\complement)\leq4\sqrt{n}l(Q)$.

\item[\rm(iii)] For any $Q,R\in\mathcal{W}$, if $\overline{Q}\cap\overline{R}
\neq\emptyset$, then $\frac{l(Q)}{l(R)}\in\{\frac12,\,1,\,2\}$.

\item[\rm(iv)] There exists a positive constant $C$ such that, for any
$Q\in\mathcal{W}$, there exist at most $C$ cubes $R\in\{Q\}_{Q\in\mathcal{W}}$
such that $\overline{R}\cap\overline{Q}\neq\emptyset$.

\item[\rm(v)] For any $Q,R\in\mathcal{W}$, if $\overline{Q}\cap\overline{R}
=\emptyset$, then $\frac98Q\cap\frac98R=\emptyset$. Moreover, for any
$Q\in\mathcal{W}$, $\frac98Q\subset G$ and $\boldsymbol{1}_G\leq
\sum_{Q\in\mathcal{W}}\boldsymbol{1}_{\frac98Q}\leq2^n\boldsymbol{1}_G$.

\item[\rm(vi)] For any $Q\in\mathcal{W}$, $0\leq\varphi_Q\leq1$ and
$\operatorname{supp}(\varphi_Q)\subset\frac98Q$.

\item[\rm(vii)] There exists a positive constant $K$ such that,
for any $Q\in\mathcal{W}$,
\begin{align*}
\left|\nabla\varphi_Q\right|\leq\frac{K}{l(Q)}.
\end{align*}

\item[\rm(viii)] The family $\{\varphi_Q\}_{Q\in\mathcal{W}}$ forms a partition
of $G$; i.e., $\sum_{Q\in\mathcal{W}}\varphi_Q=\boldsymbol{1}_G$.
\end{enumerate}
\end{lemma}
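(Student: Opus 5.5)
We will follow the classical construction of Whitney cubes and the associated partition of unity, as in \cite[Theorem 7.5.2 and Lemma 7.5.6]{Grafakos24}; in fact the lemma is quoted from there. The plan below is how we would reprove it. Let $F:=\mathbb{R}^n\setminus G$, which is closed and nonempty, so $\operatorname{dist}(\cdot,F)$ is finite and $1$-Lipschitz. For $k\in\mathbb{Z}$ we set
\begin{align*}
\Omega_k:=\left\{x\in G:2\sqrt{n}\,2^{-k}<\operatorname{dist}(x,F)\leq4\sqrt{n}\,2^{-k}\right\},
\end{align*}
so that $G=\bigcup_{k\in\mathbb{Z}}\Omega_k$ (disjoint union). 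Let $\mathcal{F}_0$ be the family of all $Q\in\mathcal{D}_k$ with $Q\cap\Omega_k\neq\emptyset$, over all $k$.

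For $Q\in\mathcal{F}_0$ and $x\in Q\cap\Omega_k$, the Lipschitz property gives
\begin{align*}
\operatorname{dist}(\overline{Q},F)\geq\operatorname{dist}(x,F)-\sqrt{n}\,l(Q)>\sqrt{n}\,l(Q)
\quad\text{and}\quad
\operatorname{dist}(\overline{Q},F)\leq\operatorname{dist}(x,F)\leq4\sqrt{n}\,l(Q).
\end{align*}
This yields (ii), and it also shows $Q\subset G$. In particular, the side lengths of the cubes of $\mathcal{F}_0$ containing a fixed point are bounded above. We then let $\mathcal{W}$ be the maximal cubes of $\mathcal{F}_0$. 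By dyadic nesting these are pairwise disjoint. They still cover $G$, since every $x\in G$ lies in some $\Omega_k$ and hence in a cube of $\mathcal{F}_0$, which is contained in a maximal one. Property (ii) passes to $\mathcal{W}$, so (i) and (ii) hold.

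For (iii), suppose $\overline{Q}\cap\overline{R}\neq\emptyset$. Then (ii) and the Lipschitz property give
\begin{align*}
\sqrt{n}\,l(Q)\leq\operatorname{dist}(\overline{Q},F)\leq\operatorname{dist}(\overline{R},F)+\sqrt{n}\,l(R)\leq5\sqrt{n}\,l(R),
\end{align*}
so $l(Q)/l(R)$ is a power of $2$ lying in $[\frac15,5]$. This crude bound only gives a ratio in $\{\frac14,\frac12,1,2,4\}$. To get down to $\{\frac12,1,2\}$ we will exploit maximality: if $l(Q)\geq4l(R)$, we will show that the dyadic parent of $R$ still meets the appropriate $\Omega_{k-1}$, contradicting maximality of $R$. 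If this tightening fails with the present constants, we will adjust the thresholds in $\Omega_k$ so that it goes through. This bookkeeping of constants in (iii), together with (v), is the step we expect to be the main obstacle; the rest of the proof only needs a uniform bound on this ratio.

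Granting (iii), (iv) follows from a volume count: every neighbor of $Q$ has side length comparable to $l(Q)$ and lies in a fixed dilate of $Q$, and the neighbors are disjoint. For (v), observe that if two closed dyadic cubes of comparable size are disjoint, then their distance is at least the smaller side length. The dilations by $\frac98$ only add $\frac1{16}l$ on each side, so $\frac98Q\cap\frac98R=\emptyset$. By (ii), $\frac98Q$ stays at positive distance from $F$, so $\frac98Q\subset G$. The overlap bound $\sum_{Q\in\mathcal{W}}\boldsymbol{1}_{\frac98Q}\leq2^n\boldsymbol{1}_G$ follows from (iii) and the disjointness just shown: a point of $G$ lies only in dilates of cubes sharing a vertex region with its own cube.

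Finally, for the partition of unity we fix $\phi\in C^\infty_{\mathrm{c}}(\mathbb{R}^n)$ with $0\leq\phi\leq1$, $\phi=1$ on $[-\frac12,\frac12]^n$, and $\operatorname{supp}\phi\subset(-\frac{9}{16},\frac{9}{16})^n$. We put $\psi_Q(x):=\phi(\frac{x-c_Q}{l(Q)})$ and $\Psi:=\sum_{Q\in\mathcal{W}}\psi_Q$. Then $1\leq\Psi\leq2^n$ on $G$, and the sum is locally finite by (v). We define $\varphi_Q:=\psi_Q/\Psi$, which gives (vi) and (viii) immediately. For (vii), on $\operatorname{supp}\varphi_Q$ only the boundedly many neighbors $R$ of $Q$ contribute, and each has $l(R)\sim l(Q)$ by (iii). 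Hence
\begin{align*}
|\nabla\Psi|\lesssim\frac{1}{l(Q)}\quad\text{and}\quad|\nabla\varphi_Q|\leq\frac{|\nabla\psi_Q|}{\Psi}+\frac{\psi_Q|\nabla\Psi|}{\Psi^2}\lesssim\frac{1}{l(Q)}.
\end{align*}
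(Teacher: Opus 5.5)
Your route is the standard Stein--Grafakos construction, which is all the paper relies on: it cites Grafakos and gives no proof. Your verification of (i) and (ii) is fine.

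The genuine gap is (iii). You only announce the maximality argument, and your fallback of changing the thresholds in $\Omega_k$ is not admissible, because it would change the constants $\sqrt{n}$ and $4\sqrt{n}$ fixed in (ii). The crude estimate by itself only gives ratios in $\{\frac14,\frac12,1,2,4\}$. The tightening does work with your constants, and the missing ingredient is the \emph{strict} inequality you already derived, namely $\operatorname{dist}(\overline{Q},F)>\sqrt{n}\,l(Q)$ for every $Q\in\mathcal{F}_0$. Suppose $\overline{Q}\cap\overline{R}\neq\emptyset$ and $l(Q)=4l(R)=4\cdot2^{-k}$, and pick $z\in\overline{Q}\cap\overline{R}$. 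Then $\operatorname{dist}(z,F)>4\sqrt{n}\,2^{-k}$, so by continuity some $z'\in R$ also has $\operatorname{dist}(z',F)>4\sqrt{n}\,2^{-k}$. On the other hand, (ii) for $R$ gives $\operatorname{dist}(z',F)\leq4\sqrt{n}\,2^{-k}+\sqrt{n}\,2^{-k}\leq8\sqrt{n}\,2^{-k}$. Hence $z'\in\Omega_{k-1}$, so the dyadic parent of $R$ lies in $\mathcal{F}_0$, contradicting the maximality of $R$. Ratios $\geq8$ are already excluded by $l(Q)\leq5l(R)$.

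Your treatment of (v) has two further holes. For the first part you use ``comparable size'', but (iii) only concerns touching cubes. For cubes with disjoint closures, the dyadic gap $\geq s:=\min\{l(Q),l(R)\}$ beats the total enlargement $\frac1{16}[l(Q)+l(R)]$ only if the ratio of the sides is below $15$. Argue by contradiction instead. If $p\in\frac98Q\cap\frac98R$, then (ii) gives $\frac{15}{16}\sqrt{n}\,l(Q)\leq\operatorname{dist}(p,F)\leq\frac{41}{8}\sqrt{n}\,l(Q)$, and likewise for $R$, so $\frac14\leq l(Q)/l(R)\leq4$. The enlargement is then at most $\frac{5}{16}s<s$, which is incompatible with $\overline{Q}\cap\overline{R}=\emptyset$.

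For the constant $2^n$, knowing that each cube whose dilate contains $x$ touches the cube containing $x$ only reproduces the constant $C$ of (iv). What works is the following. By the first part of (v), any two cubes whose $\frac98$-dilates contain $x$ have intersecting closures. Applying the one-dimensional Helly property in each coordinate, all these closures share a point $t$. Each such half-open dyadic cube contains, for small $\epsilon$, an open box $\prod_{m=1}^nI_m$ with $I_m\in\{(t_m,t_m+\epsilon),\,(t_m-\epsilon,t_m)\}$. Disjointness forces distinct sign patterns, so there are at most $2^n$ such cubes.

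With these repairs the rest of your argument, including (iv) and the partition of unity, goes through. You also need local finiteness of $\sum_Q\psi_Q$ near each point, which follows from (ii) rather than from the pointwise bound in (v).
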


Applying the Whitney decomposition in Lemma
\ref{lem:20260502.2249} to $\Omega$, we obtain a countable
family $\mathcal{W}_\Omega:=\{Q\}_{Q\in\mathcal{W}_\Omega}$ of disjoint dyadic
cubes and the corresponding sequence $\{\varphi_Q\}_{Q\in\mathcal{W}_\Omega}$
of functions in $C_\mathrm{c}^\infty(\mathbb{R}^n)$.
For any cube $Q\subset\mathbb{R}^n$, let
$\delta(Q):=\operatorname{dist}(Q,\Gamma)$. Since
$\Gamma$ is closed, it follows that, for any $Q\in\mathcal{W}_\Omega$, there
exists a point $\xi_Q\in\Gamma$ such that
$\operatorname{dist}(\xi_Q,Q)=\delta(Q)$.
The \emph{extension operator} $E$ is then defined by setting, for any function
$g\in L^1_\mathrm{loc}(\Gamma,\mathcal{H}^d)$ and $x\in\mathbb{R}^n$,
\begin{align*}
Eg(x):=\sum_{Q\in\mathcal{W}_\Omega}y_Q\varphi_Q(x),
\end{align*}
where, for any $Q\in\mathcal{W}_\Omega$,
\begin{align*}
y_Q:=\frac{1}{\mathcal{H}^d(\Gamma(\xi_Q,\delta(Q)))}
\int_{\Gamma(\xi_Q,\delta(Q))}g(z)\,d\mathcal{H}^d(z).
\end{align*}
Note that, for any $x\in\Omega$, the summation $Eg(x)$ is indeed a finite
sum. Therefore, $Eg$ is infinitely differentiable on $\Omega$.

Now, we present the main result of this subsection, which is an
extension of \cite[Theorem 7.3]{DFM21} from the weighted Sobolev
space $\dot{W}^{1,2}(\Omega,w)$ to the weighted Sobolev--Morrey space $\dot{W}^1\mathcal{M}^p_q(\Omega,w)$.

\begin{theorem}\label{thm:Eg-membership}
Let $1<q\leq p<\infty$ and $w$ be as in \eqref{dw}.
Then the extension operator $E$ is bounded
from $Q^p_q(\Gamma)$ to $\dot{W}^1\mathcal{M}^p_q(\Omega,w)$.
Moreover, for any $g\in Q^p_q(\Gamma)$ and $\mathcal{H}^d$-almost every
$x\in\Gamma$, $(T\circ E)g(x)=g(x)$.
\end{theorem}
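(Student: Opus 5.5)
The plan follows the DFM21 extension argument at the level of single Whitney cubes, then aggregates into Morrey-type sums. Fix $g\in Q^p_q(\Gamma)$. Since $\sum_{Q}\nabla\varphi_Q=0$ on $\Omega$ by Lemma \ref{lem:20260502.2249}(viii), for any $Q_0\in\mathcal{W}_\Omega$ and $x\in Q_0$ we can write
\begin{align*}
\nabla Eg(x)=\sum_{Q\in\mathcal{W}_\Omega,\ \overline{Q}\cap\overline{Q_0}\neq\emptyset}(y_Q-y_{Q_0})\nabla\varphi_Q(x).
\end{align*}
By (iii), (iv), and (vii) this gives $|\nabla Eg(x)|\lesssim l(Q_0)^{-1}\max_{Q}|y_Q-y_{Q_0}|$. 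For neighbouring cubes, $\delta(Q)\sim\delta(Q_0)\sim l(Q_0)$ and $|\xi_Q-\xi_{Q_0}|\lesssim l(Q_0)$. Hence both $\Gamma(\xi_Q,\delta(Q))$ and $\Gamma(\xi_{Q_0},\delta(Q_0))$ lie in a ball $\Delta_{Q_0}:=\Gamma(\xi_{Q_0},Cl(Q_0))$ of comparable $\mathcal{H}^d$-measure. Jensen's inequality (which needs $q>1$ only for convexity) then yields
\begin{align*}
|\nabla Eg(x)|^q\lesssim l(Q_0)^{-q}\fint_{\Delta_{Q_0}}\fint_{\Delta_{Q_0}}|g(z)-g(z')|^q\,d\mathcal{H}^d(z)\,d\mathcal{H}^d(z')
\lesssim l(Q_0)^{-2d}\iint_{\Delta_{Q_0}\times\Delta_{Q_0}}\frac{|g(z)-g(z')|^q}{|z-z'|^{q+d-1}}\,d\mathcal{H}^d\,d\mathcal{H}^d\cdot l(Q_0)^{d-1},
\end{align*}
using $|z-z'|\lesssim l(Q_0)$. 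Integrating over $Q_0$ against $w\sim l(Q_0)^{d+1-n}$ gives
\begin{align*}
\int_{Q_0}|\nabla Eg|^qw\lesssim\iint_{\Delta_{Q_0}\times\Delta_{Q_0}}\frac{|g(z)-g(z')|^q}{|z-z'|^{q+d-1}}\,d\mathcal{H}^d\,d\mathcal{H}^d.
\end{align*}

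For the Morrey norm, fix a ball $B(x,r)$ and split into two cases. If $\delta(x)\geq2r$, every Whitney cube meeting $B(x,r)$ has $l(Q)\gtrsim\delta(x)$; I reduce to the sup bound for cubes of a single size, controlling $|\nabla Eg|$ on a cube by the bound above. Then
\begin{align*}
|B(x,r)|^{\frac1p-\frac1q}\left[\int_{B(x,r)}|\nabla Eg|^qw\right]^{\frac1q}\lesssim r^{\frac np-\frac nq}\left(\frac{r^n}{|Q_0|}\right)^{\frac1q}\left[\int_{Q_0}|\nabla Eg|^qw\right]^{\frac1q},
\end{align*}
and the latter factor is $\lesssim l(Q_0)^{n/q-n/p}\|g\|_{Q^p_q}$. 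Using $r\lesssim l(Q_0)$ and $p\ge q$, this gives $\lesssim\|g\|_{Q^p_q(\Gamma)}$. If $\delta(x)<2r$, pick $v\in\Gamma$ with $B(x,r)\subset B(v,3r)$. Every cube meeting $B(v,3r)$ has $l(Q)\lesssim r$ and $\Delta_Q\subset\Gamma(v,Cr)$. The key combinatorial fact is bounded overlap of the sets $\Delta_Q\times\Delta_Q$, weighted appropriately. Summing the cube estimates, a pair $(z,z')$ is counted by those $Q$ with $l(Q)\gtrsim|z-z'|$ and $\xi_Q$ within $Cl(Q)$ of $z$. In each dyadic generation there are only boundedly many such cubes by porosity/Ahlfors regularity. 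The geometric decay in the number of generations then has to come from refining the Jensen step, so I would redo it keeping the factor $(|z-z'|/l(Q))^{q+d-1}\cdot$ (count).

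This summation is where I expect the main obstacle. The naive count diverges logarithmically, since each scale contributes $O(1)$. I would first try following DFM21 Theorem 7.3: estimate $|y_Q-y_{Q_0}|$ by a telescoping chain over scales, or equivalently bound $l(Q)^{-q}|y_Q-y_{Q_0}|^q$ by an average of $|g(z)-g(z')|^q/|z-z'|^{q}$ over pairs at distance $\sim l(Q)$. The count of cubes with $l(Q)\sim2^j$ in $B(v,Cr)$ is about $(r/2^j)^d$ times the number of cubes per boundary point, and that number is bounded. With this, pairs at distance $\sim2^j$ are hit only by scale-$2^j$ cubes. This yields
\begin{align*}
\int_{B(v,3r)}|\nabla Eg|^qw\lesssim\iint_{\Gamma(v,Cr)^2}\frac{|g(z)-g(z')|^q}{|z-z'|^{q+d-1}}.
\end{align*}
Multiplying by $|B(v,Cr)|^{\frac1p-\frac1q}$ gives $\lesssim\|g\|_{Q^p_q(\Gamma)}$.

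Finally, for $T\circ E=\mathrm{id}$: by Lemma \ref{lem:5.1}, $T(Eg)$ exists $\mathcal{H}^d$-a.e. I take $x\in\Gamma$ to be a Lebesgue point of $g$ with respect to $\mathcal{H}^d$; this holds a.e. since $\mathcal{H}^d|_\Gamma$ is doubling and $g\in L^1_{\rm loc}$, the latter because $g\in Q^p_q$ implies local integrability modulo constants. Then
\begin{align*}
\fint_{B(x,s)}|Eg-g(x)|\le\sup_{Q\cap B(x,s)\neq\emptyset}|y_Q-g(x)|\lesssim\fint_{\Gamma(x,Cs)}|g-g(x)|\,d\mathcal{H}^d\to0,
\end{align*}
because every cube meeting $B(x,s)$ has $\Gamma(\xi_Q,\delta(Q))\subset\Gamma(x,Cs)$ with comparable measure. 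Hence $T(Eg)(x)=g(x)$.
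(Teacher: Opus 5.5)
\textbf{The gap: the proof of $(T\circ E)g=g$.} You claim
\[
\sup_{Q}|y_Q-g(x)|\lesssim\fint_{\Gamma(x,Cs)}|g-g(x)|\,d\mathcal{H}^d,
\]
with the supremum over Whitney cubes meeting $B(x,s)$. This inequality is false. Such cubes can be arbitrarily small near points of $\Gamma(x,s)$ other than $x$. Then $\mathcal{H}^d(\Gamma(\xi_Q,\delta(Q)))\sim\delta(Q)^d$ is not comparable with $s^d$, and $y_Q$ is an average of $g$ over a tiny boundary ball not centred at $x$. For instance, let $g(x)=0$ and $g=M\boldsymbol{1}_{\Gamma(x',\epsilon)}$ with $|x'-x|=s/2$. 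Small cubes near $x'$ meet $B(x,s)$ and have $y_Q=M$, while the right-hand side is of order $M(\epsilon/s)^d$.

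Replace the supremum by a weighted sum and reuse the per-scale counting:
\begin{align*}
\int_{B(x,s)}|Eg(y)-g(x)|\,dy
&\le\sum_{\frac98Q\cap B(x,s)\neq\emptyset}|\tfrac98Q|\,|y_Q-g(x)|\\
&\lesssim\int_{\Gamma(x,Cs)}|g(z)-g(x)|
\sum_{\frac98Q\cap B(x,s)\neq\emptyset}[l(Q)]^{n-d}
\boldsymbol{1}_{\Gamma(\xi_Q,\delta(Q))}(z)\,d\mathcal{H}^d(z).
\end{align*}
For fixed $z$, the inner sum has $O(1)$ terms at each scale $2^k\lesssim s$, so it is $\lesssim s^{n-d}$ because $n-d>0$. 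Hence $\fint_{B(x,s)}|Eg-g(x)|\lesssim\fint_{\Gamma(x,Cs)}|g-g(x)|\,d\mathcal{H}^d\to0$ at every $\mathcal{H}^d$-Lebesgue point of $g$. The paper itself only verifies $g\in L^1_{\mathrm{loc}}(\Gamma,\mathcal{H}^d)$ and then invokes Theorem 7.3 of David, Feneuil, and Mayboroda.

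\textbf{The Morrey-norm bound.} This otherwise follows the paper's route: H\"older on $|y_Q-y_R|$ over comparable boundary balls, then a split according to the distance of the centre to $\Gamma$ versus the radius. However, the summation obstacle you anticipate is self-inflicted. The logarithmic divergence comes only from replacing $l(Q)^{-(q+d-1)}$ by $|z-z'|^{-(q+d-1)}$ cube by cube. Instead, keep
\[
\int_Q|\nabla Eg|^qw\lesssim[l(Q)]^{-(q+d-1)}\iint_{\Delta_Q\times\Delta_Q}|g(z)-g(z')|^q\,d\mathcal{H}^d(z)\,d\mathcal{H}^d(z'),
\]
then interchange sum and integral. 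For fixed $z\neq z'$, only $O(1)$ cubes of each side length $2^k\gtrsim|z-z'|$ have $z,z'\in\Delta_Q$. Since $q+d-1>0$, the geometric series gives
\[
\sum_Q[l(Q)]^{-(q+d-1)}\boldsymbol{1}_{\Delta_Q}(z)\boldsymbol{1}_{\Delta_Q}(z')\lesssim|z-z'|^{-(q+d-1)}.
\]
This is exactly the paper's \eqref{20260506.2019}, and it is the first fix you name. No telescoping is needed. Your alternative claim, that pairs at distance $\sim2^j$ are seen only by scale-$2^j$ cubes, is false for the $\Delta_Q\times\Delta_Q$ averaging you set up, since those pairs can lie at any distance below $2Cl(Q)$. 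It would require routing through an extra intermediate boundary ball at distance $\sim l(Q)$.
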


\begin{proof}
Let $g\in Q^p_q(\Gamma)$. To prove the boundedness of $E$, it suffices to
show that, for any $v\in\Omega$ and $r\in(0,\infty)$,
\begin{align}\label{20260503.2213}
|B(v,r)|^{\frac1p-\frac1q}\left[\int_{B(v,r)}|\nabla Eg(x)|^qw(x)\,dx
\right]^\frac1q\lesssim\|g\|_{Q^p_q(\Gamma)},
\end{align}
where the implicit positive constant is independent of $g$, $v$, and $r$.
To this end, from Lemmas \ref{lem:20260502.2249} and \ref{lem:250104-11}(i),
we deduce that, for any $R\in\mathcal{W}_\Omega$,
\begin{align*}
\nabla Eg=\sum_{Q\in\mathcal{W}_\Omega}(y_Q-y_R)\nabla\varphi_Q
\end{align*}
almost everywhere in $\mathbb{R}^n$, and hence
\begin{align}\label{20260506.1938}
\int_{B(v,r)}|\nabla Eg(x)|^qw(x)\,dx&\lesssim
\sum_{R\in\mathcal{W}_\Omega}\sum_{Q\in\mathcal{W}_\Omega}
\int_{B(v,r)\cap R}\left|(y_Q-y_R)\nabla\varphi_Q(x)\right|^qw(x)\,dx\notag\\
&\lesssim\sum_{R\in\mathcal{W}_\Omega}\sum_{\genfrac{}{}{0pt}{}
{Q\in\mathcal{W}_\Omega}{\frac98Q\cap R\cap B(v,r)\neq\emptyset}}
\frac{1}{[l(Q)]^q}\left|y_Q-y_R\right|^qw(B(v,r)\cap R)\notag\\
&\lesssim\sum_{R\in\mathcal{W}_\Omega}\sum_{\genfrac{}{}{0pt}{}
{Q\in\mathcal{W}_\Omega}{\frac98Q\cap R\cap B(v,r)\neq\emptyset}}
\frac{1}{[\delta(R)]^q}\left|y_Q-y_R\right|^qw(B(v,r)\cap R).
\end{align}
To proceed, we estimate $|y_Q-y_R|^q$ for any $Q,R\in\mathcal{W}_\Omega$ with
$\frac98Q\cap R\neq\emptyset$. To this end, we first claim that, for any
$Q,R\in\mathcal{W}_\Omega$ such that $\frac98Q\cap R\neq\emptyset$,
\begin{align}\label{20260503.2253}
\Gamma(\xi_Q,\delta(Q))\subset\Gamma(\xi_R,100\delta(R)).
\end{align}
Indeed, for any $y\in\Gamma(\xi_Q,\delta(Q))$, $z_Q\in Q$, $z_R\in R$,
and $\xi\in\frac98Q\cap R$,
\begin{align*}
|y-\xi_R|&\leq|y-\xi_Q|+|\xi_Q-\xi_R|<\delta(Q)+|\xi_Q-z_Q|+|z_Q-\xi|
+|\xi-z_R|+|z_R-\xi_R|\\
&\leq\delta(Q)+|\xi_Q-z_Q|+\frac98\sqrt{n}l(Q)+\sqrt{n}l(R)+|z_R-\xi_R|.
\end{align*}
Taking the infimum over $z_Q\in Q$ and $z_R\in R$ and using
Lemma \ref{lem:20260502.2249}, we obtain
\begin{align*}
|y-\xi_R|&\leq\delta(Q)+\delta(Q)+\frac98\sqrt{n}l(Q)+\sqrt{n}l(R)+\delta(R)\\
&\leq\left(9+\frac98\right)\sqrt{n}l(Q)+\sqrt{n}l(R)+\delta(R)\\
&\leq\left(19+\frac94\right)\sqrt{n}l(R)+\delta(R)
\leq\left(20+\frac94\right)\delta(R)<100\delta(R).
\end{align*}
Thus, \eqref{20260503.2253} and hence the above claim holds. In addition,
since $\Gamma$ is a $d$-set, it follows that, for any
$Q,R\in\mathcal{W}_\Omega$ with $\frac98Q\cap R\neq\emptyset$,
\begin{align*}
\mathcal{H}^d\left(\Gamma(\xi_Q,\delta(Q))\right)
\mathcal{H}^d\left(\Gamma(\xi_R,\delta(R))\right)\sim[\delta(Q)\delta(R)]^d
\sim[\delta(R)]^{2d}.
\end{align*}
Combining this, \eqref{20260503.2253}, and H\"older's inequality,
we conclude that, for any $Q,R\in\mathcal{W}_\Omega$
such that $\frac98Q\cap R\neq\emptyset$,
\begin{align*}
\left|y_Q-y_R\right|^q&\leq\left[\frac{1}{\mathcal{H}^d(\Gamma(\xi_Q,\delta(Q)))
\mathcal{H}^d(\Gamma(\xi_R,\delta(R)))}\right.\\
&\quad\left.\times
\iint_{\Gamma(\xi_Q,\delta(Q))\times\Gamma(\xi_R,\delta(R))}
|g(x)-g(y)|\,d\mathcal{H}^d(x)\,d\mathcal{H}^d(y)\right]^q\\
&\lesssim\frac{1}{[\delta(R)]^{2d}}
\iint_{\Gamma(\xi_R,\delta(R))\times\Gamma(\xi_R,100\delta(R))}
|g(x)-g(y)|^q\,d\mathcal{H}^d(x)\,d\mathcal{H}^d(y).
\end{align*}
This, combined with \eqref{20260506.1938} and Lemma \ref{lem:20260502.2249},
further implies that
\begin{align}\label{20260506.1949}
\int_{B(v,r)}|\nabla Eg(x)|^qw(x)\,dx&\lesssim\sum_{\genfrac{}{}{0pt}{}
{R\in\mathcal{W}_\Omega}{R\cap B(v,r)\neq\emptyset}}
\frac{1}{[\delta(R)]^{q+2d}}w(B(v,r)\cap R)\notag\\
&\quad\times\iint_{\Gamma(\xi_R,\delta(R))\times\Gamma(\xi_R,100\delta(R))}
|g(x)-g(y)|^q\,d\mathcal{H}^d(x)\,d\mathcal{H}^d(y).
\end{align}

Next, we show that, for $\mathcal{H}^d$-almost every $x,y\in\Gamma$,
\begin{align}\label{20260506.2019}
\sum_{\genfrac{}{}{0pt}{}{R\in\mathcal{W}_\Omega}{R\cap B(v,r)\neq\emptyset}}
\frac{\boldsymbol{1}_{\Gamma(\xi_R,\delta(R))}(x)
\boldsymbol{1}_{\Gamma(\xi_R,100\delta(R))}(y)}
{[\delta(R)]^{q+d-1}}\lesssim\frac{1}{|x-y|^{q+d-1}},
\end{align}
where the implicit positive constant is independent of $x$ and $y$.
Indeed, let $x,y\in\Gamma$ with $x\neq y$. For any $k\in\mathbb{Z}$, define
\begin{align*}
\mathcal{W}_\Omega^{(k)}:=\left\{R\in\mathcal{W}_\Omega:l(R)=2^k,\
R\cap B(v,r)\neq\emptyset,\ \delta(R)>\frac{|x-y|}{101},
\mbox{\ and\ }\Gamma(\xi_R,\delta(R))\ni x\right\}.
\end{align*}
Let $K:=\lceil\log_2\frac{|x-y|}{404\sqrt{n}}\rceil$. Then it is easy to see
that, for any $k\in\mathbb{Z}\cap(-\infty,K-1]$,
$\mathcal{W}_\Omega^{(k)}=\emptyset$. Moreover, note that, for any
$R\in\mathcal{W}_\Omega$, there exists $\eta_R\in R$ such
that $\delta(R)=\operatorname{dist}(\xi_R,R)=|\xi_R-\eta_R|$. These, together with Lemma
\ref{lem:20260502.2249}, further implies that,
for any $k\in\mathbb{Z}\cap[K,\infty)$, $R\in \mathcal{W}_\Omega^{(k)}$,
and $z\in R$,
\begin{align*}
|z-x|&\leq|z-\eta_R|+|\eta_R-\xi_R|+|\xi_R-x|\leq\sqrt{n}l(R)
+\delta(R)+\delta(R)\leq9\sqrt{n}l(R)=9\sqrt{n}2^k.
\end{align*}
Thus, for any $k\in\mathbb{Z}\cap[K,\infty)$,
\begin{align*}
\bigcup_{R\in\mathcal{W}_\Omega^{(k)}}R\subset B(x,9\sqrt{n}2^k),
\end{align*}
which, together with the fact that cubes in $\mathcal{W}_\Omega^{(k)}$ are
pairwise disjoint, further implies that the number of cubes in
$\mathcal{W}_\Omega^{(k)}$ is uniformly bounded with respect to $k$.
Combining this, Lemma \ref{lem:20260502.2249}, and the definition of $K$, we
conclude that, for any $x\in\Gamma(\xi_R,\delta(R))$ and
$y\in\Gamma(\xi_R,100\delta(R))$ with $x\neq y$,
\begin{align*}
\sum_{\genfrac{}{}{0pt}{}{R\in\mathcal{W}_\Omega}{R\cap B(v,r)\neq\emptyset}}
\frac{1}{[\delta(R)]^{q+d-1}}&\leq\sum_{k=K}^\infty
\sum_{R\in\mathcal{W}_\Omega^{(k)}}\frac{1}{[\delta(R)]^{q+d-1}}
\lesssim\sum_{k=K}^\infty2^{k(-d-q+1)}\\
&\sim2^{K(-d-q+1)}\sim\frac{1}{|x-y|^{q+d-1}},
\end{align*}
and hence \eqref{20260506.2019} holds.
Subsequently, we consider the following two cases for $v$ and $r$.

\emph{Case (1)} $\delta(v)<100r$. In this case,
$\Gamma(\xi_R,100\delta(R))\subset\Gamma(v,20000r)$. Indeed,
from Lemma \ref{lem:20260502.2249}, we deduce that, for any
$z\in\Gamma(\xi_R,100\delta(R))$, $z_R\in R$, and $x\in R\cap B(v,r)$,
\begin{align*}
|z-v|&\leq|z-\xi_R|+|\xi_R-z_R|+|z_R-x|+|x-v|\\
&<100\delta(R)+|\xi_R-z_R|+\sqrt{n}l(R)+r
\leq101\delta(R)+|\xi_R-z_R|+r.
\end{align*}
Taking the infimum over $z_R\in R$, we obtain $|z-v|\leq102\delta(R)+r$.
Moreover, note that there exists $\xi_v\in\Gamma$ such that
$\delta(v)=|v-\xi_v|$. This, together with the assumption that
$\delta(v)<100r$, further implies that, for any $x\in R\cap B(v,r)$,
\begin{align*}
\delta(R)\leq\delta(x)\leq|x-\xi_v|\leq|x-v|+|v-\xi_v|<r+100r=101r.
\end{align*}
Therefore, for any $z\in\Gamma(\xi_R,100\delta(R))$,
$|z-v|<102\cdot101r+r<20000r$ and hence $z\in\Gamma(v,20000r)$.
Thus, $\Gamma(\xi_R,100\delta(R))\subset\Gamma(v,20000r)$. Furthermore,
observe that, for any $R\in\mathcal{W}_\Omega$ with $R\cap B(v,r)\neq\emptyset$
and $x\in B(v,r)\cap R$, it holds that $\delta(x)\geq\delta(R)$. This, combined
with Lemma \ref{lem:20260502.2249}, further implies that
\begin{align*}
w(B(v,r)\cap R)\lesssim[\delta(R)]^{d+1-n}|R|\sim[\delta(R)]^{d+1}.
\end{align*}
From this, the proven conclusion that
$\Gamma(\xi_R,100\delta(R))\subset\Gamma(v,20000r)$,
\eqref{20260506.1949}, and \eqref{20260506.2019}, we deduce that
\begin{align*}
\int_{B(v,r)}|\nabla Eg(x)|^qw(x)\,dx&\lesssim
\iint_{\Gamma(v,20000r)\times\Gamma(v,20000r)}
|g(x)-g(y)|^q\\
&\quad\times\sum_{\genfrac{}{}{0pt}{}{R\in\mathcal{W}_\Omega}
{R\cap B(v,r)\neq\emptyset}}\frac{\boldsymbol{1}_{\Gamma(\xi_R,\delta(R))}(x)
\boldsymbol{1}_{\Gamma(\xi_R,100\delta(R))}(y)}{[\delta(R)]^{q+d-1}}
\,d\mathcal{H}^d(x)\,d\mathcal{H}^d(y)\\
&\lesssim\iint_{\Gamma(v,20000r)\times\Gamma(v,20000r)}
\frac{|g(x)-g(y)|^q}{|x-y|^{q+d-1}}\,d\mathcal{H}^d(x)\,d\mathcal{H}^d(y),
\end{align*}
and hence \eqref{20260503.2213} holds.

\emph{Case (2)} $\delta(v)\geq100r$. In this case, $\Gamma(\xi_R,100\delta(R))
\subset\Gamma(\xi_v,1000\delta(v))$. Indeed, by Lemma \ref{lem:20260502.2249},
we find that, for any $z\in\Gamma(\xi_R,100\delta(R))$,
$z_R\in R$, and $x\in R\cap B(v,r)$,
\begin{align*}
|z-\xi_v|&\leq|z-\xi_R|+|\xi_R-z_R|+|z_R-x|+|x-v|+|v-\xi_v|\\
&<100\delta(R)+|\xi_R-z_R|+\sqrt{n}l(R)+r+\delta(v)\\
&\leq101\delta(R)+|\xi_R-z_R|+r+\delta(v).
\end{align*}
Taking the infimum over $z_R$, we obtain
$|z-\xi_v|\leq102\delta(R)+r+\delta(v)$. Moreover, for any $x\in R\cap B(v,r)$,
\begin{align*}
\delta(R)\leq\delta(x)\leq|x-\xi_v|\leq|x-v|+|v-\xi_v|<r+\delta(v).
\end{align*}
Combining these and the assumption that $\delta(v)\geq100r$, we obtain,
for any $z\in\Gamma(\xi_R,100\delta(R))$, $|z-\xi_v|\leq103(r+\delta(v))
\leq1000\delta(v)$ and hence $z\in\Gamma(\xi_v,1000\delta(x))$. Therefore,
$\Gamma(\xi_R,100\delta(R))\subset\Gamma(\xi_v,1000\delta(v))$. In addition,
from Lemma \ref{lem:250104-11}(i) and the assumption that $\delta(v)\geq100r$,
it follows that, for any $R\in\mathcal{W}_\Omega$ with $R\cap
B(x,r)\neq\emptyset$,
\begin{align*}
w(B(v,r)\cap R)\lesssim r^nw(v)\sim r^n[\delta(R)]^{d+1-n}.
\end{align*}
According to this, the proven conclusion that
$\Gamma(\xi_R,100\delta(R))\subset\Gamma(\xi_v,1000\delta(v))$,
\eqref{20260506.1949}, and \eqref{20260506.2019}, we conclude that
\begin{align*}
\int_{B(v,r)}|\nabla Eg(x)|^qw(x)\,dx&\lesssim
\iint_{\Gamma(\xi_v,1000\delta(v))\times\Gamma(\xi_v,1000\delta(v))}
|g(x)-g(y)|^qr^n\\
&\quad\times\sum_{\genfrac{}{}{0pt}{}{R\in\mathcal{W}_\Omega}
{R\cap B(v,r)\neq\emptyset}}\frac{\boldsymbol{1}_{\Gamma(\xi_R,\delta(R))}(x)
\boldsymbol{1}_{\Gamma(\xi_R,100\delta(R))}(y)}{[\delta(R)]^{q+d+n-1}}
\,d\mathcal{H}^d(x)\,d\mathcal{H}^d(y)\\
&\lesssim\frac{r^n}{[\delta(v)]^n}\iint_{\Gamma(\xi_v,1000\delta(v))
\times\Gamma(\xi_v,1000\delta(v))}
\frac{|g(x)-g(y)|^q}{|x-y|^{q+d-1}}\,d\mathcal{H}^d(x)\,d\mathcal{H}^d(y).
\end{align*}
This, together with the assumption that $\delta(v)\geq100r$,
further implies that
\begin{align*}
&|B(v,r)|^{\frac1p-\frac1q}\left[\int_{B(v,r)}|\nabla Eg(x)|^qw(x)\,dx
\right]^\frac1q\\
&\quad\lesssim\frac{r^\frac{n}{p}}{[\delta(v)]^\frac{n}{q}}
\left[\iint_{\Gamma(\xi_v,1000\delta(v))
\times\Gamma(\xi_v,1000\delta(v))}\frac{|g(x)-g(y)|^q}{|x-y|^{q+d-1}}
\,d\mathcal{H}^d(x)\,d\mathcal{H}^d(y)\right]^\frac1q\\
&\quad\lesssim|B(\xi_v,1000\delta(v))|^{\frac1p-\frac1q}
\left[\iint_{\Gamma(\xi_v,1000\delta(v))\times\Gamma(\xi_v,1000\delta(v))}
\frac{|g(x)-g(y)|^q}{|x-y|^{q+d-1}}
\,d\mathcal{H}^d(x)\,d\mathcal{H}^d(y)\right]^\frac1q\\
&\quad\leq\|g\|_{Q_q^p(\Gamma)}.
\end{align*}

Combining the argument in both Case (1) and Case (2), we find that
\eqref{20260503.2213} holds. Finally, we prove that, for
$\mathcal{H}^d$-almost every $x\in\Gamma$, $(T\circ E)g(x)=g(x)$.
According to \cite[Theorem 7.3]{DFM21}, it suffices to show that
$g\in L_\mathrm{loc}^1(\Gamma,\mathcal{H}^d)$.
Indeed, let $v\in\Gamma$ and $R\in(0,\infty)$. Since $\Gamma$ is a $d$-set,
it follows that $\mathcal{H}^d(\Gamma(v,R))\sim R^d$. Using this and H\"older's
inequality, we conclude that
\begin{align*}
&\iint_{\Gamma(v,R)\times\Gamma(v,R)}|g(x)-g(y)|\,d\mathcal{H}^d(x)
\,d\mathcal{H}^d(y)\\
&\quad\leq R^\frac{d+q-1}{q}\left[\mathcal{H}^d(\Gamma(v,R))\right]^\frac{2}{q'}
\left[\iint_{\Gamma(v,R)\times\Gamma(v,R)}
\frac{|g(x)-g(y)|^q}{R^{d+q-1}}\,d\mathcal{H}^d(x)
\,d\mathcal{H}^d(y)\right]^\frac1q\\
&\quad\lesssim R^\frac{q+d+2d(q-1)-1}{q}
\left[\iint_{\Gamma(v,R)\times\Gamma(v,R)}
\frac{|g(x)-g(y)|^q}{|x-y|^{d+q-1}}\,d\mathcal{H}^d(x)
\,d\mathcal{H}^d(y)\right]^\frac1q\\
&\quad\lesssim R^{\frac{q+d+2d(q-1)+n-1}{q}-\frac{n}{p}}
\|g\|_{Q_q^p(\Gamma)}<\infty,
\end{align*}
which further implies that, for $\mathcal{H}^d$-almost every $y\in\Gamma(v,R)$,
\begin{align*}
\int_{\Gamma(v,R)}|g(x)-g(y)|\,d\mathcal{H}^d(x)<\infty.
\end{align*}
From this and the arbitrariness of $v$ and $R$, we deduce that
$g\in L_\mathrm{loc}^1(\Gamma,\mathcal{H}^d)$ and hence complete the
proof of Theorem \ref{thm:Eg-membership}.
\end{proof}

\begin{remark}
When $p=q=2$, Theorem \ref{thm:Eg-membership} in this case coincides with
\cite[Theorem 7.3]{DFM21}. Moreover, even in the special case of $p=q\neq2$,
the conclusion of Theorem \ref{thm:Eg-membership} is new.
\end{remark}

\subsection{Complex Interpolation}
\label{subsection4.7}

In this subsection, using Theorems \ref{thm:local-trace} and
\ref{thm:Eg-membership} and Proposition \ref{thm:interp-morrey}, we
establish the corresponding interpolation identities for both
$\dot{W}^1\mathcal{M}^p_q(\Omega,w)$ and $Q_q^p(\Gamma)$.
Notably, the complex interpolation method preserves the intrinsic structure of
these spaces and yields the expected interpolation properties of the indices.

\begin{theorem}\label{thm:interp-sobolev-morrey}
Let $1<q_0\leq p_0<\infty$, $1<q_1\leq p_1<\infty$ satisfy
$\frac{n}{p_i}-\frac{n-d-1}{q_i}>0$ for any $i\in\{0,\,1\}$,
$1\leq q\leq p<\infty$, $\theta\in(0,1)$ be such that
\eqref{eq:ratio-condition} and \eqref{eq:interp-indices} hold,
and $w$ be as in \eqref{dw}. Then
\begin{align}\label{eq:interp-sobolev-morrey}
\left[\dot{W}^1\mathcal{M}^{p_0}_{q_0}(\Omega,w),
\dot{W}^1\mathcal{M}^{p_1}_{q_1}(\Omega,w)\right]^\theta
=\dot{W}^1\mathcal{M}^p_q(\Omega,w).
\end{align}
\end{theorem}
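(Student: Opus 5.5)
The plan is to transfer Proposition \ref{thm:interp-morrey} to the Sobolev level by a retraction--coretraction argument. The natural coretraction is the Riesz potential $(-\Delta)^{\frac12}$ from Theorem \ref{cor:260117-1}. An alternative is the gradient map $u\mapsto\nabla u$ into $[\mathcal{M}^{p_i}_{q_i}(\Omega,w)]^n$, whose retraction is built from Riesz transforms.

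First, the compatibility issue. By Lemma \ref{lem:250919-31b}, under $\frac{n}{p_i}-\frac{n-d-1}{q_i}>0$ every $u\in\dot{W}^1\mathcal{M}^{p_i}_{q_i}(\Omega,w)$ is realized as $u-u_{B(\mathbf 0,1)}\in\mathcal{S}'(\mathbb{R}^n)$. Working modulo constants, I regard both spaces as embedded in $\mathcal{S}'(\mathbb{R}^n)/\mathcal{P}$, where $\mathcal{P}$ is the space of polynomials. So $(\dot{W}^1\mathcal{M}^{p_0}_{q_0},\dot{W}^1\mathcal{M}^{p_1}_{q_1})$ is a compatible couple. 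I also check that the target indices satisfy \eqref{20260623.2235}. This holds because $\frac{n}{p}-\frac{n-d-1}{q}$ is the convex combination, with weights $1-\theta$ and $\theta$, of the two corresponding quantities, and both are positive.

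Second, define $S:=(-\Delta)^{\frac12}$ and $R:=I_1$, the inverse Riesz potential $(-\Delta)^{-\frac12}$ taken modulo polynomials. By Theorem \ref{cor:260117-1}(i), $S$ is bounded from $\dot{W}^1\mathcal{M}^{p_i}_{q_i}(\Omega,w)$ to $\mathcal{M}^{p_i}_{q_i}(\Omega,w)$ for $i\in\{0,1\}$. By Theorem \ref{cor:260117-1}(ii), for $f\in\mathcal{M}^{p_i}_{q_i}(\Omega,w)$ the element $Rf$ is well defined: it is the unique, modulo polynomials, $F\in\dot{W}^1\mathcal{M}^{p_i}_{q_i}(\Omega,w)$ with $(-\Delta)^{\frac12}F=f$. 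Moreover $\|Rf\|\sim\|f\|$, and $R\circ S=\mathrm{id}$. The point to verify is that $R$ and $S$ act consistently on the sum space, so that one operator serves both endpoints. I would check this by defining $R$ through the Littlewood--Paley series $\sum_j\varphi_j(D)$ of Proposition \ref{prop:LP-Morrey} applied to $|\xi|^{-1}\mathcal{F}f$, together with the identity $\partial_k Rf=-R_kf$. Then $\nabla Rf=-(R_1f,\dots,R_nf)$, which is linear on $\mathcal{M}^{p_0}_{q_0}+\mathcal{M}^{p_1}_{q_1}$ by Proposition \ref{prop:260117-3}, and this gives consistency on the sum space.

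Third, apply the abstract retraction theorem for the second complex method (\cite[Theorem 6.4.2]{BeLo76}, or Calder\'on's version for $[\cdot,\cdot]^\theta$). It applies because $\mathcal{G}(X_0,X_1)$ is mapped into itself by bounded linear operators acting compatibly. Applied to the retraction $R$ with coretraction $S$, it gives
\[
\left[\dot{W}^1\mathcal{M}^{p_0}_{q_0}(\Omega,w),\dot{W}^1\mathcal{M}^{p_1}_{q_1}(\Omega,w)\right]^\theta
=R\left(\left[\mathcal{M}^{p_0}_{q_0}(\Omega,w),\mathcal{M}^{p_1}_{q_1}(\Omega,w)\right]^\theta\right),
\]
with equivalent norms. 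By Proposition \ref{thm:interp-morrey}, the inner space equals $\mathcal{M}^p_q(\Omega,w)$. Applying Theorem \ref{cor:260117-1} once more at the indices $(p,q)$ identifies $R(\mathcal{M}^p_q(\Omega,w))$ with $\dot{W}^1\mathcal{M}^p_q(\Omega,w)$, which is \eqref{eq:interp-sobolev-morrey}.

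I expect the main obstacle to be the second step: making $R$ a single well-defined linear map on $\mathcal{M}^{p_0}_{q_0}+\mathcal{M}^{p_1}_{q_1}$, with values in $\dot{W}^1\mathcal{M}^{p_0}_{q_0}+\dot{W}^1\mathcal{M}^{p_1}_{q_1}$, in the second complex method. The $\mathcal{G}$-functions only take values in the sum space and only their derivative matters, so the polynomial ambiguity in Theorem \ref{cor:260117-1}(ii) must be removed uniformly. My first attempt is to fix the normalization by the gradient. Since the map $u\mapsto\nabla u$ is injective modulo constants, I would define $Rf$ as the function whose gradient equals $-(R_kf)_{k=1}^n$. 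This gradient lies in $\mathcal{M}^{p_0}_{q_0}+\mathcal{M}^{p_1}_{q_1}$, and the construction is clearly linear, so the identification is unambiguous.
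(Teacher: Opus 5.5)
Your proposal is correct and follows essentially the paper's route. The paper likewise uses that $(-\Delta)^{\frac12}$ is an isomorphism from $\dot{W}^1\mathcal{M}^{p_i}_{q_i}(\Omega,w)$ onto $\mathcal{M}^{p_i}_{q_i}(\Omega,w)$ (Theorem \ref{cor:260117-1}), applies the interpolation property of the second complex method together with Proposition \ref{thm:interp-morrey}, and then invokes Theorem \ref{cor:260117-1} again at $(p,q)$. Your extra care about the common ambient space $\mathcal{S}'(\mathbb{R}^n)/\mathcal{P}$, and about defining the inverse consistently on the sum space via $\nabla Rf=-(R_1f,\dots,R_nf)$, fills in details that the paper leaves implicit.
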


\begin{proof}
By Theorem \ref{cor:260117-1}, we obtain, for any $i\in\{0,\,1\}$,
\begin{align*}
(-\Delta)^\frac12:\dot{W}^1\mathcal{M}^{p_i}_{q_i}(\Omega,w)\longrightarrow
\mathcal{M}^{p_i}_{q_i}(\Omega,w)
\end{align*}
is an isomorphism. From this, Proposition \ref{thm:interp-morrey},
and the fact that $(-\Delta)^\frac12$ commutes with interpolation (see, for
instance, \cite[Theorem 4.1.4]{BeLo76}), we infer that
\begin{align*}
(-\Delta)^\frac12:\left[\dot{W}^1\mathcal{M}^{p_0}_{q_0}(\Omega,w),
\dot{W}^1\mathcal{M}^{p_1}_{q_1}(\Omega,w)\right]^\theta\longrightarrow
\left[\mathcal{M}^{p_0}_{q_0}(\Omega,w),
\mathcal{M}^{p_1}_{q_1}(\Omega,w)\right]^\theta=\mathcal{M}^p_q(\Omega,w)
\end{align*}
is an isomorphism, which, together with Theorem \ref{cor:260117-1} again,
further implies that \eqref{eq:interp-sobolev-morrey} holds. This finishes
the proof of Theorem \ref{thm:interp-sobolev-morrey}.
\end{proof}

\begin{remark}\label{rem:diag-sobolev}
We use the same notation as in Theorem \ref{thm:interp-sobolev-morrey}.
If $p_0=q_0$ and $p_1=q_1$, then, applying \cite[Theorem 4.3.1]{BeLo76}, the
reflexivity of $\dot{W}^{1,p_i}(\Omega,w)=L^{p_i}(\Omega,w)$ with
$i\in\{0,1\}$, and \eqref{eq:interp-sobolev-morrey}, we recover the standard
weighted Sobolev interpolation formula
\begin{align*}
\left[\dot{W}^{1,p_0}(\Omega,w),\dot{W}^{1,p_1}(\Omega,w)\right]_\theta=
\left[\dot{W}^{1,p_0}(\Omega,w),\dot{W}^{1,p_1}(\Omega,w)\right]^\theta=
\dot{W}^{1,p}(\Omega,w)
\end{align*}
(see, for instance, \cite[p.\,2440]{ce(jfa-2019)}).
\end{remark}

As a consequence of Theorems \ref{thm:local-trace}, \ref{thm:Eg-membership},
and \ref{thm:interp-sobolev-morrey}, we obtain the interpolation of
corresponding trace spaces.

\begin{corollary}\label{cor:interp-Qpq}
Let $1<q_0\leq p_0<\infty$, $1<q_1\leq p_1<\infty$ satisfy
$\frac{n}{p_i}-\frac{n-d-1}{q_i}>0$ for any $i\in\{0,\,1\}$,
$1\leq q\leq p<\infty$, $\theta\in(0,1)$ be such that
\eqref{eq:ratio-condition} and \eqref{eq:interp-indices} hold,
and $w$ be as in \eqref{dw}. Then
\begin{align*}
\left[Q^{p_0}_{q_0}(\Gamma),Q^{p_1}_{q_1}(\Gamma)\right]^\theta=Q^p_q(\Gamma).
\end{align*}
\end{corollary}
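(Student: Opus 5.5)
The argument is a retraction argument: $Q^p_q(\Gamma)$ is a retract of $\dot{W}^1\mathcal{M}^p_q(\Omega,w)$ via $T$ and $E$, so its interpolation spaces are read off from those of the Sobolev--Morrey spaces. This is the standard retraction principle (see, for instance, \cite[Theorem 6.4.2]{BeLo76} for the first method). For the second complex method we verify it directly using only the functoriality of $[\cdot,\cdot]^\theta$ under bounded linear maps between compatible couples \cite[Theorem 4.1.4]{BeLo76}.

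\textbf{Step 1: the operators and the couples.} Denote by $T$ and $E$ the operators of Theorems \ref{thm:local-trace} and \ref{thm:Eg-membership}. Then $T$ maps $\dot{W}^1\mathcal{M}^{p_i}_{q_i}(\Omega,w)$ boundedly into $Q^{p_i}_{q_i}(\Gamma)$ and $E$ maps $Q^{p_i}_{q_i}(\Gamma)$ boundedly into $\dot{W}^1\mathcal{M}^{p_i}_{q_i}(\Omega,w)$, for $i\in\{0,1\}$ and also for the target indices $(p,q)$. Moreover, $T\circ E=\mathrm{id}$ $\mathcal{H}^d$-almost everywhere on each $Q$-space.

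Both $T$ and $E$ are defined by the same formulas on every space: $E$ by the Whitney sum and $T$ by limits of ball averages. They are therefore compatible, i.e.\ they agree on the intersections. The ambient spaces for the $Q$-couple are $\mathcal{H}^d$-measurable functions on $\Gamma$ modulo constants. For the Sobolev couple we use $L^1_{\mathrm{loc}}$ modulo constants, or $\mathcal{S}'$ via Lemma \ref{lem:250919-31b}; the hypothesis $\frac{n}{p_i}-\frac{n-d-1}{q_i}>0$ is what makes Theorem \ref{thm:interp-sobolev-morrey} available.

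We must also check that $T$ and $E$ extend to linear maps on the sums $X_0+X_1$ that commute with the decompositions; this follows from linearity and the pointwise definitions. We should also check that $T$ and $E$ respect the quotient by constants. For $T$ this is clear. For $E$, if $g$ is constant then $Eg$ is constant on $\Omega$, since $\sum_Q\varphi_Q=\boldsymbol{1}_\Omega$.

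\textbf{Step 2: the two inclusions.} For the inclusion $\supset$, take $g\in Q^p_q(\Gamma)$. Theorem \ref{thm:Eg-membership} gives $Eg\in\dot{W}^1\mathcal{M}^p_q(\Omega,w)$, and Theorem \ref{thm:interp-sobolev-morrey} identifies this space with $[\dot{W}^1\mathcal{M}^{p_0}_{q_0},\dot{W}^1\mathcal{M}^{p_1}_{q_1}]^\theta$. Applying $T$ and the interpolation property yields
\[
g=TEg\in[Q^{p_0}_{q_0}(\Gamma),Q^{p_1}_{q_1}(\Gamma)]^\theta,
\]
with norm bounded by a constant times $\|g\|_{Q^p_q(\Gamma)}$.

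For the inclusion $\subset$, take $g\in[Q^{p_0}_{q_0},Q^{p_1}_{q_1}]^\theta$. Applying $E$ gives
\[
Eg\in[\dot{W}^1\mathcal{M}^{p_0}_{q_0},\dot{W}^1\mathcal{M}^{p_1}_{q_1}]^\theta=\dot{W}^1\mathcal{M}^p_q(\Omega,w).
\]
Then Theorem \ref{thm:local-trace} gives $TEg\in Q^p_q(\Gamma)$. It remains to know that $TEg=g$. Here $g$ lies in $Q^{p_0}_{q_0}+Q^{p_1}_{q_1}$, so $g=g_0+g_1$ with $g_i\in Q^{p_i}_{q_i}(\Gamma)$. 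By linearity and Theorem \ref{thm:Eg-membership} applied to each $g_i$, we get $TEg=TEg_0+TEg_1=g_0+g_1=g$ almost everywhere.

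\textbf{Main obstacle.} The delicate point is the functoriality step for the second complex method $[\cdot,\cdot]^\theta$. One must confirm that a bounded linear map of couples $A:(X_0,X_1)\to(Y_0,Y_1)$ sends $\mathcal{G}(X_0,X_1)$ into $\mathcal{G}(Y_0,Y_1)$ via $G\mapsto A\circ G$, and that $(A\circ G)'(\theta)=A(G'(\theta))$. This requires $A$ to be continuous from $X_0+X_1$ to $Y_0+Y_1$, which holds because $A$ is bounded on each endpoint space.

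A second subtle point is the a.e.\ identity $TE=\mathrm{id}$ on the sum space, which was handled above through the decomposition $g=g_0+g_1$. Finally, Theorem \ref{thm:local-trace} is stated for $1<q$, and the corollary allows $q\ge 1$; but $q>1$ follows from $1/q=(1-\theta)/q_0+\theta/q_1$ with $q_0,q_1>1$, so this causes no difficulty.
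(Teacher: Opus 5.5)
Your proposal is correct and follows the paper's proof. Both use the boundedness of $T$ and $E$ on the endpoint spaces (Theorems \ref{thm:local-trace} and \ref{thm:Eg-membership}), the interpolation property of $[\cdot,\cdot]^\theta$ \cite[Theorem 4.1.4]{BeLo76}, and Theorem \ref{thm:interp-sobolev-morrey}: the inclusion $\supset$ comes from $g=T(Eg)$, and $\subset$ from $TEg\in Q^p_q(\Gamma)$. The paper leaves several points implicit that you make explicit, namely the justification of $TEg=g$ for $g$ in the sum space via $g=g_0+g_1$, the check that $q>1$, and the compatibility of $E$ with the quotient by constants.
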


\begin{proof}
From Theorem \ref{thm:local-trace} and \ref{thm:Eg-membership}, we infer that,
for any $i\in\{0,\,1\}$, both
\begin{align*}
T:\dot{W}^1\mathcal{M}^{p_i}_{q_i}(\Omega,w)\longrightarrow
Q^{p_i}_{q_i}(\Gamma)\mbox{\ \ and\ \ }
E:Q^{p_i}_{q_i}(\Gamma)\longrightarrow
\dot{W}^1\mathcal{M}^{p_i}_{q_i}(\Omega,w)
\end{align*}
are bounded. Combining this, \cite[Theorem 4.1.4]{BeLo76}, and Theorem
\ref{thm:interp-sobolev-morrey}, we find that both
\begin{align}\label{20260507.1830}
T:\dot{W}^1\mathcal{M}^p_q(\Omega,w)=
\left[\dot{W}^1\mathcal{M}^{p_0}_{q_0}(\Omega,w),
\dot{W}^1\mathcal{M}^{p_1}_{q_1}(\Omega,w)\right]^\theta\longrightarrow
\left[Q^{p_0}_{q_0}(\Gamma),Q^{p_1}_{q_1}(\Gamma)\right]^\theta
\end{align}
and
\begin{align}\label{20260507.1832}
E:\left[Q^{p_0}_{q_0}(\Gamma),Q^{p_1}_{q_1}(\Gamma)\right]^\theta\longrightarrow
\left[\dot{W}^1\mathcal{M}^{p_0}_{q_0}(\Omega,w),
\dot{W}^1\mathcal{M}^{p_1}_{q_1}(\Omega,w)\right]^\theta=
\dot{W}^1\mathcal{M}^p_q(\Omega,w)
\end{align}
are bounded. Note that
$$T:\dot{W}^1\mathcal{M}^p_q(\Omega,w)\longrightarrow
Q_q^p(\Gamma)$$
is surjective (which can be deduced from Theorem
\ref{thm:Eg-membership}). This, together with \eqref{20260507.1830}, further
implies that $Q_q^p(\Gamma)\subset[Q^{p_0}_{q_0}(\Gamma),
Q^{p_1}_{q_1}(\Gamma)]^\theta$. Conversely, from \eqref{20260507.1832} and
Theorem \ref{thm:Eg-membership}, it follows that, for any
$g\in[Q^{p_0}_{q_0}(\Gamma),Q^{p_1}_{q_1}(\Gamma)]^\theta$, $Eg\in
\dot{W}^1\mathcal{M}^p_q(\Omega,w)$ and hence $(T\circ E)g\in
Q_q^p(\Gamma)$. Therefore, $[Q^{p_0}_{q_0}(\Gamma),Q^{p_1}_{q_1}(\Gamma)]^\theta
\subset Q_q^p(\Gamma)$. This finishes the proof of
Corollary \ref{cor:interp-Qpq}.
\end{proof}

\section{Applications to Divergence-Form Degenerate Second-Order Elliptic Equations}
\label{section5}

Let $w$ be as in \eqref{dw}.
In this section, we denote the weighted measure $w(x)\,dx$ by $dm$.
This section is devoted to applications of weighted Sobolev--Morrey spaces in
the study of divergence-form degenerate second-order elliptic
equations by two subsections. Specifically, in Subsection \ref{subsection5.1},
we recall the concept of
solutions and establish the reverse H\"older inequalities for local solutions.
Applying this, in Subsection \ref{subsection5.2} we establish weighted a
priori estimates for solutions to the Dirichlet problem of divergence-form degenerate second-order elliptic equations, which further allow us to derive regularity results within the
Morrey type framework. We begin by recalling the concepts of function spaces in
addition to $\dot{W}^1\mathcal{M}_q^p(\Omega,w)$.
\begin{itemize}
\item \emph{Weighted Sobolev spaces $\dot{W}^{1,p}(\Omega,\omega)$}. Let
$p\in[1,\infty)$ and $\omega$ be a nonnegative locally integrable function
on $\mathbb{R}^n$. The space $\dot{W}^{1,p}(\Omega,\omega)$
is defined to be the set of all $f\in L^1_\mathrm{loc}(\Omega)$, modulo
constant functions, such that $\nabla f \in L^p(\Omega,\omega)$, equipped with
the norm $\|f\|_{\dot{W}^{1,p}(\Omega,\omega)}:=
\|\nabla f\|_{L^p(\Omega,\omega)}$ (see Subsection \ref{subsection4.1}).

\item The \emph{localized version of $\dot{W}^{1,2}(\Omega,w)$}. Let
$E\subset\Omega$ be an open set. The set $W_r(E,w)$ is defined by setting
\begin{align*}
W_r(E,w):=\left\{f\in L_\mathrm{loc}^1(E):\varphi f\in\dot{W}^{1,2}(\Omega,w)
\mbox{\ for\ any\ }\varphi\in C_\mathrm{c}^\infty(E)\right\}.
\end{align*}

\item \emph{Weighted Sobolev spaces $\dot{W}^{1,p}_0(\Omega,\omega)$
with zero traces}. Let $p\in[1,\infty)$ and $\omega$ be a nonnegative locally
integrable function on $\mathbb{R}^n$. The space
$\dot{W}^{1,p}_0(\Omega,\omega)$ is defined by setting
\begin{align*}
\dot{W}^{1,p}_0(\Omega,\omega):=\left\{f\in\dot{W}^{1,p}(\Omega,\omega):
Tf=0\right\}.
\end{align*}
The dual space of $\dot{W}^{1,p}_0(\Omega,\omega)$ is denoted by
$[\dot{W}^{1,p}_0(\Omega,\omega)]^*$.
\end{itemize}

\subsection{Definition of Solutions and Reverse H\"older Estimates}
\label{subsection5.1}

In this subsection, we recall the concept of solutions and establish the
reverse H\"older estimates for local solutions.
Let $w$ be as in \eqref{dw} and
$L:=-\mathrm{div}(A\nabla\cdot)$ be a divergence-form
degenerate second-order elliptic operator on $\Omega$, where $A$
is a real-valued $n\times n$ matrix of measurable functions on $\Omega$ satisfying the \emph{degenerate
elliptic condition}; i.e., there exists a constant $C_1\in[1,\infty)$ such
that, for any $x\in\Omega$ and $\xi,\eta\in\mathbb{R}^n$,
\begin{align}\label{eq:weighted-ellipticity}
|A(x)\xi\cdot\eta|\leq C_1 w(x)|\xi|\,|\eta|
\mbox{\ \ and\ \ }A(x)\xi\cdot\xi\geq C_1^{-1}w(x)|\xi|^2.
\end{align}
Denote the matrix $w^{-1}A$ by $\mathcal{A}$. Then the matrix $\mathcal{A}$
satisfies the \emph{uniformly elliptic condition}; i.e.,
for any $x\in\Omega$ and $\xi,\eta\in\mathbb{R}^n$,
\begin{align*}
|\mathcal{A}(x)\xi\cdot\eta|\leq C_1 |\xi|\,|\eta|\mbox{\ \ and\ \ }
\mathcal{A}(x)\xi\cdot\xi\geq C_1^{-1}|\xi|^2.
\end{align*}
Let $E\subset\Omega$ be an open set. Recall that a function $u\in
W_r(E,w)$ is called a \emph{weak solution} to the divergence-form
degenerate second-order elliptic equation
\begin{align}\label{eq:elliptic}
Lu=-\mathrm{div}(A(x)\nabla u)=0\mbox{\ \ in\ \ }E
\end{align}
if, for any $\varphi\in C_\mathrm{c}^\infty(E)$,
\begin{align*}
\int_\Omega A(x)\nabla u(x)\cdot\nabla\varphi(x)\,dx=
\int_\Omega\mathcal{A}(x)\nabla u(x)\cdot\nabla\varphi(x)\,dm(x)=0.
\end{align*}

As established in \cite[Lemmas 8.6 and 8.11]{DFM21}, local solutions to
\eqref{eq:elliptic} satisfy interior/boundary Caccioppoli inequalities.

\begin{lemma}\label{lem:8.6}
Let $x\in\mathbb{R}^n$, $r\in(0,\infty)$,
$w$ be as in \eqref{dw}, and $u\in W_r(\Omega(x,2r),w)$ be a
solution to \eqref{eq:elliptic} in $\Omega(x,2r)$. If one of
the following two conditions holds:
\begin{enumerate}
\item[\rm(i)] $B(x,2r)\subset\Omega$,

\item[\rm(ii)] $x\in\Gamma$ and $Tu=0$ in the sense of $\mathcal{H}^d$-almost everywhere on
$\Gamma(x,2r)$,
\end{enumerate}
then
\begin{align*}
\int_{B(x,r)}|\nabla u(y)|^2\,dm(y)\lesssim
r^{-2}\int_{B(x,2r)}|u(y)|^2\,dm(y),
\end{align*}
where the implicit positive constant depends only on $n$, $d$,
and $C_1$ in \eqref{eq:weighted-ellipticity}.
\end{lemma}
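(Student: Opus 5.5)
The plan is the standard Caccioppoli argument. We test the equation with $\varphi:=\eta^2u$, where $\eta\in C^\infty_{\mathrm c}(B(x,2r))$ satisfies $0\le\eta\le1$, $\eta\equiv1$ on $B(x,r)$ and $|\nabla\eta|\lesssim r^{-1}$. Everything is done with respect to $dm$ and the normalized matrix $\mathcal A=w^{-1}A$, which is uniformly elliptic with constant $C_1$.

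Assume for the moment that $\eta^2u$ is an admissible test function. Then
\begin{align*}
0=\int_\Omega\mathcal A\nabla u\cdot\nabla(\eta^2u)\,dm
=\int_\Omega\eta^2\mathcal A\nabla u\cdot\nabla u\,dm
+2\int_\Omega\eta u\,\mathcal A\nabla u\cdot\nabla\eta\,dm .
\end{align*}
Ellipticity bounds the first term from below by $C_1^{-1}\int\eta^2|\nabla u|^2\,dm$. Boundedness of $\mathcal A$ together with Young's inequality bounds the second term by
\begin{align*}
\frac{1}{2C_1}\int\eta^2|\nabla u|^2\,dm+C\int u^2|\nabla\eta|^2\,dm .
\end{align*}
Absorbing the first piece into the left-hand side gives
\begin{align*}
\int_{B(x,r)}|\nabla u|^2\,dm\le\int\eta^2|\nabla u|^2\,dm\lesssim r^{-2}\int_{B(x,2r)}|u|^2\,dm .
\end{align*}
The constant depends only on $C_1$; the dependence on $n$ and $d$ enters only through the cutoff and the approximation step. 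If $\int_{B(x,2r)}|u|^2\,dm=\infty$ there is nothing to prove, so we may assume it is finite.

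The real work is justifying the test function, since the weak formulation only allows $\varphi\in C^\infty_{\mathrm c}(\Omega(x,2r))$.

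In case (i), $\eta u$ lies in $\dot W^{1,2}(\Omega,w)$ because $u\in W_r(\Omega(x,2r),w)$. Its support is a compact subset of $B(x,2r)\subset\Omega$, where $w$ is bounded above and below. So $\eta^2u$ is a compactly supported $W^{1,2}$ function, and mollifying it produces $\varphi_\varepsilon\in C^\infty_{\mathrm c}(\Omega(x,2r))$ with $\nabla\varphi_\varepsilon\to\nabla(\eta^2u)$ in $L^2(dm)$. Since $\mathcal A\nabla u\in L^2(\operatorname{supp}\eta,dm)$, we can pass to the limit in the weak formulation.

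In case (ii), which is the main obstacle, the support of $\eta$ meets $\Gamma$. Here $\eta^2u$ belongs to $\dot W^{1,2}(\Omega,w)$, is supported in $B(x,2r)$, and has trace $T(\eta^2u)=\eta^2Tu=0$ on $\Gamma$; this uses the Lebesgue differentiation property in Proposition \ref{prop:260126-1}(iii). We then need that such functions are limits in $\dot W^{1,2}(\Omega,w)$ of functions in $C^\infty_{\mathrm c}(\Omega(x,2r))$. I would take this density from \cite{DFM21}, which shows that functions with zero trace can be approximated by smooth functions compactly supported away from $\Gamma$, together with the truncation and localization lemmas there.

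If one prefers to avoid that, the approximation can be built by hand. Multiply by $1-\chi_\epsilon(\delta)$, where $\chi_\epsilon(t)=1$ for $t<\epsilon$, $\chi_\epsilon(t)=0$ for $t>2\epsilon$, and $|\chi_\epsilon'|\lesssim\epsilon^{-1}$. The resulting error term is
\begin{align*}
\epsilon^{-2}\int_{\{\delta<2\epsilon\}\cap B(x,2r)}|u|^2\,dm .
\end{align*}
Cover the set $\{\delta<2\epsilon\}\cap B(x,2r)$ by balls of radius $\sim\epsilon$ centred on $\Gamma$, apply the boundary Poincaré inequality of Proposition \ref{prop:Poincare} (in an $L^2$ form, using $Tu=0$ on each ball), and use Lemma \ref{lem:250104-11}(ii). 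This bounds the error term by a multiple of $\int_{\{\delta\lesssim\epsilon\}}|\nabla u|^2\,dm$, which tends to $0$ as $\epsilon\to0$ by absolute continuity of the integral.

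After this approximation the computation above applies verbatim, and the lemma follows.
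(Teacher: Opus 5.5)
Your proposal is correct and is the standard argument behind \cite[Lemmas 8.6 and 8.11]{DFM21}, which the paper cites without giving a proof: test with $\eta^2u$, absorb with Young's inequality, and in case (ii) justify the test function by approximating zero-trace functions by elements of $C^\infty_{\mathrm c}(\Omega(x,2r))$. Your hand-built cutoff $1-\chi_\epsilon(\delta)$ together with the $L^2$ boundary Poincar\'e inequality on balls of radius $\sim\epsilon$ centred on $\Gamma$ is in effect a local Hardy inequality and works, provided you cover only $\{\delta<2\epsilon\}\cap\operatorname{supp}\eta$, so that the balls stay inside $B(x,2r)$ where $Tu=0$.

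One point to make explicit: in case (ii) you are reading $u\in W_r$ in the sense of \cite{DFM21}, i.e.\ $\varphi u\in\dot W^{1,2}(\Omega,w)$ for every $\varphi\in C^\infty_{\mathrm c}(B(x,2r))$. That is what gives $\eta^2u\in\dot W^{1,2}(\Omega,w)$ and makes $Tu$ on $\Gamma(x,2r)$ meaningful. Under the paper's literal definition, with $E=\Omega(x,2r)\subset\Omega$, every admissible $\varphi$ vanishes near $\Gamma$, so this membership would not follow.
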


Applying the above Caccioppoli inequality, we obtain the interior/boundary
reverse H\"older estimate for local solutions to \eqref{eq:elliptic}.
\begin{lemma}\label{lem:7.1}
Let $x\in\mathbb{R}^n$, $r\in(0,\infty)$,
$w$ be as in \eqref{dw},
and $u\in W_r(\Omega(x,4r),w)$ be a
solution to \eqref{eq:elliptic} in $\Omega(x,4r)$. If one of
the following two conditions holds:
\begin{enumerate}
\item[\rm(i)] $B(x,4r)\subset\Omega$,

\item[\rm(ii)] $x\in\Gamma$ and $Tu=0$ in the sense of $\mathcal{H}^d$-almost everywhere on
$\Gamma(x,4r)$,
\end{enumerate}
then there exists $\varepsilon\in(0,\infty)$, depending only on $n$, $d$,
the structural constants of $w$, and $C_1$ in \eqref{eq:weighted-ellipticity},
such that
\begin{align*}
\left[\fint_{B(x,r)}|\nabla u(y)|^{2+\varepsilon}\,dm(y)\right]
^\frac1{2+\varepsilon}\lesssim\fint_{B(x,2r)}|\nabla u(y)|\,dm(y),
\end{align*}
where the implicit positive constant is independent of $x$, $r$, and $u$.
\end{lemma}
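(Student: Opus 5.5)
The plan is the classical Caccioppoli--Sobolev--Poincar\'e route followed by a Gehring-type self-improvement, all carried out with respect to the doubling measure $dm=w\,dx$. We first establish, for every ball $B=B(z,s)$ with either $B(z,4s)\subset\Omega$ or $z\in\Gamma$, $s\le r$, $B(z,4s)\subset B(x,4r)$, the reverse H\"older inequality with exponent below $2$,
\begin{align*}
\left[\fint_{B}|\nabla u|^2\,dm\right]^{\frac12}\lesssim
\left[\fint_{2B}|\nabla u|^{p_0}\,dm\right]^{\frac1{p_0}}
\end{align*}
for some fixed $p_0\in[1,2)$. Once this is available, we feed it into the Gehring-type lemma (Lemma \ref{lem:Shen}, or the standard Giaquinta--Modica version on the doubling space $(\mathbb{R}^n,|\cdot|,m)$; doubling follows from Remark \ref{rmk:20260508.2141}(ii)). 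That yields the $L^{2+\varepsilon}(dm)$ bound by the $L^2(dm)$ average on $2B$. Finally, we lower the right-hand exponent from $2$ to $1$ by the usual iteration/interpolation trick on a family of balls between $B(x,r)$ and $B(x,2r)$.

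To prove the reverse H\"older inequality, we split into two cases. \emph{Interior case} ($B(z,4s)\subset\Omega$): here $w\sim w(z)$ on $B(z,2s)$ by Lemma \ref{lem:250104-11}(i), so $dm$ is comparable to a constant multiple of Lebesgue measure. We apply Lemma \ref{lem:8.6}(i) to $u-c$ with $c$ the average of $u$ over $B(z,2s)$, obtaining $\fint_B|\nabla u|^2dm\lesssim s^{-2}\fint_{2B}|u-c|^2dm$. The unweighted Sobolev--Poincar\'e inequality with $p_0=\frac{2n}{n+2}$ (or any $p_0<2$ with $p_0^*\ge2$) then finishes this case. \emph{Boundary case} ($z\in\Gamma$ and $Tu=0$ on $\Gamma(z,4s)$): Lemma \ref{lem:8.6}(ii) gives $\fint_B|\nabla u|^2dm\lesssim s^{-2}\fint_{2B}|u|^2dm$. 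We then need a weighted Sobolev--Poincar\'e inequality for functions with vanishing trace,
\begin{align*}
\left[\fint_{2B}|u|^2dm\right]^{\frac12}\lesssim s\left[\fint_{2B}|\nabla u|^{p_0}dm\right]^{\frac1{p_0}}
\quad\text{for some }p_0<2 .
\end{align*}
We obtain this from Proposition \ref{prop:Poincare2} (with $q=2$ and $p=p_0\in(1,2)$ chosen so that $2\le p_0^*$) applied to $u$, giving control of $u-u_{2B}$, combined with Proposition \ref{prop:Poincare} to control the mean. For the mean, Proposition \ref{prop:Poincare} gives $|u_{2B}|\le\fint_{2B}|u|\,dy\lesssim s^{-d}\int_{2B}|\nabla u|\,dm$. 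Since $m(2B)\sim s^{d+1}$ by Lemma \ref{lem:250104-11}(ii), this equals $s\fint_{2B}|\nabla u|\,dm$, which H\"older bounds by the $L^{p_0}(dm)$ average. The weighted and unweighted means are comparable in the needed direction via Lemma \ref{lem:250104-11}(iii).

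\emph{Mixed balls.} A general ball inside $B(x,r)$ (for the Gehring covering) may be neither interior nor centered on $\Gamma$. If $\delta(z)\ge 4s$, we are in the interior case. Otherwise we enlarge to a ball centered at a nearest point $\xi_z\in\Gamma$ with comparable radius and use the boundary case, which is legitimate since $m$ is doubling. In case (i) of the lemma only interior balls occur. In case (ii) we need $\Gamma(\xi_z,\cdot)\subset\Gamma(x,4r)$, which holds if we restrict to subballs of $B(x,r)$ with radius at most $r/8$. Such a localized Gehring lemma is standard.

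The main obstacle is the boundary Sobolev--Poincar\'e step. Propositions \ref{prop:Poincare} and \ref{prop:Poincare2} are stated with the hypothesis $f\in\dot W^{1,\max\{p,q\}}_{\mathrm{loc}}$, so we have to check that $u\in W_r$ qualifies: $\nabla u\in L^2_{\mathrm{loc}}(dm)$ suffices since $\max\{p_0,2\}=2$. We also have to make sure that the exponent gain $p_0<2$ survives with the weight $w$, since $n$ rather than an effective dimension $d+1$ appears in $p^*$. If this is delicate, we fall back on $p_0$ close to $2$, which is all Gehring needs.
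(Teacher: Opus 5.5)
Your proposal is correct and follows essentially the paper's route: Lemma \ref{lem:8.6} combined with Proposition \ref{prop:Poincare2} gives a reverse H\"older inequality with some exponent $p_0<2$ on the right; in the boundary case Proposition \ref{prop:Poincare} and $w(B(x,2r))\sim r^{d+1}$ control the mean, and Gehring's lemma then yields the $L^{2+\varepsilon}$ bound. You apply Lemma \ref{lem:8.6}(ii) to $u$ itself (subtracting a constant would destroy $Tu=0$) and you explicitly handle balls that are neither interior nor centered on $\Gamma$; both points are handled more carefully than in the paper's write-up, but note that the self-improvement must come from the standard Gehring lemma on the doubling space $(\mathbb{R}^n,|\cdot|,m)$, which is what the paper invokes, and not from Lemma \ref{lem:Shen}, since that lemma already assumes an $L^{p_1}$ bound on $R_B$ and so cannot by itself give any gain beyond exponent $2$.
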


\begin{proof}
From Gehring's lemma (see, for instance, \cite[Theorem 3.22]{bb-book2011}),
it remains to show that there exists $p\in(1,2)$ such that
\begin{align}\label{20260513.1928}
\left[\fint_{B(x,r)}|\nabla u(y)|^2\,dm(y)\right]^\frac12
\lesssim\left[\fint_{B(x,2r)}|\nabla u(y)|^p\,dm(y)\right]^\frac1p,
\end{align}
where the implicit positive constant is independent of $x$, $r$, and $u$.

Now, we prove \eqref{20260513.1928}. Fix $p\in(1,2)$ such that
$p^*:=\frac{np}{n-p}>2$. From Proposition \ref{prop:Poincare2},
we deduce that $U_{B(x,2r)}:=\fint_{B(x,2r)}u(y)\,dm(y)$ exists and
\begin{align}\label{20260514.2214}
\left[\fint_{B(x,2r)}\left|u(y)-u_{B(x,2r)}\right|^2\,dm(y)\right]^\frac12
&\sim\left[\fint_{B(x,2r)}\left|u(y)-U_{B(x,2r)}\right|^2\,dm(y)\right]
^\frac12\notag\\
&\lesssim r\left[\fint_{B(x,2r)}|\nabla u(y)|^p\,dm(y)\right]^\frac1p.
\end{align}
Moreover, $u-U_{B(x,2r)}\in W_r(B(x,4r),w)$ and is also a solution to
\eqref{eq:elliptic} in $\Omega\cap B(x,4r)$. If the assumption (i) holds, then,
using the case (i) of Lemma \ref{lem:8.6} with $u$ therein replaced by
$u-U_{B(x,2r)}$ and \eqref{20260514.2214}, we find that
\begin{align*}
\left[\fint_{B(x,r)}|\nabla u(y)|^2\,dm(y)\right]^\frac12
&\lesssim r^{-1}\left[\fint_{B(x,2r)}\left|u(y)-U_{B(x,2r)}\right|
^2\,dm(y)\right]^\frac12\\
&\lesssim\left[\fint_{B(x,2r)}|\nabla u(y)|^p\,dm(y)\right]^\frac1p,
\end{align*}
and hence \eqref{20260513.1928} holds. If the assumption (ii) holds, then,
from the case (ii) of Lemma \ref{lem:8.6} with $u$ therein replaced by
$u-u_{B(x,2r)}$, \eqref{20260514.2214}, Proposition \ref{prop:Poincare},
Lemma \ref{lem:250104-11}(ii), and H\"older's inequality, we infer that
\begin{align*}
\left[\fint_{B(x,r)}|\nabla u(y)|^2\,dm(y)\right]^\frac12
&\lesssim r^{-1}\left[\fint_{B(x,2r)}\left|u(y)-u_{B(x,2r)}\right|
^2\,dm(y)\right]^\frac12+\fint_{B(x,2r)}|u(y)|\,dy\\
&\lesssim\left[\fint_{B(x,2r)}|\nabla u(y)|^p\,dm(y)\right]^\frac1p
+\frac{1}{r^{d+1}}\int_{B(x,2r)}|u(y)|\,dm(y)\\
&\lesssim\left[\fint_{B(x,2r)}|\nabla u(y)|^p\,dm(y)\right]^\frac1p,
\end{align*}
and hence \eqref{20260513.1928} holds. In conclusion, \eqref{20260513.1928}
holds. This finishes the proof of Lemma \ref{lem:7.1}.
\end{proof}

\subsection{Weighted a Priori Estimates of Solutions}
\label{subsection5.2}

In this subsection, we present weighted a priori estimates in a sharp range
for solutions to the Dirichlet problem
\begin{align}\label{eq:weighted-dirichlet}
\begin{cases}
Lu=-\mathrm{div}(A\nabla u)=F &\mbox{in}\ \Omega,\\
u=g &\mbox{on}\ \Gamma,
\end{cases}
\end{align}
where $A$ is a real-valued $n\times n$ matrix of measurable functions on
$\Omega$ satisfying \eqref{eq:weighted-ellipticity}. It was established in
\cite[Lemma 9.1]{DFM21} that, if $F\in[\dot{W}^{1,2}_0(\Omega,w)]^*$ and
$g\in Q_2^2(\Gamma)$, then there exists a unique weak solution $u\in
\dot{W}^{1,2}(\Omega,w)$ to \eqref{eq:weighted-dirichlet}. Applying the results
in Sections \ref{section2}--\ref{section4}, we extend these results to
both the more general weighted Sobolev setting and the weighted Sobolev--Morrey
setting. We first consider the case where $g\equiv0$ and we begin by recalling
the Muckenhoupt class $A_p(w)$ and the reverse H\"older class $RH_p(w)$
with respect to the measure $w(x)\,dx$.

\begin{definition}
Let $p\in[1,\infty)$, $q\in(1,\infty]$, and $w$ be as in \eqref{dw}.
\begin{enumerate}
\item[\rm(i)] The \emph{Muckenhoupt class $A_p(w)$} is defined to
be the set of all nonnegative locally integrable functions
$\omega$ on $\mathbb{R}^n$ such that
\begin{align*}
[\omega]_{A_p(w)}:=
\begin{cases}
\displaystyle
\sup_B\fint_B\omega(x)\,dm(x)\left[\mathop\mathrm{ess\,inf}_{x\in B}
\omega(x)\right]^{-1}<\infty &\mathrm{if}\ p=1,\\
\displaystyle
\sup_B\fint_B\omega(x)\,dm(x)\left\{\fint_B[\omega(x)]^\frac{1}{1-p}
\,dm(x)\right\}^{p-1}<\infty &\mathrm{if}\ p\in(1,\infty),
\end{cases}
\end{align*}
where the suprema are taken over all balls $B\subset\mathbb{R}^n$.

\item[(ii)] The \emph{reverse H\"older class $RH_q(w)$} is defined
to be the set of all nonnegative locally integrable functions
$\omega$ on $\mathbb{R}^n$ such that
\begin{align*}
[\omega]_{RH_q(w)}:=
\begin{cases}
\displaystyle
\sup_B\left\{\fint_B[\omega(x)]^q\,dm(x)\right\}^\frac1q
\left[\fint_B\omega(x)\,dm(x)\right]^{-1}<\infty &\mathrm{if}\ q\in(1,\infty),\\
\displaystyle
\sup_B\left[\mathop\mathrm{ess\,sup}_{x\in B}\omega(x)\right]
\left[\fint_B\omega(x)\,dm(x)\right]^{-1}<\infty &\mathrm{if}\ q=\infty,\\
\end{cases}
\end{align*}
where the suprema are taken over all balls $B\subset\mathbb{R}^n$.
\end{enumerate}
\end{definition}

\begin{remark}\label{rmk:20260520.2150}
Let $p\in[1,\infty)$, $w$ be as in \eqref{dw},
and $\omega\in A_p(w)$. Then it is easy to show that
$\omega w\in A_p(\mathbb{R}^n)$ and $[\omega w]_{A_p(\mathbb{R}^n)}
\leq[\omega]_{A_p(w)}[w]_{A_p(\mathbb{R}^n)}$.
\end{remark}

In what follows, for any $p\in[1,\infty)$ (resp. $q\in(1,\infty]$),
$\omega\in A_p(w)$ [resp. $\omega\in RH_q(w)$], and any measurable subset
$E\subset\mathbb{R}^n$, let $\omega(E):=\int_E\omega(x)\,dm(x)$.
Applying the Calder\'on--Zygmund decomposition, the weighted boundedness
of the Hardy--Littlewood maximal operator, and an argument similar
to that used in the proof of Shen \cite[Theorem 2.1]{Shen23}
(see also \cite{Shen18, Shen05, YYY21}), we establish the following
real-variable lemma of Gehring type. We omit the details here.

\begin{lemma}\label{lem:Shen}
Let $B_0\subset\mathbb{R}^n$ be a ball, $1\leq p_0<q<p_1<\infty$,
$w$ be as in \eqref{dw},
$G\in L^{p_0}(B_0)$, and $f\in L^q(B_0)$. Suppose that the
following condition holds: there exist $C_0,C_1,\eta\in(0,\infty)$
and $0<\beta_1<1<\beta_2<\infty$, independent of $G$ and $f$, such
that, for any ball $B:=B(x_B,r_B)\subset\mathbb{R}^n$ satisfying
$|B|\leq\beta_1|B_0|$ and either $2B\subset B_0$ or $x_B\in\partial B_0$,
there exist two measurable functions $G_B$ and $R_B$ on $2B$
such that $|G|\leq|G_B|+|R_B|$ on $2B\cap B_0$,
\begin{align*}
&\left[\fint_{2B\cap B_0}|G_B(x)|^{p_0}\,dm(x)\right]^\frac{1}{p_0}\\
&\quad\leq C_0\sup_{B\subset\widetilde{B}}\left[\fint_{\widetilde{B}\cap B_0}
|f(x)|^{p_0}\,dm(x)\right]^\frac{1}{p_0}
+\eta\left[\fint_{\beta_2 B\cap B_0}|G(x)|^{p_0}\,dm(x)\right]^\frac{1}{p_0},
\end{align*}
and
\begin{align*}
&\left[\fint_{2B\cap B_0}|R_B(x)|^{p_1}\,dm(x)\right]^\frac{1}{p_1}\\
&\quad\leq C_1\left\{\left[\fint_{\beta_2 B\cap B_0}
|G(x)|^{p_0}\,dm(x)\right]^\frac{1}{p_0}
+\sup_{B\subset\widetilde{B}}\left[\fint_{\widetilde{B}\cap B_0}
|f(x)|^{p_0}\,dm(x)\right]^\frac{1}{p_0}\right\},
\end{align*}
where the suprema are taken over all balls $\widetilde{B}\supset B$.
Then, for any $\omega\in A_\frac{q}{p_0}(w)\cap RH_s(w)$ with
$s\in((\frac{p_1}{q})',\infty]$, there exist positive constants
$C$ and $\eta_0$, depending only on $C_0$, $C_1$, $n$, $d$, $p_0$,
$q$, $p_1$, $\beta_1$, $\beta_2$, $[\omega]_{A_\frac{q}{p_0}(w)}$, and
$[\omega]_{RH_s(w)}$, such that, if $\eta\in[0,\eta_0)$, then
\begin{align*}
&\left[\frac{1}{\omega(B_0)}\int_{B_0}|G(x)|^q\omega(x)\,dm(x)\right]^\frac1q\\
&\quad\leq C\left\{\left[\fint_{B_0}|G(x)|^{p_0}\,dm(x)\right]
^\frac{1}{p_0}+\left[\frac{1}{\omega(B_0)}\int_{B_0}|f(x)|^q\omega(x)\,dm(x)
\right]^\frac{1}{q}\right\}.
\end{align*}
\end{lemma}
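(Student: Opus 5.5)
We follow the good-$\lambda$ / Calder\'on--Zygmund scheme of Shen, transplanted to the doubling space $(\mathbb{R}^n,|\cdot|,dm)$ with $dm=w\,dx$; this measure is doubling by Remark \ref{rmk:20260508.2141}(ii). Fix $B_0$ and normalize so that $\fint_{B_0}|G|^{p_0}\,dm=1$. For $\lambda>0$ we set
$$E(\lambda):=\left\{x\in B_0:M_{B_0}(|G|^{p_0})(x)>\lambda\right\},$$
where $M_{B_0}$ is the Hardy--Littlewood maximal operator relative to $B_0$ with respect to $dm$, that is, averages over $\widetilde B\cap B_0$ in the $dm$-measure. The goal is a good-$\lambda$ inequality: for $A$ large, $\gamma$ small and $\lambda\ge\lambda_0$,
$$m\Bigl(E(A\lambda)\Bigr)\le\left(\frac{C\eta^{p_0}}{A}+\frac{C}{A^{p_1/p_0}}\right)m\Bigl(E(\lambda)\Bigr)+m\left(\left\{M_{B_0}(|f|^{p_0})>\gamma\lambda\right\}\right).$$

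To get this, we first perform a Calder\'on--Zygmund (Whitney-type) decomposition of the open set $E(\lambda)$ for $\lambda\ge\lambda_0\sim1$. This produces balls $B$ with $|B|\le\beta_1|B_0|$, each satisfying either $2B\subset B_0$ or centered on $\partial B_0$ (after a standard covering by boundary balls). On each such ball, $\fint_{\beta_2B\cap B_0}|G|^{p_0}\,dm\lesssim\lambda$ by maximality, and $\{x:M_{B_0}(|f|^{p_0})(x)\le\gamma\lambda\}\cap B\neq\emptyset$; otherwise $B$ is absorbed into the last term. Splitting $|G|\le|G_B|+|R_B|$, we get
$$m\Bigl(E(A\lambda)\cap B\Bigr)\le m\Bigl(\{M(|G_B|^{p_0}\mathbf 1_{2B})>A\lambda/2^{p_0+1}\}\Bigr)+m\Bigl(\{M(|R_B|^{p_0}\mathbf 1_{2B})>A\lambda/2^{p_0+1}\}\Bigr).$$
For the first term we use the weak-$(1,1)$ bound for $M$ on $(\mathbb{R}^n,dm)$ together with the $G_B$ hypothesis, giving $\lesssim A^{-1}(\gamma+\eta^{p_0})\,m(B)$. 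For the second we use the weak-$(p_1/p_0,p_1/p_0)$ bound with the $R_B$ hypothesis, giving $\lesssim A^{-p_1/p_0}(1+\gamma)^{p_1/p_0}\,m(B)$. Summing over the bounded-overlap family yields the good-$\lambda$ inequality.

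Next we pass from $m$ to $\omega$. Since $\omega\in RH_s(w)$, for any $F\subset B\subset\mathbb{R}^n$ we have
$$\frac{\omega(F)}{\omega(B)}\lesssim\left[\frac{m(F)}{m(B)}\right]^{1/s'}.$$
Applying this ball by ball turns the good-$\lambda$ inequality into
$$\omega\Bigl(E(A\lambda)\Bigr)\le\epsilon\,\omega\Bigl(E(\lambda)\Bigr)+\omega\left(\left\{M_{B_0}(|f|^{p_0})>\gamma\lambda\right\}\right),\qquad \epsilon:=C\left[A^{-1}\eta^{p_0}+A^{-p_1/p_0}\right]^{1/s'}.$$
We then multiply by $\lambda^{q/p_0-1}$ and integrate in $\lambda$ over $(\lambda_0,\Lambda)$, truncating at $\Lambda$ to keep all quantities finite, which gives
$$\int_{B_0}M_{B_0}(|G|^{p_0})^{q/p_0}\,\omega\,dm\le CA^{q/p_0}\epsilon\int_{B_0}M_{B_0}(|G|^{p_0})^{q/p_0}\,\omega\,dm+C\,\omega(B_0)\lambda_0^{q/p_0}+C\int_{B_0}M_{B_0}(|f|^{p_0})^{q/p_0}\,\omega\,dm.$$
The condition $s>(p_1/q)'$ is equivalent to $\frac{p_1}{p_0s'}>\frac{q}{p_0}$. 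So we can choose $A$ large enough that $CA^{q/p_0}A^{-p_1/(p_0s')}<\frac14$, and then $\eta_0$ small enough that $CA^{q/p_0}(A^{-1}\eta^{p_0})^{1/s'}<\frac14$; the first term on the right is then absorbed into the left. For the last term we use $\omega\in A_{q/p_0}(w)$, which means exactly that $M$ is bounded on $L^{q/p_0}(\omega\,dm)$ over the doubling space, so it is $\lesssim\int_{B_0}|f|^q\omega\,dm$. Finally we undo the normalization and divide by $\omega(B_0)$.

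The step we expect to be the main obstacle is the localization to $B_0$ near its boundary. The decomposition must produce balls that are either interior, with $2B\subset B_0$, or centered on $\partial B_0$, with all constants uniform in the $dm$-measure. Doubling of $m$ restricted to $B_0$ requires $m(\widetilde B\cap B_0)\gtrsim m(\widetilde B)$ for balls centered in $B_0$ of radius at most that of $B_0$. This follows from the two-sided estimates for $w(B)$ in Lemma \ref{lem:250104-11} combined with $|\widetilde B\cap B_0|\gtrsim|\widetilde B|$, via the $A_\infty$ comparison in Remark \ref{rmk:20260508.2141}(ii). Once this is in place, the rest of the argument is the standard one for spaces of homogeneous type.
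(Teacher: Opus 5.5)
Your proposal follows essentially the route the paper indicates. The paper omits the proof, citing a Calder\'on--Zygmund decomposition, the weighted boundedness of the Hardy--Littlewood maximal operator on $(\mathbb{R}^n,dm)$, and Shen's argument; at that level of detail your sketch is correct, including the $RH_s(w)$ transfer from $m$ to $\omega$ and the equivalence $s>(p_1/q)'\iff p_1/s'>q$ that drives the choice of $A$. Two points need tightening: the $G_B$ estimate also contributes $C\gamma/A$ to $\epsilon$, so $\gamma$ must be fixed small after $A$ (costing only a factor $\gamma^{-q/p_0}$ in the $f$-term); and since the hypothesis uses $\fint_{\widetilde B\cap B_0}$ over \emph{all} balls $\widetilde B\supset B$, including balls centered outside $B_0$ for which $\widetilde B\cap B_0$ is a thin lens with $m(\widetilde B\cap B_0)\ll m(\widetilde B)$, the maximal function you use for $f$ must dominate these lens averages, and its boundedness on $L^{q/p_0}(\omega\,dm)$ does not follow from relative doubling for balls centered in $B_0$ (the paper's appeal to Shen leaves this point equally implicit).
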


Let $p\in(1,\infty)$ and $\omega\in A_1(\mathbb{R}^n)$. It is well known that
the \emph{weak divergence operator $\mathrm{div}_\omega$} with respect to the
measure $\omega(x)\,dx$ acts as an isomorphism between the weighted Lebesgue
space $L^p(\Omega,\omega)$ and the dual space
$[\dot{W}^{1,p'}_0(\Omega,\omega)]^*$
of the homogeneous weighted Sobolev space $\dot{W}^{1,p'}_0(\Omega,\omega)$
with zero trace. Combining this and Lemmas \ref{lem:7.1} and \ref{lem:Shen}, we
obtain weighted a priori estimates for \eqref{eq:weighted-dirichlet} with
$F\in[\dot{W}^{1,p}_0(\Omega,w)]^*$ for $p$ near 2 and $g\equiv0$ therein.

\begin{theorem}\label{thm:weighted-dirichlet}
Let $\varepsilon$ be as in Lemma \ref{lem:7.1},
$p\in(\max\{2-\varepsilon,\,1\},2+\varepsilon)$,
$r_0\in(\frac{2+\varepsilon}{2+\varepsilon-p},\infty]$,
$w$ be as in \eqref{dw},
and $\omega\in A_1(w)\cap RH_{r_0}(w)$.
Then, for any given $f:=(f^{(1)},\dots,f^{(n)})\in L^p(\Omega,\omega w)$,
there exists a unique weak solution $u\in\dot{W}^{1,p}_0(\Omega,\omega w)$ to
the Dirichlet problem
\begin{align}\label{eq:weighted-dirichlet2}
\begin{cases}
Lu=-\mathrm{div}(A\nabla u)=\mathrm{div}_w(f) &\mbox{in}\ \Omega,\\
u=0 &\mbox{on}\ \Gamma
\end{cases}
\end{align}
such that
\begin{align}\label{eq:weighted-dirichlet-est}
\|\nabla u\|_{L^p(\Omega,\omega w)}\lesssim\|f\|_{L^p(\Omega,\omega w)},
\end{align}
where the implicit positive constant depends only on $n$, $d$, $p$,
$C_1$ in \eqref{eq:weighted-ellipticity}, $[\omega]_{A_1(w)}$,
and $[\omega]_{RH_{r_0}}(w)$.
\end{theorem}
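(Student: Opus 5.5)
We plan to follow Shen's real-variable scheme: obtain the weighted estimate \eqref{eq:weighted-dirichlet-est} a priori for the $\dot{W}^{1,2}_0(\Omega,w)$ solution, and then get existence and uniqueness by density and duality.

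\emph{Step 1: the $L^2$ solution.} Take $f\in L^2(\Omega,w)\cap L^p(\Omega,\omega w)$. The functional $\varphi\mapsto-\int_\Omega f\cdot\nabla\varphi\,dm$ lies in $[\dot{W}^{1,2}_0(\Omega,w)]^*$. By \cite[Lemma 9.1]{DFM21} (Lax--Milgram), there is a unique $u\in\dot{W}^{1,2}_0(\Omega,w)$ with $\|\nabla u\|_{L^2(\Omega,w)}\lesssim\|f\|_{L^2(\Omega,w)}$.

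\emph{Step 2: local decomposition and Lemma \ref{lem:Shen}.} We work on a large ball $B_0$ and apply Lemma \ref{lem:Shen} with $G:=|\nabla u|$, $p_0:=2$ (or $p_0:=1$), $q:=p$ and $p_1:=2+\varepsilon$. For each admissible ball $B$, we need either $4B\subset\Omega$ or a ball centred on $\Gamma$, so we enlarge $B$ to a boundary-centred ball when $\delta(x_B)\lesssim r_B$. On that enlarged ball we split $u=v+h$:
\begin{enumerate}
\item[(a)] $v\in\dot{W}^{1,2}_0$ solves $Lv=\mathrm{div}_w(f\boldsymbol{1}_{4B})$, so that energy gives $\fint|\nabla v|^2\,dm\lesssim\fint_{4B}|f|^2\,dm$;
\item[(b)] $h=u-v$ solves $Lh=0$ in $\Omega\cap4B$, with $Th=0$ on $\Gamma(x,4r)$ in the boundary case.
\end{enumerate}
Lemma \ref{lem:7.1} then gives the reverse H\"older bound $(\fint|\nabla h|^{2+\varepsilon}\,dm)^{1/(2+\varepsilon)}\lesssim\fint|\nabla h|\,dm\lesssim\fint|\nabla u|\,dm+\fint|\nabla v|\,dm$. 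Setting $G_B:=|\nabla v|$ and $R_B:=|\nabla h|$ verifies the hypotheses of Lemma \ref{lem:Shen} with $\eta=0$. The doubling of $dm$ (Remark \ref{rmk:20260508.2141}) handles the change of balls.

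\emph{Step 3: range of $p$ and passage to the global bound.} For $p\in(2,2+\varepsilon)$ we use $q=p$ and weight $\omega\in A_1(w)\subset A_{p/2}(w)$. The condition $s>(p_1/p)'=\frac{2+\varepsilon}{2+\varepsilon-p}$ is exactly $r_0>\frac{2+\varepsilon}{2+\varepsilon-p}$, so we get
\begin{align*}
\left[\frac{1}{\omega(B_0)}\int_{B_0}|\nabla u|^p\omega\,dm\right]^\frac1p\lesssim\left(\fint_{B_0}|\nabla u|^2\,dm\right)^\frac12+\left[\frac{1}{\omega(B_0)}\int_{B_0}|f|^p\omega\,dm\right]^\frac1p.
\end{align*}
We multiply by $\omega(B_0)^{1/p}$ and let $B_0\uparrow\mathbb{R}^n$. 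The $L^2$ term times $\omega(B_0)^{1/p}$ should vanish: since $\omega\in A_1(w)$, $\omega(B_0)\lesssim w(B_0)\inf_{B_0}\omega$. If needed, we first reduce to compactly supported $f$ and use the decay of $\nabla u$ away from $\operatorname{supp} f$, which comes from the same reverse H\"older estimate. For $p\in(\max\{2-\varepsilon,1\},2)$ we use duality. The adjoint operator $L^*$ has the same structure, so the estimate holds for $L^*$ with exponent $p'\in(2,2+\varepsilon)$ and weight $\omega^{1-p'}$. Here I would have to check that $\omega\in A_1(w)\cap RH_{r_0}(w)$ gives $\omega^{1-p'}$ in the required class (perhaps via $A_{p'/2}\cap RH$ duality).

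\emph{Step 4: existence, uniqueness and where the difficulty lies.} Existence follows from the a priori bound by density of $L^2\cap L^p(\omega w)$ in $L^p(\omega w)$ and completeness (Lemma \ref{lem:250919-3}). Uniqueness follows by testing against the adjoint solution, again by duality. The main obstacle is the boundary case of Step 2, together with the duality step. At the boundary, $h$ must have zero trace on $\Gamma(x,4r)$ so that Lemma \ref{lem:7.1}(ii) applies. This requires $Tv=0$, which holds by construction, and the traces of $u$ and $v$ to agree with Proposition \ref{prop:260126-1}.
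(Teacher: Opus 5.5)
\textbf{Part of your proposal that matches the paper.} For $p\in(2,2+\varepsilon)$ your plan is the paper's proof. It uses the $\dot W^{1,2}_0(\Omega,w)$ solution from \cite[Lemma 9.1]{DFM21}, a cut-off splitting into a localized-data piece plus a local solution, Lemma \ref{lem:7.1}, and Lemma \ref{lem:Shen} with $p_0=2$, $p_1=2+\varepsilon$, $\eta=0$, $s=r_0$.

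The one difference is how you remove the $L^2$ term:
\begin{itemize}
\item You let $B_0\uparrow\mathbb{R}^n$ and use $\omega(B_0)^{1/p}w(B_0)^{-1/2}\lesssim(\operatorname{ess\,inf}_{B_0}\omega)^{1/p}w(B_0)^{\frac1p-\frac12}\to0$.
\item The paper solves with $f_k:=f\mathbf{1}_{B_k}$ and applies H\"older to $f_k$ together with $A_1(w)$.
\end{itemize}
Both work. However, Lemma \ref{lem:Shen} only sees $f$ on $\widetilde B\cap B_0$, so you must in any case solve with data supported in $B_0$ and conclude by Fatou, as the paper does.

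Two smaller points. Your reduction of balls near $\Gamma$ to $\Gamma$-centred balls is genuinely needed; the paper leaves it implicit. Completeness of $\dot W^{1,p}(\Omega,\omega w)$ does not come from Lemma \ref{lem:250919-3}. It comes from the Poincar\'e inequality for $\omega w\in A_1(\mathbb{R}^n)$ (Remark \ref{rmk:20260520.2150}).

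\textbf{The gap: the case $p<2$.} This is exactly where you hesitated, and the paper's ``standard duality argument'' does not close it either. There are two problems.
\begin{itemize}
\item \emph{Range.} $p'<2+\varepsilon$ holds only when $p>(2+\varepsilon)'=\frac{2+\varepsilon}{1+\varepsilon}$, and $(2+\varepsilon)'>\max\{2-\varepsilon,1\}$. So for $p\in(\max\{2-\varepsilon,1\},(2+\varepsilon)']$ your claim $p'\in(2,2+\varepsilon)$ is false, and nothing in the argument reaches these $p$. The range has to be read with $(2+\varepsilon)'$ as lower endpoint, or with $\varepsilon$ shrunk (e.g.\ to $\frac{\varepsilon}{1+\varepsilon}$).
\item \emph{Weight class.} For $p\in((2+\varepsilon)',2)$ the check you postponed fails. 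For $\omega\in A_1(w)$ one has $\fint_B\omega^{1-p'}\,dm\sim(\fint_B\omega\,dm)^{1-p'}$. Hence $\omega^{1-p'}\in A_{p'/2}(w)$ is equivalent to $\omega\in RH_{\frac{2}{2-p}}(w)$. Since $\frac{2}{2-p}>\frac{2+\varepsilon}{2+\varepsilon-p}$, the hypothesis on $r_0$ does not supply this.
\end{itemize}

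\textbf{How to fix it.} Dualize only the unweighted estimate, then rerun Lemma \ref{lem:Shen} at $p<2$.
\begin{enumerate}
\item Take $p_0\in((2+\varepsilon)',p)$. The case $\omega\equiv1$ at $p_0'\in(2,2+\varepsilon)$ gives, by duality, $\|\nabla u\|_{L^{p_0}(\Omega,w)}\lesssim\|f\|_{L^{p_0}(\Omega,w)}$.
\item Apply Lemma \ref{lem:Shen} directly at $q=p$ with this $p_0$ and $p_1=2+\varepsilon$. Bound $G_B$ by the unweighted $L^{p_0}$ estimate. Bound $R_B$ by Lemma \ref{lem:7.1}, whose right-hand side is an $L^1$ average.
\item The weight requirements become $\omega\in A_{p/p_0}(w)\supset A_1(w)$ and $r_0>(\frac{2+\varepsilon}{p})'=\frac{2+\varepsilon}{2+\varepsilon-p}$, which is exactly the hypothesis.
\item The $L^{p_0}$ term is absorbed by H\"older and $A_1(w)$, as in the paper's Step 1.
\end{enumerate}

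Your uniqueness-by-duality step needs the same refinement. Given $\omega\in A_1(w)$, the dual weight $\omega^{1-p'}$ is automatically in $RH_\infty(w)$. The condition $\omega^{1-p'}\in A_{p'/p_0}(w)$ reduces to $\omega\in RH_{p_0(p'-1)/(p'-p_0)}(w)$. This exponent decreases to $\frac{2+\varepsilon}{2+\varepsilon-p}<r_0$ as $p_0\downarrow(2+\varepsilon)'$, so a suitable $p_0$ exists.
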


\begin{proof}
Let $f:=(f^{(1)},\dots,f^{(n)})\in L^p(\Omega,\omega w)$. We divide the proof
of Theorem \ref{thm:weighted-dirichlet} into the following four steps.

\emph{Step 1.} We first consider the case $p\in(2,2+\varepsilon)$ and assume
that $f\in L^p(\Omega,\omega w)\cap L^2(\Omega,w)$. In this case, from
\cite[Lemma 9.1]{DFM21}, we deduce that there exists a unique weak solution
$u\in\dot{W}^{1,2}(\Omega,w)$ to \eqref{eq:weighted-dirichlet2}.
We prove \eqref{eq:weighted-dirichlet-est} for $u$.
Indeed, fix $x_0\in\Gamma$ and, for any $k\in\mathbb{N}$, define
$B_k:=B(x_0,2^k)$ and $f_k:=f\boldsymbol{1}_{B_k}$. By \cite[Lemma 9.1]{DFM21}
again, we find that, for any $k\in\mathbb{N}$, there exists a unique weak
solution $u_k\in\dot{W}^{1,2}(\Omega,w)$ to the Dirichlet problem
\begin{align*}
\begin{cases}
-\mathrm{div}(A\nabla u)=\mathrm{div}_w(f_k) &\mbox{in}\ \Omega,\\
u=0 &\mbox{on}\ \Gamma
\end{cases}
\end{align*}
with
\begin{align}\label{20260519.1329}
\|u_k\|_{\dot{W}^{1,2}(\Omega,w)}=\|\nabla u_k\|_{L^2(\Omega,w)}
\lesssim\|\mathrm{div}_w(f_k)\|_{[\dot{W}^{1,2}_0(\Omega,w)]^*}
\sim\|f_k\|_{L^2(\Omega,w)}.
\end{align}

Now, fix $k\in\mathbb{N}$. Let $B:=B(x_B,r_B)\subset\mathbb{R}^n$ be a ball
satisfying $r_B\in(0,\frac{2^k}{8})$ and either $2B\subset B_k$ or $x_B\in
\partial B_k$. Also let $\varphi\in C_\mathrm{c}^\infty(\mathbb{R}^n)$ satisfy
$\boldsymbol{1}_{8B}\leq\varphi\leq\boldsymbol{1}_{16B}$.
From \cite[Lemma 9.1]{DFM21}, we infer that there exist $u_{k,1},u_{k,2}\in
\dot{W}^{1,2}(\Omega,w)$ such that
\begin{align*}
\begin{cases}
-\mathrm{div}(A\nabla u_{k,1})=\mathrm{div}_w(\varphi f_k)
&\mbox{in}\ \Omega,\\
u_{k,1}=0 &\mbox{on}\ \Gamma
\end{cases}
\end{align*}
and
\begin{align*}
\begin{cases}
-\mathrm{div}(A\nabla u_{k,2})=\mathrm{div}_w(f_k-\varphi f_k)
&\mbox{in}\ \Omega,\\
u_{k,2}=0 &\mbox{on}\ \Gamma
\end{cases}
\end{align*}
with
\begin{align}\label{20260519.1402}
\|\nabla u_{k,1}\|_{L^2(\Omega,w)}\lesssim\|\mathrm{div}_w(\varphi f_k)\|
_{[\dot{W}^{1,2}_0(\Omega,w)]^*}\sim\|\varphi f_k\|_{L^2(\Omega,w)}
\end{align}
and
\begin{align*}
\|\nabla u_{k,2}\|_{L^2(\Omega,w)}\lesssim\|\mathrm{div}_w(f_k-\varphi f_k)\|
_{[\dot{W}^{1,2}_0(\Omega,w)]^*}\sim\|f_k-\varphi f_k\|_{L^2(\Omega,w)}.
\end{align*}
Moreover, $u_k=u_{k,1}+u_{k,2}$. Furthermore, we can prove that
\begin{align}\label{20260519.1404}
\frac{w(16B)}{w(2B\cap B_k)}\lesssim1.
\end{align}
Indeed, if $2B\subset B_k$, then, using the doubling property of $w$ [see
Remark \ref{rmk:20260508.2141}(ii)], we obtain
\begin{align*}
\frac{w(16B)}{w(2B\cap B_k)}\leq\frac{w(16B)}{w(2B)}\lesssim
\frac{|16B|}{|2B|}\sim1
\end{align*}
and hence \eqref{20260519.1404} holds. If $x_B\in\partial B_k$, then there
exists a ball $U\subset2B\cap B_k$ with radius $\frac{r_B}{2}$,
which, together with the doubling property of $w$, further implies that
\begin{align*}
\frac{w(16B)}{w(2B\cap B_k)}\leq\frac{w(16B)}{w(U)}\lesssim
\frac{|16B|}{|U|}\sim1,
\end{align*}
and hence \eqref{20260519.1404} holds. In conclusion, \eqref{20260519.1404}
holds. Let $G:=\nabla u_k$, $G_B:=\nabla u_{k,1}$, and
$R_B:=\nabla u_{k,2}$. Then $|G|\leq|G_B|+|R_B|$ on $2B\cap B_k$.
In addition, from the fact that $\operatorname{supp}(\varphi f_k)\subset 16B\cap B_k$,
\eqref{20260519.1402}, and \eqref{20260519.1404}, it follows that
\begin{align}\label{20260519.1433}
\left[\fint_{2B\cap B_k}|G_B(x)|^2\,dm(x)\right]^\frac12&\leq
\left[\frac{1}{w(2B\cap B_k)}\int_\Omega|G_B(x)|^2\,dm(x)\right]^\frac12\notag\\
&\lesssim\left[\frac{1}{w(2B\cap B_k)}
\int_\Omega|\varphi(x)f_k(x)|^2\,dm(x)\right]^\frac12\notag\\
&\leq\left[\frac{w(16B\cap B_k)}{w(2B\cap B_k)}
\fint_{16B\cap B_k}|f_k(x)|^2\,dm(x)\right]^\frac12\notag\\
&\lesssim\left[\fint_{16B\cap B_k}|f_k(x)|^2\,dm(x)\right]^\frac12.
\end{align}
Moreover, note that $f_k-\varphi f_k\equiv0$ in $8B$. Combining this,
\eqref{20260519.1404}, Lemma \ref{lem:7.1}, H\"older's inequality, the fact
that $\operatorname{supp}(u_{k,2})\subset B_k$, \eqref{20260519.1402}, and
\eqref{20260519.1433}, we conclude that
\begin{align*}
\left[\fint_{2B\cap B_k}|R_B(x)|^{2+\varepsilon}\,dm(x)\right]
^\frac{1}{2+\varepsilon}&\leq\left[\frac{w(2B)}{w(2B\cap B_k)}
\fint_{2B}|R_B(x)|^{2+\varepsilon}\,dm(x)\right]^\frac{1}{2+\varepsilon}\\
&\lesssim\fint_{4B}|R_B(x)|\,dm(x)\leq
\left[\fint_{4B}|R_B(x)|^2\,dm(x)\right]^\frac12\\
&\leq\left[\fint_{4B\cap B_k}\left|\nabla u_k(x)-\nabla u_{k,1}(x)\right|
^2\,dm(x)\right]^\frac12\\
&\leq\left[\fint_{4B\cap B_k}\left|G(x)\right|^2\,dm(x)\right]^\frac12
+\left[\fint_{4B\cap B_k}\left|G_B(x)\right|^2\,dm(x)\right]^\frac12\\
&\lesssim\left[\fint_{4B\cap B_k}\left|G(x)\right|^2\,dm(x)\right]^\frac12
+\left[\fint_{16B\cap B_k}|f_k(x)|^2\,dm(x)\right]^\frac12.
\end{align*}
By this, \eqref{20260519.1433}, and Lemma \ref{lem:Shen} with $p_1:=2$,
$p_2:=2+\varepsilon$, $\eta:=0$, $\beta_1:=\frac{1}{8^n}$, $\beta_2:=4$,
$s:=r_0$ therein, we obtain
\begin{align}\label{20260519.1832}
&\left[\frac{1}{\omega(B_k)}\int_{B_k}
|\nabla u_k(x)|^p\omega(x)\,dm(x)\right]^\frac1p\notag\\
&\quad\lesssim\left[\frac{1}{w(B_k)}\int_{B_k}
|\nabla u_k(x)|^2\,dm(x)\right]^\frac12
+\left[\frac{1}{\omega(B_k)}
\int_{B_k}|f_k(x)|^p\omega(x)\,dm(x)\right]^\frac1p.
\end{align}
Moreover, from \eqref{20260519.1329}, the fact that
$\operatorname{supp}(f_k)\subset B_k$,
H\"older's inequality, and the assumption that $\omega\in A_1(w)$,
we deduce that
\begin{align*}
\left[\frac{1}{w(B_k)}\int_{B_k}|\nabla u_k(x)|^2\,dm(x)\right]^\frac12
&\lesssim\left[\frac{1}{w(B_k)}\int_{B_k}|f_k(x)|^2\,dm(x)\right]^\frac12\\
&\leq\left[\frac{1}{w(B_k)}\int_{B_k}|f_k(x)|^p\,dm(x)\right]^\frac1p\\
&\lesssim\left[\frac{1}{\omega(B_k)}\int_{B_k}|f_k(x)|^p\omega(x)\,dm(x)
\right]^\frac1p,
\end{align*}
which, together with \eqref{20260519.1832}, further implies that
\begin{align}\label{20260519.1842}
\|\nabla u_k\|_{L^p(\Omega,\omega w)}\lesssim\|f_k\|_{L^p(\Omega,\omega w)}.
\end{align}

Next, using \cite[Lemma 9.1]{DFM21} again, we find that, for any
$k\in\mathbb{N}$, $u-u_k$ is the unique weak solution to the Dirichlet problem
\begin{align*}
\begin{cases}
-\mathrm{div}(A\nabla u)=\mathrm{div}_w(f-f_k)
&\mbox{in}\ \Omega,\\
u=0 &\mbox{on}\ \Gamma
\end{cases}
\end{align*}
and $\|u-u_k\|_{L^2(\Omega,w)}\lesssim\|f-f_k\|_{L^2(\Omega,w)}$. Since
$\|f-f_k\|_{L^2(\Omega,w)}\to0$ as $k\to\infty$, it follows that there exists
a subsequence of $\{\nabla u_k\}_{k\in\mathbb{N}}$, still denoted by
$\{\nabla u_k\}_{k\in\mathbb{N}}$, such that, for almost every $x\in\Omega$,
$\lim_{k\to\infty}\nabla u_k(x)=\nabla u(x)$. Combining this, Fatou's lemma,
and \eqref{20260519.1842}, we conclude that
\begin{align*}
\|\nabla u\|_{L^p(\Omega,\omega w)}&\leq\liminf_{k\to\infty}
\left[\int_{B_k}|\nabla u_k(x)|^p\omega(x)\,dm(x)\right]^\frac1p\\
&\lesssim\liminf_{k\to\infty}\left[\int_{B_k}|f_k(x)|^p\omega(x)
\,dm(x)\right]^\frac1p\leq\|f\|_{L^p(\Omega,\omega w)},
\end{align*}
and hence \eqref{eq:weighted-dirichlet-est} holds.

\emph{Step 2.} We remove the assumption that $f\in L^2(\Omega,w)$ and consider
directly the general case $f\in L^p(\Omega,\omega w)$ for
$p\in(2,2+\varepsilon)$. In this case, from the density of
$L^p(\Omega,\omega w)\cap L^2(\Omega,w)$ in $L^p(\Omega,\omega w)$, we infer
that there exist $\{f_k\}_{k\in\mathbb{N}}\subset L^p(\Omega,\omega w)\cap
L^2(\Omega,w)$ such that $\|f_k-f\|_{L^p(\Omega,\omega w)}\to0$ as
$k\to\infty$. By the argument in Step 1, we find that, for any $k\in\mathbb{N}$,
there exists a unique weak solution $u_k\in\dot{W}^{1,p}(\Omega,\omega w)\cap
\dot{W}^{1,2}(\Omega, w)$ to the Dirichlet problem
\begin{align*}
\begin{cases}
-\mathrm{div}(A\nabla u)=\mathrm{div}_w(f_k) &\mbox{in}\ \Omega,\\
u=0 &\mbox{on}\ \Gamma.
\end{cases}
\end{align*}
This, combined with the argument in Step 1 again, further implies that, for any
$j,k\in\mathbb{N}$, $u_j-u_k$ is the unique weak solution to
the Dirichlet problem
\begin{align*}
\begin{cases}
-\mathrm{div}(A\nabla u)=\mathrm{div}_w\left(f_j-f_k\right)
&\mbox{in}\ \Omega,\\
u=0 &\mbox{on}\ \Gamma
\end{cases}
\end{align*}
and $\|\nabla u_j-\nabla u_k\|_{L^p(\Omega,\omega w)}\lesssim
\|f_j-f_k\|_{L^p(\Omega,\omega w)}$. Therefore, $\{\nabla u_k\}_{\mathbb{N}}$
is a Cauchy sequence in $L^p(\Omega,\omega w)$ and hence there exists
$v\in L^p(\Omega,\omega w)$ such that $\|\nabla u_k-v\|_{L^p(\Omega,\omega w)}
\to0$ as $k\to\infty$. On the other hand, from the assumption that $\omega\in
A_1(w)$, Remark \ref{rmk:20260520.2150}, and the weighted Poincar\'e inequality
(see, for instance, \cite[Corollary 1.9]{pr(tams-2019)}), it follows
that, for any $k\in\mathbb{N}$ and any ball $B\subset\mathbb{R}^n$ with radius
$r_B\in(0,\infty)$,
\begin{align*}
&\left[\frac{1}{\omega(B)}\int_B|u_k(x)-(u_k)_B|^pw(x)\,dx\right]^\frac1p\\
&\quad\leq\left[\frac{1}{\omega(B)\mathrm{ess\,inf}_B\omega}
\int_B|u_k(x)-(u_k)_B|^p\omega(x)w(x)\,dx\right]^\frac1p\\
&\quad\lesssim r_B\left[\frac{1}{\omega(B)\mathrm{ess\,inf}_B\omega}
\int_B|\nabla u_k(x)|^p\omega(x)w(x)\,dx\right]^\frac1p,
\end{align*}
which further implies that $\{u_k-(u_k)_B\}_{k\in\mathbb{N}}$ is a Cauchy
sequence in $L^p(B,w)$ and hence there exists $u^{(B)}\in L^p(B,w)$ such that
$\|u_k-(u_k)_B-u^{(B)}\|_{L^p(B,w)}\to0$ as $k\to\infty$. Thus,
up to constants, $\{u_k\}_{k\in\mathbb{N}}$ has a limit function $u$ in
$L_\mathrm{loc}^p(\Omega,w)\hookrightarrow L_\mathrm{loc}^1(\Omega)$,
which further implies that $\nabla u$ exists and $\nabla u=v$ almost
everywhere. Moreover, using \eqref{eq:weighted-ellipticity} and Remark
\ref{rmk:20260520.2150}, we obtain, for any $\varphi\in
C_\mathrm{c}^\infty(\Omega)$ and $k\in\mathbb{N}$,
\begin{align*}
\left|\int_\Omega A(x)[\nabla u(x)-\nabla u_k(x)]
\cdot\nabla\varphi(x)\,dx\right|
&\lesssim\int_\Omega|\nabla u(x)-\nabla u_k(x)|\,|\nabla\varphi(x)|w(x)\,dx\\
&\leq\|\nabla u-\nabla u_k\|_{L^p(\Omega,\omega w)}\to0
\end{align*}
and
\begin{align*}
\left|\int_\Omega[f(x)-f_k(x)]\cdot\nabla\varphi(x)\,dm(x)\right|
\lesssim\|f-f_k\|_{L^p(\Omega,\omega w)}\to0
\end{align*}
as $k\to\infty$. This, together with the fact that, for any $k\in\mathbb{N}$
and $\varphi\in C_\mathrm{c}^\infty(\Omega)$,
\begin{align*}
\int_\Omega A(x)\nabla u_k(x)\cdot\nabla\varphi(x)\,dx=
\int_\Omega f_k(x)\cdot\nabla\varphi(x)\,dm(x),
\end{align*}
further implies that, for any $\varphi\in C_\mathrm{c}^\infty(\Omega)$,
\begin{align*}
\int_\Omega A(x)\nabla u(x)\cdot\nabla\varphi(x)\,dx=
\int_\Omega f(x)\cdot\nabla\varphi(x)\,dm(x).
\end{align*}
Consequently, $u\in\dot{W}^{1,p}(\Omega,\omega w)$ and is the unique weak
solution to the Dirichlet problem \eqref{eq:weighted-dirichlet2}. Furthermore,
from Fatou's lemma and the Lebesgue dominated convergence theorem, we deduce
that \eqref{eq:weighted-dirichlet-est} holds. This finishes the proof of
Theorem \ref{thm:weighted-dirichlet} in the case $p\in(2,2+\varepsilon)$.

\emph{Step 3.} Next, we consider the case $p\in(\max\{2-\varepsilon,\,1\},2)$.
We first assume that $f\in L^p(\Omega,\omega w)\cap L^2(\Omega,
\omega w)$. In this case, by a standard duality argument as the discussion
in Step 1, we find that there exists a unique weak solution
$u\in\dot{W}^{1,p}(\Omega,\omega w)\cap\dot{W}^{1,2}(\Omega,w)$ to the
Dirichlet problem \eqref{eq:weighted-dirichlet2} such that
$\|u\|_{L^p(\Omega,\omega w)}\lesssim\|f\|_{L^p(\Omega,\omega w)}$.
For the general case $f\in L^p(\Omega,\omega w)$, we can use an argument
similar to that used in Step 2 to obtain the existence of the weak solution
to \eqref{eq:weighted-dirichlet2} and apply a standard duality argument to
establish \eqref{eq:weighted-dirichlet-est} for this solution.
This finishes the proof of Theorem
\ref{thm:weighted-dirichlet} in the case $p\in(\max\{2-\varepsilon,\,1\},2)$.

\emph{Step 4.} Finally, we consider the case $p=2$. In this case, the existence
and the uniqueness of the weak solution $u\in\dot{W}^{1,2}(\Omega,w)$ to the
Dirichlet problem \eqref{eq:weighted-dirichlet2} follow directly from the
Lax--Milgram theorem. Moreover, by Marcinkiewicz's interpolation theorem, we
establish \eqref{eq:weighted-dirichlet-est} for the solution. Combining the
argument in Step 1 through Step 4 completes the proof of Theorem
\ref{thm:weighted-dirichlet}.
\end{proof}

Applying Theorems \ref{thm:Eg-membership}, and \ref{thm:weighted-dirichlet},
we establish the weighted a priori estimates for solutions to the Dirichlet problem
\eqref{eq:weighted-dirichlet} with $F\in[\dot{W}^{1,p}_0(\Omega,w)]^*$ and
$g\in Q_p^p(\Gamma)$ for $p$ near 2 therein.

\begin{corollary}\label{thm:weighted-dirichlet-cor}
Let $\varepsilon$ be as in Lemma \ref{lem:7.1},
$p\in(\max\{2-\varepsilon,\,1\},2+\varepsilon)$,
and $w$ be as in \eqref{dw}.
Then, for any given $F\in[\dot{W}^{1,p'}_0(\Omega,w)]^*$ and
$g\in Q_p^p(\Gamma)$, there exists a unique weak solution
$u\in\dot{W}^{1,p}(\Omega,w)$ to the Dirichlet problem
\eqref{eq:weighted-dirichlet}:
\begin{align*}
\begin{cases}
Lu=-\mathrm{div}(A\nabla u)=F &\mbox{in}\ \Omega,\\
u=g &\mbox{on}\ \Gamma,
\end{cases}
\end{align*}
such that
\begin{align}\label{20260521.1344}
\|\nabla u\|_{L^p(\Omega,w)}\lesssim
\|F\|_{[\dot{W}^{1,p'}_0(\Omega,w)]^*}+\|g\|_{Q_p^p(\Gamma)},
\end{align}
where the implicit positive constant depends only on $n$, $d$, $p$, and
$C_1$ in \eqref{eq:weighted-ellipticity}.
\end{corollary}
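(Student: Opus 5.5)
The plan is to reduce the inhomogeneous boundary data to zero by means of the extension operator, and then treat the right-hand side $F$ as a divergence and apply Theorem \ref{thm:weighted-dirichlet} with $\omega\equiv1$.

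\emph{Reduction to zero boundary data.} Given $g\in Q^p_p(\Gamma)$, Theorem \ref{thm:Eg-membership} (with $q=p$) gives $Eg\in\dot{W}^{1,p}(\Omega,w)$ with
\begin{align*}
\|\nabla Eg\|_{L^p(\Omega,w)}\lesssim\|g\|_{Q^p_p(\Gamma)}
\end{align*}
and $T(Eg)=g$ $\mathcal{H}^d$-almost everywhere on $\Gamma$. We look for $u=Eg+v$, where $v\in\dot{W}^{1,p}_0(\Omega,w)$ solves $Lv=F+\mathrm{div}(A\nabla Eg)$ in $\Omega$ with zero trace. Write $A\nabla Eg=w\,\mathcal{A}\nabla Eg$. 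Then $\mathrm{div}(A\nabla Eg)=\mathrm{div}_w(\mathcal{A}\nabla Eg)$ in the weak sense, where the pairing with test functions is taken in $dm$. Moreover, $|\mathcal{A}\nabla Eg|\le C_1|\nabla Eg|$, so this vector field lies in $L^p(\Omega,w)$ with norm at most $C\|g\|_{Q^p_p(\Gamma)}$.

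\emph{Representing $F$.} By the isomorphism recalled before Theorem \ref{thm:weighted-dirichlet} (with $\omega=w\in A_1(\mathbb{R}^n)$, by Remark \ref{rmk:20260508.2141}(ii)), the map $\mathrm{div}_w:L^p(\Omega,w)\to[\dot{W}^{1,p'}_0(\Omega,w)]^*$ is onto. It also admits a bounded right inverse. Concretely, we embed $\dot{W}^{1,p'}_0(\Omega,w)$ isometrically into $L^{p'}(\Omega,w)^n$ via $\varphi\mapsto\nabla\varphi$, extend $F$ by Hahn--Banach, and represent the extension by $L^p$--$L^{p'}$ duality with respect to $dm$. This yields $f_1\in L^p(\Omega,w)$ with $F=\mathrm{div}_w f_1$ (up to sign) and $\|f_1\|_{L^p(\Omega,w)}\le\|F\|_{[\dot{W}^{1,p'}_0(\Omega,w)]^*}$.

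\emph{Solving and estimating.} Set $f:=f_1+\mathcal{A}\nabla Eg$ (with signs matching the weak formulation). Apply Theorem \ref{thm:weighted-dirichlet} with $\omega\equiv1$, which belongs to $A_1(w)\cap RH_\infty(w)$, so that any $r_0$ is admissible. This gives a unique $v\in\dot{W}^{1,p}_0(\Omega,w)$ solving \eqref{eq:weighted-dirichlet2} with
\begin{align*}
\|\nabla v\|_{L^p(\Omega,w)}\lesssim\|f\|_{L^p(\Omega,w)}.
\end{align*}
Then $u:=v+Eg$ is a weak solution of \eqref{eq:weighted-dirichlet}, since $Tu=Tv+T(Eg)=0+g$ by linearity of the trace (Lemma \ref{lem:5.1}). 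The estimate \eqref{20260521.1344} follows from the triangle inequality together with the two bounds above.

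\emph{Uniqueness.} If $u_1,u_2$ are two solutions, their difference lies in $\dot{W}^{1,p}_0(\Omega,w)$ and solves the homogeneous problem with $f=0$. The uniqueness part of Theorem \ref{thm:weighted-dirichlet} then forces it to vanish modulo constants.

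\emph{Expected obstacle.} The main difficulty is bookkeeping rather than analysis. First, the weak formulation of $\mathrm{div}(A\nabla Eg)$ must be identified with $\mathrm{div}_w$ of an $L^p(\Omega,w)$ field when tested against $C^\infty_{\mathrm c}(\Omega)$ and extended by density to $\dot{W}^{1,p'}_0(\Omega,w)$. Second, one must check that the constants do not depend on any weight constants beyond $n$, $d$, $p$, $C_1$. This holds because $\omega\equiv1$ has trivial characteristic constants, while the constant in Theorem \ref{thm:Eg-membership} depends only on $n$, $d$, $p$.
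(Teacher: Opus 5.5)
Your proposal is correct and takes essentially the same route as the paper: subtract the extension $Eg$ from Theorem \ref{thm:Eg-membership}, write the remaining right-hand side as $\mathrm{div}_w$ of a field in $L^p(\Omega,w)$, solve the zero-trace problem with Theorem \ref{thm:weighted-dirichlet} (implicitly with $\omega\equiv1$), and add $Eg$ back. The only difference is cosmetic: you represent $F$ by Hahn--Banach and $\mathrm{div}(A\nabla Eg)$ directly as $\mathrm{div}_w(\mathcal{A}\nabla Eg)$ (up to sign), whereas the paper first bounds $LG$ in $[\dot{W}^{1,p'}_0(\Omega,w)]^*$ and then represents $F-LG$ in one step.
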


\begin{proof}
Let $F\in[\dot{W}^{1,p'}_0(\Omega,w)]^*$ and $g\in Q_p^p(\Gamma)$.
By Theorem \ref{thm:Eg-membership}, we find that
\begin{align*}
G:=Eg\in \dot{W}^{1,p}(\Omega,w),\ TG=g,\mbox{\ and\ }
\|G\|_{\dot{W}^{1,p}(\Omega,w)}\lesssim\|g\|_{Q_p^p(\Gamma)}.
\end{align*}
We show that $LG=-\mathrm{div}(A\nabla G)\in
[\dot{W}^{1,p'}_0(\Omega,w)]^*$. Indeed, from \eqref{eq:weighted-ellipticity}
and H\"older's inequality, we infer that, for any
$\varphi\in\dot{W}^{1,p'}_0(\Omega,w)$,
\begin{align*}
\langle LG,\varphi\rangle:=\int_\Omega A(x)\nabla G(x)\nabla\varphi(x)\,dx
\end{align*}
and
\begin{align*}
|\langle LG,\varphi\rangle|&=\left|\int_\Omega A(x)
\nabla G(x)\nabla\varphi(x)\,dx\right|\lesssim
\int_\Omega A(x)|\nabla G(x)|\,|\nabla\varphi(x)|w(x)\,dx\\
&\leq\|G\|_{\dot{W}^{1,p}(\Omega,w)}\|\varphi\|_{\dot{W}^{1,p'}_0(\Omega,w)}
\lesssim\|g\|_{Q_p^p(\Gamma)}\|\varphi\|_{\dot{W}^{1,p'}_0(\Omega,w)}.
\end{align*}
Thus, $LG\in[\dot{W}^{1,p'}_0(\Omega,w)]^*$ and
$\|LG\|_{[\dot{W}^{1,p'}_0(\Omega,w)]^*}\lesssim\|g\|_{Q_p^p(\Gamma)}$.
Moreover, by the definition of $[\dot{W}^{1,p'}_0(\Omega,w)]^*$ and an argument
similar to that used in the proof of \cite[Section 1.1.15, Theorem 1]{m2011}, we conclude that there exists
$f:=(f_1,\ldots,f_n)\in[L^p(\Omega,w)]^n$ such that, for any
$\varphi\in\dot{W}^{1,p'}_0(\Omega,w)$,
$$\langle F-LG,\varphi\rangle=\int_{\Omega}f\cdot\nabla\varphi\,dx$$
and $\|F-LG\|_{[\dot{W}^{1,p'}_0(\Omega,w)]^*}\sim\|f\|_{L^p(\Omega,w)}$.
Applying this and Theorem \ref{thm:weighted-dirichlet}, we find that there
exists a unique weak solution $v\in\dot{W}^{1,p}(\Omega,w)$ to the Dirichlet problem
\begin{align*}
\begin{cases}
Lv=-\mathrm{div}(A\nabla v)=F-LG &\mbox{in}\ \Omega,\\
v=0 &\mbox{on}\ \Gamma
\end{cases}
\end{align*}
such that
$$\|v\|_{\dot{W}^{1,p}(\Omega,w)}\lesssim\|f\|_{L^p(\Omega,w)}\sim
\|F-LG\|_{[\dot{W}^{1,p'}_0(\Omega,w)]^*}.$$
Define $u:=v+G$. Then it is easy
to verify that $u$ is a weak solution to \eqref{eq:weighted-dirichlet} and
\begin{align}\label{20260521.1439}
\|\nabla u\|_{L^p(\Omega,w)}=\|u\|_{\dot{W}^{1,p}(\Omega,w)}
\leq\|v\|_{\dot{W}^{1,p}(\Omega,w)}+\|G\|_{\dot{W}^{1,p}(\Omega,w)}
\lesssim\|F\|_{[\dot{W}^{1,p'}_0(\Omega,w)]^*}+\|g\|_{Q_p^p(\Gamma)}.
\end{align}
Finally, the uniqueness of the weak solution follows immediately from the
linearity of the operator $L$ and \eqref{20260521.1439}. This finishes the
proof of Corollary \ref{thm:weighted-dirichlet-cor}.
\end{proof}

\begin{remark}\label{rmk:20260521.1442}
We use the same notation as in Corollary
\ref{thm:weighted-dirichlet-cor}.
\begin{enumerate}
\item[\rm(i)] We point out that Corollary \ref{thm:weighted-dirichlet-cor} is
an extension of \cite[Lemma 9.1]{DFM21} from $p=2$ to
$p\in(\max\{2-\epsilon,\,1\},2+\epsilon)$.

\item[\rm(ii)] It is worth noting that, for any $p\neq2$, one can construct a
Meyers-type Dirichlet problem in the form of \eqref{eq:weighted-dirichlet}
such that its solution does not satisfy the a priori estimate
\eqref{20260521.1344}, which further implies that the range
$p\in(\max\{2-\epsilon,\,1\},2+\epsilon)$ in Corollary
\ref{thm:weighted-dirichlet-cor} is sharp in some sense. Indeed,
let $n\in\mathbb{N}\cap[3,\infty)$, $p\in(2,\infty)$, and $x_0\in\Omega$.
Without loss of generality, we may assume that $x_0:=\mathbf{0}$.
For any $x\in\mathbb{R}^n$, define
\begin{align*}
A(x):=
\begin{cases}
\displaystyle
w(\mathbf{0})\left[(1+\sigma)I_n-n\sigma\frac{x\otimes x}{|x|^2}\right]
&\mbox{if\ }x\in B\left(\mathbf{0},\frac{\delta(\mathbf{0})}{2}\right),\\
w(x)I_n&\mbox{if\ }x\in\Omega\setminus
B\left(\mathbf{0},\frac{\delta(\mathbf{0})}{2}\right),
\end{cases}
\end{align*}
where $I_n$ denotes the $n\times n$ identity matrix,
$\sigma\in(0,\frac{1}{n-1})$, and $\otimes$ denotes
the tensor product of two vectors. Obviously, for any $x\in\Omega\setminus
B(\mathbf{0},\frac{\delta(\mathbf{0})}{2})$, \eqref{eq:weighted-ellipticity}
holds. In addition, for any $x\in B(\mathbf{0},\frac{\delta(\mathbf{0})}{2})$,
$(\frac23)^{n-d-1}w(\mathbf{0})\leq w(x)\leq2^{n-d-1}w(\mathbf{0})$.
From this and the Cauchy--Schwarz inequality, we deduce that, for any $x\in
B(\mathbf{0},\frac{\delta(\mathbf{0})}{2})$ and $\xi,\eta\in\mathbb{R}^n$,
\begin{align*}
A(x)\xi\cdot\xi&=w(\mathbf{0})\left[(1+\sigma)\xi
-n\sigma\frac{(x\otimes x)\xi}{|x|^2}\right]\cdot\xi\\
&=w(\mathbf{0})\left[(1+\sigma)\xi
-n\sigma\frac{(x\cdot\xi)x}{|x|^2}\right]\cdot\xi\\
&=w(\mathbf{0})\left[(1+\sigma)|\xi|^2
-n\sigma\frac{|x\cdot\xi|^2}{|x|^2}\right]\\
&\geq\left(\frac12\right)^{n-d-1}[1-(n-1)\sigma]w(x)|\xi|^2
\end{align*}
and
\begin{align*}
|A(x)\xi\cdot\eta|&=\left|w(\mathbf{0})\left[(1+\sigma)\xi
-n\sigma\frac{(x\otimes x)\xi}{|x|^2}\right]\cdot\eta\right|\\
&=\left|w(\mathbf{0})\left[(1+\sigma)(\xi\cdot\eta)
-n\sigma\frac{(x\cdot\xi)(x\cdot\eta)}{|x|^2}\right]\right|\\
&\leq\left(\frac32\right)^{n-d-1}[1+(n+1)\sigma]w(x)|\xi|\,|\eta|.
\end{align*}
Therefore, $A$ satisfies \eqref{eq:weighted-ellipticity} with
\begin{align*}
C_1:=\max\left\{\left(\frac32\right)^{n-d-1}[1+(n+1)\sigma],\,
\frac{2^{n-d-1}}{1-(n-1)\sigma}\right\}.
\end{align*}
Let $\eta\in C_\mathrm{c}^\infty(\mathbb{R}^n)$ satisfy
\begin{align*}
\begin{cases}
\eta(x)=1 &\mbox{if\ }x\in B\left(x_0,\frac{\delta(x_0)}{4}\right),\\
\eta(x)\in[0,1] &\mbox{if\ }x\in
B\left(x_0,\frac{\delta(x_0)}{2}\right)\setminus
B\left(x_0,\frac{\delta(x_0)}{4}\right),\\
\eta(x)=0 &\mbox{if\ }x\in\mathbb{R}^n\setminus
B\left(x_0,\frac{\delta(x_0)}{2}\right)
\end{cases}
\end{align*}
and define $u_0(x):=\frac{\eta(x)}{|x|^{n-2}}$ and
\begin{align*}
f(x):=
\begin{cases}
0 &\mbox{if\ }x\in B(\mathbf{0},\frac{\delta(\mathbf{0})}{4}),\\
\displaystyle
\frac{A(x)\nabla u_0(x)}{w(x)} &\mbox{if\ }x\in\Omega\setminus
B(\mathbf{0},\frac{\delta(\mathbf{0})}{4}).
\end{cases}
\end{align*}
Then $u_0$ is a weak solution to the Dirichlet problem
\begin{align}\label{20260525.1731}
\begin{cases}
Lu=-\mathrm{div}(A\nabla u)=\mathrm{div}_w(f) &\mbox{in}\ \Omega,\\
u=0 &\mbox{on}\ \Gamma,
\end{cases}
\end{align}
but the estimate \eqref{eq:weighted-dirichlet-est} fails. Indeed,
a direct calculation indicates that, for any
$x\in B(\mathbf{0},\frac{\delta(\mathbf{0})}{4})$,
$\nabla u_0(x)=-\frac{(n-2)x}{|x|^n}$. This further implies that,
for any $x\in B(\mathbf{0},\frac{\delta(\mathbf{0})}{4})$,
\begin{align*}
A(x)\nabla u_0(x)=w(\mathbf{0})(n-2)[1-(n-1)\sigma]\frac{x}{|x|^n}
\end{align*}
and hence $\mathrm{div}(A\nabla u_0)=0$ in
$B(\mathbf{0},\frac{\delta(\mathbf{0})}{4})$. Therefore, $u_0$ is a weak
solution to the Dirichlet problem \eqref{20260525.1731}. Moreover,
\begin{align*}
\|\nabla u_0\|_{L^p(\Omega,w)}^p\geq
\int_{B(\mathbf{0},\frac{\delta(\mathbf{0})}{4})}
\frac{(n-2)w(x)}{|x|^{(n-1)p}}\,dx\sim w(\mathbf{0})
\int_{B(\mathbf{0},\frac{\delta(\mathbf{0})}{4})}
\frac{1}{|x|^{(n-1)p}}\,dx=\infty
\end{align*}
and
\begin{align*}
\|f\|_{L^p(\Omega,w)}^p&=\int_{B(\mathbf{0},\frac{\delta(\mathbf{0})}{2})
\setminus B(\mathbf{0},\frac{\delta(\mathbf{0})}{4})}
\frac{|A(x)\nabla u_0(x)|^p}{[w(x)]^{p-1}}\,dx\\
&\lesssim w(\mathbf{0})\int_{B(\mathbf{0},\frac{\delta(\mathbf{0})}{2})
\setminus B(\mathbf{0},\frac{\delta(\mathbf{0})}{4})}
\left[\frac{|\eta(x)|^p}{|x|^{(n-1)p}}
+\frac{|\nabla\eta(x)|^p}{|x|^{(n-2)p}}\right]\,dx<\infty.
\end{align*}
Thus, \eqref{eq:weighted-dirichlet-est} fails. This establishes the sharpness
of the range $p\in(\max\{2-\epsilon,\,1\},2+\epsilon)$ in Corollary
\ref{thm:weighted-dirichlet-cor}.
\end{enumerate}
\end{remark}

Finally, using Theorems \ref{thm:weighted-dirichlet} and
\ref{prop:250104-11}, we establish the following weighted a priori estimates
for solutions to the Dirichlet problem \eqref{eq:weighted-dirichlet2} within
the setting of Morrey spaces.

\begin{corollary}\label{cor:7.1}
Let $\varepsilon$ be as in Lemma \ref{lem:7.1},
$q\in(\max\{2-\varepsilon,\,1\},2+\varepsilon)$,
$\theta\in(0,\frac{2+\varepsilon-q}{2+\varepsilon})$,
$p\in(q,\frac{qn}{n-(d+1)\theta}]$, and $w$ be as in \eqref{dw}.
Then, for any given $f:=(f^{(1)},\dots,f^{(n)})\in\mathcal{M}^p_q(\Omega,w)$,
there exists a unique weak solution
$u\in\dot{W}^1\mathcal{M}^p_q(\Omega,w)$ to the Dirichlet problem
\begin{align}\label{20260521.2141}
\begin{cases}
Lu=-\mathrm{div}(A\nabla u)=\mathrm{div}_w(f) &\mbox{in}\ \Omega,\\
u=0 &\mbox{on}\ \Gamma
\end{cases}
\end{align}
such that
\begin{equation}\label{eq:weighted-dirichlet-est-Morrey}
\|\nabla u\|_{\mathcal{M}^p_q(\Omega,w)}
\lesssim\|f\|_{\mathcal{M}^p_q(\Omega,w)},
\end{equation}
where the implicit positive constant depends only on $n$, $d$, $q$, $p$, and
$C_1$ in \eqref{eq:weighted-ellipticity}.
\end{corollary}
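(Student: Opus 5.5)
The plan is to reduce Corollary \ref{cor:7.1} to Theorem \ref{thm:weighted-dirichlet}, with the exponent $q$ playing the role of $p$ there. The weights are $\omega_{x,r}:=[M_w(\boldsymbol{1}_{\Omega(x,r)})]^{\theta'}$, and Theorem \ref{prop:250104-11} then converts the weighted $L^q$ bounds into a Morrey bound.

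\emph{Choice of $\theta'$.} The hypothesis $p\le\frac{qn}{n-(d+1)\theta}$ only gives $\frac np-\frac nq+\frac{(d+1)\theta}{q}\ge0$, whereas Theorem \ref{prop:250104-11} needs the strict inequality \eqref{eq:260121-1}. Since the admissible range of $\theta$ is open, I fix $\theta'\in(\theta,\frac{2+\varepsilon-q}{2+\varepsilon})$. Then \eqref{eq:260121-1} holds strictly with $\theta'$ in place of $\theta$. I also fix $r_0\in(\frac{2+\varepsilon}{2+\varepsilon-q},\frac1{\theta'})$, which is possible because $\frac1{\theta'}>\frac{2+\varepsilon}{2+\varepsilon-q}$.

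\emph{Weight classes.} The measure $dm=w\,dx$ is doubling by Remark \ref{rmk:20260508.2141}(ii). I would then apply the Coifman--Rochberg theorem on the space of homogeneous type $(\mathbb{R}^n,|\cdot|,dm)$ to the function $\boldsymbol{1}_{\Omega(x,r)}$, whose maximal function $M_w(\boldsymbol{1}_{\Omega(x,r)})$ is finite almost everywhere. This gives $\omega_{x,r}\in A_1(w)$ for $\theta'<1$, and also $\omega_{x,r}^{r_0}=[M_w(\boldsymbol{1}_{\Omega(x,r)})]^{r_0\theta'}\in A_1(w)$ because $r_0\theta'<1$. A standard consequence is $\omega_{x,r}\in RH_{r_0}(w)$. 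Indeed, $\fint_B\omega^{r_0}\,dm\lesssim\operatorname{ess\,inf}_B\omega^{r_0}\le(\fint_B\omega\,dm)^{r_0}$. All these constants depend only on $\theta'$, $r_0$ and the doubling constant, so they are uniform in $(x,r)$. This uniformity is what makes the final supremum work.

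\emph{Existence and the estimate.} By the sufficiency half of Theorem \ref{prop:250104-11}, for any fixed $(x,r)$ we have
\begin{align*}
r^{\frac nq-\frac np}\|f\|_{\mathcal M^p_q(\Omega,w)}\gtrsim\|f\|_{L^q(\Omega,\omega_{x,r}w)} .
\end{align*}
In particular $f\in L^q(\Omega,\omega_{x,r}w)$ for every $(x,r)$. Theorem \ref{thm:weighted-dirichlet} then produces a solution $u_{x,r}\in\dot W^{1,q}_0(\Omega,\omega_{x,r}w)$ with $\|\nabla u_{x,r}\|_{L^q(\Omega,\omega_{x,r}w)}\lesssim\|f\|_{L^q(\Omega,\omega_{x,r}w)}$, with a constant uniform in $(x,r)$.

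Once a single $u$ equals all the $u_{x,r}$, we multiply by $r^{\frac np-\frac nq}$ and take the supremum over $x\in\Omega$ and $r\in(0,\infty)$. This gives $\|\nabla u\|_{\widetilde{\mathcal M}^p_q(\Omega,w)}\lesssim\|f\|_{\widetilde{\mathcal M}^p_q(\Omega,w)}$. By the equivalence \eqref{20260629.2159} (used with $\theta'$), both sides are comparable to the $\mathcal M^p_q(\Omega,w)$ norms, which yields \eqref{eq:weighted-dirichlet-est-Morrey}.

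\emph{Consistency and uniqueness: the main obstacle.} The key difficulty is showing that $u_{x,r}$ does not depend on $(x,r)$, and that any solution in $\dot W^1\mathcal M^p_q(\Omega,w)$ is this one.

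For consistency, the first thing I would try uses two pointwise facts. First, $M_w(\boldsymbol{1}_{\Omega(x,r)})\le1$. Second, by the doubling property, $\omega_{x,r}$ and $\omega_{x',r'}$ are comparable on every compact set, with constants depending on the pair of balls. Two solutions $u_{x,r}$ and $u_{x',r'}$ therefore both lie in $\dot W^{1,q}_{\mathrm{loc}}$ with zero trace. To conclude they coincide, I would redo the approximation of Step 2 in the proof of Theorem \ref{thm:weighted-dirichlet}: truncate $f_k:=f\boldsymbol{1}_{B(x_0,2^k)}$. Each $f_k$ lies in $L^q(\Omega,w)\cap L^q(\Omega,\omega w)$ for every weight considered, since $\omega_{x,r}$ is bounded below by a positive constant on bounded sets. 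The uniqueness in Theorem \ref{thm:weighted-dirichlet} then identifies the solutions $u_k$ with data $f_k$ across all the weights. Passing to the limit in each weighted space gives the same limit $u$ almost everywhere, modulo constants that are killed by the zero trace.

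Uniqueness in $\dot W^1\mathcal M^p_q(\Omega,w)$ follows the same route. If $v$ solves the homogeneous problem with $\nabla v\in\mathcal M^p_q(\Omega,w)$, the necessity-free direction of Theorem \ref{prop:250104-11} gives $\nabla v\in L^q(\Omega,\omega_{x,r}w)$. Uniqueness in Theorem \ref{thm:weighted-dirichlet} then forces $v=0$.
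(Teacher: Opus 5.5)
Your proposal is correct and follows the paper's own route: weights $[M_w(\boldsymbol{1}_{\Omega(x,r)})]^{\theta}$ in $A_1(w)\cap RH_s(w)$ for $s<1/\theta$ (the paper's extra factor $r^{n(\frac qp-1)}$ is harmless), Theorem \ref{thm:weighted-dirichlet} applied with constants uniform in $(x,r)$, and the equivalence of Theorem \ref{prop:250104-11} turning the weighted $L^q$ bounds into the Morrey bound. Your two additions go beyond the paper's write-up: replacing $\theta$ by some $\theta'\in(\theta,\frac{2+\varepsilon-q}{2+\varepsilon})$ is actually needed at the endpoint $p=\frac{qn}{n-(d+1)\theta}$, where \eqref{eq:260121-1} holds only with equality and the paper's direct appeal to Theorem \ref{prop:250104-11} with $\theta$ is not justified; and your truncation argument, which uses $\omega_{x,r}\le 1$ and uniqueness to show the solution does not depend on $(x,r)$, fills in a step the paper leaves implicit.
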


\begin{proof}
Let $f:=(f^{(1)},\dots,f^{(n)})\in\mathcal{M}^p_q(\Omega,w)$.
By Theorem \ref{prop:250104-11} and the assumption that
$p\in(q,\frac{qn}{n-(d+1)\theta}]$, we find that
\begin{equation}\label{eq:7.14}
\sup_{x\in\Omega,\,r\in(0,\infty)}r^{\frac{n}{p}-\frac{n}{q}}
\left\{\int_{\Omega}|f(y)|^q\left[M_w(\boldsymbol{1}_{\Omega(x,r)})(y)\right]
^\theta w(y)\,dy\right\}^\frac1q\sim\|f\|_{\mathcal{M}^p_q(\Omega,w)},
\end{equation}
which further implies that, for any $x\in\mathbb{R}^n$ and $r\in(0,\infty)$,
\begin{align*}
\|f\|_{L^q(\Omega,r^{n(\frac{q}{p}-1)}[M_w(\boldsymbol{1}_{\Omega(x,r)})]
^\theta w)}\lesssim\|f\|_{\mathcal{M}^p_q(\Omega,w)}.
\end{align*}
For any given $x\in\Omega$ and $r\in(0,\infty)$, let
$$\omega:=r^{n(\frac{q}{p}-1)}\left[M_w(\boldsymbol{1}_{\Omega(x,r)})\right]^\theta.$$
Then an argument similar to that used in the proof of
\cite[Lemma 4.6]{am07} yields $\omega \in A_1(w)\cap RH_s(w)$
for any $s\in(1,\frac{1}{\theta})$. Applying Theorem
\ref{thm:weighted-dirichlet} and the assumption that
$\theta\in(0,\frac{2+\varepsilon-q}{2+\varepsilon})$,
we conclude that there exists a unique solution $u$ to the Dirichlet problem
\eqref{20260521.2141} such that
\begin{align*}
\|\nabla u\|_{L^q(\Omega,r^{n(\frac{q}{p}-1)}
[M_w(\boldsymbol{1}_{\Omega(x,r)})]^\theta w)}\lesssim
\|f\|_{L^q(\Omega,r^{n(\frac{q}{p}-1)}
[M_w(\boldsymbol{1}_{\Omega(x,r)})]^\theta w)},
\end{align*}
where the implicit positive constant is independent of $f$, $x$, and $r$.
From this, Theorem \ref{prop:250104-11}, and \eqref{eq:7.14}, we deduce that
\begin{align*}
\|u\|_{\dot{W}^1\mathcal{M}^p_q(\Omega,w)}
&\sim\sup_{x\in\Omega,\,r\in(0,\infty)}
\|\nabla u\|_{L^q(\Omega,r^{n(\frac{q}{p}-1)}
[M_w(\boldsymbol{1}_{\Omega(x,r)})]^\theta w)}\\
&\lesssim\sup_{x\in\Omega,\,r\in(0,\infty)}
\|f\|_{L^q(\Omega,r^{n(\frac{q}{p}-1)}
[M_w(\boldsymbol{1}_{\Omega(x,r)})]^\theta w)}\lesssim
\|f\|_{\mathcal{M}^p_q(\Omega,w)}
\end{align*}
and hence \eqref{eq:weighted-dirichlet-est-Morrey} holds.
This finishes the proof of Corollary \ref{cor:7.1}.
\end{proof}

\smallskip
\noindent\textbf{Acknowledgements}\qquad
Yoshihiro Sawano would like to express his deep thanks to Professor Tadele Mengesha
for his helpful discussion about the Dirichlet problem on Lipschitz domains.

\bigskip

\smallskip

\noindent Weiyi Kong, Dachun Yang (Corresponding author) and Wen Yuan

\smallskip

\noindent Laboratory of Mathematics and Complex Systems (Ministry of Education
of China), School of Mathematical Sciences, Institute for Advanced Study,
Beijing Normal University,
Beijing 100875, The People's Republic of China

\smallskip

\noindent{\it E-mails:} \texttt{weiyikong@mail.bnu.edu.cn} (W. Kong)

\noindent\phantom{{\it E-mails:} }\texttt{dcyang@bnu.edu.cn} (D. Yang)

\noindent\phantom{{\it E-mails:} }\texttt{wenyuan@bnu.edu.cn} (W. Yuan)

\bigskip

\noindent Yoshihiro Sawano

\smallskip

\noindent Department of Mathematics, Chuo University, Tokyo, 112-8551, Japan

\smallskip

\noindent{\it E-mail:} \texttt{yoshihiro-sawano@celery.ocn.ne.jp} (Y. Sawano)

\bigskip

\noindent Sibei Yang

\smallskip

\noindent School of Mathematics and Statistics, Lanzhou University, Lanzhou 730000,
The People's Republic of China

\smallskip

\noindent{\it E-mail:} \texttt{yangsb@lzu.edu.cn} (S. Yang)

\end{document}